\documentclass[onefignum,onetabnum]{siamart220329}

\usepackage{appendix}
\usepackage{amsfonts}
\usepackage{graphicx,epstopdf}
\usepackage{bm}
\usepackage{enumitem}
\usepackage{algorithm}
\usepackage{algpseudocode}
\usepackage{framed,multirow}
\usepackage{booktabs}
\usepackage{makecell}
\usepackage{subcaption}
\usepackage{xcolor}

\usepackage{tikz}

\usetikzlibrary{patterns} 
\usepackage{wrapfig}
\usetikzlibrary{calc}  
\newtheorem{example}{Example}[section] 
\newtheorem{assumption}{Assumption}

\newtheorem{thm}{Theorem}[section] 
 
\newtheorem{remark}[thm]{Remark}

\definecolor{darkgreen}{rgb}{0.0, 0.5, 0.0}

\newcommand{\dx}{\Delta x}
\newcommand{\dy}{\Delta y}

\newcommand{\dt}{\Delta t}

\newcommand{\bu}{{\bf u}}
\newcommand{\bv}{{\bf v}}

\newcommand{\bmm}{{\bm  m}}
\newcommand{\bn}{{\bf n}}
\newcommand{\bc}{{\bf c}} 
\newcommand{\rd}{\mbox{d}}
\newcommand{\bx}{{\bm x}}
\newcommand{\bff}{{\bf f}}
\headers{IDP High Order Schemes}{K. Wu, X. Zhang, and C.-W. Shu}

\title{High Order Numerical Methods Preserving Invariant Domain for Hyperbolic and Related Systems 
\thanks{Submitted on May 1, 2025; accepted for publication (in revised form) December 8, 2025.
\funding{
Kailiang Wu is partially supported by  Science Challenge Project (No.~TZ2025007), Shenzhen Science and Technology Program 
(Grant No.~RCJC20221008092757098), and National Natural Science Foundation of China 
(Grant No.~12171227). 
Xiangxiong Zhang is partially supported by NSF grant DMS-2208518 and Purdue University Seed Funding for review papers. 
Chi-Wang Shu is partially supported by NSF grant DMS-2309249.} }
}

\author{ Kailiang Wu \thanks{Department of Mathematics, Southern University of Science and Technology, Shenzhen, Guangdong
518055, China (\email{WUKL@sustech.edu.cn}).}
\and Xiangxiong Zhang\thanks{Department of Mathematics, Purdue University, West Lafayette, IN 47907, USA
  (\email{zhan1966@purdue.edu}).}
  \and Chi-Wang Shu \thanks{Division of Applied Mathematics,
    Brown University
    Providence, RI 02906, USA (\email{Chi-Wang\_Shu@brown.edu}).}
  }
 
\begin{document}

\maketitle
\begin{abstract} 
Admissible states in hyperbolic systems and related equations often form a convex invariant domain. Numerical violations of this domain can lead to  loss of hyperbolicity, resulting in ill-posedness and severe numerical instabilities.   
It is therefore crucial for numerical schemes to preserve the invariant domain to ensure both physically meaningful solutions and robust computations. For complex systems, constructing invariant-domain-preserving (IDP) schemes is highly nontrivial and particularly challenging for high-order accurate methods.  
This paper presents a comprehensive survey of IDP schemes for hyperbolic and related systems, with a focus on the most popular approaches for constructing provable IDP schemes.  
We first give a systematic review of the fundamental approaches for establishing the IDP property in first-order accurate schemes, covering finite difference, finite volume, finite element, and residual distribution methods. 
Then we focus on  
two widely used and actively developed classes of high order IDP schemes 
as well as their recent developments, most of which have emerged in the past decade.
The first class of methods seeks an intrinsic weak IDP property in high-order schemes and then designs polynomial limiters to enforce a strong IDP property 
 at the points of interest. 
This generic approach applies to high-order finite volume and discontinuous Galerkin schemes. 
The second class is based on the flux limiting approaches, which originated from the flux-corrected transport method and can be adapted to a broader range of spatial discretizations, including finite difference and continuous finite element methods. 
In this survey, we 
elucidate the main ideas and underlying principles that guide the construction of IDP schemes, unify several existing IDP analysis techniques and theories, and provide some new perspectives and insights on the existing approaches.  
We also illustrate these approaches through extensive examples, such as positivity-preserving schemes for the gas dynamics equations, and present numerical experiments drawn from interdisciplinary applications in gas dynamics and magnetohydrodynamics. 
\end{abstract}


\begin{keywords}
  Convex invariant domains, positivity-preserving, bound-preserving, 
  hyperbolic conservation laws, 
  high order accurate schemes, invariant-domain-preserving limiters, gas dynamics, magnetohydrodynamics
\end{keywords}

\begin{MSCcodes} 
 65M06, 65M08, 65M60, 65M12, 76N15, 35L65
\end{MSCcodes}

\section{Introduction}
\subsection{Motivation}
Consider the initial value problem of a time-dependent PDE system of $N$ equations in $d$ spatial dimensions,
\begin{equation}\label{eq:gPDE}
	\partial_t {\bf u} +  \mathcal{L} ({\bf u}) = {\bf 0}, ~~~
	\qquad {\bf u}( {\bm x}, 0) = {\bf u}_0 ( {\bm x} ),\quad {\bm x} \in  \mathbb R^d, \quad {\bf u}={\bf u}({\bm x},t)\in \mathbb R^N,
\end{equation} 
defined in a bounded domain with appropriate boundary conditions,  where   
$\mathcal{L}$ denotes 
the spatial differential operator. 
A set  $G \subset \mathbb R^N$ is called an {\it invariant domain} of \eqref{eq:gPDE}
if   well posed solutions to \eqref{eq:gPDE} satisfy ${\bf u} ( {\bm x}, t ) \in G$ for all $\bm x$ and $t >0$ 
as long as ${\bf u} ( {\bm x}, 0 ) \in G$ for all $\bm x$.
In many problems, $G$ can be represented by the positivity or non-negativity of 
several functions of ${\bf u}$: 
\begin{equation}\label{eq:ASS-G}
	G = \left \{ {\bf u}  \in \mathbb R^N:~g_i ({\bf u}) >0~~\forall i \in \mathbb I, \quad g_i({\bf u})\ge 0~~\forall i \in \widehat {\mathbb I} \, \right \}. 
\end{equation}
For a given PDE, its invariant domain is not necessarily convex, but the invariant domain used in stabilizing many interesting and important hyperbolic systems is often a convex set.   
In this  paper, we only consider convex invariant domains.
Such convex invariant domains are typically found in hyperbolic conservation laws 
\begin{equation}\label{eq:hPDE}
\partial_t {\bf u} +   \nabla \cdot {\bf f} ( {\bf u} ) = {\bf 0}, 
\end{equation}
as well as in other time-dependent PDEs, such as 
convection-diffusion equations \cite{chueh1977positively}, semilinear parabolic equations \cite{DuJuLiQiao2021}, and  reaction-diffusion equations \cite{EstepMAMS2000,smoller2012shock}, etc.

For example, for gas dynamics equations consisting of conservation of mass, momentum and total energy, 
the set of admissible states is defined by   positive density and internal energy (or pressure for many common equations
of state).
 Such a set of admissible states is needed for not only physically meaningful solutions but also maintaining the hyperbolicity of the governing equations, 
which will be reviewed in  \Cref{sec:gasdynamics}.
 More importantly, numerically preserving positivity of density and pressure is critical for stabilizing computations of challenging problems such as high speed flows, especially for high order numerical schemes. Preserving an invariant domain defined by positivity of density and pressure in a conservative scheme can ensure $L^1$ stability of mass and total energy \cite{perthame1992second}.
  
For a generic hyperbolic problem, the set of admissible states such as positive water height in shallow water equations,  usually defines a convex invariant domain, preserving which  renders numerical schemes more robust. 
 There are many such examples  in applications including but not limited to
 weather modeling \cite{Nair2011-book}, radiative transfer \cite{buet2006asymptotic,olbrant2012realizability,buet2012asymptotic},
 kinetic equations \cite{cheng2013study,alldredge2015realizability},
neutrino transport in core-collapse supernovae \cite{mezzacappa2020physical},  relativistic
hydrodynamics \cite{radice2014high,WU2015539, qin2016bound,wu2017admissible,wu2017design, wu2021provably}, hydraulic engineering \cite{xing2010positivity}, astrophysics \cite{KIDDER201784,klingenberg2017numerical}, 
chemically reactive flows \cite{lv2017high-chem}, etc.

\subsection{Scalar conservation laws and order barriers}
\label{sec:barrier}
As a simplified model problem, 
consider a scalar conservation law $\partial_t u +   \nabla \cdot {\bf f} ( u) = 0$, whose entropy solution $u({\bm x},t)$ is total-variation-diminishing (TVD)   and satisfies the maximum principle $\min_{ {\bm x} } u({\bm x},t) \le u({\bm x},s)\le \max_{ {\bm x} } u({\bm x},t)$ for any $s>t\geq 0$, implying a bound-preserving property $U_{\min}:=\min_{ {\bm x} \in \Omega} u_0( {\bm x} ) \le u({\bm x},t) \le \max_{ {\bm x} } u_0( {\bm x} ) =: U_{\max}$. Hence, the scalar conservation law admits a convex invariant domain $G = [U_{\min},U_{\max}]$.
 
For scalar conservation laws, there is extensive literature on enforcing stronger properties, such as the TVD property and monotonicity, which go beyond merely preserving   the invariant domain $G$. 
Consider the one-dimensional scalar equation $\partial_t u + \partial_x f(u) = 0$. A numerical scheme of the form 
$u^{n+1}_j=H(u^{n}_{j-k}, u^{n}_{j-k+1}, \cdots, u^{n}_{j+k})$ 
is called \emph{monotone} if the function $H$ is monotone increasing (non-decreasing) with respect to each of its arguments. 
This definition naturally extends to multi-dimensional problems and to convection–diffusion equations: the numerical solution at the next time level must be a monotone increasing function of each of
its arguments within the stencil at the current time level.

It is well known that such a monotone scheme is at most first order accurate \cite{godunov1959finite,harten1976finite}.
There is a common misconception that a monotone scheme never produces new extrema.
A monotone scheme is also   TVD thus   {\it monotonicity-preserving}:
\[\mbox{monotone} ~ \Rightarrow ~ \mbox{TVD} ~ \Rightarrow ~ \mbox{monotonicity-preserving}, \]
none of which, however, implies that no new extrema can be generated. 
The definition of {\it monotonicity-preserving} is that if $u^n_{j+1}\geq u^n_j$ for all $j$, then $u^{n+1}_{j+1}\geq u^{n+1}_j$ for all $j$. Thus   a monotone scheme cannot generate new extrema on an infinite domain if the initial condition is a monotone profile. 
 For instance,
the Lax-Friedrichs scheme (a.k.a. Rusanov scheme)  is monotone, and
one can easily construct a non-monotone initial condition for which the Lax-Friedrichs scheme  produces new extrema \cite{breuss2004correct}.

The order barrier that a monotone scheme can only be first order accurate is also called {\it Godunov Theorem}.
There are similar order barriers stated in the literature. For example, a TVD finite difference scheme satisfying $\sum_j|u^{n+1}_{j+1}-u^{n+1}_{j}|\leq \sum_j|u^{n}_{j+1}-u^{n}_{j}|$ can be at most first order accurate in two dimensions \cite{goodman1985accuracy}. Similarly, if seeking maximum principle in the form $\min_j u^n_j \leq u^{n+1}_j\leq \max_j u^n_j$ in a finite difference or finite volume scheme, then such a scheme can be most second order accurate in spatial truncation error analysis, see \cite{zhang2011maximum} for a simple counterexample accredited to Harten. Central schemes  \cite{jiang1998nonoscillatory,kurganov2000new} satisfy such maximum principle and achieve second order accuracy.

 To enforce the maximum principle or the TVD property in high order finite volume and finite difference schemes, 
various limiters can be designed, which however causes loss of high order accuracy near extrema due to the  above-mentioned order barriers.
Nonetheless, such schemes can still achieve high-order accuracy for smooth solutions that are monotonically increasing or decreasing, while delivering good resolution of discontinuities.

 \subsection{First order schemes for systems}
Consider a one-dimensional hyperbolic system $\bu_t+\bff(\bu)_x={\bf 0}$ as an example, then a 3-point-stencil first order {\it locally conservative} scheme can be written as 
\begin{equation}
 \bu^{n+1}_j=\bu^{n}_j-\lambda\left(\hat{\bff}(\bu^n_{j}, \bu^n_{j+1})-\hat{\bff}(\bu^n_{j-1}, \bu^n_{j})\right),\quad \lambda=\frac{\Delta t}{\Delta x},
 \label{scheme:1Dsystem}
\end{equation} 
where $\bu^n_j$ denotes the solution at a  grid point $x_j$ and $n$-th time step,   $\hat{\bff}(\cdot, \cdot)$ denotes the numerical flux function, and $\Delta t$ and $\Delta x$ are the temporal and spatial step-sizes. 
 Stability properties of scalar equations such as TVD and maximum principle do not directly extensible to systems of conservation laws. For instance, in a blast wave solution to gas dynamics equations,   both the total variation and upper bound of density  become much larger than their initial values (see \Cref{fig:sedov}). 
Instead,   first order monotone schemes like the Godunov scheme and the Lax-Friedrichs scheme (a.k.a.~Rusanov scheme) for scalar conservation laws can  be shown invariant-domain-preserving (IDP) for hyperbolic systems, which will be reviewed in Section \ref{sec:firstorderIDP}.

For a given hyperbolic system, it is usually difficult to show that the exact solution preserves a given invariant domain   from PDE analysis. If  numerical solutions to a locally conservative scheme converge in the sense of bounded variation when refining the meshes, then the limit must be a weak solution by the   Lax-Wendroff Theorem \cite{lax1960systems}. 
Schemes like the Godunov scheme also satisfy the entropy inequality, thus the converged limit is  also an entropy solution. Therefore, numerical solutions of the first order Godunov scheme can be used to as a numerical evidence for whether the hyperbolic system should preserve a particular chosen convex domain \cite{hoff1979finite, hoff1985invariant}. If $\bu^n_j$ produced by a first order IDP scheme \eqref{scheme:1Dsystem} converges as mesh refines, then it is a strong numerical evidence that at least one exact solution to the PDE should preserve the same invariant domain.

\subsection{High order schemes for systems}

Although almost all desired stability properties can be proven for a first-order scheme \eqref{scheme:1Dsystem} using either the Godunov flux or the Lax–Friedrichs flux, their numerical resolution of fine structures is often unsatisfactory. 
High-resolution and high-order schemes are preferred for achieving better resolution. 
High-order schemes are typically defined as those that are at least third-order accurate for smooth solutions. 
Even though any Eulerian scheme on a uniform mesh can be at most half-order accurate in the $L^2$-norm for discontinuous solutions \cite[\S 11.2]{leveque1992numerical}, high-order schemes such as 
WENO (weighted essentially non-oscillatory) schemes \cite{shu2009high} or discontinuous Galerkin (DG) methods \cite{cockburn1990runge} generally produce better resolution than low-order schemes for discontinuities such as shocks, due to the reduced artificial viscosity present in high-order schemes.

However, popular and practical high order accurate finite difference, finite volume, and finite element schemes 
are not robust for very challenging hyperbolic problems (e.g., high speed flows involving low density and pressure), often due to violation of the invariant domains (e.g., loss of positivity of density or pressure in compressible flows). 
With proper modifications or limiters, high order schemes can be rendered to preserve the invariant domain of admissible states, which may improve their robustness.

We emphasize that the order barriers such as the Godunov Theorem hold only in the sense as stated in Section \ref{sec:barrier}. Under different definitions of discrete total variation or maximum, it is possible to avoid these  order barriers.
For example,  if defining the total variation as the total variation of the piecewise polynomials reconstructed  in a finite volume scheme, a very high order accurate finite volume TVD scheme  can be constructed for scalar equations in one dimension \cite{zhang2010genuinely}. 
If seeking a scheme preserving a simple invariant domain $U_{\min}\leq u^n_j \leq U_{\max}$ instead of a strict maximum principle $\min_j u^n_j \leq u^{n+1}_j\leq \max_j u^n_j$  for scalar equations, then in general it is still possible to achieve high order accuracy for a smooth solution. 
For a properly defined invariant domain \eqref{eq:ASS-G},  there should be no order barriers  for systems.  

For the sake of  stabilizing high order Eulerian schemes without adding excessive artificial viscosity, we should consider a method that is high order accurate (at least for smooth solutions), conservative, and IDP. For a special scheme solving a special system, there might be many different ways to construct such a method. For general problems, there are two popular and flexible approaches which are  described briefly as follows.

Take a high order accurate finite volume scheme on an interval $I_j=[x_{j-\frac12},x_{j+\frac12}]$ as an example. With forward Euler time stepping, it can be written as 
 \begin{equation}
 \bar \bu^{n+1}_j=\bar \bu^{n}_j-\lambda\left(\hat{\bff}(\bu^-_{j+\frac12}, \bu^+_{j+\frac12})-\hat{\bff}(\bu^-_{j-\frac12}, \bu^+_{j-\frac12})\right),\quad \lambda=\frac{\Delta t}{\Delta x},
 \label{scheme:1Dsystem-2}
\end{equation} 
where $\bar\bu_j$ denotes the cell average, $\hat\bff$ is a numerical flux function and $\bu^-_{j+\frac12}, \bu^+_{j+\frac12}$ are reconstructed values at $x_{j+\frac12}$ from the left and from the right, respectively.  
Let $\hat{\bff}_{j+\frac12}=\hat{\bff}(\bu^-_{j+\frac12}, \bu^+_{j+\frac12})$ denote the numerical flux.

The first popular and easy-to-use approach is to consider a modification or limiting of the   numerical fluxes $\hat{\bff}_{j\pm\frac12}$ so that  $ \bar \bu^{n+1}_j$ is in the invariant domain, and the idea of flux limiting traces back to  the seminal work of the flux corrected transport (FCT) by Boris and Book \cite{boris1973flux, book1975flux, boris1976flux} and also Zalesak \cite{zalesak1979fully}.

The second popular approach is to modify only the reconstructed values $\bu^\pm_{j\pm\frac12}$ so that $ \bar \bu^{n+1}_j$ is in the invariant domain under a suitable time step, and such an approach has been popularized by Zhang and Shu \cite{zhang2010positivity}, which 
 builds upon the idea by Perthame and Shu in \cite{perthame1996positivity} and the simple polynomial limiter analyzed in \cite{liu1996nonoscillatory}. 
 By the Godunov Theorem, a high order scheme like \eqref{scheme:1Dsystem-2} cannot be monotone for solving scalar equations, but it can still be {\it weakly monotone} (\Cref{thm:1} in \Cref{sec:zhang-shu}),  which is the key of such an approach.

\subsection{Scope and organization of this paper}

The preservation of invariant domain is merely a partial characterization of nonlinear stability, which is not sufficient for convergence. For convergence to entropy solutions, discrete entropy inequality should also be considered. Moreover, more properties might also be desired such as energy stability, and well-balancedness for shallow water equations.  
It is possible to combine discussions of other properties with the IDP property. For simplicity,
we focus on only how to preserve a convex invariant domain, and we do not discuss boundary conditions. We only discuss the numerical scheme in the interior of the domain, e.g., assuming periodic boundary conditions on a rectangular domain or zero inflow boundary conditions.
 For the organization of the rest of this paper,
we first list some representative examples of   invariant domains in Section \ref{sec:domain}. In Section \ref{sec:firstorderIDP}, we discuss how to show IDP  in classical first order schemes, on which IDP techniques in high order schemes depend heavily. 
In Section \ref{sec:zhang-shu} and Section \ref{sec:FCT}, we review two popular approaches for enforcing invariant domain in high order schemes, which can be used for many hyperbolic systems such as gas dynamics and shallow water equations. In Section \ref{sec:other}, we briefly discuss
other approaches and extensions, then survey recent breakthroughs and developments for the much more challenging MHD systems.
 The concluding remarks will be given in   Section \ref{sec:remarks}.

\section{Examples of invariant domains}
\label{sec:domain}

For a system $\partial_t\bu+\partial_x\bff(\bu)={\mathbf 0}$, it is hyperbolic if the Jacobian matrix $\bff'(\bu)$ has 
real eigenvalues and a complete set of eigenvectors, from which usually an  
invariant domain like \eqref{eq:ASS-G} can be defined. 
For nonlinear hyperbolic systems in multiple dimensions, it is in general difficult to prove that the exact solutions satisfy $\bu (\bx, t)\in G$, but violation of such an invariant domain usually causes blow-ups in numerical computation   due to loss of hyperbolicity. 
Next, we list some examples of \eqref{eq:ASS-G}.

\subsection{Gas dynamics equations}
\label{sec:gasdynamics}
\begin{example}[Positivity density and pressure for ideal gas]
	Consider the compressible Euler equations, which can be written in the form of \eqref{eq:hPDE}:
\begin{subequations}
    \label{eq:Euler}
\begin{equation}
		\partial_t	
		\begin{pmatrix}
			\rho  
			\\
			{\bm m}
			\\
			E	
		\end{pmatrix}
		+ 	 \nabla \cdot
		\begin{pmatrix}
			{\bm m}
			\\
			\rho^{-1}{\bm m} \otimes {\bm m} 
			+ p {\bf I}  
			\\
			\rho^{-1}( E + p ) {\bm m}
		\end{pmatrix} = {\bf 0},\quad \bx\in \mathbb R^d, \quad d=1,2,3
	\end{equation}	
where   $\rho$ is the density, $\bm m$ denotes the momentum vector, $p$ is the pressure, and 
$E= \rho e + \frac{|{\bm m}|^2}{2\rho}$ is the total energy with $e$ being the specific internal energy.  
The system \eqref{eq:Euler} is closed with an equation of state (EOS), e.g., the  EOS for ideal gas is   
\begin{equation}
\label{EOS}
    p=(\gamma-1)\rho e,
\end{equation}
where $\gamma>1$ is the ratio of specific heats. 
\end{subequations}
The commonly considered invariant domain for  \eqref{eq:Euler} is defined by positive density and positive pressure
\begin{equation}\label{eq:G-Euler}
	G = \left\{ 
	{\bf u}=(\rho, {\bm m},  E)^\top \in \mathbb R^{d+2}:~ \rho>0,~ 
	p=(\gamma-1)(E-\frac{|{\bm m}|^2}{2\rho})> 0 
	\right\}.
\end{equation}
The pressure function $p({\bu})$ is concave in $\bu$, which can be verified via its Hessian matrix. Thus $p$ satisfies the Jensen's inequality 
\begin{equation}\label{eq:Jensen4g}
p(\lambda \bu_1+(1-\lambda)\bu_2)\geq \lambda  p(\bu_1)+(1-\lambda)p(\bu_2) \quad \forall \lambda \in (0,1),~\forall \bu_1,\bu_2\in G, 
\end{equation}
which implies the convexity of $G$.
\end{example}

Negative density or pressure leads not only to physically meaningless solutions but also to loss of hyperbolicity, 
since the local sound speed $\sqrt{\frac{\gamma p}{\rho}}$ becomes imaginary and the eigenvalues of the Jacobian matrix of system \eqref{eq:Euler} become complex. 
In practice, negative density or pressure almost always cause significant numerical instabilities when solving \eqref{eq:Euler}, for example in simulations of high-speed flows (see \Cref{fig:astro-Mach2000-Euler}).
Therefore, the set \eqref{eq:G-Euler} is also known as the {\it set of admissible states}.


\begin{example}[Invariant domain with minimum entropy principle]
\label{ex:entropy}
For compressible Euler equations with ideal gas EOS \eqref{eq:Euler}, one may also consider adding 
the minimum entropy principle \cite{tadmor1986minimum, khobalatte1992maximum},   $S({\bf u}) \ge S_{min}:=\min_{\bm x} S ( {\bf u}_0({\bm x}) )$, for the specific entropy $S=\ln \frac{p}{\rho^{\gamma}}$, 
which is not a concave function but instead a quasi-concave function \cite[Lemma 2.1]{zhang2012minimum} thus satisfies
\begin{equation}
    \label{Jensen-quasiconcave}
     S(\lambda \bu_1+(1-\lambda)\bu_2)\geq \min\{  S(\bu_1), S(\bu_2)\}\quad \forall \lambda \in (0,1), \quad \forall \bu_1,\bu_2\in G.
\end{equation}
One can obtain another convex invariant domain:
\begin{equation}
	G_S = \left\{ 
	{\bf u}=(\rho, {\bm m},  E)^\top \in \mathbb R^{d+2}:~ \rho>0,~ 
	p> 0,~S=\ln \frac{p}{\rho^{\gamma}}\geq S_{\min} 
	\right\}.
  \label{G-Euler-2}
\end{equation}

\end{example}

\begin{example}[Invariant domain for any EOS]
The pressure function may no longer be a concave function in a general EOS, for which the internal energy
$\rho e=E-\frac{|\bm m |^2}{2\rho}$ is always concave for $\rho>0$. The invariant domain defined by positivity of internal energy can be considered for  both compressible Euler and Navier--Stokes equations \cite{grapsas2016unconditionally, zhang2017positivity, guermond2021second-NS}:
\begin{equation}
G = \left\{ 
	{\bf u}=(\rho, {\bm m},  E)^\top \in \mathbb R^{d+2}:~ \rho>0,~ 
	\rho e=E-\frac{|{\bm m}|^2}{2\rho}> 0 
	\right\}.
  \label{G-Euler-3}
\end{equation}
\end{example}

\subsection{Single layer shallow water equations}
The   single layer
 shallow water equations with a  bottom topography, which has been  used to model
flows in rivers and coastal areas for ocean and hydraulic engineering,
 can be written as 
 \begin{equation}
     \partial_t	
		\begin{pmatrix}
			h  
			\\
			{\bm m} 
		\end{pmatrix}
		+ 	 \nabla \cdot
		\begin{pmatrix}
			{\bm m}
			\\
			h^{-1}{\bm m} \otimes {\bm m} 
			+ \frac12 g h {\bf I}  
		\end{pmatrix} = \begin{pmatrix}
			0
			\\
			-gh \nabla b  
		\end{pmatrix}
        ,\quad \bx\in \mathbb R^2, 
	\end{equation}	 
where $h$ is water height, $\bm m=h (u,v)^\top$ is the momentum, $b(\bx)$ is the bottom topography function,
and $g$ is the gravity constant. The eigenvalues of the Jacobian  are related to $\sqrt{gh}$, thus the non-negativity of the water height function defines a convex invariant domain, which is crucial for numerical stability \cite{berthon2008positive, castro2006numerical, kurganov2007second}.

\subsection{Two-layer shallow water equations}

Consider the more complicated   
two-layer shallow water equations, which are widely used in the study of stratified flow motions such as salinity-driven exchange flow motions and layered flows. In one dimension, the equations for conservative variables can be written as
\begin{equation}
\label{Eq:2layer}
\partial_t \begin{pmatrix}
h_1 \\ h_1 u_1 \\ h_2 \\ h_2 u_2 
\end{pmatrix}+  \partial_x \begin{pmatrix}
h_1 u_1 \\ h_1 u_1^2+\frac12 gh_1^2 \\ h_2 u_2 \\ h_2 u_2^2+\frac12 gh_2^2 
\end{pmatrix}=\begin{pmatrix}
0 \\ -gh_1(h_2+b)_x \\ 0 \\ -rgh_2(h_1)_x-gh_2b_x
\end{pmatrix},
\end{equation}
where the subscripts 1 and 2 denote the upper and the lower layers of the water respectively, $h_i$ is the height of the $i$th layer, $u_i$ denotes the velocity of the $i$th layer,  $b$ is the bottom topography,  and $r=\rho_{1}/\rho_{2} \in (0,1)$ is the ratio of density. 

By using the expansion-based first order approximation  \cite{mandli2013numerical}, the eigenvalues of the Jacobian can be given as the external wavespeeds
and internal wavespeeds
\begin{equation}\label{Eq:ext-vel}
\lambda_{ext}^{\pm}\approx\frac{h_{1}u_{1}+h_{2}u_{2}}{h_{1}+h_{2}}\pm\sqrt{g(h_{1}+h_{2})},
\end{equation} 
\begin{equation}\label{Eq:int-vel}
\lambda_{int}^{\pm}\approx\frac{h_{1}u_{2}+h_{2}u_{1}}{h_{1}+h_{2}}\pm\sqrt{g'\frac{h_{1}h_{2}}{h_{1}+h_{2}}\left[1-\frac{(u_{1}-u_{2})^{2}}{g'(h_{1}+h_{2})}\right]},\quad g'=g(1-r)>0.
\end{equation} 

One can first consider an {\color{black}approximated invariant} domain defined to ensure the approximated eigenvalues to be real numbers:
\begin{equation}
\label{G-twolayer-1}
\color{black}
 G=\left\{(h_1, h_1 u_1, h_2, h_2 u_2)^\top:  h_1> 0,~ h_2> 0,~  \frac{(u_1-u_2)^2}{h_1+h_2}\leq  g'
  \right\},
\end{equation}  which unfortunately is not a convex set because $\frac{(u_1-u_2)^2}{h_1+h_2}$ is not a convex function of the conserved variables $(h_1,  h_1 u_1, h_2, h_2  u_2)^\top$. 
However, the function $\frac{(u_1-u_2)^2}{h_1+h_2}$ 
is a convex function of the primitive variables $(
           h_1, u_1,  h_2,  u_2)^\top$, which can be easily verified via its Hessian matrix. 
Thus we may consider rewriting the two-layer equations in these variables:          
\begin{equation}\label{Eq:2layer-2}
\begin{pmatrix}
h_1 \\ u_1 \\ h_2 \\   u_2 
\end{pmatrix}_t+\begin{pmatrix}
h_1 u_1 \\ \frac12 u_1^2+ g(h_1+h_2+b) \\ h_2 u_2 \\  \frac12 u_2^2+ g(r h_1+h_2+b)
\end{pmatrix}_x=\begin{pmatrix}
0 \\ 0 \\ 0 \\ 0
\end{pmatrix}, 
\end{equation}
and consider the following invariant domain for the same constraints:
\begin{equation}
\label{G-twolayer-2}
\color{black}
 G=\left\{(h_1, u_1, h_2,   u_2)^\top: h_1 > 0,~ h_2 > 0,~  \frac{(u_1-u_2)^2}{h_1+h_2}\leq  g'
  \right\}.
\end{equation}
Although   \eqref{G-twolayer-1} and \eqref{G-twolayer-2} describe the same admissible states, the key difference is that only the set \eqref{G-twolayer-2} is convex, which facilitates the construction of IDP schemes for the equivalent system \eqref{Eq:2layer-2}; see \cite{du2024high-twolayer}. However, 
{\color{black}the systems \eqref{Eq:2layer} and \eqref{Eq:2layer-2} are equivalent only  for smooth solutions and they may produce different weak solutions.} Moreover, 
there are some drawbacks of solving a system for non-conservative variables \eqref{Eq:2layer-2}, e.g., it is nontrivial to obtain conservation of momentum.

\subsection{Ten-moment Gaussian closure model}

Gaussian closure models are alternatives of compressible Euler equations
for modeling compressible flows with nonlocal thermodynamic equilibrium assumed, especially for extremely low pressure rarefied gas flows. 
The ten-moment Gaussian closure model
\cite{levermore1998gaussian,berthon2006numerical}
in two dimensions are given by $\partial_t {\bf u} +  \partial_x {\bf f}_1 ( {\bf u} ) + \partial_y {\bf f}_2  ( {\bf u} ) = {\bf 0}$ with
\begin{align}\label{eq:Rephrased-Ten-Moment}
& {\bf u} = 	
\begin{pmatrix}
	\rho  
	\\
	m_1 
	\\
	m_2 
	\\
	E_{11}
	\\
	E_{12}
	\\
	E_{22}
\end{pmatrix}, \qquad 
{\bf f}_j ( {\bf u} ) = 	
\begin{pmatrix}
	m_j 
	\\
	m_1 v_j + p_{1j}
	\\
	m_2 v_j + p_{2j}
	\\
	E_{11} v_j + p_{1j} v_1
	\\
	E_{12} v_j + \frac{1}{2} (p_{1j} v_2 + p_{2j} v_1)
	\\
	E_{22} v_j + p_{2j} v_2
\end{pmatrix}, \quad j=1,2,
\end{align}
where $\rho$ denotes the mass density, ${\bm m} = (m_1, m_2)^\top$ is the momentum, and the velocity vector is given by ${\bm v} = {\bm m}/\rho$. The symmetric tensor ${\bf E} = (E_{ij})$ represents the energy, and ${\bf p} = (p_{ij})$ is the pressure tensor, which is also symmetric and anisotropic in nature.

The system \eqref{eq:Rephrased-Ten-Moment} is closed by 
the relation ${\bf p} = 2 {\bf E} - \rho {\bm v} \otimes {\bm v}$. 
This ensures that the model remains consistent with physical constraints. The  set of admissible states is characterized by the positivity of both the density and the pressure tensor,
\begin{align}\label{eq:Rephrased-G}
G &= \left\{ {\bf u} \in \mathbb{R}^6 : \rho > 0,\; {\bf E} - \frac{{\bm m} \otimes {\bm m}}{2\rho} \text{ is positive-definite} \right\},
\end{align}
which is a convex set \cite{meena2017positivity}.

\subsection{Ideal MHD equations}
The system for ideal compressible magnetohydrodynamics (MHD) 
   can be reformulated in conservative form as follows:
\begin{equation}\label{eq:rephrased-idealMHD}
\partial_t	
\begin{pmatrix}
	\rho  
	\\
	{\bm m}
	\\
	{\bf B}
	\\
	E	
\end{pmatrix}
+ \nabla \cdot
\begin{pmatrix}
	{\bm m}
	\\
	{\bm m} \otimes {\bm v} - {\bf B} \otimes {\bf B}
	+ p_{\text{tot}} {\bf I}
	\\
	{\bm v} \otimes {\bf B} - {\bf B} \otimes {\bm v}
	\\
	\left( E +  p + \frac{1}{2} | {\bf B} |^2 \right) {\bm v} 
	- ( {\bm v} \cdot {\bf B} ) {\bf B}	
\end{pmatrix}
= {\bf 0},
\end{equation}
where   $\rho$ denotes the mass density, ${\bm m}$ is the momentum, and the velocity field is computed via ${\bm v} = {\bm m}/\rho$. The magnetic field is represented by ${\bf B}$, satisfying the solenoidal constraint $\nabla \cdot {\bf B} = 0$ if it is satisfied
at $t=0$. 
The total energy $E$ includes contributions from internal, kinetic, and magnetic energy,
$E = \rho e + \frac{1}{2} \left( \rho | {\bm v} |^2 + | {\bf B} |^2 \right),$
where $e$ is the specific internal energy.  
Physically meaningful states must satisfy positivity of both the mass density and internal energy. This leads to a convex invariant domain:  
\begin{equation}\label{eq:rephrased-G-iMHD}
G = \left\{
{\bf u} = (\rho, {\bm m}, {\bf B}, E)^\top \in \mathbb{R}^{2d+2} :\; \rho > 0,\;
g({\bf u}) := E - \frac{|{\bm m}|^2}{2\rho} - \frac{|{\bf B}|^2}{2} > 0
\right\}.
\end{equation}
The positivity is crucial for stable computations \cite{cheng2013positivity-MHD, derigs2016novel-mhd, wu2018positivity}.

\subsection{Relativistic MHD equations}

The relativistic magnetohydrodynamic (RMHD) system \cite{wu2017admissible,wu2021provably} can be expressed in conservative form as follows:
\begin{equation}\label{eq:Rephrased-RMHD}
\partial_t 
\begin{pmatrix}
	D \\
	{\bm m} \\
	{\bf B} \\
	E
\end{pmatrix}
+ \nabla \cdot
\begin{pmatrix}
	D {\bm v} \\
	{\bm m} \otimes {\bm v} 
	- {\bf B} \otimes \left( W^{-2} {\bf B} + ( {\bm v} \cdot {\bf B} ) {\bm v} \right)
	+ p_{\text{tot}} {\bf I} \\
	{\bm v} \otimes {\bf B} - {\bf B} \otimes {\bm v} \\
	{\bm m}
\end{pmatrix}
= {\bf 0}.
\end{equation}
In this formulation, the relativistic mass density $D = \rho W$, where $\rho$ is the rest-mass density and $W = (1 - | {\bm v} |^2)^{-1/2}$ is the Lorentz factor. The momentum vector ${\bm m}$ is defined as
\[
{\bm m} = (\rho h W^2 + | {\bf B} |^2) {\bm v} - ( {\bm v} \cdot {\bf B} ) {\bf B},
\]
where $h$ is the specific enthalpy and ${\bm v}$ is the velocity. The velocity is normalized such that the speed of light is unity. The total energy is given by
\[
E = \rho h W^2 - p_{\text{tot}} + | {\bf B} |^2.
\]
The magnetic field ${\bf B}$ obeys the divergence-free condition $\nabla \cdot {\bf B} = 0$ if it is satisfied at $t=0$, 
as in ideal MHD. The total pressure $p_{\text{tot}} = p + p_{m}$ is composed of the thermal component $p$ and the magnetic contribution
\[
p_{m} = \frac{1}{2} \left( W^{-2} | {\bf B} |^2 + ( {\bm v} \cdot {\bf B} )^2 \right).
\] 

To ensure physical admissibility, the solution must lie within the invariant domain
\begin{equation}\label{eq:Rephrased-RMHD-G}
G = \left\{ {\bf u} = (D, {\bm m}, {\bf B}, E)^\top \in \mathbb{R}^{2d+2} : D > 0,\; p({\bf u}) > 0,\; | {\bm v}({\bf u}) | < 1 \right\},
\end{equation}
where both ${\bm v}({\bf u})$ and $p({\bf u})$ are nonlinear functions of the conserved variables and cannot be explicitly written in closed form.  
These nonlinear implicit functions are frequently expressed in terms of another auxiliary function $\hat \phi({\bf u})$, which is determined implicitly. Specifically, the expressions take the form:
\begin{equation}\label{eq:Rephrased-getprimformU}
p({\bf u}) = \frac{\gamma - 1}{Z_{\bf u}^2(\hat \phi)  \,  \gamma} \left( \hat \phi - D \, Z_{\bf u}(\hat \phi) \right), \quad
{\bm v}({\bf u}) = \frac{{\bm m} + ( {\bm m} \cdot {\bf B} ) {\bf B}/\hat \phi}{\hat \phi + | {\bf B} |^2}.
\end{equation}
Here, $\hat \phi = \hat \phi({\bf u})$ is defined as the unique positive root of a nonlinear equation:
\[
 \phi - E + | {\bf B} |^2 
- \frac{1}{2} \left( \frac{( {\bm m} \cdot {\bf B} )^2}{\phi^2} 
+ \frac{| {\bf B} |^2}{Z_{\bf u}^2(\phi)} \right)
+ \frac{\gamma - 1}{\gamma} \left( \frac{D}{Z_{\bf u}(\phi)} 
- \frac{\phi}{Z_{\bf u}^2(\phi)} \right) = 0,
\]
where $\gamma$ is the adiabatic index (ratio of specific heats), and   $Z_{\bf u}(\phi)$ is 
\[
Z_{\bf u}(\phi) := 
\left( 
\frac{ 
\phi^2(\phi + | {\bf B} |^2)^2 
- \left[ \phi^2 | {\bm m} |^2 
+ (2\phi + | {\bf B} |^2)( {\bm m} \cdot {\bf B} )^2 \right]
}
{\phi^2 (\phi + | {\bf B} |^2)^2}
\right)^{-1/2}.
\]



\section{First order schemes preserving invariant domains}
\label{sec:firstorderIDP}
We only consider schemes with forward Euler time stepping in this section. 

\subsection{Monotone schemes for 1D scalar conservation laws}
\label{sec:mono}
We first consider the simplest example of enforcing bounds in solving scalar conservation laws. 
Consider the one-dimensional (1D) version of the scalar conservation law 
\begin{equation}\label{eq:1stSCL}
	\partial_t u + \partial_x f(u) = 0. 
\end{equation}

A 3-point-stencil first order monotone scheme for \eqref{eq:1stSCL} can be written as 
\begin{equation}\label{eq:1stSCLmono}
	u_j^{n+1} = u_j^{n} - \lambda \left( \hat f( u_j^{n},u_{j+1}^{n} ) - \hat f( u_{j-1}^{n},u_{j}^{n} )  \right) =: H_{\lambda} ( u_{j-1}^{n},u_{j}^{n},u_{j+1}^{n} ),
\end{equation}
where $u_j^{n}$ denotes the numerical solution at the $j$-th grid point in finite difference or $j$-th cell in finite volume at the time level $n$,  $\lambda = \Delta t/\Delta x$ with $\Delta t$ and $\Delta x$ denoting the temporal and spatial mesh sizes\footnote{Uniform mesh size is assumed here for  simplicity in presentation, while all our discussions are extensible to non-uniform meshes.}, and 
$\hat f(u^-,u^+)$ is a monotone flux, i.e., it is non-decreasing in the first argument $u^-$ and non-increasing in its second argument $u^+$, and satisfies the consistency $\hat f(u,u) = f(u)$.   

Under a suitable Courant--Friedrichs--Lewy (CFL) condition, typically of the form:
\begin{equation}\label{eq:CFL1}
	 \lambda \alpha \le 1 \quad \mbox{with} \quad \alpha:= \max_{u} |f'(u)|,
\end{equation}
the function $H_{\lambda}(\cdot,\cdot,\cdot)$ is monotonically non-decreasing in all its three arguments. Note that the  consistency of $\hat f$ implies that 
$
H_{\lambda} (u,u,u) = u. 
$ 
Therefore, if 
$$
U_{\min} \le u_{j-1}^{n},~ u_{j}^{n},~ u_{j+1}^{n} \le U_{\max},
$$
then the monotonicity and consistency imply 
\begin{align*}
	u_j^{n+1} &= H_{\lambda} ( u_{j-1}^{n},u_{j}^{n},u_{j+1}^{n} ) \ge H_{\lambda} ( U_{\min}, U_{\min}, U_{\min} )= U_{\min},
	\\
	u_j^{n+1} &= H_{\lambda} ( u_{j-1}^{n},u_{j}^{n},u_{j+1}^{n} ) \le H_{\lambda} ( U_{\max}, U_{\max}, U_{\max} ) =  U_{\max}.
\end{align*}
Hence, the monotone scheme \eqref{eq:1stSCLmono} preserves the invariant domain $G = [U_{\min},U_{\max}]$ under the CFL condition \eqref{eq:CFL1}. 
Examples of monotone numerical fluxes include:
\begin{itemize} 
	\item the Lax--Friedrichs, a.k.a.~Rusanov flux 
\begin{equation}\label{eq:LF}
		\hat{f}^{\text{LF}}\left(u^-, u^+\right) = \frac{1}{2} \Big( f(u^-) + f(u^+) - \alpha (u^+ - u^-) \Big);
\end{equation}
	\item the Godunov flux 
	$$
	\hat{f}^G\left(u^-, u^+\right) = \begin{cases} 
		\min\limits_{u^- \leq u \leq u^+} f(u) \quad & \text{if } u^- \leq u^+; \\
		\max\limits_{u^+ \leq u \leq u^-} f(u) \quad & \text{if } u^- > u^+;
	\end{cases}
	$$
	\item the Engquist--Osher flux
$$
\hat{f}^{\text{EO}}\left(u^-, u^+\right) = \frac{1}{2} \left( f(u^-) + f(u^+) - \int_{u^-}^{u^+} |f'(u)| \, \mathrm{d}u \right).
$$
\end{itemize}
For instance,  the scheme \eqref{eq:1stSCLmono} with the Lax--Friedrichs flux \eqref{eq:LF}, can be reformulated as 
$$
H_\lambda(u_{j-1}^n, u_j^n, u_{j+1}^n) = (1 - \alpha \lambda) u_j^n + \frac{1}{2} \lambda \left( \alpha u_{j+1}^n - f(u_{j+1}^n) \right) + \frac{1}{2} \lambda \left( \alpha u_{j-1}^n + f(u_{j-1}^n) \right),
$$
which is  non-decreasing with respect to $u_{j-1}^n$, $u_j^n$, $u_{j+1}^n$, thus preserves the bounds under \eqref{eq:CFL1}. 
Although the monotone scheme is at most first order accurate (by the Godunov Theorem), it satisfies much stronger stability properties such as $L^1$ contraction and it converges to the unique entropy solution for scalar conservation laws in multiple dimensions \cite{crandall1980monotone}.

\begin{remark}
Although some implicit schemes can be shown to be monotone for scalar linear problems \cite{abgrall2003construction}, the extensions to nonlinear equations can be difficult due to the algebraic nonlinear systems involved. 
\end{remark}

\subsection{Basic assumptions for systems}

Many techniques of the first order IDP  methods have been well established since 1980s, e.g., \cite{hoff1979finite, hoff1985invariant,tadmor1986minimum, einfeldt1991godunov, perthame1996positivity, frid2001maps}.  
 For enforcing a convex invariant domain defined as \eqref{eq:ASS-G}, we need to make some basic assumptions about the flux function in \eqref{eq:hPDE}, which are used in almost all classical IDP methods.
We consider a nonlinear system \eqref{eq:hPDE} and   first state   basic assumptions for the 1D version:
\begin{equation}\label{eq:1stHsystem}
	\partial_t {\bf u} + \partial_x {\bf f}({\bf u}) = {\bf 0}.  
\end{equation}

Let ${\bf U}^{\rm RP}(x, t; {\bf u}_L, {\bf u}_R )$
be the exact solution of a Riemann problem to  \eqref{eq:1stHsystem} with the initial data 
\begin{equation}\label{eq:RPdata}
			{\bf u}_0 ( x ) = 
	\begin{cases}
		{\bf u}_L, \quad & \text{if } x \leq 0, \\
		{\bf u}_R, \quad & \text{if } x > 0. 
	\end{cases}
\end{equation}
The exact solution of a Riemann problem is self-similar, i.e.,  
${\bf U}^{\rm RP}(\alpha x, \alpha t; {\bf u}_L, {\bf u}_R )$ = ${\bf U}^{\rm RP}(x, t; {\bf u}_L, {\bf u}_R )$ for any
constant $\alpha >0$.  Therefore, we only need to consider ${\bf U}^{\rm RP}(x, 1; {\bf u}_L, {\bf u}_R )$.

\begin{assumption}
    \label{prop:RP}
	The exact solution of the Riemann problem  
 preserves the invariant domain:
if ${\bf u}_L,{\bf u}_R\in G$, then ${\bf U}^{\rm RP}\left(x, t; {\bf u}_L, {\bf u}_R \right) \in G$ for any $x \in \mathbb R$ and $t>0.$ 
And there
 exists a maximum wave speed
  $a_{\max}( {\bf u}_L, {\bf u}_R )>0$ such that
\begin{equation}\label{eq:finiteWaveSpeed}
	\begin{aligned}
		{\bf U}^{\rm RP}\left(x, t; {\bf u}_L, {\bf u}_R \right) &= {\bf u}_L \quad 
		  \quad \forall x/t \le - a_{\max},  \\
		{\bf U}^{\rm RP}\left(x, t; {\bf u}_L, {\bf u}_R \right) &= {\bf u}_R \quad 
		\quad \forall  x/t \ge a_{\max}. 
	\end{aligned}
\end{equation}
\end{assumption}

 Such an assumption can be verified for  most systems of hyperbolic conservation laws and well studied equations such as
 scalar conservation laws, shallow water equations \cite{ketcheson2020riemann}, and 
 compressible Euler equations   \cite{toro2013riemann}. \Cref{prop:RP} does not hold in certain cases, such as the multidimensional ideal MHD and relativistic MHD systems with a jump in the normal component of magnetic field \cite{janhunen2000positive} or when the divergence-free constraint of magnetic field is violated \cite{wu2018provably,wu2021provably}.

For any $\alpha \ge a_{\max}$, 
integrating \eqref{eq:1stHsystem} with the initial data \eqref{eq:RPdata} over $[-\alpha, \alpha] \times [0,1]$ in space-time domain with the Divergence Theorem yields  

\begin{center} 
\begin{tikzpicture}
    \fill[pattern=north east lines, pattern color=yellow] (-3,0) rectangle (3,3);
    \draw[dotted] (-4,0) -- (4,0) node[right] {$t=0$};
    \draw[dotted] (-4,3) -- (4,3) node[right] {$t=1$};

    \draw[dotted] (-3,0) -- (-3,3); 
    \draw[dotted] (3,0) -- (3,3); 
    \node[text=red] at (-1.5,.2) {{ $\bu_L$}};
 
    \node[text=red] at (1.5, .2) {$\bu_R$};

    
    \draw[thick,blue] (0,0) -- (-0.8,3);
    \draw[thick,blue] (0,0) -- (-0.1,3);
    \draw[thick,blue] (0,0) -- (0.5,3);
    \draw[thick,blue] (0,0) -- (0.8,3);
    \draw[thick,blue] (0,0) -- (1.3,3);

    \draw[thick,red] (-3,0) -- (3,0);

\node[draw,circle,fill=black,inner sep=1pt] at (-3,0) {};
\node[draw,circle,fill=black,inner sep=1pt] at (0,0) {};
\node[draw,circle,fill=black,inner sep=1pt] at (3,0) {}; 
    \node[below] at (-3,0) {$x=-\alpha$};
    \node[below] at (0,0) {$x=0$};
    \node[below] at (3,0) {$x=\alpha$};

    \fill[cyan, opacity=0.3] (-3,3.1) -- (3,3.1) -- (3,2.9) -- (-3,2.9) -- cycle;
    \node[text=cyan] at (1.5, 3.5) {${\bf U}^{\rm RP}\left(x, 1; {\bf u}_L, {\bf u}_R \right)$};

\end{tikzpicture}
\end{center}
\begin{footnotesize}
\begin{align*}
	\int_{-\alpha}^{ \alpha }  {\bf U}^{\rm RP}\left(x,1 ; {\bf u}_L, {\bf u}_R \right) \,\rd x &= 
	\int_{-\alpha}^{ \alpha }  	{\bf u}_0 ( x ) \,\rd x -  [{\bf f}( {\bf U}^{\rm RP}\left( \alpha, 1 ; {\bf u}_L, {\bf u}_R \right) ) + {\bf f}( {\bf U}^{\rm RP}\left( -\alpha, 1  ; {\bf u}_L, {\bf u}_R \right) ]
	\\ & = \alpha ( {\bf u}_L + {\bf u}_R  ) - {\bf f} ( {\bf u}_R ) + {\bf f} ( {\bf u}_L ),
\end{align*}
\end{footnotesize}
where we have used \eqref{eq:finiteWaveSpeed} in the second step. 
Dividing this identity by $2 \alpha$ implies 
$$
\frac{ {\bf u}_L + {\bf u}_R  }2 + \frac{ {\bf f} ( {\bf u}_L ) - {\bf f} ( {\bf u}_R )  }{2 \alpha } = \frac{1}{2 \alpha } \int_{-\alpha}^{ \alpha }  {\bf U}^{\rm RP}\left(x, 1; {\bf u}_L, {\bf u}_R \right) \,\rd x.
$$
Thus \Cref{prop:RP} implies:
\begin{equation}
   {\bf u}_L,{\bf u}_R\in G\Rightarrow \frac{ {\bf u}_L + {\bf u}_R  }2 + \frac{ {\bf f} ( {\bf u}_L ) - {\bf f} ( {\bf u}_R )  }{2 \alpha } \in G\quad \forall \alpha \ge a_{\max} ( {\bf u}_L, {\bf u}_R ).
   \label{assumption2}
\end{equation}

We state the second assumption without using any exact solutions:
\begin{assumption}\label{prop:LFS}
	There exists a  suitable function $\hat a({\bf u})>0$ such that 
\begin{equation}\label{eq:LFS}
		{\bf u}\in G \quad \Longrightarrow \quad  {\bf u} \pm \frac{ {\bf f} ( {\bf u} )}{\alpha} \in G \quad \forall \alpha \ge \hat a({\bf u}).
\end{equation}
\end{assumption}

 In \Cref{prop:LFS}, a suitable $\hat a$ should satisfy $\hat a({\bf u}) \le \eta 
\hat a_{\max}({\bf u})$ for some constant $\eta > 0$, where $\hat a_{\max}({\bf u})$ denotes the maximum wave speed at the state $\bu$. 
If this requirement is removed, then for any open set $G$, one could always construct such a function by selecting sufficiently large values; however, this would lead to arbitrarily small time steps in an IDP scheme and thus be practically useless.

\begin{remark}
    For many problems, it often suffices to take $\hat a({\bf u})$ as the maximum wave speed to satisfy
    \Cref{prop:LFS}. 
    For example, for the 1D compressible Euler equations with the ideal gas EOS \eqref{EOS}, the maximum wave speed is given by 
    the spectral radius of the Jacobian matrix $\bff'(\bu)$, which is simply $|u| + \sqrt{ \gamma\frac{ p}{\rho}}$. 
    For enforcing the invariant domain in \eqref{eq:G-Euler},
    by \cite[Lemma 6]{zhang2017positivity}, to satisfy \Cref{prop:LFS}, 
    one can take $\hat a({\bf u}) = |u| + \sqrt{\frac{\gamma-1}{2}\frac{p}{\rho}}<|u| + \sqrt{\gamma \frac{ p}{\rho}}$. 
    However, for other stability considerations such as entropy stability, the use of the maximum wave speed can be necessary. 
\end{remark}

 \Cref{prop:LFS} is not a universal property
and it may not hold for all convex invariant domains. For instance, it does not hold  for many scalar conservation laws 
with the bound-preserving property or for the compressible MHD systems.  When the entropy principle is considered for gas dynamics, e.g., the invariant domain \eqref{G-Euler-2}, the property  \Cref{prop:LFS} may not hold in general. 
In the relativistic hydrodynamic case, if the entropy principle is included in the invariant domain, then \Cref{prop:LFS} no longer holds, as observed in \cite{wu2021minimum}.

When the invariant domain involves highly nonlinear (or even implicit) constraints, e.g., \eqref{eq:Rephrased-RMHD-G}, it is often very challenging to verify the above assumptions. In \cite{wu2023geometric}, Wu and Shu proposed a  general approach, termed 
Geometric Quasi Linearization (GQL), which transforms the nonlinear constraints in \eqref{eq:ASS-G} into {\em equivalent linear} constraints: 
\begin{equation}\label{eq:1200}
    {G}^\star
    :=
    \left\{\,
    {\bf u} \in \mathbb{R}^N
    \, : \, ({\bf u}-{\bf u}^\star) \cdot \mathbf{n}^\star_i \succ 0 \; ~ 
    \forall {\bf u}^\star \in \mathcal{S}_i,
    \forall i \in \mathbb{I} \cup \hat{\mathbb{I}}
    \,\right\},
\end{equation}
where 
$\mathcal{S}_i := \partial  {G} \cap \partial  {G}_i$ with $\partial {G}_i := \left\{ {\bf u} \in \mathbb{R}^N : g_i({\bf u}) = 0\right\}$, $\mathbf{n}^\star_i := \nabla g_i({\bf u}^\star)$ is an inward normal vector of $\partial G$ at ${\bf u}^\star$, and the symbol $\succ$ represents $>$ for $i \in \mathbb{I}$ and $\ge $ for $i \in \hat{\mathbb I}$. 
Here, ${\bf u}^\star$ is independent of $\bf u$ and is referred to as a \emph{free auxiliary variable} in the GQL framework \cite{wu2023geometric}. These variables are introduced to lift the dimension for linearity.

Under the GQL framework \cite{wu2023geometric}, we introduce a more general and weaker assumption than 
 \Cref{prop:LFS}, which is extensible to more equations such as MHD systems. 

\begin{assumption}\label{prop:wLFS}
	There exists a suitable function $\hat a({\bf u})>0$ and ${\zeta}({\bf u^\star})$  such that 
\begin{equation}\label{eq:wLFS}
		{\bf u}\in G \quad \Longrightarrow \quad  
            \alpha({\bf u}-{\bf u}^\star)\cdot \mathbf{n}_i^\star 
    \pm  
    {\bf f}({\bf u})\cdot \mathbf{n}_i^\star
    \succ \pm \zeta({\bf u^\star})~~ \forall {\bf u}^\star \in \mathcal{S}_i, ~~\forall \alpha \ge \hat a({\bf u}).
\end{equation}
\end{assumption}

Based on the equivalence $G = G^\star$, one can show that \Cref{prop:wLFS} also implies \eqref{assumption2}, with  
$a_{\max}({\bf u}_L, {\bf u}_R) = \max\{ \hat a({\bf u}_L), \hat a({\bf u}_R) \}.$ 
Moreover, \Cref{prop:LFS} can be viewed as a special case of \Cref{prop:wLFS} by taking ${\zeta}({\bf u}^\star) = 0$.  
With or without the entropy principle included in the invariant domain, \Cref{prop:wLFS} holds for the relativistic hydrodynamic \cite{wu2021minimum}.  
For MHD systems, where \Cref{prop:RP} and \Cref{prop:LFS} are both inapplicable, a variant of \Cref{prop:wLFS} can still be established by carefully constructing ${\zeta}({\bf u}^\star)$ so that its related terms vanishes under the (discrete) divergence-free constraint \cite{wu2017admissible,wu2018positivity,wu2019provably,wu2021provably}.

\begin{remark}
For special systems and specific invariant domains, it is possible to construct IDP schemes without employing these assumptions, e.g., kinetic schemes for compressible Euler equations 
\cite{perthame1992second,perthame1994variant,estivalezes1996high,tao1999gas}.  
\end{remark}

In the rest of this section, for simplicity, we only focus on  how to use \Cref{prop:RP} or \Cref{prop:LFS} for proving that first order schemes are IDP for systems like  gas dynamics equations. 
For harder problems like compressible MHD and relativistic hydrodynamics equations,
\Cref{prop:wLFS} applies and will be reviewed in
  \Cref{sec:MHD} and \Cref{sec:RHD}.

\subsection{First order schemes for 1D hyperbolic systems}

The   monotonicity technique in Section \ref{sec:mono} is useful for maximum principles, but it does not apply to  
 system of hyperbolic conservation laws.
Next we demonstrate   three   techniques based on the above assumptions, for  provable IDP  schemes  in the form \eqref{scheme:1Dsystem} for solving 1D hyperbolic system \eqref{eq:1stHsystem}.
\begin{definition}[IDP numerical flux]\label{def:IDPflux}
Let $\lambda=\frac{\Delta t}{\Delta x}$.
	A numerical flux $\hat {\bf f} (\cdot,\cdot)$ is said to be IDP, if 
	the 
	corresponding 1D three-point
	first order scheme \eqref{scheme:1Dsystem} is IDP, 
	$$
	\bu^{n+1}_j :=   \bu^{n}_j - \lambda \left( \hat {\bf f}( \bu^{n}_j, \bu^{n}_{j+1} ) - \hat {\bf f}( \bu^{n}_{j-1}, \bu^{n}_{j} )  \right) \in G \quad \forall \,  \bu^{n}_{j-1}, \bu^{n}_{j}, \bu^{n}_{j+1} \in G, 
	$$
	under a suitable CFL condition $\lambda \alpha   \le c_0$, where $\alpha$ denote the estimated maximum wavespeed and $c_0$ is the IDP CFL number. 
\end{definition}

For compressible Euler equations with ideal gas EOS \eqref{eq:Euler}, the Lax--Friedrichs flux (a.k.a. Rusanov flux) can be shown IDP for $G$ in \eqref{eq:G-Euler} with 
  $c_0=1$  \cite{tang2000positivity} and  the Godunov  and HLLE fluxes are IDP with $c_0=\frac12$; see \cite{einfeldt1991godunov}. See  
    \cite{perthame1992second, estivalezes1996high, tao1999gas} for the CFL of kinetic schemes to be IDP.
Next, we demonstrate how the three assumptions can be used to show IDP in first order schemes by a few examples.

\subsubsection{Method 1 using Assumption 1} This technique is a traditional approach and has been widely used; e.g., \cite{einfeldt1991godunov, perthame1996positivity}. 
Based on \Cref{prop:RP}, we can investigate the IDP property of several  numerical fluxes defined as exact or approximate Riemann solvers, such as the Godunov scheme, Lax--Friedrichs scheme, and HLL type schemes.

\begin{example}[Godunov scheme]  
Let the   self similar solution to the Riemann problem be denoted by
 ${\bf U}^{\rm RP}(\xi; {\bf u}_L, {\bf u}_R )$ with   $\xi:={x}/{t}$.
Then the Godunov flux is  
	$$\hat {\bf f}( {\bf u}^-, {\bf u}^+ ) = {\bf f} ( {\bf U}^{\rm RP}(0; {\bf u}^-, {\bf u}^+ ) ).$$
Consider the scheme \eqref{scheme:1Dsystem}  using the Godunov flux  with $\bu^n_j$ denoting the cell average on the interval  $I_j=[x_{j-\frac12}, x_{j+\frac12}]$.  Let 
$a_{\max}=\max_j  a_{\max} ( \bu_j^n, \bu_{j+1}^n ) $ be the maximum wavespeed. 

\begin{tikzpicture}
    
\fill[pattern=north east lines, pattern color=yellow] (0,0) rectangle (3,3);
    \draw[dotted] (-4,0) -- (7,0) node[right] {$t_n$};
    \draw[dotted] (-4,3) -- (7,3) node[right] {$t^{n+1}$};

    \draw[dotted] (-3,0) -- (-3,3);
    \draw[dotted] (0,0) -- (0,3);
    \draw[dotted] (3,0) -- (3,3);
    \draw[dotted] (6,0) -- (6,3);
    \node[text=red] at (-1.5,.2) {{ $\bu_{j-1}^n$}};
 
    \node[text=red] at (1.5, .2) {$\bu_j^n$};
    \node[text=red] at (4.5,.2) {$\bu_{j+1}^n$};

    \draw[thick,blue] (-3,0) -- (-2.5,3);
    \draw[thick,blue] (-3,0) -- (-2.8,3);
    \draw[thick,blue] (-3,0) -- (-3.0,3);
    \draw[thick,blue] (-3,0) -- (-3.2,3);
    
    \draw[thick,blue] (0,0) -- (-0.8,3);
    \draw[thick,blue] (0,0) -- (-0.4,3);
    \draw[thick,blue] (0,0) -- (0.5,3);
    \draw[thick,blue] (0,0) -- (0.8,3);

    \draw[thick,blue] (3,0) -- (2.5,3);
    \draw[thick,blue] (3,0) -- (2.7,3);
    \draw[thick,blue] (3,0) -- (3.2,3);

    \draw[thick,blue] (6,0) -- (5.0,3);
    \draw[thick,blue] (6,0) -- (6.1,3);
    \draw[thick,blue] (6,0) -- (6.2,3);

    \draw[thick,red] (-3,0) -- (6,0);

\node[draw,circle,fill=black,inner sep=1pt] at (-3,0) {};
\node[draw,circle,fill=black,inner sep=1pt] at (0,0) {};
\node[draw,circle,fill=black,inner sep=1pt] at (3,0) {};
\node[draw,circle,fill=black,inner sep=1pt] at (6,0) {};
    \node[below] at (-3,0) {$x_{j-\frac32}$};
    \node[below] at (0,0) {$x_{j-\frac12}$};
    \node[below] at (3,0) {$x_{j+\frac12}$};
    \node[below] at (6,0) {$x_{j+\frac32}$};

    \fill[cyan, opacity=0.3] (0,3.1) -- (3,3.1) -- (3,2.9) -- (0,2.9) -- cycle;
    \node[text=cyan] at (1.5, 3.5) {$\bu_j^{n+1}$};

\end{tikzpicture}

By integrating \eqref{eq:1stHsystem} over a space-time rectangle $I_j\times [t^n, t^{n+1}]$ with the Divergence Theorem as shown in the figure above, we obtain 
\[\resizebox{0.99\textwidth}{!}{$ \frac{1}{\Delta x}\int_{I_j}\bu(x, t^{n+1}) \, \rd  x=\frac{1}{\Delta x}\int_{I_j}\bu(x, t^{n}) \, \rd x -\frac{\Delta t}{\Delta x}\int_{t^n}^{t^{n+1}}[\bff(\bu(x_{j+\frac12},t))-\bff(\bu(x_{j-\frac12},t))]\, \rd t $},\]
in which 
 $\bu(\bx, t^n)$ is piecewise constant $\bu_j^n$, 
 the interface value $\bu(x_{j+\frac12},t)$ is equal to the self similar exact  solution ${\bf U}^{\rm RP}(0; \bu^n_j, \bu^n_{j+1})$ 
 for any $t\in [t^n, t^{n+1}]$, if the local Riemann problems do not intersect, ensured by the CFL $\frac{\Delta t}{\Delta x} a_{\max}\leq\frac12$ as shown in the figure. 
Thus in the Godunov scheme 
\[ \bu_j^{n+1}=\bu_j^{n}-\lambda \hat [\bff (\bu^n_j, \bu^n_{j+1})-\bff  (\bu^n_{j-1}, \bu^n_{j})],\]
the numerical solution $\bu_j^{n+1}$ is exactly the average of the exact solution to two non-intersecting Riemann problems under the CFL $\frac{\Delta t}{\Delta x} a_{\max}\leq\frac12$. The integral operator $\frac{1}{\Delta}\int_{I_j}\cdot\,\rd x$ preserves a convex invariant domain, thus \Cref{prop:RP} implies $\bu_j^{n+1}\in G$.
\end{example}

\begin{example}[Global Lax--Friedrichs flux]
	For the Lax--Friedrichs flux \eqref{eq:LF} with a global estimate of the wavespeed $\alpha$ at time $t^n$, the scheme 
	\eqref{scheme:1Dsystem} can be rewritten as a convex combination:
\begin{equation}\label{eq:LF1D}
		\mathbf{u}_j^{n+1} = (1 - \lambda \alpha ) \mathbf{u}_j^n + \lambda \alpha  \left( \frac{ {\bf u}_{j-1}^n + {\bf u}_{j+1}^n  }2 + \frac{ {\bf f} ( {\bf u}_{j-1}^n ) - {\bf f} ( {\bf u}_{j+1}^n )  }{2 \alpha } 
	\right) 
\end{equation}
	under the CFL condition 
	$\lambda \alpha \le 1$. By \eqref{assumption2} and the convexity of $G$, we have $\mathbf{u}_j^{n+1} \in G$, if  $\alpha$ in the Lax--Friedrichs flux \eqref{eq:LF} satisfies 
	$\alpha \ge \max_{j} a_{\max} ( {\bf u}_{j-1}^n, {\bf u}_{j}^n). $
\end{example}

\begin{example}[Local Lax--Friedrichs flux]
\label{example:localLFscheme-Method2}
Consider the  scheme \eqref{scheme:1Dsystem} with a local Lax--Friedrichs flux defined by
\begin{equation}
    \label{1D-local-LF-flux}
    \hat{\bff}(\bu^n_{j-1}, \bu^n_{j})=\frac{1}{2}[\bff(\bu^n_{j-1})+\bff(\bu^n_{j})-\alpha_{j-\frac12}^n (\bu^n_{j}-\bu^n_{j-1})],\quad \alpha^n_{j-\frac12}=a_{\max} ( {\bf u}_{j}, {\bf u}_{j-1} ).
\end{equation} 
    Then the first order local Lax--Friedrichs scheme  is
    \[ \resizebox{0.99\textwidth}{!}{$ \bu^{n+1}_j=\bu^{n}_j-
\lambda\left[ \frac{\bff(\bu^n_{j})+\bff(\bu^n_{j+1})-\alpha_{j+\frac12}^n (\bu^n_{j+1}-\bu^n_{j})}{2}-\frac{\bff(\bu^n_{j-1})+\bff(\bu^n_{j})-\alpha_{j-\frac12}^n (\bu^n_{j}-\bu^n_{j-1})}{2}\right],$}\]
which is equivalent to
\begin{align}
 \notag  \bu^{n+1}_j=&\left(1- \lambda \alpha_{j+\frac12}^n-\lambda\alpha_{j-\frac12}^n \right)\bu^{n}_j+
\lambda \alpha_{j+\frac12}^n \left(\frac{\bu^n_{j+1}+\bu^n_{j}}{2}+\frac{\bff(\bu^n_{j})-\bff(\bu^n_{j+1})}{2\alpha_{j+\frac12}^n}\right)    \notag  \\
&
+\lambda\alpha_{j-\frac12}^n\left(\frac{ \bu^n_{j}+\bu^n_{j-1}}{2}+\frac{\bff(\bu^n_{j-1})-\bff(\bu^n_{j})}{2\alpha_{j-\frac12}^n}\right). 
\label{1D-local-LF-flux-method2}
\end{align}
For compressible Euler equations with ideal gas EOS \eqref{eq:Euler} and the invariant domain  $G_S$ including the minimum entropy principle  \eqref{G-Euler-2}, 
by \eqref{assumption2}, we have
\[ \bu^n_{j}, \bu^n_{j-1}\in G_S \quad \Longrightarrow \quad  \frac{\bu^n_{j}+\bu^n_{j-1}}{2}+\frac{\bff(\bu^n_{j-1})-\bff(\bu^n_{j})}{2\alpha_{j-\frac12}^n} \in G_S.\]
 Therefore, by the convexity of $G_S$, we get $\bu^{n+1}_j\in G_S$ because the scheme \eqref{1D-local-LF-flux-method2} is a convex combination under the CFL condition
 \begin{equation}
\label{localLF-CFL-1D-1} \lambda \max_j\alpha^n_{j+\frac12}  \le \frac12,  
 \end{equation}
 which is the same CFL derived as in \cite[Appendix]{perthame1996positivity} by a different yet essentially equivalent approach. 
\end{example}

We remark that   an estimate of a rigorous yet explicit upper bound $a_{\max}$ is needed for the maximum wave speed in the Riemann problem in Method 1.  A naive estimate based on the largest eigenvalues is not always adequate when fast shocks arise in the Riemann problem; see, e.g., \cite{guermond2016fast} for a discussion in the context of the Euler equations. {\color{black}On the other hand, the eigenvalue-based estimate can be overly restrictive in some cases, for example, when simulating the Sod shock tube problem.}

\subsubsection{Method 2 using Assumption 2} 
This approach is more flexible, which will be frequently used in this paper. The basic idea is to decompose a  scheme into a convex combination of several simpler functions of the form $\bu$ and $\bu\pm \bff(\bu)/\alpha$, which preserve the invariant domain $G$, then the convexity of $G$ implies that the target scheme is IDP.

\begin{example}[Local Lax--Friedrichs flux]
\label{example:localLFscheme-Method3}
For solving \eqref{eq:Euler},    
     the first order the local Lax-Friedrichs scheme  can also be rewritten as
\begin{equation}\label{eq:LF1Deuler}
\resizebox{0.99\textwidth}{!}{$	\mathbf{u}_j^{n+1} = \frac{2-\lambda \alpha^n_{j-\frac12}-\lambda \alpha^n_{j+\frac12}}{2} \mathbf{u}_j^n + 
	\frac{\lambda \alpha^n_{j-\frac12}}2 \left( {\bf u}_{j-1}^n + \frac{ {\bf f} ( {\bf u}_{j-1}^n )  }{ \alpha^n_{j-\frac12} }  \right) 
	+ \frac{\lambda \alpha^n_{j+\frac12}}2 \left( {\bf u}_{j+1}^n - \frac{ {\bf f} ( {\bf u}_{j+1}^n )  }{ \alpha^n_{j+\frac12} }  \right). $}
\end{equation}
For the invariant domain $G$ in \eqref{eq:G-Euler}, by taking 
$$\alpha^n_{j-\frac12}=\max_{i=j, j-1}|\bff'(\bu_i^n)|=\max_{i=j, j-1}|v_i| + \sqrt{ \frac{\gamma p_i}{\rho_i}}\quad \forall j,$$
 if ${\bf u}_{j}^n\in G$, then ${\bf u}_{j}^n \pm \frac{ {\bf f} ( {\bf u}_{j}^n )}{\alpha^n_{j\pm\frac12}} \in G$ for all $j$, see \cite[Remark 2.4]{zhang2010positivity}. Therefore, by the convexity of $G$, the scheme \eqref{eq:LF1Deuler} preserves the invariant domain $G$ under the CFL condition $ \lambda \max_j\alpha^n_{j+\frac12}  \le 1$, which allows a larger time step than  \eqref{localLF-CFL-1D-1} in Method 1. 
\end{example}

\Cref{example:localLFscheme-Method2} and \Cref{example:localLFscheme-Method3} are two different approaches on the same scheme. By comparing these two examples, we can see that each method has its own advantages. Although Method 2 allows a larger time step for provable  positivity-preserving property for density and pressure for the local Lax-Friedrichs scheme, it cannot be used for enforcing the minimum entropy principle since \Cref{prop:LFS} may not hold for $G_S$ in \eqref{G-Euler-2}.
On the other hand, for gas dynamics equations such as compressible Navier--Stokes equations with a generic EOS and $G$ in \eqref{G-Euler-3}, Method 2 is very flexible to use.

Note that \Cref{prop:LFS} is commonly used to construct IDP schemes with Lax--Friedrichs type fluxes. However, for IDP schemes employing other numerical fluxes, it may not directly apply and must be appropriately adapted.

\subsection{Basic assumptions in multiple dimensions}


For a multi-dimensional system  \eqref{eq:hPDE} and any given unit vector $\bn\in \mathbb R^d$, let ${\bf U}^{\rm RP}(x, t;\bn, {\bf u}_L, {\bf u}_R )$
be the exact solution of a Riemann problem to one dimensional equation $\bu_t+\partial_x[\bff(\bu)\cdot \bn]=0$ with the initial data 
\begin{equation}\label{eq:RPdata-2}
			{\bf u}_0 ( x ) = 
	\begin{cases}
		{\bf u}_L, \quad & \text{if } x \leq 0, \\
		{\bf u}_R, \quad & \text{if } x > 0. 
	\end{cases}
\end{equation}

\begin{assumption}[\Cref{prop:RP} in multiple dimensions] 
\label{prop:LF-2D}
The exact solution of the Riemann problem preserves the invariant domain, namely, ${\bf u}_L,{\bf u}_R\in G\Rightarrow {\bf U}^{\rm RP}(x,t; \bn, {\bf u}_L, {\bf u}_R ) \in G$ for any $x \in \mathbb R$ and $t>0$. 
There exists a maximum wave speed
  $a_{\max}(\bn, {\bf u}_L, {\bf u}_R )>0$ such that
\begin{equation*} 
	\begin{aligned}
		{\bf U}^{\rm RP}\left(x, t; \bn, {\bf u}_L, {\bf u}_R \right) &= {\bf u}_L \quad 
		  \quad \forall x/t \le - a_{\max},  \\
		{\bf U}^{\rm RP}\left(x, t; \bn, {\bf u}_L, {\bf u}_R \right) &= {\bf u}_R \quad 
		\quad \forall  x/t \ge a_{\max}. 
	\end{aligned}
\end{equation*}
  Similar to the derivation 
 of \eqref{assumption2}, this assumption yields
\[  {\bf u}_L,{\bf u}_R\in G \quad \Longrightarrow \quad  \frac{ {\bf u}_L + {\bf u}_R  }2 + \frac{ {\bf f} ( {\bf u}_L )\cdot\bn - {\bf f} ( {\bf u}_R )\cdot\bn  }{2 \alpha } \in G\quad \forall \alpha \ge a_{\max} (\bn,  {\bf u}_L, {\bf u}_R ).
\]
\end{assumption}
 
\begin{assumption}[\Cref{prop:LFS} in multiple dimensions] 
\label{prop:LFS-2D}
	There exists a suitable function $\hat a({\bf u},\bn)>0$ such that 
${\bf u}\in G \quad \Longrightarrow \quad  {\bf u} \pm \frac{ {\bf f} ( {\bf u} )\cdot\bn}{\alpha} \in G \quad \forall \alpha \ge \hat a({\bf u},\bn).$
\end{assumption}

 \Cref{prop:LFS-2D} does hold for many interesting systems and commonly considered convex invariant domains such as  the Euler and ten-moment Gaussian 
closure systems without considering minimum entropy principle.  
For instance,  the following result implies that this assumption holds for compressible Euler and also Navier--Stokes equations, with the invariant domain defined in \eqref{G-Euler-3}. 
\begin{lemma}[Lemma 6 in \cite{zhang2017positivity}]
\label{lf-fact-NS}
Consider any $\bu=(\rho,  \bmm, E)^\top$, and
\[\bff^a( \bu)=\begin{pmatrix}
\bmm\\\rho^{-1} \bmm\otimes\bmm+p\mathbf{I}\\
\rho^{-1}(E+p) \bmm
\end{pmatrix}, \bff^d( \bu)=\begin{pmatrix}
0\\ \boldsymbol{\tau}\\
\rho^{-1}\bmm\cdot{\boldsymbol{\tau}}-\mathbf{q}
\end{pmatrix},\]
where $p$, $\boldsymbol{\tau}$, and $\mathbf{q}$ are not necessarily dependent on $ \bu$.
Let $e=\rho^{-1} E-\frac12 \rho^{-2}|\bmm|^2$.
For any unit vector $\mathbf{n}$, let $v=\rho^{-1}\bmm \cdot\mathbf{n}$, $q=\mathbf{q}\cdot\mathbf{n}$
and $\tau=\mathbf{n}\cdot\boldsymbol{\tau}$.
Then we have the following for $G=\{ \bu: \rho>0, e=\rho^{-1} E-\frac12 \rho^{-2}|\bmm|^2 \geq 0\}$, 
\begin{itemize}
 \item [(a)] $\bu\pm\alpha^{-1}\bff^a( \bu)\cdot\mathbf{n}\in G$ if and only if $\alpha> |v|+\sqrt{\frac{p^2}{2\rho^2 e}}$,
\item [(b)] $ \bu\pm \beta^{-1}(\bff^a( \bu)-\bff^d( \bu))\cdot\mathbf{n}\in G$ 
if and only if 
$$\beta>|v|+\frac{1}{2\rho^2 e}\left(\sqrt{\rho^2q^2+2\rho^2 e | \tau-p\mathbf{n}|^2 }+\rho|q|\right).$$
\end{itemize}
\end{lemma}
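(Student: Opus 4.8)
The plan is to reduce membership in $G$ to two scalar conditions on the shifted state — positivity of its density component and non-negativity of its internal energy $\rho e$ — and to verify both by a single direct computation for each part. First I would fix a sign $\epsilon\in\{+1,-1\}$ and write $\bu_\epsilon := \bu + \epsilon\alpha^{-1}\bff^a(\bu)\cdot\mathbf{n}$. Using $\bmm\cdot\mathbf{n}=\rho v$ and $|\mathbf{n}|=1$, its three components are
\[
\rho_\epsilon = \rho(1+\epsilon v/\alpha), \quad \bmm_\epsilon = (1+\epsilon v/\alpha)\,\bmm + \epsilon\alpha^{-1}p\,\mathbf{n}, \quad E_\epsilon = (1+\epsilon v/\alpha)\,E + \epsilon\alpha^{-1}pv.
\]
Writing $s := 1+\epsilon v/\alpha$, requiring $\rho_\epsilon>0$ for both signs is exactly $\alpha>|v|$.

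The crux of (a) is to substitute these into $\rho_\epsilon e_\epsilon = E_\epsilon - |\bmm_\epsilon|^2/(2\rho_\epsilon)$ and check that the mixed term $\epsilon\alpha^{-1}pv$ cancels, collapsing the expression to the clean identity
\[
\rho_\epsilon e_\epsilon = s\,\rho e - \frac{p^2}{2\rho s\,\alpha^2}.
\]
Since $s>0$, non-negativity is equivalent to $s^2 \ge p^2/(2\rho^2 e\,\alpha^2)$; taking the worse sign (the smaller value $s=1-|v|/\alpha$) and extracting square roots yields $\alpha \ge |v| + \sqrt{p^2/(2\rho^2 e)}$, which together with $\alpha>|v|$ gives the equivalence claimed in (a).

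For (b) I would repeat the computation with $\bff := \bff^a-\bff^d$, so the momentum flux acquires $-\boldsymbol{\tau}\mathbf{n}$ and the energy flux acquires $q-\rho^{-1}\bmm\cdot(\boldsymbol{\tau}\mathbf{n})$, where $q=\mathbf{q}\cdot\mathbf{n}$. The density component is unchanged, so positivity remains $\beta>|v|$. Setting $\bc := p\mathbf{n}-\boldsymbol{\tau}\mathbf{n}$, the same cancellation of the $pv$ and $\rho^{-1}\bmm\cdot(\boldsymbol{\tau}\mathbf{n})$ terms now leaves
\[
\rho_\epsilon e_\epsilon = s\,\rho e + \epsilon\beta^{-1}q - \frac{|\bc|^2}{2\rho s\,\beta^2}.
\]
Substituting $w := s\beta = \beta+\epsilon v>0$ converts non-negativity into the scalar quadratic inequality $2\rho^2 e\,w^2 + 2\rho q\epsilon\,w - |\bc|^2 \ge 0$, whose product of roots is negative, so there is a unique positive root. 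The condition is therefore $w \ge (-\epsilon q + \sqrt{q^2+2e|\bc|^2})/(2\rho e)$, and translating back to $\beta$ and taking the worse of the two signs produces the threshold in (b).

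The hard part will be the algebraic bookkeeping in the two energy computations: verifying that the cross products cancel exactly so that $\rho_\epsilon e_\epsilon$ reduces to a single quadratic in the shift. The genuinely new ingredient in (b) is the heat-flux contribution $q$, which breaks the symmetry in $\epsilon$ and introduces the linear term in the quadratic; this is the origin of the extra $\rho|q|$ term in the bound, and some care is needed both in selecting the positive root and in combining the two sign constraints, where a triangle-inequality estimate puts the threshold into the stated closed form.
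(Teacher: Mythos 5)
Your computations are correct, and this is the same route as the cited proof of Zhang (2017): direct substitution, the observation that the shifted density is $\rho s$ with $s=1+\epsilon v/\alpha$, and the exact cancellation of the cross terms yielding $\rho_\epsilon e_\epsilon = s\,\rho e - \frac{p^2}{2\rho s\,\alpha^2}$ in (a) and $\rho_\epsilon e_\epsilon = s\,\rho e + \epsilon\beta^{-1}q - \frac{|\bc|^2}{2\rho s\,\beta^2}$ in (b), followed by the reduction to a quadratic in $w=s\beta$ with a unique positive root. All of these identities check out.

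The one step of your plan that cannot deliver what the statement literally claims is the final triangle-inequality step in (b). Combining your two sign constraints exactly gives the sharp condition
\[
\beta \;\ge\; \Bigl| v + \tfrac{q}{2\rho e} \Bigr| + \frac{\sqrt{q^2 + 2e|\bc|^2}}{2\rho e},
\]
and the estimate $\bigl|v+\tfrac{q}{2\rho e}\bigr| \le |v| + \tfrac{|q|}{2\rho e}$ goes the wrong way for necessity: it shows the threshold in the lemma dominates the sharp one, hence proves the ``if'' direction only. When $vq<0$ the stated bound is strictly larger than needed; e.g., take $\rho=1$, $e=\tfrac12$, $v=1$, $q=-1$, $\bc=\mathbf{0}$ (so $2\rho e = 2\rho^2 e=1$): the stated threshold is $\beta>3$, yet a direct check shows $\beta=1.1$ already places both states $\bu\pm\beta^{-1}(\bff^a-\bff^d)\cdot\mathbf{n}$ in $G$. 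So you should either record the sharp inequality above as the true equivalence --- the closed form in (b) coincides with it exactly when $v$ and $q$ share a sign, which is the worst case one designs a scheme for --- or weaken your claim in (b) to sufficiency, which is all that is used in the IDP analysis. A cosmetic analogue occurs in (a): since $G$ only requires $e\ge 0$, your derivation correctly yields the non-strict $\alpha \ge |v| + \sqrt{p^2/(2\rho^2 e)}$, and the strict inequality in the statement differs only at this boundary point (when $p=0$ the binding constraint is the density condition $\alpha>|v|$, where strictness is genuine).
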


\begin{remark}
For the compressible Navier--Stokes equations, we can write it as if it were a formal convection system $\partial_t \bu+ \nabla\cdot\bff(\bu)={\bf 0}$ where $\bff=\bff^a-\bff^b$ and $\bff^a, \bff^b$ are given in Lemma \ref{lf-fact-NS}. Then $\beta$ in Lemma \ref{lf-fact-NS} (b) gives one way to satisfy \Cref{prop:LFS} for such a formal system. However, here $\beta$ is not any approximation to  wave speeds, but instead merely a quantity designed to satisfy \Cref{prop:LFS}. After all, the concept of wavespeed is not well defined for a convection diffusion system like the Navier--Stokes equations. 
\end{remark}

\begin{assumption}[\Cref{prop:wLFS} in multiple dimensions] \label{prop:wLFS-2D}
	There exists a suitable function $\hat a({\bf u},\bn)>0$ and a vector function ${\bm \zeta}({\bf u^\star})$  such that, for any  $\alpha \ge \hat a({\bf u}, {\bf n})$,  
\begin{equation}\label{eq:wLFS-2D}
		{\bf u}\in G \quad \Longrightarrow \quad  
            \alpha({\bf u}-{\bf u}^\star)\cdot \mathbf{n}_i^\star 
    \pm 
    ({\bf f}({\bf u})\cdot{\bf n})\cdot \mathbf{n}_i^\star
    \succ \pm {\bm \zeta}({\bf u^\star}) \cdot {\bf n}~~ \forall {\bf u}^\star \in \mathcal{S}_i,
\end{equation}
where $\mathbf{n}^\star_i$ is an inward normal vector of $\partial G$ at ${\bf u}^\star$; see \eqref{eq:1200}. 

\end{assumption}
  
\Cref{prop:LFS-2D}  can be regarded as a special case of \Cref{prop:wLFS-2D}  
with 
${\bm \zeta}({\bf u^\star})\equiv{\bf 0}$. 
Hence, \Cref{prop:wLFS-2D} is generally weaker than \Cref{prop:LFS-2D}. 
The application  of using \Cref{prop:wLFS-2D}   will be reviewed in 
  \Cref{sec:IDPopt}, \Cref{sec:RHD} and \Cref{sec:MHD}.

\subsection{Finite volume scheme in multiple dimensions}
 For simplicity, 
consider the two dimensional    system  
\eqref{eq:hPDE} and a polygonal mesh as shown in the following figure.

\begin{center}
    \begin{tikzpicture}[scale=1.5]
    \coordinate (A) at (0,0);
    \coordinate (B) at (1.5,0.2);
    \coordinate (C) at (3,0.2);
    \coordinate (D) at (0.7,1);
    \coordinate (E) at (2.3,0.9);
    \coordinate (F) at (0,2);
    \coordinate (G) at (1.8,1.8);
    \coordinate (H) at (3,2);
    \coordinate (I) at (2.5,2.8);
    

    \draw[line width=1pt] (A) -- (B) -- (C) -- (E) -- (B);
    \draw[line width=1pt] (A) -- (D) -- (B);
    \draw[line width=1pt] (D) -- (E) -- (B);
    \draw[line width=1pt] (A) -- (D) -- (F);
    \draw[line width=1pt] (D) -- (G) -- (E);
    \draw[line width=1pt] (H) -- (C);
    \draw[line width=1pt] (D) -- (G);
    \draw[line width=1pt] (E) -- (G) -- (H);
    \draw[line width=1pt] (G) -- (I) -- (H);
   \draw[line width=1pt] (A) -- (F)-- (I); 

    \node at (1.5,0.6) {$T_1$}; 
    \node at (1.7,1.2) {$T$};
    \node at (2.4,1.3) {$T_2$}; 
    \node at (1.0,1.8) {$T_3$};  
\end{tikzpicture}
\end{center}

Let $T$ be a polygonal cell with edges $E_i$ ($i=1,\cdots,T_E$)  and $T_i$ be the adjacent cell which shares the edge 
$E_i$ with $T$.  
Let $|E_i|$ denote the length of the edge $E_i$.
For solving \eqref{eq:hPDE}, 
consider a first order finite volume scheme
on the cell $T$,
\[ \bu^{n+1}_T=\bu^{n}_T-\frac{\Delta t}{|T|}\sum_{i=1}^{T_E}|E_i|\widehat{\mathbf{f}\cdot\mathbf{n}}(\bu^{n}_T, \bu^{n}_{T_i}),\]
with the Lax-Friedrichs or Rusanov flux defined by
\[\widehat{\mathbf{f}\cdot\mathbf{n}}(\bu^{n}_T,  \bu^{n}_{T_i})
=\frac12\left[\mathbf{f}(\bu^n_T)\cdot\mathbf{n}_i+\mathbf{f}(\bu^n_{T_i})\cdot\mathbf{n}_i-
\alpha_{i}(\bu^n_{T_i}-\bu^n_{T})\right],\]
where $\bu^{n}_T$ is the approximation to the average of $\mathbf{u}$ on $T$ at time level $n$,
$\mathbf{n}_i$ is the unit vector normal to the edge $E_i$ pointing outward of $T$,
and $\alpha_{i}$ is a positive number dependent on $\bu^{n}_T$ and $\bu^{n}_{T_i}$.
With the assumption $\bu^{n}_T, \bu^{n}_{T_i}\in G$, we want to find a proper $\alpha_{i}$ and a CFL condition so that $\bu^{n+1}_T\in G$.
A simple fact for any polygon $T$ is
\begin{equation}
    \sum_{i=1}^{T_E} \mathbf{n}_i|E_i|={\bf 0}.
    \label{discrete-div}
\end{equation}

\begin{example}[Method 1]
    By \eqref{discrete-div}, we obtain $ \sum_{i=1}^{T_E} \mathbf{f}(\bu^n_{T})\cdot\mathbf{n}_i|E_i|=0$, thus the right hand side of the first order Lax-Friedrichs   scheme can be rewritten as 
 \begin{align*}
      &\bu^{n}_T-\frac{\Delta t}{|T|}\sum_{i=1}^{T_E}|E_i|\widehat{\mathbf{f}\cdot\mathbf{n}}(\bu^{n}_T, \bu^{n}_{T_i})+\frac{\Delta t}{|T|}\sum_{i=1}^{T_E} \mathbf{f}(\bu^n_{T})\cdot\mathbf{n}_i|E_i|\\
 =&    \bu^{n}_T+\frac{\Delta t}{|T|}\sum_{i=1}^{T_E}|E_i|\frac{\mathbf{f}(\bu^n_T)\cdot\mathbf{n}_i-\mathbf{f}(\bu^n_{T_i})\cdot\mathbf{n}_i+
\alpha_{i}(\bu^n_{T_i}-\bu^n_{T})}{2}\\
 =&   \left(1-\frac{\Delta t}{|T|}\sum_{i=1}^{T_E}|E_i|\alpha_i\right) \bu^{n}_T+ \frac{\Delta t}{|T|}\sum_{i=1}^{T_E}|E_i|\alpha_{i}\left(
\frac{\bu^n_{T_i}+\bu^n_{T}}{2}+\frac{\mathbf{f}(\bu^n_T)\cdot\mathbf{n}_i-\mathbf{f}(\bu^n_{T_i})\cdot\mathbf{n}_i}{2\alpha_{i}}\right),
 \end{align*}   
which  is a convex combination  under the CFL constraint 
\begin{equation}
    \label{example-FV-2D-method2-CFL}\Delta t\frac{|\partial T|}{|T|}\max_i\alpha_i\leq 1,
\end{equation}
with $|\partial T|=\sum_{i=1}^{T_E}|E_i|$. Now Method 1 can be applied whenever \Cref{prop:LF-2D} holds. 
\end{example}

\begin{example}[Method 2]
    With \eqref{discrete-div}, the first order Lax-Friedrichs or Rusanov finite volume scheme can be rewritten as 
\begin{equation}
\label{FV-firstorder-2D}
    \bu^{n+1}_T=\left(1-\frac12\frac{\Delta t}{|T|}\sum_{i=1}^{T_E}|E_i|\alpha_i\right)\bu^{n}_T+\frac12\frac{\Delta t}{|T|}\sum_{i=1}^{T_E}|E_i|\alpha_i\left[\bu^{n}_{T_i}-\alpha_i^{-1}\mathbf{f}(\bu^n_{T_i})\cdot\mathbf{n}_i\right].
\end{equation} 
For gas dynamics equations with a generic EOS and $G$ in \eqref{G-Euler-3}, by Lemma \ref{lf-fact-NS}, 
 we have 
$\bu^{n}_{T_i}-\alpha_i^{-1}\mathbf{f}(\bu^n_{T_i})\cdot\mathbf{n}_i\in G$
if we use any viscosity parameter
$$\alpha_i>\max\limits_{\bu^{n}_T, \bu^{n}_{T_i}}|\rho^{-1}\bmm\cdot \bn_i|+\sqrt{\frac{p^2}{2\rho^2 e}}.$$  
Notice that \eqref{FV-firstorder-2D}  is a convex combination of $\bu^{n}_T$ and $\bu^{n}_{T_i}-\alpha_i^{-1}\mathbf{f}(\bu^n_{T_i})\cdot\mathbf{n}_i$ thus $\bu^{n+1}_T\in G$, under the CFL constraint $\Delta t\frac{|\partial T|}{|T|}\max_i\alpha_i\leq 2$, which twice of the CFL as in  \eqref{example-FV-2D-method2-CFL}.
\end{example}

\subsection{Continuous finite element method}
\label{sec:FEM}
We consider
a first order accurate IDP  continuous finite element method for hyperbolic problems 
\cite{guermond2016invariant}.   
Such a scheme is also known as the group finite element method \cite{fletcher1983group,selmin1993node,selmin1996unified,barrenechea2017analysis}.
As an example, we consider  solving \eqref{eq:hPDE} on a triangular mesh in two dimensions, and 
it can be  extended to a mesh consisting of tetrahedra, parallelepipeds, and triangular prisms in three dimensions.  

\subsubsection{Definition of the scheme}
Let $\Omega$ be the two dimensional domain and $\mathcal T_h$ be a triangular mesh. Let $V^h$ be the continuous piecewise linear polynomial space on $\mathcal T_h$.
 Let $\varphi_i(\bx)\in V^h $ be the Lagrangian basis at $i$-th vertex $\bx_i$ ($i=1,\cdots, N$) of the triangular mesh, then $\sum_{i=1}^N\varphi_i\equiv 1$.   
 Define $M$ as the mass matrix and $M^L$ as the lumped mass matrix, i.e., $M=[M_{ij}]$ with $M_{ij}=\iint_\Omega \varphi_i(\bx) \varphi_j(\bx)\, \rd \bx$, and $M^L$ is a diagonal matrix with the diagonal entries $ m_{i}=\iint_\Omega \varphi_i(\bx)\, \rd \bx=\sum_j M_{ij}.$ Define 
 $\mathcal N_i=\{j: \varphi_i(\bx)\varphi_j(\bx) \mbox{ is not constant zero}\}$, i.e., $\mathcal N_i=\{i\}\cup \{j: \mbox{$\bx_j$ is connected $\bx_i$ by an edge}\}$, as shown in the \Cref{fig:FEM}.

\begin{figure}[htbp]
    \centering
    \begin{tikzpicture}[scale=1]

    \coordinate (A) at (0,0);
    \coordinate (B) at (1.5,0.2);
    \coordinate (C) at (3,0);
    \coordinate (D) at (0.7,1);
    \coordinate (E) at (2.3,0.9);
    \coordinate (F) at (0,2);
    \coordinate (G) at (1.8,1.8);
    \coordinate (H) at (3,2);
    \coordinate (I) at (2.5,2.8);

    \draw[line width=1pt] (A) -- (B) -- (C) -- (E) -- (B);
    \draw[line width=1pt] (A) -- (D) -- (B);
    \draw[line width=1pt] (D) -- (E) -- (B);
    \draw[line width=1pt] (A) -- (D) -- (F)-- (A);
    \draw[line width=1pt] (D) -- (G) -- (E);
    \draw[line width=1pt] (E) -- (H) -- (C);
    \draw[line width=1pt] (D) -- (G) -- (F)-- (I);
    \draw[line width=1pt] (E) -- (G) -- (H);
    \draw[line width=1pt] (G) -- (I) -- (H);

    \node [red] at (E) {\textbullet};
    \node [blue] at (B) {\textbullet};
    \node [blue] at (C) {\textbullet};
    \node [blue] at (D) {\textbullet};
    \node [blue] at (G) {\textbullet};
    \node [blue] at (H) {\textbullet}; 
     
    \node[below] at (B) {$\bx_{j_1}$};
    \node[below] at (C) {$\bx_{j_2}$};
    \node[ left] at (D) {$\bx_{j_3}$};
    \node[right] at (E) {$\bx_i$}; 
    \node[above] at (G) {$\bx_{j_4}~~$};
    \node[right] at (H) {$\bx_{j_5}$}; 

\end{tikzpicture} \qquad \qquad 
\begin{tikzpicture}[scale=0.5]
    \coordinate (A) at (0,0);
    \coordinate (B) at (5,0);
    \coordinate (C) at (2,4);
    \coordinate (D) at (6,2.5);

    \draw [line width=1.1pt, fill=gray!10] (A) -- (B) -- (C) -- cycle;
    
    \draw [line width=1.1pt, fill=gray!10] (B) -- (C) -- (D) -- cycle;
    
    \node[left] at (A) {};
    \node[below] at (B) {$\bx_j$};
    \node[above] at (C) {$\bx_i$};
    \node[right] at (D) {};

    \draw[fill=red!30] (A) -- (0:0.75cm) arc (0:63:.75cm);
    \node at (1.3cm,0.5cm) {$\theta^1_{ij}$};

    \begin{scope}[shift={(6cm,2.5cm)}]
        \draw[fill=red!30] (0,0) -- (160:0.75cm) arc (160:245:0.75cm);
        \node at (-1.3cm,-0.2cm) {$\theta^2_{ij}$};
    \end{scope}
    
    \draw [line width=1.1pt] (A) -- (B) -- (C) -- cycle;
\end{tikzpicture}
    \caption{Notation for continuous finite element method.
     {Left: $\mathcal N_i=\{i, j_1, j_2, j_3, j_4, j_5\}$}.}
    \label{fig:FEM}
\end{figure}
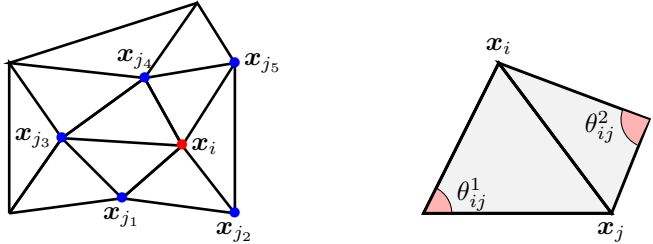

\begin{remark}
   Mass lumping is used not only in finite element methods for enforcing positivity \cite{lohner1987finite} and but also used in the Petrov--Galerkin formulation of residual distribution approach to recover a monotone residual distribution scheme, see \cite[Section 2]{abgrall2003construction}. For the Lagrange basis of piecewise  linear polynomials, such a row-sum lumped mass matrix is also equal to approximating integrals $\iint_\Omega \varphi_i(\bx) \varphi_j(\bx)\, \rd \bx$ by the simple quadrature using only the Lagrange basis points, e.g., using the quadrature of only three vertices on each triangle in a triangular mesh for approximating the integral $\iint_\Omega \varphi_i(\bx) \varphi_j(\bx)\, \rd \bx$. 
\end{remark}
Let $\bu_h^n$ denote the finite element solution at time step $n$ on a mesh of size $h$, and $\bu^n_j=\bu_h^n(\bx_j)$, then $$\bu_h^n(\bx)=\sum_i \bu^n_i \varphi_i(\bx),\quad \iint_\Omega \bu_h^n(\bx)\,\rd \bx=\sum_i \bu^n_i m_i.$$
By the group finite element approximation   $\bff(\bu^n_h)\approx \sum_{j} \bff(\bu^n_j) \varphi_j(\bx)$,
we have 
\[\iint_\Omega \nabla \cdot \bff(\bu^n_h) \varphi_i(\bx) \, \rd \bx\approx  \sum_{j\in \mathcal N_i}\bff(\bu^n_j)\cdot \bc_{ij},\quad  \bc_{ij}= \iint_\Omega \varphi_i\nabla \varphi_j\,\rd \bx,\]
then one version of IDP continuous finite element method with forward Euler time stepping can be given as
\begin{subequations}
        \label{FEM-firstorder}
\begin{equation}
    m_{i}\frac{\bu^{n+1}_i-\bu^{n}_i}{\Delta t}+\sum_{j\in \mathcal N_i} [\bff(\bu^n_j)\cdot \bc_{ij}-d_{ij}^n\bu^n_j]=0,
\end{equation}
where $d_{ij}^n$ is the artificial viscosity coefficients designed to ensure stability including the IDP properties. 
  Let $D^n=[d^n_{ij}]$ be a sparse matrix with entries $d_{ij}^n=0$ for $j\notin \mathcal N_i$. Then
 $D^n$ should be a symmetric matrix with zero row sum and non-negative off-diagonal entries, 
\begin{equation}
\label{FEM-diffusion-condition}
    d_{ij}^n\geq 0,  \quad d_{ij}^n= d_{ji}^n,\quad \forall i\neq j; \quad \sum_{j\in \mathcal N_i} d_{ij}^n=0.
\end{equation}
\end{subequations}
{\color{black}   It is possible to discuss more properties for a first order scheme like \eqref{FEM-firstorder}  by choosing  suitable $d_{ij}$ or adding suitable viscosity terms, e.g.,  \cite{guermond2011entropy,guermond2016invariant}.} Here we only review viscosity coefficients to achieve the IDP properties.

\subsubsection{Examples of artificial viscosity}

We first give two examples of \eqref{FEM-diffusion-condition}.

\begin{example}[Discrete Laplacian for Artificial Viscosity]
    \label{FEM-Sij}
  Note that the conditions  in \eqref{FEM-diffusion-condition} are met by the discrete Laplacian matrices of the linear finite element method on a simplicial mesh under some mild mesh constraints.
On a 2D triangular mesh, for the edge connecting two interior vertices $\bx_i, \bx_j$, there are two angles $\theta^1_{ij}$ and $\theta^2_{ij}$ as shown in \Cref{fig:FEM}. Let $S$ { with 
$S_{ij}=\iint_{\Omega}\nabla\varphi_i\cdot\nabla\varphi_j d\bx$}  be  the stiffness matrix in the continuous finite element method of Lagrange $P^1$ basis on a triangular mesh for solving Laplace equation $-\Delta u=0$ in two dimensions, then $S$ 
is a sparse symmetric matrix with zero row sums:
\[  S_{ij}=\begin{cases}
0, &\quad j\notin \mathcal N_i,\\
-\frac{\cot \theta^1_{ij}+\cot \theta^2_{ij}}{2}, & \quad j \in \mathcal N_i, j\neq i,\\
-\sum_{j\neq i}   S_{ij}&\quad j=i.
\end{cases}\]
Since the necessary and sufficient condition for $\cot \theta^1_{ij}+\cot \theta^2_{ij}\geq 0$ is $\theta^1_{ij}+\theta^2_{ij}\leq \pi$, for satisfying \eqref{FEM-diffusion-condition} up to a sign,
it suffices to have 
$\theta^1_{ij}+\theta^2_{ij}\leq \pi$, which can be achieved in a Delaunay triangulation in two dimensions. 
See \cite[Section 2]{xu1999monotone} for similar formulae of simplicial meshes in higher dimensions.   
 Then one choice of $D^n$ is to set $D^n=-\varepsilon S$, which is an approximation to $\varepsilon\Delta $ for some parameter $\varepsilon>0.$
 With such a choice of  $d^n_{ij}$, we can see that the scheme \eqref{FEM-firstorder} is a first order approximation to $\partial_t \bu+\nabla \cdot \bff(\bu)=0$ but a formally second order accurate approximation to the modified equation 
with extra artificial viscosity $\partial_t \bu+\nabla \cdot \bff(\bu)=\varepsilon\Delta \bu$. 
Such a modified equation approach is a standard tool for analyzing classical first order finite difference and finite volume schemes for conservation laws  \cite[Chapter 11]{leveque1992numerical}.

\end{example}

 {\color{black} The discrete Laplacian added here is essentially a multi-dimensional version of the Lax-Friedrichs type numerical dissipation, e.g., see the Lax-Friedrichs type scheme on unstructured grids in \cite[Section 2.3.1.2]{remi-10}.}

\begin{example}[Graph Laplacian for Artificial Viscosity]
\label{FEM-graphLaplacian}
    In the previous example, mesh constraints such as Delaunay triangulation are necessary in two dimensions, and such a mesh constraint will become more stringent in higher dimensions \cite{xu1999monotone}. To remove mesh constraints for satisfying \eqref{FEM-diffusion-condition}, a graph Laplacian  can   be considered \cite{guermond2014maximum}; see also \cite[Section 2.3.1.2]{remi-10} and \cite{selmin1993node, kuzmin-turek-2002flux, kuzmin2024property}.
    For a given triangular mesh with nodes $\bx_i$ and edges $E_{ij}$ connecting nodes $\bx_i$ and $\bx_j$, we regard it as a weighted graph by defining the weight for $E_{ij}$ as 
    \begin{equation}
     w_{ij}=\sum_{T\ni E_{ij}}\frac{a_T |T|}{2},
     \label{edge-weight}
    \end{equation}
      which is the weighted average of areas of two triangles sharing the edge $E_{ij}$ with $a_T$ denoting a viscosity constant for each cell $T$. Let $j\sim i$ denote that $\bx_j$ is connected to $\bx_i$ by an edge. Then the graph Laplacian matrix $L$ for such a weighted undirected graph can be given as 
    \[L_{ij}=\begin{cases}
        -w_{ij}, & j\sim i\\
       \sum_{j\in \mathcal N_i}w_{ij}, &j=i
    \end{cases}.\]
    That is, $L$ is a symmetric sparse matrix with zero row sums, positive diagonal entries, and non-positive off-diagonal entries.
    The advantage of using graph Laplacian is the easiness to satisfy \eqref{FEM-diffusion-condition} on any meshes, although graph Laplacian is a  less accurate approximation to Laplacian compared to $S_{ij}$.  
\end{example}

\begin{figure}[htbp]
    \centering
    \begin{tikzpicture}[scale=0.5]
    \coordinate (A) at (0,0);
    \coordinate (B) at (5,0);
    \coordinate (C) at (2,4);
    \coordinate (D) at (6,2.5);
    \draw [line width=1.5pt, fill=gray!10] (A) -- (B) -- (C) -- cycle;  
    \draw [line width=1.5pt, fill=gray!10] (B) -- (C) -- (D) -- cycle;   
    \node[below] at (B) {$\bx_j$};
    \node[above] at (C) {$\bx_i$};
    \node at (2.5,1.8) {$T_1$};
    \node at (4.7,2) {$T_2$};
    \draw [line width=1.5pt] (A) -- (B) -- (C) -- cycle;
\end{tikzpicture}
\qquad \qquad 
\begin{tikzpicture}[scale=0.5]
    \coordinate (A) at (0,0);
    \coordinate (B) at (5,0);
    \coordinate (C) at (2,4); 

    \coordinate (D) at (1,2); 
    \coordinate (E) at (-1,3); 
\draw [->] (D) -- (E);

    \draw [line width=1.5pt, fill=gray!10] (A) -- (B) -- (C) -- cycle;
     
    \node[left] at (A) {$\bx_k$};
    \node[below] at (B) {$\bx_j$};
    \node[above] at (C) {$\bx_i$}; 
    \node[above] at (E) {$\bn^T_{ik}$};
 \node[left] at (2.8,1.8) {$T$};   
    \draw [line width=1.5pt] (A) -- (B) -- (C) -- cycle;
\end{tikzpicture}
    \caption{An illustration of notations for computing $\bc_{ij}^T$ and $\bc_{ij}$. }
    \label{Fig:FEM-triangle-cij}
\end{figure}
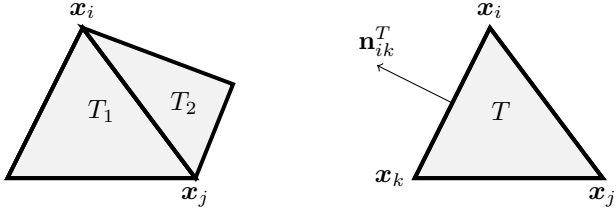

The viscosity here renders the scheme only first order accurate.
See \cite{guermond2017invariant} for better viscosity constructed by the FCT method \cite{kuzmin-turek-2002flux}. {\color{black}The residual distribution (RD)  methods~\cite{remi-10,remi-7,abgrall2003construction} can also provide improved viscosity constructions, see the next section for the connection between RD and the group finite element method.}

\subsubsection{Explicit expressions of the scheme}

Next, we give more explicit expressions of the scheme.
Let $\bc_{ij}^T=\iint_T \varphi_i\nabla \varphi_j\, \rd \bx$, where $T$ is one triangle containing the edge $E_{ij}$ connecting two nodes $\bx_i$ and $\bx_j$. Then 
     \[\bc_{ij}=\iint_\Omega \varphi_i\nabla \varphi_j\, \rd \bx=\sum_{T\ni E_{ij}} \iint_T \varphi_i\nabla \varphi_j\, \rd \bx=\sum_{T\ni E_{ij}}\bc_{ij}^T=\bc_{ij}^{T_1}+\bc_{ij}^{T_2}, \quad j\neq i, \]
     where $T_1$ and $T_2$ are two triangles sharing the edge $E_{ij}$ as shown in Figure \ref{Fig:FEM-triangle-cij} (Left).
    Let $T$ be a triangle with vertices $\bx_i, \bx_j, \bx_k$, and let $\bn^T_{ik}$ be unit normal vector to the edge $E_{ik}$ outward to the triangle $T$,
    as shown in Figure \ref{Fig:FEM-triangle-cij} (Right).   Then straightforward calculation (see  \cite{selmin1996unified,kuzmin2012flux,kuzmin2024property}) gives
\begin{subequations}
        \label{cij_properties}        
\begin{equation}
    \bc^T_{ij}=-\frac16 |E_{ik}|\bn^T_{ik}, \quad \bc_{ij}=\begin{cases}
             \sum_{T\ni \bx_i}\bc_{ii}^T={\mathbf 0}, & i=j\\
             \sum_{T\ni E_{ij}}\bc_{ij}^T=-\bc_{ji}, & i\neq j
         \end{cases}. 
\end{equation}  
\begin{equation}
\label{cij_property-totalsum}
 \sum_{j\in\mathcal N_i, j\neq i}\bc_{ij}=\sum_{j\in\mathcal N_i}\bc_{ij}=\mathbf{0},\quad \sum_{j\in\mathcal N_i}d^n_{ij}={\mathbf 0}.   
\end{equation}
\end{subequations}
Using  \eqref{cij_property-totalsum}, the finite element scheme \eqref{FEM-firstorder} can equivalently rewritten in a flux form as 
\begin{equation}
    \label{FEM-firstorder-LF3}
    m_{i}\frac{\bu^{n+1}_i-\bu^{n}_i}{\Delta t}+\sum_{j\in \mathcal N_i} [(\bff(\bu^n_j)+\bff(\bu^n_i))\cdot \bc_{ij}-d_{ij}^n(\bu^n_j-\bu^n_i)]={\mathbf 0},
    \end{equation}  
  and also rewritten as  
\begin{equation}
    \label{FEM-firstorder-LF}
    m_{i}\frac{\bu^{n+1}_i-\bu^{n}_i}{\Delta t}+\sum_{j\in \mathcal N_i} [(\bff(\bu^n_j)-\bff(\bu^n_i))\cdot \bc_{ij}-d_{ij}^n(\bu^n_j-\bu^n_i)]=0.
    \end{equation}   
Below are a few special cases of   the finite element scheme \eqref{FEM-firstorder}. 

 \begin{example}[Linear Advection]
    Consider a linear scalar PDE, i.e., $\bu=u$ is a scalar and $\bff(u)=u\bv$ with a constant vector $\bv$, then \eqref{FEM-firstorder-LF} reduces to 
     \[ m_{i}\frac{u^{n+1}_i-u^{n}_i}{\Delta t}=\sum_{j\in \mathcal N_i}  e_{ij}(u^n_j-u^n_i),\quad e_{ij}=:d_{ij}^n-\bv \cdot \bc_{ij},\]
which is a monotone scheme \cite{kuzmin-turek-2002flux, remi-10} if $e_{ij}$ is non-negative for $j\neq i$, under the CFL condition $\frac{\Delta t}{m_i} \sum\limits_{j\in \mathcal N_i, j\neq i}e_{ij} \leq 1$. The non-negativity of $e_{ij}$ $(j\neq i)$ can be   achieved by taking $$d^n_{ij}=\max\{0, \bv\cdot \bc_{ij}, \bv\cdot \bc_{ji}\},\quad j\neq i,$$
which gives {\color{black}an upwind feature} \cite{kuzmin2001positive,csik2002conservative, kuzmin2004high}. 
Such a scheme is a popular choice in algebraic flux correction (AFC) method for linear advection, which  
is naturally connected with a convection diffusion problem \cite{barrenechea2017edge}. The condition $e_{ij}\geq 0$, $j\neq i$ is also used in local extremum diminishing schemes \cite{jameson1995positive} and residual distribution schemes \cite{remi-10}. {\color{black} A truly multidimensional upwind method is the N scheme  \cite{zbMATH001505028,deconinck1993compact,deconinck1993multidimensional}.} 

 \end{example}
 
 \begin{example}[1D Problem] For the finite element scheme \eqref{FEM-firstorder} with a graph Laplacian viscosity as in \Cref{FEM-graphLaplacian}, we can consider a uniform mesh of intervals for a 1D problem with the viscosity coefficients $a_{i+\frac12}=\frac{1}{\Delta x}\alpha_{i+\frac12}$ for the interval $I_{i+\frac12}=[x_{i}, x_{i+1}]$, then the graph Laplacian is a tridiagonal matrix
 \[ L_{ij}=\begin{cases}
     -\frac12  \alpha_{i+\frac12}, & j=i+1\\
    -\frac12   \alpha_{i-\frac12}, & j=i-1\\
      \frac12   [\alpha_{i-\frac12}+\alpha_{i+\frac12}] , &j=i
    \end{cases}, \]
 and
 \eqref{FEM-firstorder-LF3} becomes   a first order finite difference or finite volume scheme with a local Lax--Friedrichs flux:
\[ \resizebox{0.99\textwidth}{!}{$ \bu^{n+1}_i=\bu^{n}_i-
\frac{\Delta t}{\Delta x}\left[ \frac{\bff(\bu^n_{i})+\bff(\bu^n_{i+1})-\alpha_{i+\frac12}^n (\bu^n_{i+1}-\bu^n_{i})}{2}-\frac{\bff(\bu^n_{i-1})+\bff(\bu^n_{i})-\alpha_{i-\frac12}^n (\bu^n_{i}-\bu^n_{i-1})}{2}\right].$}\]
 \end{example}
 
Similarly, the finite element scheme \eqref{FEM-firstorder}  on a uniform mesh of intervals for a 1D problem with viscosity defined in \Cref{FEM-Sij} reduces to the first order scheme \eqref{eq:LF1D}.

\subsubsection{Basic properties of the scheme}

We discuss some basic properties of \eqref{FEM-firstorder}.

\noindent\noindent \paragraph{\bf \bf Global conservation} Let $\tilde \bff=\sum_{j} \bff(\bu^n_j) \varphi_j(\bx)\in V^h$, then 
by summing $i$ in \eqref{FEM-firstorder}, we obtain conservation in the following sense:
\begin{align*}
 \frac{\iint_\Omega \bu_h^{n+1}\,\rd \bx-\iint_\Omega \bu_h^n\,\rd \bx}{\Delta t}  =- \sum_i\iint_{T_i} \nabla \cdot \tilde {\bf f} \,\rd \bx 
 = - \sum_i\oint_{\partial T_i}  \tilde {\bf f}\cdot \bn \,\rd s= - \oint_{\partial \Omega}  \tilde {\bf f}\cdot \bn \,\rd s, 
 \end{align*}
where we have used the fact that $\tilde {\bf f}\cdot \bn$ is continuous across each edge of triangles.

\noindent \paragraph{\bf Local conservation} It is not very obvious in what sense \eqref{FEM-firstorder} is locally conservative. 
As a matter of fact,  it is proven in \cite{selmin1993node,selmin1996unified} that the group finite element method \eqref{FEM-firstorder} can be written as a finite volume scheme on the median dual mesh shown in Figure \ref{Fig:RD}, see \cite[Section 6]{selmin1996unified}, thus the general version of Lax-Wendroff Theorem in \cite{shi2018local-LW}
 can be applied to show the convergence to weak solutions. In the next subsection,  we will also show that \eqref{FEM-firstorder} is exactly the first order Lax-Friedrichs scheme defined on unstructured grids via a definition of residual distribution scheme \cite[Section 2.3.1.2]{remi-10}, thus a Lax--Wendroff Theorem for residual distribution schemes can also be used \cite{Remi-Roe, remi-11, remi-3}.   {  Another proof of Lax--Wendroff Theorem for the continuous finite element method was given in \cite{kuzmin2025-consistency}.}

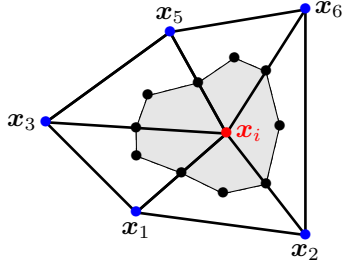
\begin{figure}[htbp] 
\centering   

   \begin{tikzpicture}[scale=1.5]

 
    \coordinate (B) at (1.5,0.2);
    \coordinate (C) at (3,0);
    \coordinate (D) at (0.7,1);
    \coordinate (E) at (2.3,0.9);
    \coordinate (G) at (1.8,1.8);
    \coordinate (H) at (3,2); 

    \coordinate (E1) at ($ (E)!0.5!(B) $);
    \coordinate (E2) at ($ (E)!0.5!(C) $);
    \coordinate (E3) at ($ (E)!0.5!(D) $);
    \coordinate (E4) at ($ (E)!0.5!(G) $);
    \coordinate (E5) at ($ (E)!0.5!(H) $); 
    \coordinate (M1) at ($ 0.334*(B)+0.334*(C) +0.334*(E) $);
    \draw[dashed] (M1) -- (E1);
    \draw[dashed] (M1) -- (E2);
    \coordinate (M2) at ($ 0.334*(B)+0.334*(D) +0.334*(E) $);
    \draw[dashed] (M2) -- (E3);
    \draw[dashed] (M2) -- (E1);

      \coordinate (M3) at ($ 0.334*(G)+0.334*(D) +0.334*(E) $);
    \draw[dashed] (M3) -- (E3);
    \draw[dashed] (M3) -- (E4);

         \coordinate (M4) at ($ 0.334*(G)+0.334*(H) +0.334*(E) $);
    \draw[dashed] (M4) -- (E5);
    \draw[dashed] (M4) -- (E4);

         \coordinate (M5) at ($ 0.334*(C)+0.334*(H) +0.334*(E) $);
    \draw[dashed] (M5) -- (E5);
    \draw[dashed] (M5) -- (E2);

 \fill [fill=gray!20, draw=black] (E2) -- (M1) -- (E1)-- (M2)-- (E3)-- (M3)-- (E4)-- (M4)-- (E5)-- (M5)-- (E2);
    
    \draw[line width=1pt] (B) -- (C) -- (E) -- (B);
    \draw[line width=1pt] (D) -- (B);
    \draw[line width=1pt] (D) -- (E) -- (B);
    \draw[line width=1pt] (D) -- (G) -- (E);
    \draw[line width=1pt] (E) -- (H) -- (C);
    \draw[line width=1pt] (D) -- (G);
    \draw[line width=1pt] (E) -- (G) -- (H); 

    \node [red] at (E) {\textbullet};
    \node [blue] at (B) {\textbullet};
    \node [blue] at (C) {\textbullet};
    \node [blue] at (D) {\textbullet};
    \node [blue] at (G) {\textbullet};
    \node [blue] at (H) {\textbullet}; 

    \node [black] at (E1) {\textbullet};
    \node [black] at (E2) {\textbullet};
    \node [black] at (E3) {\textbullet};
    \node [black] at (E4) {\textbullet};
    \node [black] at (E5) {\textbullet};
    \node [black] at (M1) {\textbullet};
    \node [black] at (M2) {\textbullet};
    \node [black] at (M3) {\textbullet};
    \node [black] at (M4) {\textbullet};
    \node [black] at (M5) {\textbullet};
    
    \node[below] at (B) {$\bx_1$};
    \node[below] at (C) {$\bx_2$};
    \node[ left] at (D) {$\bx_3$};
    \node[right] at (E) {\textcolor{red}{$\bx_i$}}; 
    \node[above] at (G) {$\bx_5$};
    \node[right] at (H) {$\bx_6$}; 

\end{tikzpicture}
\caption{The shaded polygon is the median dual cell $C_i$ constructed by connecting triangle centroids to edges containing $\bx_i$.  }
\label{Fig:RD}
\end{figure}

\noindent \paragraph{\bf Invariant domain} Since we may regard the finite element scheme \eqref{FEM-firstorder} as a Lax-Friedrichs type scheme on unstructured grids \cite[Section 2.3.1.2]{remi-10},  it is not a surprise that discussions in \Cref{example:localLFscheme-Method2} and \Cref{example:localLFscheme-Method3} can be applied here.

\begin{example}[Method 1]
\label{example:FEM-method1}
Since $\bc_{ii}={\mathbf 0}$, the scheme \eqref{FEM-firstorder-LF} can be rewritten as
\[      m_{i}\frac{\bu^{n+1}_i-\bu^{n}_i}{\Delta t}+\sum_{\substack{j\in \mathcal N_i \\ j\neq i}} [(\bff(\bu^n_j)-\bff(\bu^n_i))\cdot \bc_{ij}-d_{ij}^n(\bu^n_j-\bu^n_i)]=0.\]
Since $\sum_{j\in \mathcal N_i} d^n_{ij}=0\Rightarrow d_{ii}^n=-\sum_{\substack{j\in \mathcal N_i \\ j\neq i}} d_{ij}^n$, we have
\[\sum_{j\in \mathcal N_i} d_{ij}^n(\bu^n_j+\bu^n_i)=\sum_{\substack{j\in \mathcal N_i \\ j\neq i}} d_{ij}^n(\bu^n_j+\bu^n_i)+\sum_{j=i}  d_{ij}^n 2\bu^n_i=\sum_{\substack{j\in \mathcal N_i \\ j\neq i}} d_{ij}^n(\bu^n_j+\bu^n_i)- 2\bu^n_i \sum_{\substack{j\in \mathcal N_i \\ j\neq i}}  d_{ij}^n, \]
thus
the scheme can now be written as a convex combination,
\begin{align*}
\bu^{n+1}_i& =\bu^{n}_i  \left (1-\sum_{\substack{j\in \mathcal N_i \\ j\neq i}}\frac{2\Delta t d_{ij}^n}{m_i}\right)+\sum_{\substack{j\in \mathcal N_i \\ j\neq i}} \frac{2\Delta t d_{ij}^n}{m_i}\overline \bu_{ij}^{n+1},\\
\overline \bu_{ij}^{n+1}& = \frac12(\bu^n_j+\bu^n_i)-[\bff(\bu^n_j)-\bff(\bu^n_i)]\cdot \frac{\bc_{ij}}{2d^n_{ij}}= \frac{\bu^n_j+\bu^n_i}{2}-\frac{[\bff(\bu^n_j)-\bff(\bu^n_i)]\cdot \bn_{ij}}{2\alpha_{ij}},
\end{align*} 
where $\bn_{ij}=\frac{\bc_{ij}}{|\bc_{ij}|}$ and $\alpha_{ij}=d^n_{ij}/|\bc_{ij}|$.

For \Cref{prop:LF-2D} to hold for compressible Euler equation \eqref{eq:Euler} and invariant domain $G_S$ in \eqref{G-Euler-2}, we 
can take $d^n_{ij}/|\bc_{ij}|=\alpha_{ij}$ with a good estimate of the maximum wave speed $\alpha_{ij}$.
One convenient estimate is 
$$\alpha_{ij}=\max\left\{\left|\bff'(\bu^n_i)\cdot \bn_{ij}\right|,\left|\bff'(\bu^n_j)\cdot \bn_{ij}\right| \right\},\quad j\neq i,$$
where $\bff'(\bu)\cdot \bn$ denotes the Jacobian matrix of the flux along a given unit direction $\bn$ and $|\bff'(\bu)\cdot \bn|$ denotes its spectral radius. However, such an estimate may not ensure the IDP property in certain cases, see \cite{guermond2016fast} for a way to rigorously estimate the maximum wave speed for compressible Euler equations. And the scheme is IDP since it is a convex combination of states in $G_S$ under the CFL
\[ \frac{\Delta t}{m_i}(-d^n_{ii})=\frac{\Delta t}{m_i}\sum_{\substack{j\in \mathcal N_i \\ j\neq i}} d_{ij}^n\leq \frac12. \]
\end{example}

\begin{example}[Method 2]
 By \eqref{cij_property-totalsum} and the fact $\bc_{ii}={\bf 0}$, the scheme \eqref{FEM-firstorder} can also be rewritten as
\[ m_{i}\frac{\bu^{n+1}_i-\bu^{n}_i}{\Delta t}+\sum_{\substack{j\in \mathcal N_i \\ j\neq i}} [\bff(\bu^n_j) \cdot \bc_{ij}-d_{ij}^n(\bu^n_j-\bu^n_i)]=0,\]
which is equivalent to 
\[ \bu^{n+1}_i =\bu^{n}_i  \left (1-\sum_{\substack{j\in \mathcal N_i \\ j\neq i}}\frac{\Delta t d_{ij}^n}{m_i}\right)+\sum_{\substack{j\in \mathcal N_i \\ j\neq i}} \frac{\Delta t d_{ij}^n}{m_i}[\bu^n_j-\alpha_{ij}^{-1}\bff(\bu^n_j)\cdot \bn_{ij}],\]
where $\bn_{ij}=\frac{\bc_{ij}}{|\bc_{ij}|}$ and $\alpha_{ij}=d^n_{ij}/|\bc_{ij}|$.

Therefore, to have $\bu^{n}_i \in G\Rightarrow \bu^{n+1}_i \in G$ for compressible Euler equations with invariant domain \eqref{G-Euler-3}, 
by Lemma \ref{lf-fact-NS},
it suffices to take
\[\frac{\Delta t}{m_i} (-d_{ii}^n) = \frac{\Delta t}{m_i} \sum_{\substack{j\in \mathcal N_i \\ j\neq i}}  d_{ij}^n\leq 1, \quad \frac{d^n_{ij}}{|\bc_{ij}|}> \max\limits_{\bu^{n}_i, \bu^{n}_j}\left(|\rho^{-1}\bmm\cdot \bn_{ij}|+\sqrt{\frac{p^2}{2\rho^2 e}}\right). \] 
\end{example}

\subsection{A special residual distribution scheme: the Lax--Friedrichs scheme on unstructured grids}

The concept of residual distribution schemes traces back to early work in 1980s \cite{ni1982multiple},
see also  \cite{struijs1991fluctuation, deconinck1993compact, deconinck1993multidimensional}. 
Many schemes such as the streamline diffusion method,
the streamline upwind Petrov–Galerkin 
 finite element methods and the cell vertex finite volume methods, as well as discontinuous Galerkin methods, can be rewritten as residual distribution schemes  \cite{remi-11}. 
See  \cite{remi-7, remi-9} for higher order accurate residual distribution schemes,    \cite{Remi-Roe, remi-10} for non-oscillatory residual distribution schemes 
and \cite{remi-1,remi-2} for entropy satisfying residual distribution schemes.
Extensions to systems can be found in \cite{remi-4, remi-5, remi-6, ricchiuto2005residual}. 
 Convergence including Lax-Wendroff Theorem for residual distribution schemes  was discussed  \cite{Remi-Roe, remi-11, remi-3}.
 See \cite{abgrall2017construction} for higher order polynomial basis. 
In most references of residual distribution schemes, monotonicity for linear problems has been well studied, with which IDP can also be achieved using the methods reviewed in this section. 
 As an example of illustrating the main ideas,  we demonstrate how to construct $P^1$ continuous finite element method \eqref{FEM-firstorder} as a residual distribution scheme on a triangular mesh.

We use the same notation from the previous subsection.   
 Let $\bu^n_i$ be the numerical solution value at each node $\bx_i$ at time step $n$.  The dual cell in \Cref{Fig:RD} is constructed by connecting centroids of triangles to edges centers, which is also called median dual cell \cite{selmin1993node,selmin1996unified}. Let $|C_i|$ be the area of the dual cell volume of $C_i$ around $\bx_i$, then its area coincides with mass lumping in FEM:
$$|C_i|=\frac13 \sum_{T\ni \bx_i} |T|=\iint_\Omega \varphi_i(\bx)\,\rd \bx=m_i.$$
A residual distribution scheme is of the form 
\begin{equation}
  |C_i|\frac{\bu^{n+1}_i-\bu^n_i}{\Delta t} { =\sum_{T\ni \bx_i} \phi_i^T,}
  \label{scheme-RD}
\end{equation}
where $T\ni \bx_i$ refers to summation over all triangles containing the given vertex $\bx_i$, and $\phi_i^T$ is the  residual at $\bx_i$ for the triangle $T$ satisfying 
\begin{equation}
    \label{RD-localconservation}
    \sum_{\bx_i\in T} \phi_i^T=-\oint_{\partial T} \widehat{\bff \cdot \mathbf n} \,\rd s\approx   -\oint_{\partial T} \bff(\bu)\cdot \mathbf n \,\rd s,
\end{equation}
where $\widehat{\bff \cdot \mathbf n}$ is some numerical flux.
  In \cite{remi-11}, it is shown that any scheme satisfying
the local conservation relation \eqref{RD-localconservation} can be 
rewritten  in flux form (algebraically equivalent), and the flux can be explicitly
computed. 

  \subsubsection{The Lax-Friedrichs scheme on unstructured grids}

There are many methods to construct the  residual for the same dual cell,  e.g., see \cite{abgrall2001toward} for a finite volume approach.
Here we give one special construction of $\phi_i^T$ to recover exactly the same scheme  as \eqref{FEM-firstorder}.
By the Lax-Friedrichs scheme on unstructured grids in \cite[Section 2.3.1.2]{remi-10}, with the edge weight \eqref{edge-weight},
  We define the following   residual 
    \begin{align*}
  \phi^T_i &= -\frac13\left[ \frac{\bff(\bu^n_i)+\bff(\bu^n_k)}{2}\cdot \bn_{ik}^T |E_{ik}|-\frac{a_T |T|}{2}(\bu^n_k-\bu^n_i) \right]\\
 & -\frac13\left[ \frac{\bff(\bu^n_i)+\bff(\bu^n_j)}{2}\cdot \bn_{ij}^T |E_{ij}|-\frac{a_T |T|}{2}(\bu^n_j-\bu^n_i) \right]- \frac13 \frac{\bff(\bu^n_j)+\bff(\bu^n_k)}{2}\cdot \bn_{jk}^T |E_{jk}|. 
\end{align*} 
{ Here, $(i,j,k)$ are the indices of the vertices of the triangle $T$ as depicted in the right panel of \Cref{Fig:FEM-triangle-cij}.}  
Summing over all three vertices of a given triangle, we have
  \begin{align*}
  &\phi^T:=\sum_{\bx_i\in T}\phi^T_i \\
 &=- \frac{\bff(\bu^n_i)+\bff(\bu^n_k)}{2}\cdot \bn_{ik}^T |E_{ik}|-  \frac{\bff(\bu^n_i)+\bff(\bu^n_j)}{2}\cdot \bn_{ij}^T |E_{ij}|- \frac{\bff(\bu^n_j)+\bff(\bu^n_k)}{2}\cdot \bn_{jk}^T |E_{jk}| \\
 & =-\oint_{\partial T} \widehat {\bff \cdot\bn} \,\rd s\approx -\oint_{\partial T} \bff(\bu)\cdot\bn \,\rd s,
\end{align*} 
where the flux is
$$\widehat{\bff\cdot \bn}|_{E_{ik}}=\frac{1}{2}[\bff(\bu^n_i)+\bff(\bu^n_k)]\cdot \bn_{ik}^T.$$

 By \eqref{cij_properties}, in a triangle $T$ with vertices $\bx_i, \bx_j, \bx_k$, we have $\bc^T_{ij}=-\frac16 \bn_{ik} |E_{ik}|$. With \eqref{discrete-div}, we have
 $\bc^T_{ij}+\bc^T_{ik}+\bc^T_{ii}={\mathbf 0}$, 
 thus 
 $\bc^T_{ij}=-\bc^T_{ik}-\bc^T_{ii}.$

The  residual can be rewritten as
\begin{small}
    \begin{align*}
  \phi^T_i =& \frac{a_T |T|}{6}(\bu^n_k-\bu^n_i)+\frac{a_T |T|}{6}(\bu^n_j-\bu^n_i)\\
  &+ [\bff(\bu^n_i)+\bff(\bu^n_k)]\cdot \bc_{ij} + [\bff(\bu^n_i)+\bff(\bu^n_j)]\cdot \bc_{ik} + [\bff(\bu^n_j)+\bff(\bu^n_k)]\cdot \bc_{ii} \\
  =&\frac{a_T |T|}{6}(\bu^n_k-\bu^n_i)+\frac{a_T |T|}{6}(\bu^n_j-\bu^n_i)\\
    &+ [\bff(\bu^n_i)+\bff(\bu^n_k)]\cdot (-\bc^T_{ik}-\bc^T_{ii}) + [\bff(\bu^n_i)+\bff(\bu^n_j)]\cdot (-\bc^T_{ij}-\bc^T_{ii}) + [\bff(\bu^n_j)+\bff(\bu^n_k)]\cdot \bc_{ii}^T \\
  =&\frac{a_T |T|}{6}(\bu^n_k-\bu^n_i)+\frac{a_T |T|}{6}(\bu^n_j-\bu^n_i)\\
    &- [\bff(\bu^n_i)+\bff(\bu^n_k)]\cdot \bc^T_{ik} - [\bff(\bu^n_i)+\bff(\bu^n_j)]\cdot \bc^T_{ij} + 2\bff(\bu^n_i)\cdot \bc_{ii}^T.  
\end{align*} 
\end{small}

Summing over all triangles $T$ containing $\bx_i$, with $\bc_{ii}={\mathbf 0}$, by \eqref{cij_properties}, we have
\begin{align*}
    -\sum_{T\ni \bx_i} \phi_i^T& =\sum_{T\ni \bx_i} \left(\sum_{\substack{\bx_j\in T \\ j\neq i}} [\bff(\bu^n_j)+\bff(\bu^n_i)]\cdot \bc_{ij}^T-2f(\bu^n_i)\cdot \bc_{ii}^T-\frac{a_T|T|}{2} \sum_{\bx_j\in T} (\bu^n_j-\bu^n_i)\right)\\
    &=\sum_{\substack{j\in\mathcal N_i \\ j\neq i}}[\bff(\bu^n_j)+\bff(\bu^n_i)]\cdot \bc_{ij}-2f(\bu^n_i)\cdot \bc_{ii}-\sum_{\substack{j\in\mathcal N_i \\ j\neq i}}w_{ij}(\bu^n_j-\bu^n_i)\\
     &=\sum_{\substack{j\in\mathcal N_i }}[\bff(\bu^n_j)+\bff(\bu^n_i)]\cdot \bc_{ij}-\sum_{\substack{j\in\mathcal N_i}}d_{ij}^n(\bu^n_j-\bu^n_i)=\sum_{\substack{j\in\mathcal N_i }}[\bff(\bu^n_j)\cdot \bc_{ij}- d_{ij}^n\bu^n_j],
\end{align*}
 where   {$w_{ij}$ is defined in \eqref{edge-weight}} and $d^n_{ij}$ is the same graph Laplacian in \Cref{FEM-graphLaplacian}.  
The residual distribution scheme \eqref{scheme-RD} with such a  residual   is exactly the finite element scheme \eqref{FEM-firstorder} with $d_{ij}^n$ in  \Cref{FEM-graphLaplacian}. 
  The Lax-Wendroff type Theorem can be proven to show convergence to weak solutions for residual distribution schemes \cite{Remi-Roe, remi-11, remi-3, abgrall2002lax, barth}.   
Since an IDP finite element method (FEM) like \eqref{FEM-firstorder} can be derived as a residual distribution scheme, the Lax--Wendroff Theorem for residual distribution schemes can be applied now to show its convergence to weak solutions.  
  {  See also \cite{kuzmin2025-consistency} for another proof of the Lax--Wendroff Theorem for FEM.}

\section{Polynomial limiters for  high order finite volume and DG schemes}
\label{sec:zhang-shu}

The Zhang--Shu approach introduced in \cite{ZHANG20103091,zhang2010positivity,zhang2012maximum} is a flexible IDP approach which can be easily applied to finite volume (FV) type and discontinuous Galerkin (DG) high order schemes. 
In this section,
we review this approach and some recent related advances, e.g., \cite{cui2023classic,cui2024optimal,ding2025robust}.

\subsection{The main idea of the Zhang-Shu approach}
We   
demonstrate the  basic idea for 1D problems.
In a high order  accurate finite volume (FV) scheme with forward Euler time stepping \eqref{scheme:1Dsystem-2}, the algorithmic structure follows these steps:  
\begin{enumerate}
	\item Given the cell averages $\bar{\bu}_j^n$ on intervals $I_j$.
	\item Reconstruct a piecewise polynomial function $\bu_h^n(x)$ such that its cell average on  $I_j$ is $\bar{\bu}_j^n$.
	\item  Evaluate the piecewise polynomial  $\bu_h^n(x)$ at the cell ends $x_{j+\frac12}$  to obtain $\bu^{\pm}_{j+\frac12}$, using which \eqref{scheme:1Dsystem-2} gives the cell averages at next time step $\bar{\bu}_j^{n+1}$. 
\end{enumerate}  
A high order DG scheme follows a similar algorithmic structure:  
\begin{enumerate}
	\item Given  a piecewise polynomial solution $\bu_h^n(x)$ with cell averages equal to $\bar{\bu}_j^n$.
	\item Evolve the solution using a time discretization method to obtain $\bu_h^{n+1}(x)$ with the cell averages equal to $\bar{\bu}_j^{n+1}$. In particular, with forward Euler time stepping, the cell average updates satisfy the same  scheme   \eqref{scheme:1Dsystem-2}.
\end{enumerate}

For a given convex invariant domain $G$, instead of seeking $\bu_h^{n+1}(x)\in G$ for all $x$, 
the Zhang--Shu approach seeks to enforce the following IDP property in a finite
volume or DG scheme IDP,
\begin{equation}\label{eq:IDPdg}
 \bu_h^n(x) \in G ~~ \forall x \in \mathbb S_j, \forall j  \quad \Longrightarrow \quad   \bu_h^{n+1}(x) \in G ~~ \forall x \in \mathbb S_j, \forall j,
\end{equation}
where $\mathbb S_j \subset I_j$ is a set of   points for each cell $I_j$ to be specified later. 
 A flowchart for achieving \eqref{eq:IDPdg} in a high order IDP finite volume or discontinuous Galerkin (DG) scheme follows three steps:
\begin{enumerate}
	\item Start with $\bu_h^n(x)$, which is high order accurate and satisfies  
	\[
	\bar{\bu}_j^n \in G \quad \forall j, \qquad \bu_h^n(x) \in G \quad \forall x \in \mathbb{S}_j.    
	\]
	\item Evolve the solution forward in time to ensure  
	\begin{equation}\label{eq:cell_IDP}
		\bar{\bu}_j^{n+1} \in G \quad \forall j. 
	\end{equation}
	This guarantees that the updated cell averages remain IDP, referred to as the {\bf weak IDP} property. In general, such a step is nontrivial to achieve, which will be reviewed in this section.  
	\item Given the weak IDP condition \eqref{eq:cell_IDP}, modify $u_h^{n+1}(x)$ without losing high order accuracy to enforce the {\bf pointwise IDP at finitely many points}:  
	\[
	\bu_h^{n+1}(x) \in G \quad \forall x \in \mathbb{S}_j,
	\] 
	which often can be enforced by a simple scaling limiter of polynomials. 
\end{enumerate}

Most importantly, it can be proven that 
$\bu_h^n(x) \in G ~ \forall x \in \mathbb{S}_j$ is a sufficient condition to ensure \eqref{eq:cell_IDP}, which holds for any high order finite volume scheme and DG scheme with an IDP numerical flux on any polygonal mesh in any dimension for a convex set $G$.  This fact implies that any high order finite volume scheme and DG scheme with an IDP numerical flux using SSP time discretizations can be rendered   IDP in the sense of \eqref{eq:IDPdg}, by adding a simple limiter to limit solution polynomials at some  points
 within each cell, which allows not only easy implementation but also easy justification of the  accuracy.

\subsection{One dimensional scalar conservation law}

We first demonstrate the method for solving 1D scalar conservation laws $u_t+f(u)_x=0.$
For convenience, 
we will first focus on the forward Euler time discretization, while high order time stepping methods will be discussed later in \Cref{sec:zhang-shu-alo}. 

\subsubsection{Loss of monotonicity in high order  schemes}\label{sec:challenges}

Let  $p_j(x)$ be a polynomial of degree $k$ either evolved in a DG scheme or reconstructed in a FV method on a cell $I_{j}=[x_{j-\frac12}, x_{j+\frac12}]$ with its cell average $\bar u_{j}^n$. 
The evolution equation of the cell averages in a high order FV or DG scheme can be written in a unified form as 
\begin{equation}\label{eq:cellaverage}
\resizebox{0.99\textwidth}{!}{$ 
	\begin{aligned}
		\bar {u}_j^{n+1} &= \bar u_{j}^n - \lambda 
		\left(  \hat f( u_{j+\frac12}^-, u_{j+\frac12}^+  ) - \hat f( u_{j-\frac12}^-, u_{j-\frac12}^+  )  \right)
		=: \hat{H}_\lambda \left( \bar u_{j}^n, u_{j+\frac12}^-, u_{j+\frac12}^+, u_{j-\frac12}^-, u_{j-\frac12}^+ \right),
	\end{aligned}$}
\end{equation}
where  
$
u_{j-\frac12}^+ = p_j( x_{j-\frac12} ),   u_{j+\frac12}^- = p_j( x_{j+\frac12} )$ are   shown in the figure below.
\begin{center}
    
\scalebox{0.7}{
\centering{
\begin{tikzpicture}[samples=100, domain=-4:8,place/.style={circle,draw=blue!50,fill=blue!20,thick,
inner sep=0pt,minimum size=2mm},
transition/.style={circle,draw=red,fill=red,thick,inner sep=0pt,minimum size=2mm}]

\draw[color=blue, domain=0:4] plot (\x,{-0.1*(\x-3)^2+1.8}) ;
\draw[color=blue, domain=-4:0] plot (\x,{-0.1*(\x-1)^2+0.5}) ;
\draw[color=blue, domain=4:7] plot (\x,{-0.05*(\x-2)^2+1.5}) ;
\draw[color=red] (0,0)--(4, 0);
\draw[color=red] (7,0)--(4, 0);
\draw[color=red] (0,0)--(-4, 0);
\node at ( 0,0) [place] {};
\node at ( 4,0) [place] {};
\node at ( 7,0) [place] {};
\node at ( -4,0) [place] {};
\node at ( 0,0.9) [transition] {};
\node at ( 4,1.7) [transition] {};
\node at ( 0,1.4) {$u^+_{j-\frac 12}$};
\node at ( 4.0,2.2) {$u^-_{j+\frac 12}$};
\node at (0,-0.5) {$x_{j-\frac12}$};
\node at (4,-0.5) {$x_{j+\frac12}$};
\node at (-4,-0.5) {$x_{j-\frac32}$};
\node at (6.8,-0.5) {$x_{j+\frac32}$};
\node at (2,0.3) {$I_j$};
\node at (2,1.99) {$p_j(x)$};
 \end{tikzpicture}}
}
\end{center}

Assume the numerical flux $\hat f ( \cdot, \cdot )$ is monotone as defined \Cref{sec:mono}, so that the corresponding first order scheme \eqref{eq:1stSCLmono} is monotone under the CFL condition \eqref{eq:CFL1}. 
By order barriers in \Cref{sec:barrier}, the high order scheme \eqref{eq:cellaverage} is in general not monotone.
In particular, the function $\hat{H}_\lambda$ is monotonically non-decreasing 
   w.r.t. $\bar u_{j}^n$ and   $u_{j+\frac12}^+, u_{j-\frac12}^-$, but non-increasing 
w.r.t. $u_{j+\frac12}^-, u_{j-\frac12}^+$. 
This induces one challenge for achieving \eqref{eq:cell_IDP}, which is explained by the following simple example. 
\begin{example} 
\label{example:advection_upwind}
	 For the linear advection equation $u_t+u_x = 0$, the high order scheme \eqref{eq:cellaverage}  with the upwind flux reduces to 
	\begin{align} \label{eq:cellaverageUpwind}
		\bar {u}_j^{n+1}  = \bar u_{j}^n - \lambda 
		\left(   u_{j+\frac12}^--  u_{j-\frac12}^- \right),
	\end{align}
	which is decreasing w.r.t. $u_{j+\frac12}^-$. Assume $G:= [m, M]=[0,1]$, $\bar u_{j}^n= u_{j-\frac12}^-=0$, and $u_{j+\frac12}^-=1$, then $\bar {u}_j^{n+1} = - \lambda <0$ for any time step $\Delta t>0$. 
\end{example}

\subsubsection{Weak monotonicity of high order schemes}

From \Cref{example:advection_upwind}, we can see that the loss of monotonicity in the high order scheme \eqref{eq:cellaverage} implies that 
 the scheme \eqref{eq:cellaverage} may fail to preserve the bounds for any positive time step $\Delta t>0$ even if  $\bar u_{j}^n, u_{j+\frac12}^-, u_{j+\frac12}^+, u_{j-\frac12}^-, u_{j-\frac12}^+$ are within the desired bounds. 
 
 In other words, since the scheme \eqref{eq:cellaverage} has the property $\bar u^{n+1}_j=\hat{H}_\lambda(\uparrow,\downarrow,\uparrow,\uparrow,\downarrow)$,  requiring all of its input values to be in $G=[m, M]$ is not enough to achieve $\bar u^{n+1}_j\in G$. 
 Even though $\bar u^{n+1}_j=\hat{H}_\lambda(\uparrow,\downarrow,\uparrow,\uparrow,\downarrow)$ is not a monotone function w.r.t. independent degree of freedoms $\bar u_{j}^n, u_{j+\frac12}^-, u_{j+\frac12}^+, u_{j-\frac12}^-, u_{j-\frac12}^+$,
the key observation by   Zhang and Shu in \cite{ZHANG20103091}  is that \eqref{eq:cellaverage} can still be rewritten as a monotone function w.r.t. some point values, which might be dependent degree of freedoms in general. 

Notice that $\hat{H}_\lambda$ is decreasing only w.r.t. $u_{j+\frac12}^-, u_{j-\frac12}^+$, which are degree of freedoms within the cell $I_j$ thus can be controlled by $\bar u_{j}^n$ 
if the cell average $\bar u_{j}^n$ is decomposed into a convex combination of several point values including  $u_{j+\frac12}^-, u_{j-\frac12}^+$. Such a decomposition can be achieved via $L$-point Gauss--Lobatto quadrature, which is exact for integrating polynomials of degree $k$ with positive quadrature weights if $L\geq  \frac{k+3}{2}$. 
This implies 
\begin{equation}\label{eq:GL1D}
	\bar u_j^n = \frac{1}{\Delta x} \int_{ I_{j} } p_j(x) {\rm d} x = \sum_{\mu = 1}^L  {\omega}_\mu p_j(  x_j^{(\mu)} ) =  \sum_{\mu = 2}^{L-1}{\omega}_\mu p_j(  x_{j}^{(\mu)} )  
	+ {\omega}_1   u_{j-\frac12}^+  +  {\omega}_L   u_{j+\frac12}^-,
\end{equation}
where  $ {\omega}_\mu>0$ are the Gauss--Lobatto quadrature weights for the interval $[-\frac12, \frac12]$ satisfying $\sum_\mu{\omega}_\mu=1$ with $ \omega_1  = \omega_L = \frac{1}{L(L-1)}$, and $\{ x_j^{(\mu)} \}$ are the quadrature nodes for $I_j$ with $x_j^{(1)} = x_{j-\frac12}$ and $ x_j^{(L)} = x_{j+\frac12}$. 
Then the high order scheme \eqref{eq:cellaverage} using  the cell average decomposition \eqref{eq:GL1D} can be rewritten as follows,
\begin{equation}
\label{weak-mono-proof}
	\begin{aligned}
		\bar{u}_j^{n+1} = & \sum_{\mu=2}^{L-1} {\omega}_\mu p_j(x_j^{(\mu)}) + {\omega}_L \left( u_{j+\frac{1}{2}}^- - \frac{\lambda}{{\omega}_L} \left( \hat{f}\big( u_{j+\frac{1}{2}}^-, u_{j+\frac{1}{2}}^+ \big) - \hat{f}\big( u_{j-\frac{1}{2}}^+, u_{j+\frac{1}{2}}^- \big) \right) \right) \\
		& + {\omega}_1 \left( u_{j-\frac{1}{2}}^+ - \frac{\lambda}{{\omega}_1} \left( \hat{f}\big( u_{j-\frac{1}{2}}^+, u_{j+\frac{1}{2}}^- \big) - \hat{f}\big( u_{j-\frac{1}{2}}^-, u_{j-\frac{1}{2}}^+ \big) \right) \right) \\
		= & \sum_{\mu=2}^{L-1} {\omega}_\mu p_j(x_j^{(\mu)}) + {\omega}_L H_{ \frac{\lambda}{{\omega}_L}} \left( u_{j-\frac{1}{2}}^+, u_{j+\frac{1}{2}}^-, u_{j+\frac{1}{2}}^+ \right) + {\omega}_1 H_{\frac{\lambda}{{\omega}_1}} \left( u_{j-\frac{1}{2}}^-, u_{j-\frac{1}{2}}^+, u_{j+\frac{1}{2}}^- \right),
	\end{aligned}
\end{equation}
which is a convex combination of 
$$
p_j(x_j^{(\mu)}),~~  H_{ \frac{\lambda}{{\omega}_L}} \left( u_{j-\frac{1}{2}}^+, u_{j+\frac{1}{2}}^-, u_{j+\frac{1}{2}}^+ \right), ~~ H_{\frac{\lambda}{{\omega}_1}} \left( u_{j-\frac{1}{2}}^-, u_{j-\frac{1}{2}}^+, u_{j+\frac{1}{2}}^- \right).
$$
Recall that $H_{\lambda}(\cdot,\cdot,\cdot)$, defined in the first order scheme \eqref{eq:1stSCLmono}, is monotonically non-decreasing  in all its three arguments under the CFL condition \eqref{eq:CFL1}. 
Thus, $H_{\frac{\lambda}{{\omega}_1}}$ and  $H_{ \frac{\lambda}{{\omega}_L}}$ 
are monotonically non-decreasing under a reduced CFL condition 
\begin{equation}\label{eq:CFL1Dhigh}
	\lambda \alpha \le  \omega:=  \omega_1 =  \omega_L = \frac{1}{L(L-1)}. 
\end{equation}
Therefore, $\bar{u}_j^{n+1}$ is a monotonically non-decreasing function of 
$$
u_{j-\frac12}^-,~~ u_{j+\frac12}^+,~~ p_j( x_j^{(\mu)} ),~~ 1\le \mu \le N. 
$$
It can be stated as the following theorem, as a sufficient condition for achieving \eqref{eq:cell_IDP}.
\begin{theorem}[Weak monotonicity of high order schemes]\label{thm:1}
	For a finite volume scheme or the scheme satisfied by the cell averages of the DG method with forward Euler time discretization \eqref{eq:cellaverage} using a monotone flux $\hat f$, let $p_j(x)$ be  the reconstructed or DG solution polynomial of degree $k$  satisfying
	\[
	\bar{u}_j^n = \frac{1}{\Delta x} \int_{I_j} p_j(x) \, dx, \quad u_{j-\frac{1}{2}}^+ = p_j \left( x_{j-\frac{1}{2}} \right) \quad \text{and} \quad u_{j+\frac{1}{2}}^- = p_j \left( x_{j+\frac{1}{2}} \right).
	\]
Then under the CFL condition \eqref{eq:CFL1Dhigh}, $\bar{u}^{n+1}_j$ is monotone w.r.t. 
$$u_{j-\frac12}^-=p_{j-1}(x_{j-\frac12}),~ u_{j+\frac12}^+=p_{j+1}(x_{j+\frac12}),~ p_j( x_j^{(\mu)} ),~ \mu=1,\cdots, L$$
 Therefore, with \eqref{eq:CFL1Dhigh}, a sufficient condition for   $\bar{u}_j^{n+1} \in G:= [m, M]$ for all $j$ is
\begin{equation}\label{eq:1DBPcondition}
		 p_j( x_j^{(\mu)} )\in G,~~ 1\le \mu \le N\quad \forall j.
\end{equation}
\end{theorem}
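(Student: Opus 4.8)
The plan is to rewrite the high-order cell-average update \eqref{eq:cellaverage} as a convex combination in which every constituent is either a fixed interior point value or an evaluation of the first-order monotone map $H_\lambda$ from \Cref{sec:mono}; the monotonicity of $\bar u_j^{n+1}$ then follows at once, since a convex combination of non-decreasing maps is non-decreasing.

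First I would decompose the cell average using the $L$-point Gauss--Lobatto rule as in \eqref{eq:GL1D}. The essential design choice is that this rule is exact on polynomials of degree $k$ once $L\ge\frac{k+3}{2}$, has strictly positive weights $\omega_\mu$ summing to one, and places nodes at the two cell endpoints. This lets me peel off the two interface contributions $u_{j-\frac12}^+=p_j(x_{j-\frac12})$ and $u_{j+\frac12}^-=p_j(x_{j+\frac12})$ with weights $\omega_1=\omega_L=\frac{1}{L(L-1)}$, leaving the interior node values carried by the remaining weights. Isolating exactly these two endpoint values is the crux of the argument: they are the only arguments in which $\hat H_\lambda$ is \emph{decreasing}, and because they are internal degrees of freedom of $I_j$ they can be absorbed into the flux differences sitting beside them.

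Next I would substitute this decomposition into \eqref{eq:cellaverage} and regroup, obtaining the form \eqref{weak-mono-proof}: $\bar u_j^{n+1}$ becomes the convex combination, with weights $\omega_1,\omega_L$ and $\{\omega_\mu\}_{\mu=2}^{L-1}$, of the interior values $p_j(x_j^{(\mu)})$ together with the two first-order expressions $H_{\lambda/\omega_L}(u_{j-\frac12}^+,u_{j+\frac12}^-,u_{j+\frac12}^+)$ and $H_{\lambda/\omega_1}(u_{j-\frac12}^-,u_{j-\frac12}^+,u_{j+\frac12}^-)$. After checking that the flux pairings inside each $H$ match the monotone three-point pattern of \eqref{eq:1stSCLmono}, I would invoke the fact that $H$ is non-decreasing in all three arguments whenever its effective CFL obeys \eqref{eq:CFL1}; since the smallest weights are $\omega_1=\omega_L=\omega$, imposing $\lambda\alpha\le\omega$ as in \eqref{eq:CFL1Dhigh} forces $\frac{\lambda}{\omega_1}\alpha\le1$ and $\frac{\lambda}{\omega_L}\alpha\le1$, hence monotonicity of both $H$-terms and of the whole combination in the listed point values. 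For the bound-preserving conclusion I would then combine this monotonicity with the consistency $H(u,u,u)=u$: feeding the constant states $m$ and $M$ into the convex, consistent combination sandwiches $\bar u_j^{n+1}\in[m,M]$. The observation making the per-cell condition \eqref{eq:1DBPcondition} sufficient for all $j$ is that the Gauss--Lobatto nodes include the endpoints, so the neighbor arguments $u_{j-\frac12}^-=p_{j-1}(x_{j-1}^{(L)})$ and $u_{j+\frac12}^+=p_{j+1}(x_{j+1}^{(1)})$ are themselves quadrature nodes of the adjacent cells; requiring $p_j(x_j^{(\mu)})\in G$ at every node of every cell therefore controls all arguments of the monotone map simultaneously.

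I expect the main obstacle to be the regrouping leading to \eqref{weak-mono-proof}: matching the two interface flux differences to the monotone three-point map $H$ with the correctly rescaled CFL factor $1/\omega$, and verifying that the resulting coefficients are exactly the positive quadrature weights summing to one, is precisely where the monotone-flux structure and the CFL reduction must be reconciled term by term. Everything before and after this step (the quadrature exactness and the consistency/boundary-coverage bookkeeping) is comparatively routine.
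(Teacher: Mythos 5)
Your proposal is correct and takes essentially the same route as the paper's proof: the Gauss--Lobatto decomposition \eqref{eq:GL1D}, the regrouping (by adding and subtracting the intermediate flux $\hat f\big(u_{j-\frac12}^+, u_{j+\frac12}^-\big)$) into the convex combination \eqref{weak-mono-proof} of interior node values and the two monotone first-order maps $H_{\lambda/\omega_1}$ and $H_{\lambda/\omega_L}$, the reduced CFL condition \eqref{eq:CFL1Dhigh}, and the final sandwiching via consistency $H(u,u,u)=u$. Your closing observation that the neighbor traces $u_{j-\frac12}^-$ and $u_{j+\frac12}^+$ are themselves endpoint Gauss--Lobatto nodes of the adjacent cells, so that \eqref{eq:1DBPcondition} controls every argument, is precisely the bookkeeping the paper relies on, and you correctly flag the flux regrouping as the crux of the argument.
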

 
\Cref{thm:1} can be regarded as a complementary result to the classical Godunov Theorem. In a high order accurate linear finite volume scheme \eqref{eq:cellaverage}, $\bar u^{n+1}_j$ is not monotone w.r.t. $\bar u^{n}_j$, but $\bar u^{n+1}_j$  can still be a monotone function w.r.t. some quadrature point values.

\begin{remark}[Optimality of Gauss--Lobatto quadrature]
	The cell average decomposition 
	\eqref{eq:GL1D} based on the Gauss--Lobatto quadrature plays a critical role in revealing the weak monotonicity. In fact, any quadrature rule with positive weights and nodes including the two endpoints:
	\begin{equation}\label{eq:g1DCAD}
		 \frac{1}{\Delta x} \int_{ I_{j} } p(x) {\rm d} x =  \sum_{\mu = 2}^{N-1}  {\omega}_\mu p( x_{j}^{(\mu)} )  
		+  {\omega}_1   p(x_{j-\frac12})  +   {\omega}_N   p(x_{j+\frac12})
	\end{equation}
	is 
	 applicable, 
	as long as it is exact for integrating polynomials of degree $k$. 
	Different feasible quadrature rules would lead to different IDP CFL conditions, for example, \eqref{eq:g1DCAD} leads to $\lambda \alpha \le \min\{ {\omega}_1, {\omega}_N \}$. 
	It is highly desirable to choose the optimal feasible quadrature to maximize the CFL number $\min\{ {\omega}_1, {\omega}_N \}$. 
	 The Gauss--Lobatto quadrature was  proven to the optimal one in the sense of the largest CFL in 1D \cite{cui2024optimal}. 
\end{remark}

\begin{remark}
For high order DG methods, the CFL condition \eqref{eq:CFL1Dhigh} is close to the CFL condition needed for linear stability. For high order FV schemes,
the CFL condition \eqref{eq:CFL1Dhigh} is usually smaller than the commonly used ones, e.g., for FV WENO schemes. On the other hand,  \eqref{eq:CFL1Dhigh} is only a convenient sufficient condition but not a necessary condition for achieving \eqref{eq:cell_IDP}.
\end{remark}

\subsubsection{A simple scaling limiter for enforcing   pointwise bounds}\label{sec:limiter} 

The weak monotonicity in \Cref{thm:1} indicates that a
high order conservative finite volume or DG scheme \eqref{eq:cellaverage} preserves  $\bar u^{n+1}_j\in [m, M]$ if 
the approximation polynomials $p_j(x)$ satisfy \eqref{eq:1DBPcondition}, which can be enforced by 
the following simple limiter for a polynomial with cell average $\bar u_j^n\in [m, M]$:
\begin{subequations}
    \label{eq:ZSlimiter}
\begin{align}
\tilde p_j(x)   = \theta(p_j(x) - \bar u_j^n) + \bar u_j^n,\\
\theta   = \min\left\{ \left| \frac{M - \bar u_j^n}{M_j - \bar u_j^n} \right|,~ \left| \frac{m - \bar u_j^n}{m_j - \bar u_j^n}\right|,~ 1\right\},
M_j= \max_{x \in \mathbb S_{j}} p_j(x), m_j = \min_{x \in \mathbb S_{j}} p_j(x). 
\end{align}
\end{subequations}

The limiter is a simplified version of those used in \cite{BarthJespersen1989,liu1996nonoscillatory},
and it satisfies the following properties.

\paragraph{Conservation} Since $\tilde p_j(x)$ is a convex combination of $p_j(x)$ with its own cell average, $\tilde p_j(x)$ has the same cell average $\bar u_j^n$ on $I_j$.

\paragraph{Bounds at $\mathbb S_j$} By the definition of $\theta$ in \eqref{eq:ZSlimiter}, $\tilde p_j(x)$ satisfies  \eqref{eq:1DBPcondition}, if the cell average is within the bounds  $\bar u_j^n\in [m, M]$.

\paragraph{Easiness of implementation} This limiter is local to each cell and only requires point values at $\mathbb S_j$, thus its implementation is easy and friendly for parallel computing. 

\paragraph{High order accuracy}
This limiter does not destroy the high order approximation accuracy of $p_j(x)$ under suitable assumptions.

\begin{theorem}[Accuracy of Zhang--Shu limiter]\label{thm:limiter_acc}
Assume the cell average of $p_j(x)$ is $\bar u^n_j\in [m, M]$, then for any function $u(x)\in [m, M]$,
the limiter \eqref{eq:ZSlimiter} for 
 polynomials of degree $k$   satisfies
\[
\left| p_j(x) - \tilde{p}_j(x) \right| \leq C_k \max_{x \in I_j} |p_j(x) - u(x)|,
\]
where $C_k$ is a constant that depends only on the polynomial degree $k$. 
\end{theorem}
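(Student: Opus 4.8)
The plan is to rewrite the limiter error exactly, reduce the estimate to a one-sided ``norm equivalence'' on the finite-dimensional space of zero-mean polynomials, and then close the argument by compactness, using crucially the positivity of the cell-average quadrature weights. First I would note that the limited polynomial differs from the original only by a scaling about the cell average, $p_j(x) - \tilde p_j(x) = (1-\theta)\,(p_j(x) - \bar u_j^n)$. Writing $q(x) := p_j(x) - \bar u_j^n$, a polynomial of degree $\le k$ with zero mean over $I_j$, and $\varepsilon := \max_{x\in I_j}|p_j(x) - u(x)|$, it suffices to prove $(1-\theta)\,\|q\|_{L^\infty(I_j)} \le C_k\,\varepsilon$. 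Next I would bound $1-\theta$ using the hypothesis $u(x)\in[m,M]$: since $|p_j-u|\le\varepsilon$ and $m\le u\le M$, we obtain $M_j \le M+\varepsilon$ and $m_j\ge m-\varepsilon$, hence $M_j - M \le \varepsilon$ and $m - m_j \le \varepsilon$. Rewriting the limiter as $1-\theta = \max\{0,\ \tfrac{M_j-M}{M_j-\bar u_j^n},\ \tfrac{m-m_j}{\bar u_j^n-m_j}\}$, these give
\[
1-\theta \le \frac{\varepsilon}{\min\{\,M_j-\bar u_j^n,\ \bar u_j^n - m_j\,\}} =: \frac{\varepsilon}{N(q)}, \qquad N(q) := \min\{\max_{\mathbb S_j} q,\ \max_{\mathbb S_j}(-q)\}.
\]

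The crux is then the affine-invariant inequality $\|q\|_{L^\infty(I_j)} \le C_k\,N(q)$ for every zero-mean $q\in\mathbb P^k$, which I would establish by a compactness argument. The key structural fact is that the cell average is a \emph{positive} convex combination of the nodal values of $q$ at the Gauss--Lobatto nodes contained in $\mathbb S_j$, namely the decomposition \eqref{eq:GL1D}, so $\sum_\mu \omega_\mu q(x_j^{(\mu)}) = 0$ with $\omega_\mu > 0$. This forces any $q$ not vanishing at all nodes to take both strictly positive and strictly negative values there, so $N(q)>0$; and since the nodes are unisolvent for $\mathbb P^k$, $N(q)=0$ implies $q\equiv 0$. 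Both $\|\cdot\|_{L^\infty(I_j)}$ and $N$ are continuous and positively homogeneous of degree one on the finite-dimensional space of zero-mean polynomials, so restricting to the compact unit sphere $\{\|q\|_{L^\infty(I_j)}=1\}$, the strictly positive continuous function $N$ attains a positive minimum $1/C_k$, which yields the claim. Mapping $I_j$ to a reference interval shows $C_k$ depends only on $k$ and the fixed reference node set, not on $j$ or $\Delta x$. Combining the three displays gives $|p_j-\tilde p_j| = (1-\theta)\|q\|_{L^\infty(I_j)} \le C_k\,\varepsilon$, as required.

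The main obstacle is precisely the one-sided norm equivalence $\|q\|_{L^\infty(I_j)}\le C_k\,N(q)$. The delicate point is that $N(q)$ controls only the \emph{smaller} of the two one-sided discrete excursions, so the naive worry is a ``lopsided'' polynomial with a tiny positive excursion but a large negative dip, for which the bound would seem to fail; the compactness argument resolves this cleanly, but it rests essentially on the positivity of the quadrature weights (which precludes such lopsidedness for zero-mean polynomials) and on $\mathbb S_j$ being a unisolvent determining set for $\mathbb P^k$. I would therefore state these two requirements on $\mathbb S_j$ explicitly, since without them $N$ could vanish on a nonzero polynomial and the constant would blow up.
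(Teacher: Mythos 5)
Your opening reductions are sound and match the paper's first step: $p_j-\tilde p_j=(1-\theta)\,q$ with $q=p_j-\bar u_j^n$, the bounds $M_j-M\le\varepsilon$ and $m-m_j\le\varepsilon$, and hence $1-\theta\le \varepsilon/N(q)$. The genuine gap is your crux inequality $\|q\|_{L^\infty(I_j)}\le C_k\,N(q)$, where $N$ measures one-sided excursions of $q$ over the \emph{node set} $\mathbb S_j$. You correctly flagged that your compactness argument needs $\mathbb S_j$ to be unisolvent for $\mathbb P^k$, but you did not check this hypothesis against the paper's setting, where it fails for every $k\ge 3$: in 1D, $\mathbb S_j$ is the $L$-point Gauss--Lobatto set with $L=\lceil\frac{k+3}{2}\rceil\le k<k+1$. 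Concretely, let $q_0(\xi)=\prod_{\mu=1}^{L}(\xi-\hat x^{(\mu)})$ be the nodal polynomial on the reference cell. Since the $L$-point Gauss--Lobatto rule is exact for degree $2L-3\ge L$ (as $L\ge 3$), the vanishing of the weighted nodal sum forces $\int_0^1 q_0\,\rd\xi=0$ automatically; thus $q_0\in\mathbb P^k$ is a nonzero zero-mean polynomial with $N(q_0)=0$ (for $k=3$, explicitly $q_0(\xi)=\xi(\xi-\tfrac12)(\xi-1)$), and your norm equivalence is false. Near-degenerate perturbations $q_0+\delta w$ then make $\varepsilon/N(q)$ blow up while the actual limiter error stays bounded, so the failure is not a presentational slip: the nodal excursion simply does not control $\|q\|_{L^\infty(I_j)}$, and your chain of inequalities cannot conclude for $k\ge3$. (For $k=2$ the three Gauss--Lobatto points are unisolvent for $\mathbb P^2$ and your argument does go through.)

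The paper's proof diverges from yours exactly at this point: it reduces to the \emph{continuum} one-sided inequality $\max_{I_j}|q|\le C_k\max_{I_j}q$ for zero-mean $q$, which is where your compactness idea is valid, since $\max_{I_j}q>0$ for every nonzero zero-mean polynomial ($C_2=3$ by explicit computation \cite{liu1996nonoscillatory}, Lebesgue-constant bounds, or the abstract norm-equivalence argument of \cite[Lemmas 7--8]{zhang2017positivity}). Note that the paper's displayed step $\big|q(x)/(M_j-\bar u_j^n)\big|\le |q|/\max_{I_j}q$ presupposes the cell-max variant of the limiter (cf.\ \Cref{rmk-limiter-2D}); for the nodal-max limiter as defined in \eqref{eq:ZSlimiter} one additionally uses that $\varepsilon$ dominates the continuum overshoot: with $a=M_j-\bar u_j^n$, $t=M-\bar u_j^n$, $b=\max_{I_j}q$ and $s=\|q\|_{L^\infty(I_j)}\le C_k b$, the active branch gives $t<a$ and $\varepsilon\ge b-t\ge b-a$, so either $a\le b/2$, whence $\varepsilon\ge b/2\ge s/(2C_k)$ and the error is at most $s\le 2C_k\varepsilon$, or $a>b/2$, whence the error is at most $\varepsilon s/a\le 2C_k\varepsilon$. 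In short, your compactness mechanism is the right tool but is aimed at the wrong functional: the estimate must be carried by the continuum excursion over $I_j$, not by the nodal excursion over $\mathbb S_j$.
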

\begin{remark}
\label{rmk-limiter-2D}
  If replacing $M_j$ and $m_j$ in \eqref{eq:ZSlimiter} by the maximum and minimum of $p_j(x)$ in the cell $I_j$, then  it a more restrictive thus less accurate limiter. For such a more restrictive limiter on a reference cell of any shape in any dimension, the same result holds with $C_k$ depending only on the polynomial degree $k$ and the reference cell.
\end{remark}
\begin{proof}
 We review the key arguments of the proof here. Without loss of generality, we only need to discuss the case that $p_j(x)$ is not a constant and $\theta=\frac{M - \bar u_j^n}{M_j - \bar u_j^n}$ with $M_j>M$. For convenience, let $\overline{p}_j=\bar u^n_j$, then 
$\widetilde{p}_j(x)-p_j(x) =(M-M_j)\frac{p_j(x)-\overline{p}_j}{M_j-\overline{p}_j}.$

First, $|M-M_j|\leq \max\limits_{x \in I_j} |p_j(x) - u(x)|$ because $\max\limits_{x\in \mathbb S_j} p_j(x)=M_j>M\geq u(x)$, see \cite{xu2017bound}. 
Thus we only need to prove  
$\left |\frac{p_j(x)-\overline{p}_j}{M_j-\overline{p}_j}\right | \leq C_k$. 
Define $q(\xi)=p_j \left(\xi \Delta x +x_{j-\frac12}\right)-\overline{p}_j$ with $\xi \in [0,1]$,
then $\overline q=\int_0^1 q(\xi)\,\rd \xi=0$, $\max\limits_{\xi\in[0,1]} q(\xi)=\max\limits_{x\in I_j}p_j(x)-\overline{p}_j$
and $\min\limits_{\xi\in[0,1]} q(\xi)=\min\limits_{x\in I_j}p_j(x)-\overline{p}_j$.
We have
\[\left |\frac{p_j(x)-\overline{p}_j}{M_j-\overline{p}_j}\right |\leq  \frac{|q(\xi)|}{\max\limits_{\xi\in[0,1]} q(\xi)}
\leq \frac{\max\limits_{\xi \in [0,1]}|q(\xi )|}{\max\limits_{\xi \in[0,1]} q(\xi )}.\]
Thus we only need to prove $ \frac{\max\limits_{\xi \in [0,1]}|q(\xi)|}{\max\limits_{\xi \in[0,1]} q(\xi)} \leq C_k$ for any polynomial of degree $k$ satisfying $\int_0^1 q(\xi)\, \rd \xi=0$.
For quadratic polynomials in one dimension, $C_2=3$ was proven by explicit calculations in \cite{liu1996nonoscillatory}.
For higher order polynomials in one dimension,  
$C_k\leq (k^2+k-1)\Lambda_{k+1}[0,1], $
 where $\Lambda_{k+1}[0,1]$ is the Lebesgue constant on the interval $[0,1]$, see \cite[Lemma 7]{zhang2017positivity}.
For general $k$ and higher dimensions, the existence of the constant $C_k$ can be established by an abstract proof similar to proving the equivalence of two norms in a finite-dimensional Banach space \cite[Lemma 8]{zhang2017positivity}, which can be used to prove the multi-dimensional case as mentioned \Cref{rmk-limiter-2D}.
\end{proof}

\subsubsection{The bound-preserving algorithm flowchart}
\label{sec:zhang-shu-alo}

Assuming $\bar{u}^n_j\in [m, M]$, then using the simple limiter \eqref{eq:ZSlimiter} at time step $n$ can achieve the sufficient condition in \Cref{thm:1} to ensure $\bar{u}^{n+1}_j\in [m, M]$, with which the simple limiter \eqref{eq:ZSlimiter} can again enforce bounds at time step $n+1$.

Such a method can be easily extended from forward Euler to  high order strong stability preserving (SSP) explicit Runge--Kutta or multistep methods \cite{gottlieb2001strong, gottlieb2009high}, which are convex combinations of forward Euler steps.
For  a semi-discrete scheme $\frac{d}{dt} u_h = \mathcal{L}(u_h)$, e.g., $\mathcal L$ denotes high order spatial discretization, 
 the classic third order explicit SSP Runge–Kutta method is
\begin{align}
	u_h^{n,*} &= u_h^n + \Delta t \mathcal{L} (u_h^n), \notag \\
	u_h^{n,**} &= \frac{3}{4} u_h^n + \frac{1}{4} \left( u_h^{n,*} + \Delta t \mathcal{L} \left( u_h^{n,*} \right) \right), \label{ssp-RK-3rd} \\
	u_h^{n+1} &= \frac{1}{3} u_h^n + \frac{2}{3} \left( u_h^{n,**} + \Delta t \mathcal{L} \left( u_h^{n,**} \right) \right), \notag
\end{align}  
and the explicit SSP third order multistep method is
\[
u_h^{n+1} = \frac{16}{27} \left( u_h^n + 3\Delta t \mathcal{L} (u_h^n) \right) 
+ \frac{11}{27} \left( u_h^{n-3} + \frac{12}{11} \Delta t \mathcal{L} (u_h^{n-3}) \right),
\]  
where $u_h^n$ represents the numerical solution at the $n$-th time step.

To construct a high order accurate bound-preserving scheme, we can use 
SSP high order time discretizations with high order FV or DG methods in space with a monotone numerical flux, with the limiter  \eqref{eq:ZSlimiter} applied in each time step in a SSP multistep methods or each time stage in a SSP RK method. Then under suitable CFL conditions,  \Cref{thm:1} and \Cref{thm:limiter_acc} imply that the full scheme is conservative, bound-preserving and high order accurate.

The main advantages of such a method include easy extensions to systems and higher dimensions, easy implementation and easy justification of accuracy, which is due to not only \Cref{thm:limiter_acc} but also, more importantly, the fact that this approach is built upon an intrinsic weak monotonicity property of the high order spatial discretization \eqref{eq:cellaverage}. 

On the other hand, although the weak monotonicity can be established for FV and DG schemes, in general it does not hold for high order finite difference (FD) schemes. For special compact FD schemes, weak monotonicity may hold and bound-preserving schemes can be constructed \cite{li2018high}.

\subsubsection{A simplified weak monotonicity and limiter}
\label{sec:zhanglimiter-FV}
The limiter  \eqref{eq:ZSlimiter} involves the polynomial $p_j(x)$ which is not available in  ENO (essentially non-oscillatory)  and WENO (weighted ENO) finite volume reconstructions. One way is to use interpolation to construct an approximation polynomial $p_j(x)$ in ENO and WENO schemes to apply \eqref{eq:ZSlimiter}. An easier alternative  provided in 
\cite{zhang2011maximum} is  to avoid explicitly using  point values $p_j(x^{(\mu)}_j)$ for $\mu=2,\cdots,N-1$. 
Since $\sum_{\mu=2}^{N-1}\frac{{\omega}_{\mu}}{1-2 \omega_1}p_j(
x^{(\mu)}_j)$ is a convex combination of point values $p_j(x^{(\mu)}_j)$ for $\mu=2,\cdots,N-1$,
by the Mean Value Theorem, there exists some point $x^*_j\in I_j$ such that
$\sum_{\mu=2}^{N-1}\frac{{\omega}_{\mu}}{1-2 \omega_1}p_j(
x^{(\mu)}_j)=p_j(x^*_j).$
We can rewrite \eqref{weak-mono-proof} as
\[\overline{u}^{n+1}_j=(1-2 \omega_1)p_j(x^*_j)+{\omega_1} H_{ \frac{\lambda } { \omega_1}} ( u^+_{j-\frac
12}, u^-_{j+\frac 12}, u^+_{j+\frac 12} ) +{\omega_1} H_{ \frac{\lambda} { \omega_1}} ( u^-_{j-\frac 12}, u^+_{j-\frac 12}, u^-_{j+\frac
12} ),\]
thus in \Cref{thm:1} we can use the following weaker sufficient condition to replace  \eqref{eq:1DBPcondition},
\begin{equation}
 \label{suff-condition-2}
 u^\pm_{j-\frac
12}, u^\pm_{j+\frac 12}, \frac{\overline{u}^{n}_j- \omega_1 u^+_{j-\frac
12}- \omega_1 u^-_{j+\frac
12}}{1-2 \omega_1}=\sum_{\mu=2}^{N-1}\frac{{\omega}_{\mu}}{1-2 \omega_1}p_j(
x^{(\mu)}_j)=p_j(x^*_j)\in [m,M].
\end{equation}

A simplified limiter to enforce \eqref{suff-condition-2} in ENO and WENO finite volume schemes 
is 
\begin{subequations}
\label{limiter-scalar-simple-2}
\begin{align}
    \widetilde p_j(x)=\theta\left( p_j(x)-\overline p_j \right) +\overline p_j,\quad \theta=\min\left\{1, \left|\frac{M-\overline{p}_j}{M_j-\overline{p}_j}\right|,
 \left|\frac{m-\overline{p}_j}{m_j-\overline{p}_j}\right|\right\},\\ 
M_j=\max\limits_{{x}^{(1)}_j, {x}^{(N)}_j, {x}^*_j}p_j(x), m_j=\min\limits_{{x}^{(1)}_j, {x}^{(N)}_j, {x}^*_j}p_j(x),\quad 
p_j(x^*_j)=\frac{\overline{u}^{n}_j- \omega_1 u^+_{j-\frac
12}- \omega_1 u^-_{j+\frac
12}}{1-2 \omega_1}.
\end{align}
\end{subequations}

\Cref{thm:limiter_acc}
still applies to \eqref{limiter-scalar-simple-2}
since it is a more relaxed limiter than 
\eqref{eq:ZSlimiter}.

\begin{remark}
    If \eqref{suff-condition-2} is replaced by requiring 
    $u^\pm_{j-\frac{1}{2}}, u^\pm_{j+\frac{1}{2}}, \frac{\overline{u}^{n}_j - a u^+_{j-\frac{1}{2}} - a u^-_{j+\frac{1}{2}}}{1 - a} \in [m, M]$ 
    with $a \in (0, \frac{1}{2})$, then this still provides a sufficient condition for ensuring $\bar{u}^{n+1}_j \in [m, M]$, as first proven in \cite{perthame1996positivity}. 
    However, it is difficult to justify the accuracy of enforcing 
    $\frac{\overline{u}^{n}_j - a u^+_{j-\frac{1}{2}} - a u^-_{j+\frac{1}{2}}}{1 - a} \in [m, M]$, 
    unless $a$ corresponds to a quadrature weight such that 
    $\frac{\overline{u}^{n}_j - a u^+_{j-\frac{1}{2}} - a u^-_{j+\frac{1}{2}}}{1 - a} = p_j(x^*_j)$, 
    which allows one to invoke \Cref{thm:limiter_acc}.
\end{remark}

\subsection{One dimensional hyperbolic systems}

We now discuss the extension to 1D  systems of the form \eqref{eq:1stHsystem} with a convex invariant domain $G$ defined in \eqref{eq:ASS-G}. 
Consider the evolution equation of the cell averages for a high order finite volume or DG scheme for the system \eqref{eq:1stHsystem}, which can be written as \eqref{scheme:1Dsystem-2}
with a numerical flux $\hat {\bf f} (\cdot,\cdot)$.
Assume  $\hat {\bf f}$ is IDP as defined in \Cref{def:IDPflux} 
under a CFL condition $\lambda \alpha   \le c_0$. 

\subsubsection{The weak IDP property of high order schemes}

The monotonicity is no longer directly applicable for hyperbolic systems. 
Instead, we can use either the convex decomposition technique \cite{ZHANG20103091,zhang2010positivity} or the GQL approach \cite{wu2023geometric}. 
In particular, let ${\bf p}_j(x)$ be the approximation polynomial in $I_j$, then \eqref{weak-mono-proof} implies that 
a high order scheme \eqref{scheme:1Dsystem-2} is a convex combination of two formal first order IDP schemes:
\begin{equation*} 
	\begin{aligned}
	&	\bar{\bu}_j^{n+1} =  \sum_{\mu=2}^{L-1} {\omega}_\mu {\bf p}_j(x_j^{(\mu)}) + {\omega}_L \left( \bu_{j+\frac{1}{2}}^- - \frac{\lambda}{{\omega}_L} \left( \hat{\bff}\big( \bu_{j+\frac{1}{2}}^-, \bu_{j+\frac{1}{2}}^+ \big) - \hat{\bff}\big( \bu_{j-\frac{1}{2}}^+, \bu_{j+\frac{1}{2}}^- \big) \right) \right) \\
		& + {\omega}_1 \left( \bu_{j-\frac{1}{2}}^+ - \frac{\lambda}{{\omega}_1} \left( \hat{\bff}\big( \bu_{j-\frac{1}{2}}^+, \bu_{j+\frac{1}{2}}^- \big) - \hat{\bff}\big( \bu_{j-\frac{1}{2}}^-, \bu_{j-\frac{1}{2}}^+ \big) \right) \right)\\
        &=  \sum_{\mu=2}^{L-1} {\omega}_\mu {\bf p}_j(x_j^{(\mu)}) + {\omega}_L {\bf H}_{ \frac{\lambda}{{\omega}_L}} \left( \bu_{j-\frac{1}{2}}^+, \bu_{j+\frac{1}{2}}^-, \bu_{j+\frac{1}{2}}^+ \right) + {\omega}_1{\bf H}_{\frac{\lambda}{{\omega}_1}} \left( \bu_{j-\frac{1}{2}}^-, \bu_{j-\frac{1}{2}}^+, \bu_{j+\frac{1}{2}}^- \right),
	\end{aligned}
\end{equation*}
Since ${\bf H}$ is a first order scheme thus is IDP under suitable CFL, e.g.,
\[ \bu_{j-\frac{1}{2}}^+, \bu_{j+\frac{1}{2}}^-, \bu_{j+\frac{1}{2}}^+ \in G\Rightarrow {\bf H}_{ \frac{\lambda}{{\omega}_L}} \left( \bu_{j-\frac{1}{2}}^+, \bu_{j+\frac{1}{2}}^-, \bu_{j+\frac{1}{2}}^+ \right)\in G,\qquad \mbox{if }\frac{\lambda}{{\omega}_L}\alpha\leq c_0.  \]
 thus \Cref{thm:1} can be extended as follows.                 
\begin{theorem}[Weak IDP property]\label{thm:1Dsystem}
	For a finite volume scheme or the scheme satisfied by the cell averages of the DG method in the form \eqref{scheme:1Dsystem-2} using an IDP flux $\hat \bff$, with an approximation polynomial vector ${\bf p}_j(x)$ of degree $k$   satisfying
	\[
	\bar{\bf u}_j^n = \frac{1}{\Delta x} \int_{I_j} {\bf p}_j(x) \, dx, \quad {\bf u}_{j-\frac{1}{2}}^+ = {\bf p}_j \left( x_{j-\frac{1}{2}} \right) \quad \text{and} \quad {\bf u}_{j+\frac{1}{2}}^- = {\bf p}_j \left( x_{j+\frac{1}{2}} \right),
	\]
then $\bar{\bf u}_j^{n+1} \in G$ 
	under the CFL condition $\lambda \alpha \le   \omega c_0$ with $\omega=\frac{1}{L(L-1)}$,	if 
		\begin{equation}\label{eq:1DBPcondition_system}
		{\bf p}_{j-1}(x_{j-\frac12})={\bf u}_{j-\frac12}^-,~ {\bf p}_{j+1}(x_{j+\frac12})={\bf u}_{j+\frac12}^+,~ {\bf p}_j( x_j^{(\mu)} )\in G,~~ 1\le \mu \le L.
	\end{equation}
\end{theorem}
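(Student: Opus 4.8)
The plan is to realize $\bar{\bu}_j^{n+1}$ as a convex combination of points that are guaranteed to lie in $G$ and then invoke the convexity of $G$. The three ingredients are the $L$-point Gauss--Lobatto cell-average decomposition, the IDP property of the numerical flux $\hat{\bff}$ from \Cref{def:IDPflux}, and the convexity of the invariant domain.

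First I would decompose the cell average using the Gauss--Lobatto quadrature \eqref{eq:GL1D} applied componentwise to the vector polynomial ${\bf p}_j$, which is exact for degree $k$ since $L \ge (k+3)/2$ and carries positive weights summing to one with $\omega_1 = \omega_L = \omega = \frac{1}{L(L-1)}$. Substituting this into the scheme \eqref{scheme:1Dsystem-2}, the single flux difference over $I_j$ is split by adding and subtracting the auxiliary interface flux $\hat{\bff}(\bu_{j-\frac12}^+, \bu_{j+\frac12}^-)$, so that the two boundary quadrature terms $\omega_1 \bu_{j-\frac12}^+$ and $\omega_L \bu_{j+\frac12}^-$ each absorb one flux balance. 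This regroups the update exactly into
\begin{equation*}
\bar{\bu}_j^{n+1} = \sum_{\mu=2}^{L-1} \omega_\mu {\bf p}_j(x_j^{(\mu)}) + \omega_L {\bf H}_{\lambda/\omega_L}\!\left(\bu_{j-\frac12}^+, \bu_{j+\frac12}^-, \bu_{j+\frac12}^+\right) + \omega_1 {\bf H}_{\lambda/\omega_1}\!\left(\bu_{j-\frac12}^-, \bu_{j-\frac12}^+, \bu_{j+\frac12}^-\right),
\end{equation*}
where ${\bf H}$ is the three-point first-order operator of \eqref{scheme:1Dsystem} built from the IDP flux $\hat{\bff}$, exactly as in the scalar derivation \eqref{weak-mono-proof} behind \Cref{thm:1}.

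Because $\hat{\bff}$ is IDP with CFL number $c_0$, each ${\bf H}$ operator maps $G \times G \times G$ into $G$ whenever its effective time step obeys $\frac{\lambda}{\omega}\alpha \le c_0$; since $\omega_1 = \omega_L = \omega$, this is precisely the assumed condition $\lambda\alpha \le \omega c_0$. Under the sufficient condition \eqref{eq:1DBPcondition_system}, every argument of both operators lies in $G$: the endpoint values $\bu_{j-\frac12}^+ = {\bf p}_j(x_j^{(1)})$ and $\bu_{j+\frac12}^- = {\bf p}_j(x_j^{(L)})$ together with the interior values ${\bf p}_j(x_j^{(\mu)})$ are in $G$ directly, while the neighbor traces $\bu_{j-\frac12}^- = {\bf p}_{j-1}(x_{j-\frac12})$ and $\bu_{j+\frac12}^+ = {\bf p}_{j+1}(x_{j+\frac12})$ are endpoint Gauss--Lobatto values of the adjacent cells and hence also in $G$. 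Thus $\bar{\bu}_j^{n+1}$ is a convex combination, with positive weights summing to one, of points of $G$, and convexity yields $\bar{\bu}_j^{n+1} \in G$.

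The only real work is the bookkeeping in the splitting step: after inserting the auxiliary interface flux, one must verify that the flux terms regroup precisely into the two first-order stencils with the correct rescaled steps $\lambda/\omega_1$ and $\lambda/\omega_L$. I expect this to be the main obstacle, though it is a mechanical verification identical in form to the scalar case \eqref{weak-mono-proof}. A worthwhile simplification relative to \Cref{thm:1} is that monotonicity of ${\bf H}$ plays no role here: the argument uses only that ${\bf H}$ is an IDP first-order update mapping $G^3$ into $G$, so no scalar-type monotonicity structure is needed.
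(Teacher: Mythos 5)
Your proposal is correct and takes essentially the same route as the paper: the Gauss--Lobatto decomposition of the cell average, insertion of the auxiliary interface flux $\hat{\bff}(\bu_{j-\frac12}^+,\bu_{j+\frac12}^-)$ to regroup the update into $\sum_{\mu=2}^{L-1}\omega_\mu {\bf p}_j(x_j^{(\mu)})$ plus the two formal first-order steps ${\bf H}_{\lambda/\omega_1}$ and ${\bf H}_{\lambda/\omega_L}$, followed by the IDP flux property under the rescaled CFL $\lambda\alpha\le\omega c_0$ and convexity of $G$. Your closing observation that only the IDP property of ${\bf H}$ is used, with no monotonicity structure required, matches the paper's own remark that monotonicity is not applicable for systems and the convex decomposition replaces it.
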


\subsubsection{A simple scaling limiter}
 
Similar to the scalar case, given ${\bf p}_j(x)$ with a cell average $\bar \bu^n_j\in G$, for enforcing \eqref{eq:1DBPcondition_system}, 
on each cell $I_j$ a simple scaling limiter  can be  designed as follows:
\begin{equation}\label{eq:ZSlimiter_system}
	\tilde {\bf p}_j(x) = \theta( {\bf p}_j(x) - \bar {\bf u}_j^n) + \bar {\bf u}_j^n,
\end{equation}
$$
\theta = \min_{\mu} \{ \theta_j^{(\mu)} \}, \qquad \mbox{with} \qquad 
\theta_j^{(\mu)} = \begin{cases}
	1,\quad   &  \mbox{if}~~ {\bf p}_j(x_{j}^{(\mu)}) \in G,
	\\
	\frac{ \left| {\bf u}_* -  \bar {\bf u}_j^n \right| }{ \left| {\bf p}_j(x_{j}^{(\mu)}) - \bar {\bf u}_j^n \right| }, \quad & \mbox{otherwise}, 
\end{cases}
$$
where ${\bf u}_*$ is the  intersection point   of $\partial G$ and the line segment that  connects  $\bar {\bf u}_j^n\in G$ and  ${\bf p}_j(x_{j}^{(\mu)})\notin G$.

In many cases, the computation of $\theta_j^{(\mu)}$ is cumbersome and may require a root-finding procedure. 
If the invariant domain can be reformulated such that all the functions $g_i({\bf u})$ in \eqref{eq:ASS-G} are linear or concave with respect to $\bf u$, then a different but easier alternative of   $\theta$ can be considered, 
$$
\theta = \min_i \{ \theta_i\}, \qquad \mbox{with} \qquad  \theta_i = \min \left \{ \left| \frac{  g_i(\bar {\bf u}_j^n) - \varepsilon_i  }{  g_i(\bar {\bf u}_j^n) - \min_{\mu} g_i( {\bf p}_j(x_{j}^{(\mu)})  ) }  \right|, 1 \right \},
$$
where the small number $\varepsilon_i \ge 0 $ is   introduced to mitigate the effect of round-off errors.  
Jensen inequality for concave functions $g_i({\bf u})$ implies 
$
\tilde {\bf p}_j(x) \in G~~ \forall x \in \mathbb S_j,
$
e.g., see \cite{zhang2010positivity,zhang2011maximum,zhang2012minimum,wang2012robust,zhang2017positivity} for compressible Euler equations and \cite{xing2010positivity} for shallow water equations.
Such a  limiter does not increase entropy for compressible Euler equations \cite{chen2017entropy,lin2023positivity}.

A simplified limiter for finite volume schemes without directly using reconstruction polynomial ${\bf p}_j(x)$ can also be constructed, similar to \Cref{sec:zhanglimiter-FV}, see \cite[Section 5]{zhang2011maximum} for details. 

In practice, one may want to preserve the invariant domain of the numerical solution at more points, such as Gauss quadrature points used in DG methods. It can be easily enforced by the same limiter \eqref{eq:ZSlimiter_system} at these points.

\subsubsection{Robust and efficient implementations}

The full algorithm flowchart follows similarly as in \Cref{sec:zhang-shu-alo}.  
Since the exact solutions to scalar conservation laws satisfy the maximum principle, it is not difficult to estimate the maximum wave speed $\alpha$ in \Cref{def:IDPflux} for scalar conservation laws.
However, for a hyperbolic system, the maximum wave speed may grow in time.
It is nontrivial to enforce the CFL needed for \Cref{thm:1Dsystem} in each time stage of a SSP Runge-Kutta method by computing a time step $\Delta t$ based only on information of $\bu^n$. 

On the other hand, the CFL condition in \Cref{thm:1Dsystem} is only a sufficient but not a necessary condition for achieving $\bar\bu^{n+1}_j\in G$ thus one convenient and efficient implementation is described by the following two steps.

First,  use  a SSP Runge-Kutta method with a high order finite volume or DG method in space using  an IDP numerical flux, e.g., the Lax-Friedrichs flux
    \[   \hat{\bff}(\bu^-_{j+\frac12}, \bu^+_{j+\frac12})=\frac{1}{2}[\bff(\bu^-_{j+\frac12})+\bff(\bu^+_{j+\frac12})-\alpha_{j+\frac12} (\bu^+_{j+\frac12}-\bu^-_{j+\frac12})], \alpha_{j+\frac12}=\max \left|\bff'(\bu^\pm_{j+\frac12})\right|.\]
    
Second, use any commonly used time step in a SSP Runge-Kutta method to evolve from time step $n$ to step $n+1$, with  the simple limiter \eqref{eq:ZSlimiter_system} used on each time stage. If $\bar{\bu}^{n+1}_j\notin G$ happens at time step $n+1$ or any inner time step of the  Runge-Kutta method, then it means that the CFL condition in \Cref{thm:1Dsystem} is not met at that time stage or time step, thus go back to time step $n$ to recompute with halved time step.
See \cite{wang2012robust,zhang2017positivity} for more details. 

Notice that the implementation of recomputing with halved time step whenever invariant domain is violated can be used for any numerical scheme, which however may result in an infinite loop of recomputing, e.g., a high order with any positive time step can violate bounds in \Cref{example:advection_upwind}. Only when  the simple limiter  \eqref{eq:ZSlimiter_system} is added to control point values at the proper locations, this is not an infinite loop
because \Cref{thm:1Dsystem} ensures $\bar{\bu}^{n+1}_j\in G$ when time step is small enough. 

\begin{remark}
The Zhang-Shu approach can be used with any finite volume and DG methods to construct high order IDP schemes. For finite difference schemes, this approach can still be used if the FD scheme is defined via a pseudo finite volume scheme. For example, the classical Jiang-Shu FD WENO scheme can be rendered positivity-preserving by this approach   \cite{zhang2012positivity,fan2021positivity}. 
\end{remark}

\subsection{Multi-dimensional extensions}
 
We summarize the two key and essential ingredients for extending from 1D results above to  multiple dimensions:

 \paragraph{A first order IDP flux or scheme} It is available in multiple dimensions as reviewed in \Cref{sec:firstorderIDP}.
 
\paragraph{A decomposition of a cell average into a convex combination of point values including point values on the cell boundary} In one dimension, the cell boundary values are simply $\bu^{\pm}_{j+\frac12}$. In multiple dimensions, these cell boundary values should be the quadrature point values used for computing numerical flux along cell boundaries.

With these two ingredients, it is possible to extend \Cref{thm:1Dsystem} to multiple dimensions, which will be reviewed in the next two subsections, and the point values can be controlled and corrected by a similar simple limiter like \eqref{eq:ZSlimiter_system}. 
The desired decomposition of a cell average into point values is achieved by 
Gauss--Lobatto quadrature in 1D. In multiple dimensions, it can be done by a quadrature with positive quadrature weights. On a given cell, such a quadrature may or may not exist. On a polygonal cell, such a quadrature can be constructed and is not unique.
In \cite{zhang2012maximum,lv2015entropy,chen2017entropy}, different choices of such quadrature rules were constructed on triangles and simplices, by
 from which specialized quadrature rules on polygons can be obtained by partitioning the polygon into a union of triangles \cite{MR3471184}. {\color{black}Quadrature rules on polygons with fewer points can be found in \cite{zbMATH05590395}.} 
See \cite{endeve2015bound} for the curvilinear elements. \Cref{fig:specialquad} shows the special quadrature in two dimensions used in \cite{zhang2010positivity,zhang2012maximum}, which is however not an optimal choice.  
\begin{figure}[htbp]
    \centering 
   \begin{tikzpicture}[scale=1]
  \draw[line width=1pt] (-2,0) -- (2,0);
  \fill[cyan] (-2,0) circle (3pt);
  \fill[cyan] (2,0) circle (3pt);
  \fill[red] (0,0) circle (3pt);
\end{tikzpicture}
\begin{tikzpicture}[scale=0.8]
  \draw[line width=1pt] (-2,-2) -- (-2,2);
  \draw[line width=1pt] (-2,-2) -- (2,-2);
  \draw[line width=1pt] (-2,2) -- (2,2);
  \draw[line width=1pt] (2,2) -- (2,-2);
  
  \foreach \x/\y in { -2/-1.55, -2/0, -2/1.55, 2/-1.55, 2/0, 2/1.55, 
                      -1.55/-2, 0/-2, 1.55/-2, -1.55/2, 0/2, 1.55/2 } {
    \fill[cyan] (\x,\y) circle (3pt);
  }
  \foreach \x/\y in { 0/-1.55, 0/0, 0/1.55, -1.55/0, 1.55/0 } {
    \fill[red] (\x,\y) circle (3pt);
  }
\end{tikzpicture}
\begin{tikzpicture}[scale=0.5]
  \draw[line width=1.2pt] (-4,-2) -- (4,-2);
  \draw[line width=1.2pt] (-4,-2) -- (0,4.93);
  \draw[line width=1.2pt] (4,-2) -- (0,4.93);

  \foreach \x/\y in { 0/-2, -3.1/-2, 3.1/-2,
                      -2/1.465, -3.45/-1.04, -0.45/4.15,
                      2/1.465, 3.45/-1.04, 0.45/4.15 } {
    \fill[cyan] (\x,\y) circle (5pt);
  }

  \foreach \x/\y in { -1/-0.2675, -3.275/-1.52, 1.325/1.075,
                      0/1.465, 0/-1.04, 0/4.15,
                      1/-0.2675, 3.275/-1.52, -1.325/1.075 } {
    \fill[red] (\x,\y) circle (5pt);
  }
\end{tikzpicture}
    \caption{One example of the special quadrature for quadratic polynomials. Left: 1D cell. Middle: 2D rectangle. Right: 2D triangle. For 1D, it is simply 3-point Gauss-Lobatto quadrature. The  points in cyan color are the Gauss quadrature points for computing numerical flux integrals in high order schemes, and red points and cyan points together form a special quadrature that is exact for quadratic polynomials with positive weight.}
    \label{fig:specialquad}
\end{figure}
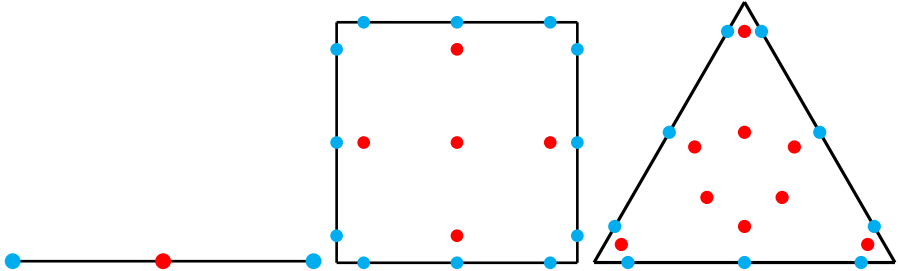
In  \Cref{fig:specialquad},
such a special quadrature consists of unnecessarily too many points 
 for integrating quadratic polynomials. We emphasize that such a quadrature is not used for computing any integrals, and we only need the following from this special quadrature: 
\begin{itemize}
\item We need its existence to establish weak IDP properties like \Cref{thm:1Dsystem}.
\item We need the quadrature points and weights for defining and implementing the limiter \eqref{eq:ZSlimiter_system}. 
\item The smallest weight on the cyan color points along the boundary also gives the CFL number in  \Cref{thm:1Dsystem}, which will be discussed later in this section. 
\end{itemize}
 
\begin{remark}
    If implementing the Zhang--Shu method using the simplified limiter as stated in \Cref{sec:zhanglimiter-FV}, then evaluation of DG or FV polynomials at the highly redundant red points in \Cref{fig:specialquad} can be avoided. Similar to \Cref{thm:1Dsystem}, only quadrature weights of the cyan points are needed to state a weak IDP theorem and define a simplified limiter. 
\end{remark}

\subsection{2D hyperbolic systems on rectangular meshes}

We now review the Zhang--Shu approach  for 2D hyperbolic system 
\begin{equation}\label{eq:2DSCL}
	\partial_t {\bf u} + \partial_x {\bf f}_1(u) + \partial_y {\bf f}_2(u) = {\bf 0}
\end{equation}
on rectangular meshes \cite{zhang2010positivity}, and some recent advances \cite{cui2023classic,cui2024optimal} on seeking optimal quadrature for IDP high order schemes.

We first review how to establish the weak IDP property for the updated cell averages. 
For \eqref{eq:2DSCL} with the forward Euler time discretization on a rectangular cell $I_{ij} := [x_{i-\frac{1}{2}}, x_{i+\frac{1}{2}}] \times [y_{j-\frac{1}{2}}, y_{j+\frac{1}{2}}]$,  
a finite volume scheme or the cell average of DG scheme 
 can be formulated as 
 \begin{equation}\label{eq:2Dscheme1}
 	\begin{aligned}
 \bar{\bf u}_{ij}^{n+1} &= \bar{\bf u}_{ij}^n - \frac{\Delta t}{\Delta x} \sum_{q=1}^Q \widetilde \omega_q  \left[ \hat{\bf f}_1 \big( {\bf u}_{i+\frac12,q}^{-}, {\bf u}_{i+\frac12,q}^{+}  \big) - \hat{\bf f}_1  \big( {\bf u}_{i-\frac12,q}^{-}, {\bf u}_{i-\frac12,q}^{+}  \big)  \right]
 \\
 & \qquad \quad 
 - \frac{\Delta t}{\Delta y} \sum_{q=1}^Q \widetilde \omega_q  \left[ \hat {\bf f}_2 \big( {\bf u}_{q,j+\frac12}^{-}, {\bf u}_{q,j+\frac12}^{+}  \big) - \hat {\bf f}_2 \big( {\bf u}_{q,j-\frac12}^{-}, {\bf u}_{q,j-\frac12}^{+}  \big) \right]
 \end{aligned}
 \end{equation}
with 
 \[
 \begin{aligned}
 	&
 {\bf u}_{i-\frac{1}{2},q}^{+} = {\bf p}_{ij}(x_{i-\frac{1}{2}}, \widetilde y_j^{(q)}), \quad
 {\bf u}_{i+\frac{1}{2},q}^{-} = {\bf p}_{ij}(x_{i+\frac{1}{2}}, \widetilde y_j^{(q)}), 
 \\
 &
 {\bf u}_{q,j-\frac{1}{2}}^{+} = {\bf p}_{ij}( \widetilde x_i^{(q)}, y_{j-\frac{1}{2}}), \quad
 {\bf u}_{q,j+\frac{1}{2}}^{-} = {\bf p}_{ij}( \widetilde x_i^{(q)}, y_{j+\frac{1}{2}}),
 \end{aligned}
 \]
where ${\bf p}_{ij}(x, y)$ is the approximate solution polynomial vector (reconstructed in finite volume methods or evolved in DG methods) on $I_{ij}$ at time level $n$ and its cell average over $I_{ij}$ equals $\bar{\bf u}_{ij}^n$. 
Here, $\{\widetilde x_i^{(q)}\}_{q=1}^{Q}$ and $\{\widetilde y_j^{(q)}\}_{q=1}^{Q}$ denote the nodes of a $Q$-point Gauss  quadrature of sufficiently high order accuracy in the intervals $[x_{i-\frac{1}{2}}, x_{i+\frac{1}{2}}]$ and $[y_{j-\frac{1}{2}}, y_{j+\frac{1}{2}}]$, respectively, with the normalized weights $\{ \widetilde \omega_q \}$ satisfying $\sum_{q=1}^{Q} \widetilde \omega_q = 1$. We use tildes to distinguish this Gauss quadrature from the Gauss–Lobatto quadrature introduced earlier; both rules will be employed below.

%
%

\subsubsection{Weak IDP property}

In order to obtain an IDP scheme, the numerical fluxes $\hat{\bff}_1$ and $\hat{\bff}_2$ in \eqref{eq:2Dscheme1} are taken as IDP fluxes,  with which the corresponding 1D three-point first order schemes are IDP, i.e., for any ${\bf u}_1, {\bf u}_2, {\bf u}_3 \in G$ it holds that
 \[
 {\bf u}_2 - \frac{\Delta t}{\Delta x} \left( \hat{\bf f}_1({\bf u}_2, {\bf u}_3) - \hat{\bf f}_1({\bf u}_1, {\bf u}_2) \right) \in G, \quad
 {\bf u}_2 - \frac{\Delta t}{\Delta y} \left( \hat{\bf f}_2({\bf u}_2, {\bf u}_3) - \hat{\bf f}_2({\bf u}_1, {\bf u}_2) \right) \in G
 \]
 under a suitable CFL condition $\max\{ \alpha_1 \Delta t / \Delta x, \alpha_2 \Delta t / \Delta y\} \leq c_0$, where $\alpha_1$ and $\alpha_2$ denote the maximum characteristic speeds in the $x$- and $y$-directions, and $c_0$ is the maximum allowable CFL number for the 1D first order schemes.

Similar to the role of the Gauss--Lobatto quadrature in the 1D case, in order to decompose the cell average $\bar {\bf u}_{ij}^n$ into a convex combination of some point values of ${\bf p}_{ij}$, we need a special quadrature on the cell $I_{ij}$ of the form 
	\begin{equation}\label{2Ddecomp}
		\begin{aligned}
 &  \frac{1}{\Delta x \Delta y}	\int_{I_{ij}} p(x,y) \, {\rm d}x {\rm d}y=  
	\sum_{q=1}^Q \widetilde \omega_{q}  \Big[  
	\omega_1^- p (x_{i-\frac{1}{2}}, \widetilde y_j^{(q)}) +
	\omega_1^+ p (x_{i+\frac{1}{2}}, \widetilde y_j^{(q)}) 
	\\
	& \qquad \qquad +
	\omega_2^- p ( \widetilde x_i^{(q)}, y_{j-\frac{1}{2}}) +
	\omega_2^+ p (  \widetilde  x_i^{(q)}, y_{j+\frac{1}{2}}) 
	\Big]
	+ \sum_{s=1}^S  \omega_s^* p (  x_s^*,  y_s^* ),
	\end{aligned}
\end{equation}
which should satisfy three requirements: 
	\begin{enumerate}[label=(\roman*)]
	\item  The quadrature \eqref{2Ddecomp} is exact for all polynomials $p \in \mathbb V$, where $\mathbb V$ is the approximate solution space, e.g., $\mathbb P^k$ or $\mathbb Q^k$;
	\item  The weights $\{{\omega}_1^\pm, {\omega}_2^\pm,  {\omega}_s^* \}$ are  positive and they sum to one;
    \item  The internal node set $\mathcal{I}_K = \left\{ ( x_s^*,  y_s^* ) \right\}_{s=1}^{S} \subset I_{ij}$.
\end{enumerate}

With such a quadrature, \Cref{thm:1Dsystem} can be easily extended to rectangular cells \cite{ZHANG20103091,zhang2010positivity,zhang2012maximum,cui2023classic,cui2024optimal}. We state one version of such an extension  in  \cite{cui2023classic} as follows.
\begin{theorem}[Weak IDP property on 2D rectangular mesh]\label{thm:CFL}
	If the solution polynomials $\{{\bf p}_{ij} \}$ satisfy 
	\begin{equation}\label{eq:2Dcond}
		{\bf p}_{ij} (x,y)\in G \qquad \forall (x,y)\in \mathbb S_{ij}, \quad \forall i,j,
	\end{equation}
	where $\mathbb S_{ij}$ denotes the set of all the quadrature points in \eqref{2Ddecomp}, 
	then the high order scheme \eqref{eq:2Dscheme1} preserves 
	$\bar {\bf u}_{ij}^{n+1}\in G$ under the CFL condition
	\begin{equation}\label{2Dhst_CFL_all}
		\Delta t \le c_0 \min \left\{ \frac{\omega_1^- \Delta x}{\alpha_1}, \frac{\omega_1^+ \Delta x}{\alpha_1} , \frac{\omega_2^- \Delta y}{\alpha_2}, \frac{\omega_2^+ \Delta y}{\alpha_2} \right\}.
	\end{equation}
\end{theorem}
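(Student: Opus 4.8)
The plan is to mimic the one-dimensional argument behind \Cref{thm:1Dsystem} (see \eqref{weak-mono-proof}), applied direction-by-direction at each boundary Gauss node, and then invoke the convexity of $G$. First I would substitute the special decomposition \eqref{2Ddecomp} for $\bar{\bf u}_{ij}^n$ into the scheme \eqref{eq:2Dscheme1}. This writes $\bar{\bf u}_{ij}^{n+1}$ as a weighted sum in which each boundary point value carries weight $\widetilde\omega_q\omega_1^\pm$ or $\widetilde\omega_q\omega_2^\pm$, each interior point value ${\bf p}_{ij}(x_s^*,y_s^*)$ carries weight $\omega_s^*$, and the two flux differences remain attached with the factors $-\Delta t/\Delta x$ and $-\Delta t/\Delta y$.

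The key step is to split each flux difference using an auxiliary \emph{interior} numerical flux and regroup. For the $x$-direction at a fixed node $\widetilde y_j^{(q)}$, set ${\bf F}_q := \hat{\bf f}_1({\bf u}^+_{i-\frac12,q}, {\bf u}^-_{i+\frac12,q})$ and write
\begin{equation*}
\hat{\bf f}_1({\bf u}^-_{i+\frac12,q}, {\bf u}^+_{i+\frac12,q}) - \hat{\bf f}_1({\bf u}^-_{i-\frac12,q}, {\bf u}^+_{i-\frac12,q}) = \left[\hat{\bf f}_1({\bf u}^-_{i+\frac12,q}, {\bf u}^+_{i+\frac12,q}) - {\bf F}_q\right] + \left[{\bf F}_q - \hat{\bf f}_1({\bf u}^-_{i-\frac12,q}, {\bf u}^+_{i-\frac12,q})\right].
\end{equation*}
I would then pair the bracket ending in ${\bf F}_q$ with the boundary weight $\omega_1^-$ and the bracket beginning with ${\bf F}_q$ with $\omega_1^+$, producing the two formal three-point first-order updates
\begin{equation*}
\begin{aligned}
&{\bf u}^-_{i+\frac12,q} - \frac{\Delta t}{\omega_1^+\Delta x}\left(\hat{\bf f}_1({\bf u}^-_{i+\frac12,q}, {\bf u}^+_{i+\frac12,q}) - {\bf F}_q\right),\\
&{\bf u}^+_{i-\frac12,q} - \frac{\Delta t}{\omega_1^-\Delta x}\left({\bf F}_q - \hat{\bf f}_1({\bf u}^-_{i-\frac12,q}, {\bf u}^+_{i-\frac12,q})\right).
\end{aligned}
\end{equation*}
Each of these is exactly a first-order IDP update of the form stated just before the theorem, with effective time steps $\Delta t/\omega_1^+$ and $\Delta t/\omega_1^-$; their three input states are boundary quadrature point values of $I_{ij}$ or of the neighbouring cell $I_{i\pm1,j}$, all of which lie in $G$ by \eqref{eq:2Dcond} applied cellwise. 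The interior flux ${\bf F}_q$ telescopes between the two weighted updates, reproducing the original $x$-flux difference. The $y$-direction is handled identically with $\omega_2^\pm$, $\Delta y$, and $\alpha_2$.

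After this regrouping, $\bar{\bf u}_{ij}^{n+1}$ is a convex combination of the interior values ${\bf p}_{ij}(x_s^*,y_s^*)$ and of the first-order IDP updates, with weights $\{\omega_s^*,\ \widetilde\omega_q\omega_1^\pm,\ \widetilde\omega_q\omega_2^\pm\}$ that are positive and sum to one, using $\sum_q\widetilde\omega_q=1$ together with requirement (ii) of \eqref{2Ddecomp}. Each first-order update lies in $G$ once its effective CFL number meets the single-cell bound, e.g. $\alpha_1\Delta t/(\omega_1^\pm\Delta x)\le c_0$ in the $x$-direction and the analogous condition in $y$; taking the minimum over all four boundary weights yields precisely \eqref{2Dhst_CFL_all}, and convexity of $G$ then gives $\bar{\bf u}_{ij}^{n+1}\in G$. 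I expect the main obstacle to be organizational rather than analytical: carefully bookkeeping the simultaneous $x$- and $y$-splittings so that each flux difference is accounted for exactly once and the interior fluxes telescope correctly, and verifying that the three states feeding each first-order update genuinely belong to $G$ — which hinges on the fact that the boundary Gauss points are shared by adjacent cells, so that \eqref{eq:2Dcond} supplies all the admissibility needed.
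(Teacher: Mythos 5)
Your proof is correct and takes essentially the same route the paper intends for this theorem: substitute the convex decomposition \eqref{2Ddecomp} into \eqref{eq:2Dscheme1}, split each flux difference at every boundary Gauss node with the interior flux (the direct 2D analogue of \eqref{weak-mono-proof}), recognize each resulting piece as a formal three-point first-order IDP update with effective time steps $\Delta t/\omega_1^{\pm}$ and $\Delta t/\omega_2^{\pm}$, and conclude by convexity of $G$, with the four conditions $\alpha_1\Delta t/(\omega_1^{\pm}\Delta x)\le c_0$ and $\alpha_2\Delta t/(\omega_2^{\pm}\Delta y)\le c_0$ combining into \eqref{2Dhst_CFL_all}. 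One trivial slip: your prose pairs the bracket ending in ${\bf F}_q$ with $\omega_1^-$ and the bracket beginning with ${\bf F}_q$ with $\omega_1^+$, which is backwards relative to your displayed formulas --- the displayed equations (bracket ending in ${\bf F}_q$ attached to the right-boundary value with weight $\omega_1^+$) are the correct ones, and the rest of the argument is unaffected.
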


\subsubsection{Quadrature and CFL}

The special 2D quadrature of the form \eqref{2Ddecomp} plays a critical role in constructing above 2D high order IDP schemes. Such quadrature rules are not unique. Below are several examples. 

\begin{example}[Zhang--Shu quadrature \cite{ZHANG20103091,zhang2010positivity}]
 Based on	the tensor product of the $L$-point Gauss--Lobatto quadrature  (with $L=\lceil \frac{k+3}{2}\rceil$) and the $Q$-point Gauss  quadrature, 
 Zhang and Shu \cite{ZHANG20103091,zhang2010positivity} proposed the following  quadrature:
\begin{equation}\label{eq:ZSquadrature}
	\begin{aligned}
   \frac{1}{\Delta x \Delta y}	\int_{I_{ij}} p(x,y) \, {\rm d}x {\rm d}y & =  
\sum_{q=1}^Q \widetilde \omega_{q}  \Big[  
\kappa_1  \omega_1 p (x_{i-\frac{1}{2}}, \widetilde  y_j^{(q)}) +
\kappa_1  \omega_1 p (x_{i+\frac{1}{2}}, \widetilde  y_j^{(q)}) 
\\
& \qquad +
\kappa_2  \omega_1 p ( \widetilde  x_i^{(q)},  y_{j-\frac{1}{2}}) +
\kappa_2  \omega_1 p ( \widetilde  x_i^{(q)}, y_{j+\frac{1}{2}}) 
\Big]
\\
 & \quad	+
	\sum_{\mu=2}^{L-1} \sum_{q=1}^{Q}  \omega_\mu \widetilde   \omega_q 
	\left[ 
	\kappa_1  p\left( x_{i}^{(\mu)}, \widetilde  y_{j}^{(q)} \right) + 
	\kappa_2  p\left( \widetilde  x_{i}^{(q)}, y_{j}^{(\mu)}  \right)
	\right],
	\end{aligned}
\end{equation}
where  $\{ x_i^{(\mu)}\}_{\mu=1}^{L}$ and $\{ y_j^{(\mu)}\}_{\mu=1}^{L}$ denote the nodes of the $L$-point Gauss-Lobatto quadrature in the intervals $[x_{i-\frac{1}{2}}, x_{i+\frac{1}{2}}]$ and $[y_{j-\frac{1}{2}}, y_{j+\frac{1}{2}}]$, respectively, with the weights $\{ \omega_\mu \}$ satisfying $\sum_{\mu=1}^{L} \omega_\mu = 1$, and 
\begin{equation*}
	\kappa_1 := \frac{{\alpha_{1}}/{\Delta x}}{{
			\alpha_1}/{\Delta x}+{\alpha_2}/{\Delta y}}, \qquad
	\kappa_2 := \frac{{\alpha_{2}}/{\Delta y}}{{\alpha_1}/{\Delta x}+{\alpha_2}/{\Delta y}}.
\end{equation*}
The set of quadrature nodes in  the   quadrature rule \eqref{eq:ZSquadrature} can be expressed as 
$$
\mathbb S_{ij}^{\tt ZS} =  \left(  {\mathbb S}_i^x \otimes  \widetilde { \mathbb S}^y_j  \right) \cup 
\left(  \widetilde {\mathbb S}_i^x \otimes   { \mathbb S}^y_j \right), 
$$
where  
$\widetilde  {\mathbb S}_i^x:=\{  \widetilde  x_{i}^{(q)} \}_{q=1}^Q,$ $\widetilde {\mathbb S}^y_j:=\{ \widetilde  y_{j}^{(q)} \}_{q=1}^Q,$ 
${\mathbb S}_i^x:=\{ x_{i}^{(\mu)} \}_{\mu=1}^L,$ and ${ \mathbb S}^y_j:=\{  y_{j}^{(\mu)} \}_{\mu=1}^L$ are the Gauss quadrature nodes for flux integration and the Gauss--Lobatto quadrature nodes for convex decomposition of cell averages, respectively.
The quadrature nodes of the Zhang--Shu quadrature \eqref{eq:ZSquadrature} are illustrated in \Cref{fig:374} for $\mathbb P^2$ and $\mathbb P^3$ based methods. 
\end{example}

\begin{figure}
	\centering
	\begin{subfigure}{0.48\textwidth}
		\centering
		\includegraphics[width=0.9\textwidth]{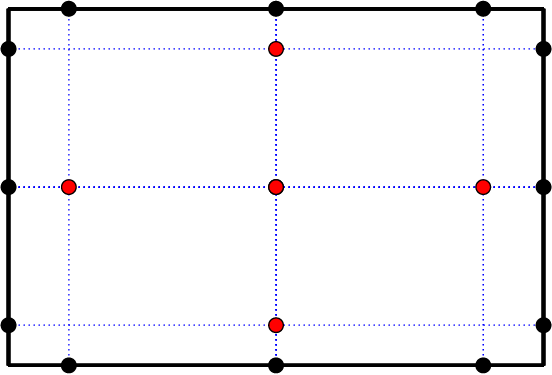}
		\caption{Zhang--Shu quadrature \eqref{eq:ZSquadrature} for $\mathbb{P}^2$}
	\end{subfigure}
	\hfill
	\begin{subfigure}{0.48\textwidth}
		\centering
		\includegraphics[width=0.9\textwidth]{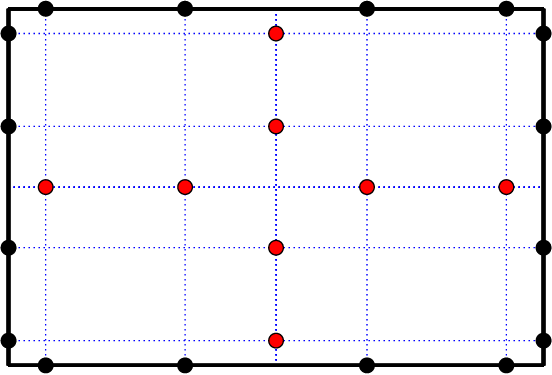}
		\caption{Zhang--Shu quadrature \eqref{eq:ZSquadrature} for $\mathbb{P}^3$}
	\end{subfigure}
	\\
	\begin{subfigure}{0.48\textwidth}
		\centering
		\includegraphics[width=0.9\textwidth]{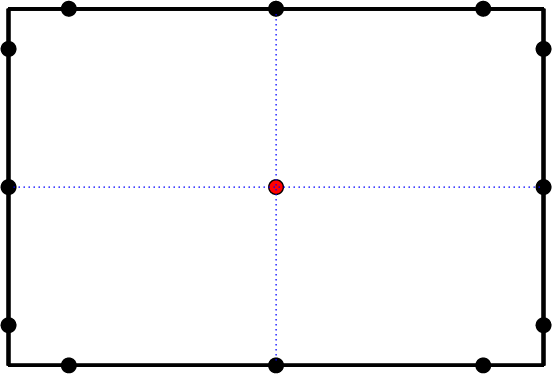}
		\caption{Optimal quadrature \eqref{eq:QR2D} for $\mathbb{P}^2$}
	\end{subfigure}
	\hfill
	\begin{subfigure}{0.48\textwidth}
		\centering
		\includegraphics[width=0.9\textwidth]{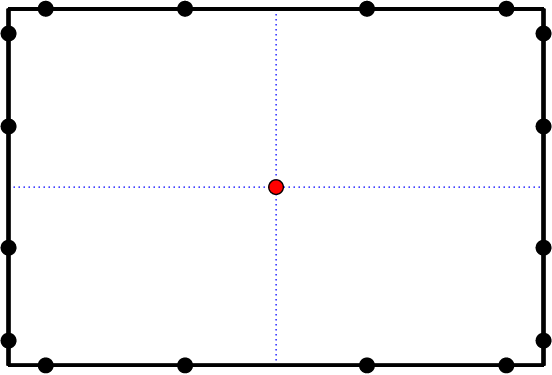}
		\caption{Optimal quadrature \eqref{eq:QR2D} for $\mathbb{P}^3$}
	\end{subfigure}
	\caption{Boundary nodes (black) and internal nodes (red) of the special 2D quadrature on a rectangular cell $I_{ij}$, for the $\mathbb{P}^2$- and $\mathbb{P}^3$-based methods, in the case of $\frac{\dx}{\alpha_1} = \frac{\dy}{\alpha_2}$. }
	\label{fig:374}  
\end{figure}

As a corollary of 
Theorem \ref{thm:CFL}, we have the following result.

\begin{theorem}[Weak IDP via Zhang--Shu quadrature]
	If the solution polynomials $\{{\bf p}_{ij} \}$ satisfy 
	\eqref{eq:2Dcond} with $\mathbb S_{ij} = \mathbb S_{ij}^{\tt ZS}$, 
	then the high order scheme \eqref{eq:2Dscheme1} preserves 
	$\bar {\bf u}_{ij}^{n+1}\in G$ under the CFL condition
	\begin{equation}\label{eq:CFL-ZS}
	 	\left( \frac{\alpha_1}{\Delta x}+\frac{\alpha_2}{\Delta y} \right) \Delta t \le  \omega_1\,c_0 
		=\frac{c_0}{ L(L-1) } \quad \mbox{with} \quad L=\left \lceil \frac{k+3}{2} \right\rceil.
	\end{equation}
\end{theorem}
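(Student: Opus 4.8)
The plan is to obtain this statement as a direct specialization of Theorem~\ref{thm:CFL}, so the only real work is to verify that the Zhang--Shu quadrature \eqref{eq:ZSquadrature} is an admissible decomposition of the form \eqref{2Ddecomp} and then to simplify the resulting CFL bound \eqref{2Dhst_CFL_all}. First I would check the three requirements needed by Theorem~\ref{thm:CFL}. The quadrature \eqref{eq:ZSquadrature} is the convex combination, with coefficients $\kappa_1$ and $\kappa_2$ satisfying $\kappa_1+\kappa_2=1$, of two tensor-product rules: the $L$-point Gauss--Lobatto rule in $x$ tensored with the $Q$-point Gauss rule in $y$, and the Gauss rule in $x$ tensored with the Gauss--Lobatto rule in $y$. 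Each one-dimensional factor is exact for the relevant polynomial degree (the choice $L=\lceil (k+3)/2\rceil$ makes the Gauss--Lobatto rule integrate degree $2L-3\ge k$ exactly, and $Q$ is taken sufficiently large for the Gauss rule), so each tensor rule is exact on $\mathbb P^k$ or $\mathbb Q^k$, and hence so is their convex combination; all weights are products of positive Gauss and Gauss--Lobatto weights scaled by $\kappa_1,\kappa_2>0$ and sum to $\kappa_1+\kappa_2=1$; the interior nodes lie in $I_{ij}$. Thus requirements (i)--(iii) of Theorem~\ref{thm:CFL} hold.

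Next I would read off the boundary weights appearing in \eqref{2Ddecomp}. The face values $p(x_{i\mp 1/2},\widetilde y_j^{(q)})$ receive contributions only from the Gauss--Lobatto endpoints $\mu=1,L$ of the first tensor rule, giving $\omega_1^-=\omega_1^+=\kappa_1\omega_1$, where $\omega_1=\frac{1}{L(L-1)}$ is the Gauss--Lobatto endpoint weight; symmetrically, the faces at $y_{j\mp 1/2}$ give $\omega_2^-=\omega_2^+=\kappa_2\omega_1$. Substituting these four values into the CFL bound \eqref{2Dhst_CFL_all} yields
\begin{equation*}
\dt \le c_0\,\omega_1 \min\left\{ \frac{\kappa_1 \dx}{\alpha_1},\, \frac{\kappa_2 \dy}{\alpha_2} \right\}.
\end{equation*}

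The key simplification is that the two candidates in this minimum are in fact \emph{equal}, by the very definition of $\kappa_1,\kappa_2$. Indeed,
\begin{equation*}
\frac{\kappa_1 \dx}{\alpha_1} = \frac{1}{\alpha_1/\dx + \alpha_2/\dy} = \frac{\kappa_2 \dy}{\alpha_2},
\end{equation*}
so the minimum collapses to $\left(\alpha_1/\dx+\alpha_2/\dy\right)^{-1}$ and the bound rearranges to exactly \eqref{eq:CFL-ZS}, namely $\left(\frac{\alpha_1}{\dx}+\frac{\alpha_2}{\dy}\right)\dt \le \omega_1 c_0 = \frac{c_0}{L(L-1)}$. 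There is no genuine obstacle: the entire content is the elementary observation that $\kappa_1,\kappa_2$ are chosen precisely to balance the $x$- and $y$-direction CFL restrictions so that neither direction becomes the bottleneck. The only place demanding a little care is confirming the exactness of \eqref{eq:ZSquadrature} on the full solution space in the first step, which reduces to the standard degrees of exactness of the one-dimensional Gauss and Gauss--Lobatto rules together with the fact that a convex combination of exact rules is exact.
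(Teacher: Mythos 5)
Your proposal is correct and follows exactly the route the paper intends: the paper states this result as a corollary of Theorem~\ref{thm:CFL}, and your verification that the Zhang--Shu quadrature \eqref{eq:ZSquadrature} satisfies requirements (i)--(iii) of \eqref{2Ddecomp}, with boundary weights $\omega_1^\pm=\kappa_1\omega_1$ and $\omega_2^\pm=\kappa_2\omega_1$, fills in precisely the intended details. Your key observation that $\frac{\kappa_1\dx}{\alpha_1}=\frac{\kappa_2\dy}{\alpha_2}=\left(\frac{\alpha_1}{\dx}+\frac{\alpha_2}{\dy}\right)^{-1}$, so the minimum in \eqref{2Dhst_CFL_all} collapses and neither direction is a bottleneck, is exactly the point of the $\kappa_1,\kappa_2$ construction (as the paper's contrast with the Jiang--Liu choice $\kappa_1=\kappa_2=\frac12$ and its more restrictive CFL \eqref{eq:JL-CFL} confirms).
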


 \begin{remark}
 	A similar quadrature was used by Jiang and Liu in \cite{jiang2018invariant}. 
 	The only difference from the Zhang--Shu quadrature is that $\kappa_1=\kappa_2=\frac12$. 
 	In this case, the CFL condition for the weak IDP property becomes 
 	\begin{equation}\label{eq:JL-CFL}
 		2 \max \left\{\frac{\alpha_1}{\Delta x},\frac{\alpha_2}{\Delta y}\right\}\Delta t \le  \omega_1c_0 ,
 	\end{equation}
 	which is more restrictive than \eqref{eq:CFL-ZS}. 
 \end{remark}

Since such special quadrature rules are not unique, it is natural to seek the {\em optimal quadrature} such that the resulting IDP CFL condition \eqref{2Dhst_CFL_all} is the mildest. 
It was proven in \cite{cui2024optimal} that the Zhang--Shu quadrature \eqref{eq:ZSquadrature} is optimal for high order schemes based on the $\mathbb Q^k$ space with any positive integer $k$. 
However, for the $\mathbb P^k$-based methods, the quadrature \eqref{eq:ZSquadrature} is generally not optimal \cite{cui2023classic}. The optimal 2D quadrature rules for $\mathbb P^k$-based methods were systematically studied in \cite{cui2023classic,cui2024optimal}.

\begin{example}[Optimal quadrature \cite{cui2023classic,cui2024optimal}]
	For the $\mathbb{P}^2$- and $\mathbb P^3$-based methods, the 
optimal quadrature for weak IDP property is given by 
	\begin{equation}\label{eq:QR2D}
	\begin{aligned}
		&  \frac{1}{\Delta x \Delta y}	\int_{I_{ij}} p(x,y) \, {\rm d}x {\rm d}y=  
		 \frac{\mu_1}{2}\sum_{q=1}^Q \widetilde \omega_{q}  \Big[  
		 p (x_{i-\frac{1}{2}}, \widetilde y_j^{(q)}) +
		 p (x_{i+\frac{1}{2}}, \widetilde y_j^{(q)}) \Big]
		\\
		& \qquad \quad + \frac{\mu_2}{2}\sum_{q=1}^Q \widetilde \omega_{q} \Big[
		 p ( \widetilde x_i^{(q)}, y_{j-\frac{1}{2}}) +
		 p ( \widetilde x_i^{(q)}, y_{j+\frac{1}{2}}) 
		\Big]
		+   \omega^* \sum_{s=1}^2  p ( x_s^*,  y_s^* )
	\end{aligned}
\end{equation}
with the internal nodes
\begin{equation}\label{eq:226}
	 \left\{ x_s^*,  y_s^* \right\}_{s=1}^2 =
	\begin{cases}
		\; \left( 
		x_i, 
		y_j \pm \frac{\dy}{2\sqrt{3}}\sqrt{\frac{\phi_*-\phi_2}{\phi_*}}
		\right),
		& \text{if} \; \phi_1 \ge \phi_2, \\
		\; \left(
		x_i \pm \frac{\dx}{2\sqrt{3}}\sqrt{\frac{\phi_*-\phi_1}{\phi_*}} ,
		y_j
		\right),
		& \text{if} \; \phi_1 < \phi_2,
	\end{cases}
\end{equation}
where
\begin{equation*}
	\begin{aligned}
	&\phi_1 = \frac{ \alpha_1}{\dx}, \quad
	\phi_2 = \frac{\alpha_2}{\dy}, \quad
	\phi^* = \max\{\phi_1,\phi_2\}, \\ 
	&
	\psi = \phi_1+\phi_2+2\phi^*, 
	\quad
	\mu_1 = \frac{\phi_1}{\psi}, \quad
	\mu_2 = \frac{\phi_2}{\psi}, \quad
	 \omega^* = \frac{\phi^*}{\psi}.
	\end{aligned}
\end{equation*}
There are only two internal nodes, which merge to a single node  $(x_i,y_j)$ in case of $\phi_1=\phi_2$. The quadrature nodes of the optimal quadrature \eqref{eq:QR2D} are illustrated in \Cref{fig:374} for $\mathbb P^2$- and $\mathbb P^3$-based methods. 
For  the optimal quadrature for the $\mathbb{P}^k$-based methods with higher $k\ge 4$, we refer the readers to \cite{cui2024optimal}. 
\end{example}

\begin{theorem}[Weak IDP via optimal quadrature \cite{cui2023classic,cui2024optimal}]
	If the solution polynomials $\{{\bf p}_{ij} \}$ are in the space $\mathbb{P}^2$ or $\mathbb P^3$ and satisfy 
	\eqref{eq:2Dcond} with $\mathbb S_{ij}$ being the set of all the nodes in \eqref{eq:QR2D}, 
	then the high order scheme \eqref{eq:2Dscheme1} preserves 
	$\bar {\bf u}_{ij}^{n+1}\in G$ under the CFL condition
\begin{equation}\label{eq:CFL2D}
	\left(   2\frac{\alpha_1}{\dx}+2\frac{\alpha_2}{\dy}+ 4 \max\left\{ 
	\frac{\alpha_1}{\dx},\frac{\alpha_2}{\dy}\right\} \right) \Delta t \le c_0.
\end{equation}
\end{theorem}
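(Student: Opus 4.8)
The plan is to obtain this statement as an immediate corollary of Theorem~\ref{thm:CFL}, recognizing the optimal rule \eqref{eq:QR2D} as one concrete instance of the abstract special quadrature \eqref{2Ddecomp}. First I would confirm that \eqref{eq:QR2D} satisfies the three requirements attached to \eqref{2Ddecomp}. Positivity and partition of unity (requirement (ii)) are direct: $\mu_1,\mu_2,\omega^*>0$ since $\phi_1,\phi_2>0$, and because $\sum_{q}\widetilde\omega_q=1$ the boundary terms contribute total weight $\mu_1+\mu_2$ while the two internal nodes contribute $2\omega^*$, so the weights sum to $\mu_1+\mu_2+2\omega^*=(\phi_1+\phi_2+2\phi^*)/\psi=1$. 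Requirement (iii) follows because $0\le\frac{\phi^*-\phi_2}{\phi^*}\le 1$ and $0\le\frac{\phi^*-\phi_1}{\phi^*}\le 1$, so each internal-node offset in \eqref{eq:226} is at most $\frac{\dy}{2\sqrt3}$ or $\frac{\dx}{2\sqrt3}$, strictly below half a cell width, which keeps the nodes inside $I_{ij}$.

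For requirement (i), the polynomial exactness, I would lean on the reflection symmetry of the node set about the cell center $(x_i,y_j)$ in each coordinate: every monomial of odd degree in $x$ or in $y$ is annihilated, and the $Q$-point Gauss rule integrates the tangential directions along each edge exactly to the required order. For $\mathbb P^2$ (and $\mathbb P^3$, whose extra monomials are all odd in one variable and hence vanish by symmetry) this leaves only the two even second-moment conditions on $\int_{I_{ij}}(x-x_i)^2\,\rd x\,\rd y$ and $\int_{I_{ij}}(y-y_j)^2\,\rd x\,\rd y$. These are exactly the equations that the square-root offsets in \eqref{eq:226} are constructed to satisfy; rather than re-deriving them I would cite the explicit construction of \cite{cui2023classic,cui2024optimal}.

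With \eqref{eq:QR2D} established as a valid rule of type \eqref{2Ddecomp}, the remaining step is purely mechanical: read off the boundary weights $\omega_1^-=\omega_1^+=\frac{\mu_1}{2}$ and $\omega_2^-=\omega_2^+=\frac{\mu_2}{2}$ and substitute into the abstract CFL bound \eqref{2Dhst_CFL_all}. The clean observation is that all four candidate quantities collapse to a single value:
\begin{equation*}
\frac{\omega_1^\pm\dx}{\alpha_1}=\frac{\mu_1\dx}{2\alpha_1}=\frac{\phi_1}{2\psi}\cdot\frac{\dx}{\alpha_1}=\frac{1}{2\psi}, \qquad \frac{\omega_2^\pm\dy}{\alpha_2}=\frac{\mu_2\dy}{2\alpha_2}=\frac{1}{2\psi},
\end{equation*}
since $\mu_i=\phi_i/\psi$ and $\phi_1\dx/\alpha_1=\phi_2\dy/\alpha_2=1$. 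Hence Theorem~\ref{thm:CFL} yields $\Delta t\le c_0/(2\psi)$, and inserting $\psi=\frac{\alpha_1}{\dx}+\frac{\alpha_2}{\dy}+2\max\{\frac{\alpha_1}{\dx},\frac{\alpha_2}{\dy}\}$ and clearing the denominator reproduces precisely \eqref{eq:CFL2D}.

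The only genuinely hard part is the exactness verification behind requirement (i): proving that the specific quadratic offsets in \eqref{eq:226} simultaneously match both even second moments while all weights stay positive, a moment-matching problem solved in \cite{cui2023classic,cui2024optimal}. Everything else — the positivity, partition-of-unity, and interior-location checks, the identification of the boundary weights, and the final algebraic rearrangement of the CFL — is routine once Theorem~\ref{thm:CFL} is in hand.
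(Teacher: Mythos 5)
Your proposal is correct and follows essentially the same route as the paper: the result is obtained by verifying that the optimal rule \eqref{eq:QR2D} is a feasible instance of \eqref{2Ddecomp} (positivity and partition of unity of the weights, interior location of the nodes via the $\frac{1}{2\sqrt{3}}$ offsets, exactness on $\mathbb{P}^2$/$\mathbb{P}^3$ reduced by reflection symmetry to the two even second-moment conditions, as constructed in \cite{cui2023classic,cui2024optimal}) and then substituting $\omega_1^\pm=\frac{\mu_1}{2}$, $\omega_2^\pm=\frac{\mu_2}{2}$ into \eqref{2Dhst_CFL_all}, where all four ratios indeed collapse to $\frac{1}{2\psi}$ and yield \eqref{eq:CFL2D}. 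Your identities $\mu_i=\phi_i/\psi$ and $\phi_1\dx/\alpha_1=\phi_2\dy/\alpha_2=1$ are exactly what make the computation go through, so no gap remains.
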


\Cref{tab:330} lists a comparison of the IDP CFL conditions and the internal nodes for  different 2D special quadrature rules.  

\begin{table}[htbp]
	\centering
	\caption{IDP CFL conditions and the numbers of internal nodes of different 2D quadrature  for the $\mathbb{P}^2$- and $\mathbb{P}^3$-based methods.}\label{tab:330}
	\renewcommand\arraystretch{1.7}
	\begin{tabular}{ lrrrr} 
		\toprule[1.5pt]
		& IDP CFL & IDP CFL  & \# Nodes &  \# Nodes \\
		& general case & $\frac{\dx}{\alpha_1} = \frac{\dy}{\alpha_2} = h$ &    $\mathbb{P}^2$ &   $\mathbb{P}^3$  \\ 
		
		\midrule[1.5pt]
		
		Optimal \cite{cui2023classic} & 
		\eqref{eq:CFL2D} 
		& $\Delta t \le \frac{c_0}{8} h$ & 1 $\sim$ 2 & 1 $\sim$ 2 \\ 
		Zhang-Shu \cite{zhang2010positivity} & \eqref{eq:CFL-ZS} & $\Delta t \le \frac{c_0}{12} h$ & 5 & 8 \\ 
		Jiang-Liu \cite{jiang2018invariant} & \eqref{eq:JL-CFL} & $\Delta t \le \frac{c_0}{12}h$ & 5 & 8\\ 
		
		\bottomrule[1.5pt]
	\end{tabular}
\end{table}

The simple limiter \eqref{eq:ZSlimiter_system} can be easily extended to multiple dimensions as follows \begin{equation*} 
	\tilde {\bf p}_{ij}(x,y) = \theta( {\bf p}_{ij}(x,y) - \bar {\bf u}_{ij}^n) + \bar {\bf u}_{ij}^n,
\end{equation*}
where $\theta$ is computed via either point values in the special quadrature $\mathbb S_{ij}$ or only the boundary quadrature point values in a simplified fashion as in \Cref{sec:zhanglimiter-FV}. See \cite{zhang2010positivity,zhang2012maximum} for details.

\subsection{Extensions to unstructured triangular meshes}

For simplicity, we focus only on the special quadrature and extensions of \Cref{thm:1Dsystem}.

\subsubsection{The special quadrature for convex decompositions of the cell average} 

Let $K$ denote an arbitrary triangular cell  with edge length denoted by $l_{K}^{(i)}$ ($i=1,2,3$). Let 
$(x^{i,q}_K,y^{i,q}_K)$ be the $q$th node of the $Q$-point Gauss quadrature on the $i$th edge $e_K^{(i)}$ and $\widetilde \omega_q$ be the weight. 
The first task is to find a special 2D quadrature on $K$:
	\begin{equation}\label{eq:980}
	\begin{aligned}
		\frac{1}{|K|}\iint_K p(x,y) ~ \textrm{d} x \textrm{d} y
		& =
		\sum_{i=1}^3
		\frac{w_i}{l_K^{(i)}}
		\int_{e_K^{(i)}} p(x,y) ~ \textrm{ds}
		+
		\sum_{s = 1}^{S}
		\omega_s^* p( x_s^*,y_s^*)
		\\
		& = 		\sum_{i=1}^3 \sum_{q=1}^Q
		w_i \widetilde \omega_q
		p( x_{K}^{i,q} ) + 
		\sum_{s = 1}^{S}
		 \omega_s^* p( x_s^*,y_s^*)
	\end{aligned}
\end{equation}
such that 
	\begin{enumerate}[label=(\roman*)]
	\item  The quadrature \eqref{eq:980} holds exactly for all $p(x,y) \in \mathbb{P}^k$;
	\item  The edge weights $\{w_i\}_{i=1}^3$ and the internal node weights $\{\omega_s^*\}_{s=1}^{S}$ are all positive, and $	\sum_{i=1}^3 \sum_{q=1}^Q
	w_i \widetilde \omega_q+ \sum_{s = 1}^{S}
	 \omega_s^*=1$;
	\item  The internal node set $\mathcal{I}_K = \left\{ ( x_s^*, y_s^* ) \right\}_{s=1}^{S} \subset K$.
\end{enumerate}

\begin{example}[Zhang--Xia--Shu quadrature \cite{zhang2012maximum}]
	Zhang, Xia, and Shu proposed in \cite{zhang2012maximum} the following quadrature for the $\mathbb{P}^k$ space on a triangular cell $K$:
	\begin{equation}\label{eq:ZXSquadrature}
		\begin{aligned}
	\frac{1}{|K|}\iint_K p(x,y) ~ \textrm{d} x \textrm{d} y
	& =
	\sum_{i=1}^3
	\frac{2 \,  \omega_1}{3 l^{(i)}_K}
	\int_{e_K^{(i)}} p(x,y) ~ \textrm{ds}
	+
	\sum_{s = 1}^{S^{\tt ZXS}}
	\omega^{\tt ZXS}_s p( x^{\tt ZXS}_s,y^{\tt ZXS}_s),
	\\
	& = 
	\sum_{i=1}^3 \sum_{q=1}^Q
	\frac{2 \,  \omega_1 \widetilde \omega_q  }{3  } 
	p( x_{K}^{i,q} )
	+
	\sum_{s = 1}^{S^{\tt ZXS}}
	\omega^{\tt ZXS}_s p( x^{\tt ZXS}_s, y^{\tt ZXS}_s),
	\end{aligned}
	\end{equation}
	where $\omega_1=\frac{1}{L(L-1)}$ is the first Gauss--Lobatto quadrature weight with $L=\lceil \frac{k+3}{2} \rceil$, $\left\{( x^{\tt ZXS}_s, y^{\tt ZXS}_s)\right\}$ denote the coordinates of $S^{\tt ZXS} = 3\lceil\frac{k-1}{2}\rceil(k+1)$ internal nodes (see \cite{zhang2012maximum} for more details). This quadrature was constructed by the average of three different mappings from the Zhang--Shu quadrature \eqref{eq:ZSquadrature} on a rectangular cell to the triangular cell $K$. In practice, one may want to use different 
   Gauss quadrature for each edge in the high order schemes. For instance, 
    see \Cref{fig:specialquad} for the Zhang--Xia--Shu quadrature for $\mathbb{P}^2$ with $3$-point Gauss quadrature for each edge, which is in general enough for the $\mathbb{P}^2$ DG method \cite{zhang2012maximum}. 
In
\cite{MR3095289}, Zhang-Xia-Shu special quadrature \eqref{eq:ZXSquadrature} with $4$-point Gauss quadrature for each edge of a triangle is used for the $\mathbb{P}^2$ DG method in order to achieve well-balanced property for shallow water equations.
\end{example}

\begin{example}[Chen--Shu quadrature]\label{rem:ChenShu}
	In \cite{chen2017entropy}, Chen and Shu used another series of quadrature rules on triangular cells  for entropy-stable DG methods, constructed by the quadrature method \cite{MR2493559},  which was also used in \cite{lv2015entropy}. 
	 These quadrature rules are also qualified for designing $\mathbb{P}^k$-based IDP schemes on triangular cells; see \cite[Appendix C]{chen2017entropy} for further details on these quadrature rules, which is much less redundant than  Zhang--Xia--Shu quadrature, as shown in \Cref{fig:ZXS}. 
\end{example}

	 \begin{figure}
        \centering
       \begin{tikzpicture}[scale=0.5]
  \draw[line width=1.2pt] (-4,-2) -- (4,-2);
  \draw[line width=1.2pt] (-4,-2) -- (0,4.93);
  \draw[line width=1.2pt] (4,-2) -- (0,4.93);

  \foreach \x/\y in { 0/-2, -3.1/-2, 3.1/-2,
                      -2/1.465, -3.45/-1.04, -0.45/4.15,
                      2/1.465, 3.45/-1.04, 0.45/4.15 } {
    \fill[cyan] (\x,\y) circle (5pt);
  }

  \foreach \x/\y in { -1/-0.2675, -3.275/-1.52, 1.325/1.075,
                      0/1.465, 0/-1.04, 0/4.15,
                      1/-0.2675, 3.275/-1.52, -1.325/1.075 } {
    \fill[red] (\x,\y) circle (5pt);
  }
\end{tikzpicture}
\qquad \qquad 
\begin{tikzpicture}[scale=0.5]
  \draw[line width=1.2pt] (-4,-2) -- (4,-2);
  \draw[line width=1.2pt] (-4,-2) -- (0,4.93);
  \draw[line width=1.2pt] (4,-2) -- (0,4.93);

  \foreach \x/\y in { 0/-2, -3.1/-2, 3.1/-2,
                      -2/1.465, -3.45/-1.04, -0.45/4.15,
                      2/1.465, 3.45/-1.04, 0.45/4.15 } {
    \fill[cyan] (\x,\y) circle (5pt);
  }

  \foreach \x/\y in { 0/0.31 } {
    \fill[red] (\x,\y) circle (5pt);
  }
\end{tikzpicture}
        \caption{Two special quadratures  for $\mathbb P^2$ on an equilateral triangle with area $1$. Left is  Zhang--Xia--Shu quadrature, with weights for three cyan points of each edge being $(\frac{5}{162},\frac{8}{162},\frac{5}{162})$.
        Right is  Chen--Shu quadrature \cite[Tale C.2 (b)]{chen2017entropy}  which is much less redundant, with weights for three cyan points of each edge being $(\frac{1}{24},\frac{1}{10},\frac{1}{24})$. }
        \label{fig:ZXS} 
    \end{figure}
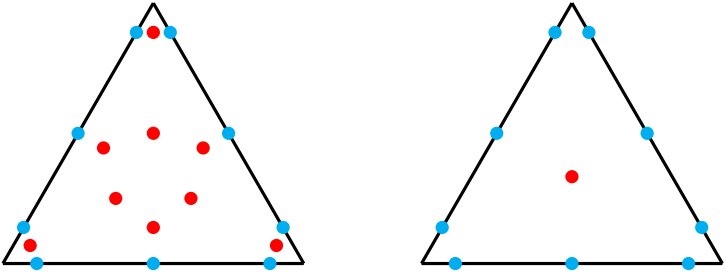

\begin{remark}
	Neither  Zhang--Xia--Shu quadrature nor Chen--Shu quadrature is optimal for IDP studies in general, as demonstrated in \cite{ding2025robust}, where the optimal quadrature rules for $\mathbb P^1$- and $\mathbb P^2$-based IDP schemes were found. 
    We emphasize that the interior nodes (red nodes in \Cref{fig:ZXS}) in the special quadrature can be avoided in the final implementation, similar to \Cref{sec:zhanglimiter-FV}; see  \cite{zhang2011maximum}.
\end{remark}

\subsubsection{The weak IDP property on triangular meshes}

The cell average scheme of a high order finite volume or DG scheme with forward Euler time discretization  on a triangular cell $K$ is given by
\begin{equation}\label{eq:1033}
	\bar{\bf u}_{K}^{n+1}  
	= 
	\bar{\bf u}_{K}^{n} 
	- 
	\frac{\Delta t}{2 |K|} 
	\sum_{i=1}^{3}  l_K^{(i)} 
	\sum_{q = 1}^{Q} \widetilde \omega_q  
	\hat {\bf f}({\bf u}^{\rm int}_{i,q},{\bf u}^{\rm ext}_{i,q},{\bf n}_K^{i}),
\end{equation}
where $|K|$ is the area of $K$, $l_K^{(i)}$ stands for the length of edge $e_K^{(i)}$ (the $i$-th edge of element $K$), $\mathbf{n}_{K}^{i}=\big(n_{K}^{i,1},n_{K}^{i,2}\big)$ is the outward unit normal vector of the edge $e_K^{(i)}$, and 
$${\bf u}^{\rm int}_{i,q}:={\bf u}^{n}_h(x^{i,q}_K,y^{i,q}_K)|_K, \qquad {\bf u}^{\rm ext}_{i,q}:={\bf u}^{n}_h(x^{i,q}_K,y^{i,q}_K) \big|_{K^{i}}$$
are approximations to edge values from interior and exterior of $K$ respectively,
with $K^{i}$ denoting the neighboring cell that shares the edge $e_K^{(i)}$ with $K$, 
$(x^{i,q}_K,y^{i,q}_K)$ being the $q$th node of the $Q$-point Gauss quadrature on $e_K^{(i)}$, 
and ${\bf u}^{n}_h$ denoting the piecewise polynomial solution either reconstructed in finite volume or evolved in DG methods.

Using the Zhang--Xia--Shu quadrature \eqref{eq:ZXSquadrature}, one obtains the following convex decomposition for the cell averages:
\begin{equation}\label{eq:ucellave}
\bar{\bf u}_{K}^{n}  =  \sum_{i=1}^3
\frac{2 \,  \omega_1}{3 }
\sum_{q = 1}^{Q} \widetilde \omega_q  
{\bf u}^{\rm int}_{i,q} 
+
\sum_{s = 1}^{S^{\tt ZXS}}
\omega^{\tt ZXS}_s {\bf u}_s^{\tt ZXS}. 
\end{equation}
Substituting  \eqref{eq:ucellave} into \eqref{eq:1033}, one can reformulate the scheme \eqref{eq:1033} as follows (cf.~\cite{zhang2012maximum}): 
\begin{equation}\label{eq:edefd}
		\bar{\bf u}_{K}^{n+1}  
	=  \sum_{s = 1}^{S^{\tt ZXS}}
	\omega^{\tt ZXS}_s {\bf u}_s^{\tt ZXS} +  \frac{2 \,  \omega_1}{3 }
	\sum_{q = 1}^{Q}   
 \widetilde \omega_q   \Big( {\bf \Pi}_{1,q} +  {\bf \Pi}_{2,q} +  {\bf \Pi}_{3,q} \Big), 
\end{equation}
where 
\begin{small}
\begin{align*}
		{\bf \Pi}_{1,q}&= \mathbf{u}^{\text{int}}_{2,q} 
	- \frac{3 \Delta t}{2 {\omega}_1 |K|} 
	\left[ \hat{\mathbf{f}}(\mathbf{u}^{\text{int}}_{2,q}, \mathbf{u}^{\text{int}}_{1,q}, {\bf n}_K^1) l_K^{(1)} 
	+ \hat{\mathbf{f}}(\mathbf{u}^{\text{int}}_{2,q}, \mathbf{u}^{\text{ext}}_{2,q}, {\bf n}_K^2) l_K^{(2)}  + \hat{\mathbf{f}}(\mathbf{u}^{\text{int}}_{2,q}, \mathbf{u}^{\text{int}}_{3,q}, {\bf n}_K^3) l_K^{(3)} \right], \\
	{\bf \Pi}_{2,q} &= \mathbf{u}^{\text{int}}_{1,q} 
	- \frac{3 \Delta t}{2 {\omega}_1 |K|} 
	\left[ \hat{\mathbf{f}}(\mathbf{u}^{\text{int}}_{1,q}, \mathbf{u}^{\text{ext}}_{1,q}, {\bf n}_K^1) l_K^{(1)} 
	+ \hat{\mathbf{f}}(\mathbf{u}^{\text{int}}_{1,q}, \mathbf{u}^{\text{int}}_{2,q}, -{\bf n}_K^1) l_K^{(1)} \right], \\
	{\bf \Pi}_{3,q} &= \mathbf{u}^{\text{int}}_{3,q} 
	- \frac{3 \Delta t}{2 {\omega}_1 |K|} 
	\left[ \hat{\mathbf{f}}(\mathbf{u}^{\text{int}}_{3,q}, \mathbf{u}^{\text{int}}_{2,q}, -{\bf n}_K^3) l_K^{(3)}
	+ \hat{\mathbf{f}}(\mathbf{u}^{\text{int}}_{3,q}, \mathbf{u}^{\text{ext}}_{3,q}, {\bf n}_K^3) l_K^{(3)} \right].
\end{align*}
\end{small}

As an example, consider the Lax--Friedrichs flux: 
$$
\hat {\bf f}({\bf u}^{\rm int}_{i,q},{\bf u}^{\rm ext}_{i,q},{\bf n}_K^{i}) = \frac12 
\Big(  {\bf f}({\bf u}^{\rm int}_{i,q}) \cdot {\bf n}_K^{i}  + {\bf f}({\bf u}^{\rm ext}_{i,q}) \cdot {\bf n}_K^{i} +  \alpha {\bf u}^{\rm int}_{i,q} - \alpha {\bf u}^{\rm ext}_{i,q}
   \Big),
$$
which is IDP if the CFL number is less than or equal to one and 
\begin{equation}\label{eq:alphaLF}
\alpha^{\rm LF} :=
        \max_{{\bm x}\in {e}^{(i)}_K, i \in \{1,2,3\}, K \in \mathcal{T}_h} 
        \tilde\alpha({\bm u}_h({\bm x}),{\bm n}_K^{i})
\end{equation}      
Note that ${\bf \Pi}_{1,q}$ is formally a first order IDP scheme on the triangular cell $K$ under 
the CFL condition
\begin{align}\label{eq:ZS-tri}
\alpha \frac{\Delta t}{|K|} \sum_{i=1}^3 l_K^{(i)} \le \frac23 {\omega}_1, 
\end{align}
and ${\bf \Pi}_{2,q}$ and ${\bf \Pi}_{3,q}$ are formally one-dimensional IDP schemes under the CFL conditions $\frac{3 \Delta t}{2 {\omega}_1 |K|} l_K^{(1)} \le 1$ 
and $\frac{3 \Delta t}{2 {\omega}_1 |K|} l_K^{(3)} \le 1$. 
Therefore, if 
$$
{\bf u}^{\rm int}_{i,q} \in G, ~~  {\bf u}^{\rm ext}_{i,q} \in G  \quad \forall i,q, 
$$
then ${\bf \Pi}_{i,q} \in G$ for all $i$ and $q$ under the CFL condition \eqref{eq:ZS-tri}. 
 
As observed from \eqref{eq:edefd}, 
$\bar{\bf u}_{K}^{n+1}$ is a convex combination of ${\bf u}_s^{\tt ZXS}$ and ${\bf \Pi}_{i,q}$. 
Thanks to the convexity of $G$, we obtain the following theorem \cite{zhang2012maximum}.  

\begin{theorem}[IDP via Zhang--Xia--Shu quadrature]
	If 
	$
	{\bf u}_h^n(x,y) \in G ~\forall (x,y) \in \mathbb S_K^{\tt ZXS},~\forall K, 
	$
	where $\mathbb S_K^{\tt ZXS}$ denotes the set of  quadrature nodes on cell $K$ in \eqref{eq:ZXSquadrature}, 
	then the scheme \eqref{eq:1033} preserves  $\bar{\bf u}_{K}^{n+1}\in G$ under the 
	CFL condition \eqref{eq:ZS-tri}. 
\end{theorem}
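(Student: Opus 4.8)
The plan is to build directly on the convex decomposition \eqref{eq:edefd}, which already expresses $\bar{\bf u}_{K}^{n+1}$ as a combination of the internal-node values $\{{\bf u}_s^{\tt ZXS}\}$ and the auxiliary states $\{{\bf \Pi}_{i,q}\}_{i=1}^3$ with coefficients $\omega^{\tt ZXS}_s$ and $\frac{2\omega_1}{3}\widetilde\omega_q$. By the exactness and positivity properties built into the Zhang--Xia--Shu quadrature \eqref{eq:ZXSquadrature}, these coefficients are positive and sum to one, so \eqref{eq:edefd} is a genuine convex combination. Since $G$ is convex, it then suffices to verify that every term in this combination lies in $G$; the final statement will follow immediately. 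The internal-node values ${\bf u}_s^{\tt ZXS}$ are in $G$ by hypothesis, because their nodes belong to $\mathbb S_K^{\tt ZXS}$, so the entire burden reduces to showing ${\bf \Pi}_{i,q}\in G$ for each $i$ and $q$.

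First I would confirm that all the point values fed into the ${\bf \Pi}_{i,q}$ are admissible. The interior values ${\bf u}^{\rm int}_{i,q}$ are sampled at the edge Gauss nodes, which belong to $\mathbb S_K^{\tt ZXS}$ and are therefore in $G$ by hypothesis. The exterior values ${\bf u}^{\rm ext}_{i,q}$ require a short but essential remark: geometrically they sit at the same shared-edge Gauss nodes, but viewed from the neighboring cell $K^{i}$ they are interior edge values of $K^{i}$, hence members of $\mathbb S_{K^{i}}^{\tt ZXS}$, and thus in $G$ by the same hypothesis applied to $K^{i}$. This is precisely where the quantifier ``$\forall K$'' is used. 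Consequently every argument of every numerical flux appearing in the ${\bf \Pi}_{i,q}$ lies in $G$.

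Next I would recognize each ${\bf \Pi}_{i,q}$ as a bona fide first order IDP update of states in $G$. The block ${\bf \Pi}_{1,q}$ has exactly the form of the first order finite volume scheme on the triangle $K$ centered at ${\bf u}^{\rm int}_{2,q}$, with the three neighbors ${\bf u}^{\rm int}_{1,q},{\bf u}^{\rm ext}_{2,q},{\bf u}^{\rm int}_{3,q}$ across the edges and effective time step $\frac{3\Delta t}{2\omega_1}$; the multidimensional first order IDP argument of \Cref{sec:firstorderIDP}, which rewrites such an update as a convex combination via the geometric identity \eqref{discrete-div} together with \Cref{prop:LF-2D} (Method 1) or \Cref{prop:LFS-2D} (Method 2), then yields ${\bf \Pi}_{1,q}\in G$ under the CFL condition \eqref{eq:ZS-tri}. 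By contrast, ${\bf \Pi}_{2,q}$ and ${\bf \Pi}_{3,q}$ are one dimensional three point schemes along the normals $\pm{\bf n}_K^{1}$ and $\pm{\bf n}_K^{3}$, so the IDP numerical flux property of \Cref{def:IDPflux} applies verbatim, giving ${\bf \Pi}_{2,q},{\bf \Pi}_{3,q}\in G$ under the single-edge conditions $\frac{3\Delta t}{2\omega_1|K|}\,l_K^{(i)}\,\alpha\le c_0$.

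I expect the only genuine obstacle to be the CFL bookkeeping. The amplification factor $\frac{3}{2\omega_1}$ inflates the effective time step inside each building block, and one must check that the single perimeter-based bound \eqref{eq:ZS-tri} simultaneously enforces admissibility for the one multidimensional block ${\bf \Pi}_{1,q}$ and both one dimensional blocks ${\bf \Pi}_{2,q},{\bf \Pi}_{3,q}$. This holds because \eqref{eq:ZS-tri} controls the full edge sum $\sum_{i} l_K^{(i)}$, which dominates each individual edge length $l_K^{(i)}$; hence the per-edge conditions for ${\bf \Pi}_{2,q}$ and ${\bf \Pi}_{3,q}$ are automatically implied. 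With all three blocks and all internal nodes thereby shown to lie in $G$, the convexity of $G$ applied to \eqref{eq:edefd} closes the argument.
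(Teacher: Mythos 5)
Your proof is correct and follows essentially the same route as the paper: it uses the convex decomposition \eqref{eq:edefd} obtained from the Zhang--Xia--Shu quadrature, verifies ${\bf \Pi}_{1,q}$ as a formal first order IDP scheme on the triangle and ${\bf \Pi}_{2,q},{\bf \Pi}_{3,q}$ as formal 1D three-point IDP schemes, and concludes by convexity of $G$. The details you add beyond the paper's sketch --- that ${\bf u}^{\rm ext}_{i,q}\in G$ because the shared-edge Gauss nodes lie in $\mathbb S_{K^i}^{\tt ZXS}$ of the neighbor, and that the perimeter-based CFL \eqref{eq:ZS-tri} dominates the per-edge conditions for the 1D blocks --- are exactly the right justifications and consistent with the paper's argument.
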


  With the weak IDP property, a simple limiter similar to the one presented in \Cref{sec:limiter} can be designed, and  
the extension of \Cref{thm:limiter_acc} to triangles or any cells is straightforward using \cite[Lemma 8]{zhang2017positivity}.

\subsubsection{IDP with larger CFL via optimal quadrature and GQL}\label{sec:IDPopt}

This subsection introduces 
the improvements of the theoretical IDP CFL condition by seeking an optimal quadrature \cite{ding2025robust} 
and the GQL approach \cite{wu2023geometric} under  
\Cref{prop:wLFS-2D}, which is weaker than \Cref{prop:LFS-2D}.

In order to precisely define the optimality of the special quadrature, 
we first show the IDP result \cite{ding2025robust} with an arbitrarily feasible quadrature 
\eqref{eq:980}.

\begin{theorem}[IDP via general feasible quadrature] \label{thm:1008}
	If the solution ${\bf u}_h^n$ satisfies 
	\begin{equation}\label{eq:1526}
	    {\bf u}_h^n(x,y) \in G \qquad \forall (x,y) \in \mathbb S_K,~~\forall K, 
	\end{equation}
	where $\mathbb S_K$ denotes the set of all the nodes in the quadrature \eqref{eq:980} on a triangle $K$, 
	then the scheme \eqref{eq:1033} preserves  $\bar{\bf u}_{K}^{n+1}\in G$ under the 
	CFL condition 
        \begin{equation}\label{eq:1009}
        \alpha \frac{\Delta t}{|K|} \le \mathcal{C}_{\tt IDP}:= \min 
        \left\{ 
        \frac{w_1}{l^{(1)}_K},
        \frac{w_2}{l^{(2)}_K},
        \frac{w_3}{l^{(3)}_K}
        \right\}. 
    \end{equation}
\end{theorem}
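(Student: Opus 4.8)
The plan is to carry out the same convex-decomposition strategy that established \Cref{thm:1Dsystem} and its triangular version, but now with the \emph{arbitrary} feasible quadrature \eqref{eq:980} in place of a specific one. The guiding principle is that $G$ is convex, so it suffices to exhibit $\bar{\bf u}_K^{n+1}$ as a convex combination of states already known to lie in $G$: the internal quadrature-node values ${\bf u}_h^n(x_s^*,y_s^*)$, which are in $G$ by hypothesis \eqref{eq:1526}, together with a family of ``building blocks'' each of which is a formal first-order IDP update and hence in $G$ by \Cref{def:IDPflux}.

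First I would invoke requirements (i)--(ii) of the quadrature \eqref{eq:980} to write the cell average as the genuine convex combination
\begin{equation*}
\bar{\bf u}_K^n = \sum_{i=1}^3 w_i \sum_{q=1}^Q \widetilde\omega_q\, {\bf u}^{\rm int}_{i,q} + \sum_{s=1}^S \omega_s^*\, {\bf u}_h^n(x_s^*,y_s^*),
\end{equation*}
which is exact since ${\bf u}_h^n|_K\in\mathbb P^k$ and all weights are positive and sum to one. Substituting this into \eqref{eq:1033} and regrouping the boundary flux sum so that the edge-$i$ flux $\hat{\bf f}({\bf u}^{\rm int}_{i,q},{\bf u}^{\rm ext}_{i,q},{\bf n}_K^i)$ is attached to the term $w_i\widetilde\omega_q{\bf u}^{\rm int}_{i,q}$ carrying the same node, the aim is to produce, for each boundary node $(i,q)$, a building block of the form
\begin{equation*}
{\bf u}^{\rm int}_{i,q} - \frac{\Delta t\, l_K^{(i)}}{w_i\,|K|}\Big[\hat{\bf f}({\bf u}^{\rm int}_{i,q},{\bf u}^{\rm ext}_{i,q},{\bf n}_K^i) - {\bf f}({\bf u}^{\rm int}_{i,q})\cdot{\bf n}_K^i\Big],
\end{equation*}
which is precisely a (degenerate) three-point first-order scheme in the direction ${\bf n}_K^i$ with coinciding ``left'' and ``center'' states. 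By \Cref{def:IDPflux} such a block lies in $G$ whenever $\alpha\,\frac{\Delta t}{|K|}\frac{l_K^{(i)}}{w_i}\le c_0$, where $c_0$ is the IDP CFL number of the flux (and $c_0=1$ for the Lax--Friedrichs flux with $\alpha$ as in \eqref{eq:alphaLF}). Taking the minimum of these per-edge conditions over $i$ yields exactly the bound \eqref{eq:1009}, and convexity of $G$ then gives $\bar{\bf u}_K^{n+1}\in G$.

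The main obstacle lies in the regrouping step, because a single physical edge flux is not by itself a complete two-flux first-order update: matching the building blocks against \eqref{eq:1033} leaves a residual consistency term proportional to $\sum_{i=1}^3 l_K^{(i)}\sum_q\widetilde\omega_q\,{\bf f}({\bf u}^{\rm int}_{i,q})\cdot{\bf n}_K^i$, which does not vanish since the interior traces differ from node to node. Following the mechanism behind \eqref{eq:edefd}, the remedy is to insert cell-internal artificial numerical fluxes between boundary Gauss nodes, completing each block into a genuine first-order IDP scheme while cancelling in the global sum through the antisymmetry $\hat{\bf f}({\bf a},{\bf b},{\bf n})=-\hat{\bf f}({\bf b},{\bf a},-{\bf n})$ together with the discrete divergence identity $\sum_{i=1}^3 l_K^{(i)}{\bf n}_K^i={\bf 0}$ from \eqref{discrete-div}. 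For a general feasible quadrature with \emph{unequal} edge weights $w_i$, arranging these internal fluxes so that the cancellation is exact while every combination weight stays positive and sums to one is the delicate point; it is precisely this more economical, edge-decoupled bookkeeping that produces the sharper factor $\min_i w_i/l_K^{(i)}$ in \eqref{eq:1009}, milder than the perimeter-based Zhang--Xia--Shu condition \eqref{eq:ZS-tri}.
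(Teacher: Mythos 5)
There is a genuine gap, and it sits exactly where you flagged it: the ``delicate point'' of inserting cell-internal artificial fluxes for unequal edge weights is not a bookkeeping detail but the entire content of the theorem along your route, and the mechanism you invoke cannot deliver the stated constant. Completing each boundary block into a \emph{genuine} first-order IDP update, as in \eqref{eq:edefd}, unavoidably produces at least one block of ${\bf \Pi}_{1,q}$-type that couples all three edges at once, and the IDP requirement for such a block is a perimeter-coupled condition of the form $\alpha \frac{\Delta t}{|K|}\sum_{i=1}^3 l_K^{(i)} \lesssim \min_i w_i$ --- this is precisely why the Zhang--Xia--Shu decomposition only yields \eqref{eq:ZS-tri}, and why the paper's improvement of that condition to the per-edge bound \eqref{eq:1692zxs} is obtained as a \emph{corollary of \Cref{thm:1008}} rather than from the decomposition itself. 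Two further obstructions: (a) pairing internal fluxes between boundary Gauss nodes presupposes a canonical correspondence of nodes across the three edges (same $Q$ per edge and a specific affine matching, which ZXS gets from mapping the rectangle quadrature); a general feasible quadrature \eqref{eq:980} provides no such pairing; (b) the residual $\sum_{i,q} l_K^{(i)}\widetilde\omega_q\, {\bf f}({\bf u}^{\rm int}_{i,q})\cdot {\bf n}_K^{i}$ involves the nonlinear flux at \emph{distinct} traces, so the identity \eqref{discrete-div} alone cannot cancel it --- in the paper's proof \eqref{discrete-div} is only ever applied to the state-independent linear term ${\bm \zeta}({\bf u}^\star)\cdot{\bf n}_K^i$. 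Your concluding claim that this bookkeeping ``produces the sharper factor $\min_i w_i/l_K^{(i)}$'' is therefore asserted, not proved, and nothing in the proposal shows the combination weights stay nonnegative at that CFL.

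The paper avoids forming first-order updates altogether: it tests $\bar{\bf u}_K^{n+1}$ against the GQL linear representation \eqref{eq:1200}, applies the weak inequality \eqref{eq:wLFS-2D} of \Cref{prop:wLFS-2D} node-by-node (the CFL \eqref{eq:1009} enters only to guarantee the coefficient $\frac{w_i}{l_K^{(i)}}-\frac{\alpha\Delta t}{2|K|}$ of each interior term is at least $\frac{\alpha\Delta t}{2|K|}$), and cancels the resulting ${\bm \zeta}({\bf u}^\star)$ terms via \eqref{discrete-div}; this covers systems (e.g., MHD, relativistic flows with entropy constraints) where your building blocks are unavailable because \Cref{prop:RP} and \Cref{prop:LFS-2D} fail. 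If you are content with the stronger \Cref{prop:LFS-2D}, a correct decomposition proof exists but is node-local and needs \emph{no} internal fluxes at all: with the Lax--Friedrichs flux, attach the leftover central-flux term to its own node and regroup as
\begin{equation*}
\bar{\bf u}_K^{n+1}=\sum_{s=1}^{S}\omega_s^*\,{\bf u}_h^n(x_s^*,y_s^*)
+\sum_{i=1}^{3}\sum_{q=1}^{Q}\widetilde\omega_q\Big[\Big(w_i-\tfrac{\alpha\Delta t\, l_K^{(i)}}{|K|}\Big){\bf u}^{\rm int}_{i,q}
+\tfrac{\alpha\Delta t\, l_K^{(i)}}{2|K|}\Big({\bf u}^{\rm int}_{i,q}-\tfrac{{\bf f}({\bf u}^{\rm int}_{i,q})\cdot{\bf n}_K^{i}}{\alpha}\Big)
+\tfrac{\alpha\Delta t\, l_K^{(i)}}{2|K|}\Big({\bf u}^{\rm ext}_{i,q}-\tfrac{{\bf f}({\bf u}^{\rm ext}_{i,q})\cdot{\bf n}_K^{i}}{\alpha}\Big)\Big],
\end{equation*}
where every bracketed state lies in $G$ by \Cref{prop:LFS-2D} and the per-node coefficients are nonnegative and sum to one exactly under \eqref{eq:1009}. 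Note that here no cancellation across edges is ever needed, which is why the per-edge constant $\min_i w_i/l_K^{(i)}$ emerges; your insistence on completing blocks into consistent first-order schemes is what reintroduces the perimeter coupling you were trying to beat.
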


\begin{proof}
Using the equivalent linear GQL representation \eqref{eq:1200}
 of $G$, 
 we derive for any ${\bf u}^\star \in \mathcal{S}_j$ and $j \in \mathbb{I}\cup\hat{\mathbb{I}}$ that 
 	\begin{align*}
		&(\bar{\bf u}^{n+1}_K-{\bf u}^\star)\cdot \mathbf{n}^\star_j
		\\
		\stackrel{\eqref{eq:980}}{=} & 
		\sum_{i = 1}^3 l^{(i)}_K
		\sum_{q = 1}^Q \widetilde \omega_{q}
		\left[
			\left(\frac{w_i}{l_K^{(i)} }-\frac{\alpha\dt}{2|K|}\right)
			({\bf u}^{\rm int}_{i,q} - {\bf u}^\star) \cdot \mathbf{n}^\star_j
			-
			\frac{\dt}{2|K|}
			\left({\bf f}({\bf u}^{\rm int}_{i,q})\cdot{\bf n}_K^{i}\right)\cdot \mathbf{n}^\star_j
		\right]\\
		& +
		\sum_{i = 1}^3 l^{(i)}_K
		\sum_{q = 1}^Q \widetilde \omega_q
		\left[
		\frac{\alpha\dt}{2|K|}
		({\bf u}^{\rm ext}_{i,q} - {\bf u}^\star) \cdot \mathbf{n}^\star_j
		-
		\frac{\dt}{2|K|}
		\left({\bf f}({\bf u}^{\rm ext}_{i,q})\cdot{\bf n}_K^{i}\right)\cdot \mathbf{n}^\star_j
		\right]
        \\
		& + \sum_{s = 1}^{S}
		  \omega_s^* 
		\left( {\bf u}_h^n ( x_s^*, y_s^*) -{\bf u}^\star\right)\cdot \mathbf{n}^\star_j
		\\
		\stackrel{\eqref{eq:1526}, \eqref{eq:1009}}{\ge} & 
		\sum_{i = 1}^3 l^{(i)}_K
		\sum_{q = 1}^Q \widetilde \omega_q
		\left[
		\frac{\alpha\dt}{2|K|}
		({\bf u}^{\rm int}_{i,q} - {\bf u}^\star) \cdot \mathbf{n}^\star_j
		-
		\frac{\dt}{2|K|}
		\left({\bf f}({\bf u}^{\rm int}_{i,q})\cdot{\bf n}_K^{i}\right)\cdot \mathbf{n}^\star_j
		\right]\\
		& +
		\sum_{i = 1}^3 l^{(i)}_K
		\sum_{q = 1}^Q \widetilde \omega_{q}
		\left[
		\frac{\alpha\dt}{2|K|}
		({\bf u}^{\rm ext}_{i,q} - {\bf u}^\star) \cdot \mathbf{n}^\star_j
		-
		\frac{\dt}{2|K|}
		\left({\bf f}({\bf u}^{\rm ext}_{i,q})\cdot{\bf n}_K^{i}\right)\cdot \mathbf{n}^\star_j
		\right]
		\\
		\stackrel{\eqref{eq:wLFS-2D}}{\succ} & 
		\sum_{i = 1}^3 l^{(i)}_K
		\sum_{q = 1}^Q \widetilde \omega_{q}
		\frac{\dt}{2|K|}
		\left[
		{\bm \zeta}({\bf u}^\star)\cdot {\bf n}_K^{i}
		\right]
		+
		\sum_{i = 1}^3 l^{(i)}_K
		\sum_{q = 1}^Q \widetilde \omega_{q}
		\frac{\dt}{2|K|}
		\left[
		{\bm \zeta}({\bm u}^\star)\cdot {\bf n}_K^{i}
		\right]
		\\
		= &
		\frac{\dt}{|K|}
		{\bm \zeta}({\bf u}^\star)
		\cdot 
		\left(
			\sum_{i = 1}^3 l^{(i)}_K
			{\bm n}_K^{i}
		\right)
		\stackrel{\eqref{discrete-div}}{=}
		\frac{\dt}{2|K|}
		{\bm \zeta}({\bf u}^\star)
		\cdot 
		{\bf 0}
		= 0.
	\end{align*}
\end{proof}

As shown in \Cref{thm:1008}, the CFL condition \eqref{eq:1009} depends on  {\color{black}$\mathcal{C}_{\tt IDP}$}, which is determined by a chosen quadrature \eqref{eq:980}. It is therefore natural to seek the \textbf{optimal} quadrature of the form \eqref{eq:980} that \textbf{maximizes} {\color{black}$\mathcal{C}_{\tt IDP}$}, thereby yielding the most lenient IDP CFL condition. This allows for larger stable time step sizes and improves the efficiency of high-order IDP schemes.

Such optimal quadrature rules were recently discovered in \cite{ding2025robust} for $\mathbb{P}^1$- and $\mathbb{P}^2$-based IDP schemes on arbitrary triangular meshes.
For convenience, we consider an arbitrary triangular cell $K$ and rearrange the indices of its edges $\{e_K^{(i)}\}_{i=1}^3$ and vertices $\{{\bf V}_K^{(i)}\}_{i=1}^3$ such that $l_K^{(1)} \ge l_K^{(2)} \ge l_K^{(3)}$.

\begin{example}[Optimal quadrature \cite{ding2025robust} for $\mathbb{P}^1$]\label{ex:P1}
The optimal quadrature of the form \eqref{eq:980} for $\mathbb{P}^1$-based IDP schemes on any triangular cell $K$ is given by
\begin{equation}\label{eq:1032}
	w_i = \frac{2l_K^{(i)}}{3l_K^{(1)} + 3l_K^{(2)}}, \qquad i = 1, 2, 3,
\end{equation}
with at most one internal node ($S \le 1$), whose weight and location are given by:
\begin{equation}\label{eq:1048}
	 {\omega}_1^* = 
	\frac{l_K^{(1)} + l_K^{(2)} - 2l_K^{(3)}}{3l_K^{(1)} + 3l_K^{(2)}}, \quad
	(x_1^*, y_1^*) =
	\frac{
		(l_K^{(1)} - l_K^{(3)})\,{\bf V}_K^{(1)} +
		(l_K^{(2)} - l_K^{(3)})\,{\bf V}_K^{(2)}
	}
	{l_K^{(1)} + l_K^{(2)} - 2l_K^{(3)}}.
\end{equation}
Note that if the cell $K$ is equilateral, i.e., $l_K^{(1)} = l_K^{(2)} = l_K^{(3)}$, then the weight ${\omega}_1^*$ becomes zero, and the optimal quadrature contains \emph{no} internal node ($S = 0$).
\end{example}

\begin{example}[Optimal quadrature \cite{ding2025robust} for $\mathbb{P}^2$]\label{ex:P2}
The optimal quadrature of the form \eqref{eq:980} for $\mathbb{P}^2$-based IDP schemes on any triangular cell $K$ has the boundary weights 
	\begin{equation*}
		w_i = \frac{2l_K^{(i)}}{9 \bar l_K + 3 \hat l_K}, \qquad i = 1,2,3,
	\end{equation*}
	and two internal nodes with weights and coordinates
	\begin{equation*}
		\omega_s^* = \frac{\bar l_K + \hat l_K}{6 \bar l_K + 2 \hat l_K},
		~~ 
		(x_s^*, y_s^*)^\top 
		= 
		\sum_{i=1}^3 \beta_{s,i} \, {\bf V}_K^{(i)},
		~~ 
		\beta_{s,i}
		=
		\frac
		{{\bm l}_K^\top {\bm M}_{s,i} \, {\bm l}_K
			+ 2 \, c_{s,i} \, \hat l_K}
		{18(\bar l_K+\hat l_K)(l_K^{(2)}+\hat l_K)},
		\quad s = 1,2,
	\end{equation*}
	where ${\bm l}_K := (l_K^{(1)}, l_K^{(2)}, l_K^{(3)})^\top$, $\bar l_K := (l_K^{(1)} + l_K^{(2)} + l_K^{(3)})/3$, and
        \begin{equation*} 
	\begin{aligned}
	\hat l_K :
	= &
	\sqrt
	{
		\big(l_K^{(1)}\big)^2
		+
		\big(l_K^{(2)}\big)^2
		+
		\big(l_K^{(3)}\big)^2
		-
		\frac23\Big(
		l_K^{(1)} l_K^{(2)} + l_K^{(2)} l_K^{(3)} + l_K^{(3)} l_K^{(1)}
		\Big)
	}.
	\end{aligned}
        \end{equation*}
	The positive coefficients $c_{s,i}$ and the positive definite matrices ${\bm M}_{s,i}$ are given by
	\begin{equation*}
		\begin{aligned}
		c_{1,1}
		&=
		3 l_K^{(1)} + 3 l_K^{(2)} + \sqrt{3} l_K^{(2)} - \sqrt{3} l_K^{(3)}, \quad 
        		c_{2,1}
		=
		3 l_K^{(1)} + 3 l_K^{(2)} + \sqrt{3} l_K^{(3)} - \sqrt{3} l_K^{(2)},
        \\
		c_{1,2}
		&=
		6 l_K^{(2)} + \sqrt{3} l_K^{(3)} - \sqrt{3} l_K^{(1)}, \quad 
		c_{2,2}
		=
		6 l_K^{(2)} + \sqrt{3} l_K^{(1)} - \sqrt{3} l_K^{(3)},
        \\
		c_{1,3}
		&=
		3 l_K^{(2)} + 3 l_K^{(3)} + \sqrt{3} l_K^{(1)} - \sqrt{3} l_K^{(2)}, 
        \quad 
		c_{2,3}
		=
		3 l_K^{(2)} + 3 l_K^{(3)} + \sqrt{3} l_K^{(2)} - \sqrt{3} l_K^{(1)},
		\end{aligned}
	\end{equation*}
	{\small 
	\begin{equation*}
		\begin{aligned}
			{\bm M}_{1,1}&=
			\begin{bmatrix}
				6 & 1 & -2 \\
				1 & 2\sqrt{3}+6 & -\sqrt{3}-2 \\
				-2 & -\sqrt{3}-2 & 6
			\end{bmatrix},&
			{\bm M}_{2,1}&=
			\begin{bmatrix}
				6 & 1 & -2 \\
				1 & 6-2\sqrt{3} & \sqrt{3}-2 \\
				-2 & \sqrt{3}-2 & 6
			\end{bmatrix},&
			\\
			{\bm M}_{1,2}&=
			\begin{bmatrix}
				6 & -\sqrt{3}-2 & -2 \\
				-\sqrt{3}-2 & 12 & \sqrt{3}-2 \\
				-2 & \sqrt{3}-2 & 6
			\end{bmatrix},&
			{\bm M}_{2,2}&=
			\begin{bmatrix}
				6 & \sqrt{3}-2 & -2 \\
				\sqrt{3}-2 & 12 & -\sqrt{3}-2 \\
				-2 & -\sqrt{3}-2 & 6
			\end{bmatrix},&
			\\
			{\bm M}_{1,3}&=
			\begin{bmatrix}
				6 & \sqrt{3}-2 & -2 \\
				\sqrt{3}-2 & 6-2\sqrt{3} & 1 \\
				-2 & 1 & 6
			\end{bmatrix},&
			{\bm M}_{2,3}&=
			\begin{bmatrix}
				6 & -\sqrt{3}-2 & -2 \\
				-\sqrt{3}-2 & 2\sqrt{3}+6 & 1 \\
				-2 & 1 & 6
			\end{bmatrix}.
		\end{aligned}
	\end{equation*}
}
\end{example}

If the above optimal quadrature is used for cell average decomposition, 
we have the following results, 
as a corollary of \Cref{thm:1008}.

\begin{theorem}[IDP via optimal quadrature \cite{ding2025robust} for $\mathbb P^1$ and $\mathbb P^2$] \label{thm:1108}
	Assume $l_K^{(1)} \ge l_K^{(2)} \ge l_K^{(3)}$. 
	If  a  $\mathbb P^m$-based ($m=1$ or $2$) solution ${\bf u}_h^n$ satisfy 
	\begin{equation}\label{eq:1526P1}
	    {\bf u}_h^n(x,y) \in G \qquad \forall (x,y) \in \mathbb S_{K,m}^{\tt DCW},~~\forall K, 
	\end{equation}
	where $\mathbb S_{K,1}^{\tt DCW}$ and $\mathbb S_{K,2}^{\tt DCW}$ denote the set of all the nodes in the optimal quadrature in \Cref{ex:P1} and \Cref{ex:P2}, respectively, proposed by Ding, Cui, and Wu \cite{ding2025robust}, 
	then the scheme \eqref{eq:1033} preserves  $\bar{\bf u}_{K}^{n+1}\in G$ under the 
	CFL condition 
	\begin{equation}\label{eq:1692}
	\begin{aligned}
		& \alpha \frac{\dt}{|K|}  \le 
		\mathcal{C}_{K,m}^{\tt DCW} \qquad \forall K \in \mathcal{T}_h, 
		\\
		& \mathcal{C}_{K,1}^{\tt DCW} := \frac{2}{3(l^{(1)}_K+l^{(2)}_K)}, \qquad 
		\mathcal{C}^{\tt DCW}_{K,2}  := \frac{2}{9\bar l_K + 3 \hat l_K}, 
		\end{aligned}
	\end{equation}
    which is  optimal for using any   
    quadrature of the form \eqref{eq:980}. 
\end{theorem}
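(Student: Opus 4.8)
The plan is to treat this theorem as a corollary of \Cref{thm:1008}: once we verify that the rules in \Cref{ex:P1} and \Cref{ex:P2} are genuine feasible quadratures of the form \eqref{eq:980}, the CFL condition \eqref{eq:1692} follows immediately by evaluating $\mathcal{C}_{\tt IDP}=\min_i w_i/l_K^{(i)}$ in \eqref{eq:1009}, so that the only genuinely new content is the optimality claim. First I would check feasibility. Positivity of the edge weights is immediate since $w_i$ is proportional to $l_K^{(i)}>0$, and positivity of the internal weights follows from the ordering $l_K^{(1)}\ge l_K^{(2)}\ge l_K^{(3)}$; for $\mathbb P^1$, for instance, $\omega_1^*\ge 0$ holds precisely because $l_K^{(1)}+l_K^{(2)}\ge 2l_K^{(3)}$, with equality exactly in the equilateral case where the internal node disappears ($S=0$). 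Summation to one and exactness on $\mathbb P^m$ are moment identities that are cleanest to check in barycentric coordinates: the edge-midpoint nodes and the internal nodes have explicit barycentric representations, and one verifies that the weighted nodes reproduce the exact moments $\frac{1}{|K|}\iint_K x^ay^b\,{\rm d}x{\rm d}y$ for $a+b\le m$. That the internal nodes lie in $K$ is then read off directly from the nonnegativity of their barycentric coordinates.

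A simplification I would emphasize is that in both quadratures the ratio $w_i/l_K^{(i)}$ is \emph{independent of} $i$, equal to $\tfrac{2}{3(l_K^{(1)}+l_K^{(2)})}$ for $\mathbb P^1$ and $\tfrac{2}{9\bar l_K+3\hat l_K}$ for $\mathbb P^2$. Hence $\mathcal{C}_{\tt IDP}$ in \eqref{eq:1009} equals this common value with no minimization to perform, and this value is exactly $\mathcal{C}_{K,m}^{\tt DCW}$. Applying \Cref{thm:1008} then yields \eqref{eq:1692}.

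For optimality, the plan is to formulate the problem of maximizing $c:=\min_i w_i/l_K^{(i)}$ over all feasible quadratures \eqref{eq:980} and to produce a matching upper bound $c\le \mathcal{C}_{K,m}^{\tt DCW}$. Again in barycentric coordinates, $\mathbb P^1$-exactness forces the weighted nodes to reproduce the centroid $\big(\tfrac13,\tfrac13,\tfrac13\big)$; subtracting the boundary contribution $\big(\tfrac{w_2+w_3}{2},\tfrac{w_1+w_3}{2},\tfrac{w_1+w_2}{2}\big)$ shows that the internal nodes must supply the residual vector, whose components are necessarily nonnegative because interior nodes have nonnegative barycentric coordinates and nonnegative weights. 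In particular $\tfrac{w_1+w_2}{2}\le\tfrac13$, i.e. $w_1+w_2\le\tfrac23$; combined with $w_1\ge c\,l_K^{(1)}$, $w_2\ge c\,l_K^{(2)}$ this gives $c\,(l_K^{(1)}+l_K^{(2)})\le w_1+w_2\le\tfrac23$, hence $c\le\tfrac{2}{3(l_K^{(1)}+l_K^{(2)})}$, which is attained by the constructed rule. (The two analogous constraints $w_1+w_3\le\tfrac23$ and $w_2+w_3\le\tfrac23$ are less restrictive under the ordering, so the longest-pair constraint is the binding one.) The $\mathbb P^2$ case follows the same template, but the exactness conditions now include the second moments, which is where $\hat l_K$ enters, and the residual that the two internal nodes must cover yields the sharper bound $c\le\tfrac{2}{9\bar l_K+3\hat l_K}$.

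I expect the $\mathbb P^2$ step to be the main obstacle. Verifying that the explicit two-node rule of \Cref{ex:P2} is simultaneously exact on all of $\mathbb P^2$, has positive weights, and places both nodes inside $K$ requires the heavy algebra encoded in the matrices ${\bm M}_{s,i}$ and the coefficients $c_{s,i}$, and the matching upper bound requires identifying precisely which second-moment nonnegativity constraint is binding. I would organize this by reducing all quantities to symmetric functions of $(l_K^{(1)},l_K^{(2)},l_K^{(3)})$, so that $\bar l_K$ and $\hat l_K$ emerge naturally, and defer the bulk of these computations to an appendix or to \cite{ding2025robust}.
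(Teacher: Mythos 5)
Your proposal follows essentially the same route as the paper: \Cref{thm:1108} is presented there exactly as a corollary of \Cref{thm:1008}, and your central observation --- that for both rules the ratio $w_i/l_K^{(i)}$ is independent of $i$ (equal to $\frac{2}{3(l_K^{(1)}+l_K^{(2)})}$ for $\mathbb P^1$ and $\frac{2}{9\bar l_K+3\hat l_K}$ for $\mathbb P^2$), so that $\mathcal{C}_{\tt IDP}$ in \eqref{eq:1009} is this common value with no minimization left to perform --- is precisely how the constants $\mathcal{C}_{K,m}^{\tt DCW}$ in \eqref{eq:1692} arise. Where you go beyond the paper is in the optimality claim: the paper does not prove feasibility or optimality of the quadratures at all, delegating both to \cite{ding2025robust}, whereas your $\mathbb P^1$ argument is a correct self-contained proof of the upper bound. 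Indeed, linear exactness in barycentric coordinates together with nonnegativity of the internal weights and of the barycentric coordinates of interior nodes forces $w_i+w_j\le \frac23$ for each pair of edges, the binding pair under the ordering is $(l_K^{(1)},l_K^{(2)})$, and $c\,(l_K^{(1)}+l_K^{(2)})\le w_1+w_2\le\frac23$ gives $c\le\frac{2}{3(l_K^{(1)}+l_K^{(2)})}$, attained by \Cref{ex:P1}; this is a nice addition that makes the $\mathbb P^1$ case independent of the reference. For $\mathbb P^2$ your sketch is not yet a proof (the bound genuinely requires the second-moment constraints, which is where $\hat l_K$ enters, and the verification that the two-node rule of \Cref{ex:P2} is exact, positive, and interior involves the heavy algebra in $c_{s,i}$ and ${\bm M}_{s,i}$), but since you defer exactly this algebra to \cite{ding2025robust}, your treatment is consistent with the paper's own.
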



\subsubsection{Comparison of different quadrature rules for IDP}

\begin{remark}[IDP via Chen--Shu quadrature in \cite{chen2017entropy}]
	As a direct consequence of 
	\Cref{thm:1008}, if  the Chen--Shu quadrature in \cite[Table C.2]{chen2017entropy} is used for cell average decomposition, 
	then we obtain a $\mathbb P^m$-based ($m=1, 2, 3, 4$) high-order IDP scheme under the CFL condition:
	\begin{equation}\label{eq:1692cs}
		\begin{aligned}
			 \alpha \frac{\dt}{|K|}  \le   w_m^{\tt CS}    \min    \left\{ 
			 \frac{1}{l^{(1)}_K},
			 \frac{1}{l^{(2)}_K},
			 \frac{1}{l^{(3)}_K}
			 \right\} =: 
			\mathcal{C}_{K,m}^{\tt CS} \qquad \forall K \in \mathcal{T}_h, 
		\end{aligned}
	\end{equation}
	where $w_1^{\tt CS}=\frac13$, $w_2^{\tt CS}=\frac3{20}$, 
	$w_3^{\tt CS}\approx 0.086812$, and $w_4^{\tt CS}\approx0.05572449$.
\end{remark}

\begin{remark}[Improved IDP via Zhang--Xia--Shu quadrature \eqref{eq:ZXSquadrature}]
	Applying 
	\Cref{thm:1008} to the case of using Zhang--Xia--Shu quadrature \eqref{eq:ZXSquadrature},  we can improve the IDP CFL condition \eqref{eq:ZS-tri} to the following more relaxed one:
	\begin{equation}\label{eq:1692zxs}
		\begin{aligned}
			\alpha \frac{\dt}{|K|}  \le   \frac23 {\omega}_1   \min    \left\{ 
			\frac{1}{l^{(1)}_K},
			\frac{1}{l^{(2)}_K},
			\frac{1}{l^{(3)}_K}
			\right\} =: 
			\mathcal{C}_{K,m}^{\tt ZXS} \qquad \forall K \in \mathcal{T}_h, 
		\end{aligned}
	\end{equation}
	where  $ \omega_1=\frac{1}{L(L-1)}$ is the first weight of the $L$-point Gauss--Lobatto quadrature with $L=\lceil \frac{m+3}{2} \rceil$.
\end{remark}

\begin{remark}[Lv--Ihme approach]
	A different decomposition for cell averages was proposed by Lv and Ihme in \cite{lv2015entropy}. The idea is to begin with any quadrature rule that has positive weights and sufficiently high accuracy, and then solve an optimization problem to increase the decomposition weights at the boundary Gauss points. 
	The primary advantage of this approach is its applicability to arbitrary polygonal or polyhedral cells. However, it requires solving computationally expensive optimization problems tailored to each specific cell geometry. Moreover, on triangular meshes, the resulting IDP CFL number is generally smaller than those obtained using the quadrature rules discussed above.
\end{remark}

\Cref{tab:2019} presents a comparison of 
 the IDP CFL condition 
$\alpha \frac{\Delta t}{|K|} \le  
        \mathcal{C}_{\tt IDP}$ 
obtained using different quadrature rules for $\mathbb{P}^1$- and $\mathbb{P}^2$-based schemes on two representative triangular cells. As expected, $\mathcal{C}_{\tt IDP}$ derived using the optimal quadrature proposed in \cite{ding2025robust} is the largest among all the cases.  

    \begin{table}[!htb]
    \centering
    \caption{Comparison of the IDP CFL condition
    	$\alpha \frac{\Delta t}{|K|} \le \mathcal{C}_{\tt IDP}$
    	and the number of internal nodes for various quadrature rules on two example triangular cells.  (Note: The information in \cite[Table 1]{ding2025robust} for the Chen--Shu quadrature  was incorrect, and the corrected version is presented here.)}\label{tab:2019}
	\renewcommand\arraystretch{1.5}
	\begin{tabular}{@{\hskip 2pt}c|c@{\hskip 2pt}|c@{\hskip 2pt}c|@{\hskip 2pt}c@{\hskip 2pt}c@{\hskip 2pt}}
		\toprule[1.5pt]
		\multicolumn{2}{@{\hskip 2pt}c|}{\raisebox{2.5ex}{\shortstack[t]{example cell $K$ \\ ~\\~\\~\\~\\~\\~\\~\\~\\~ }}} &
		\multicolumn{2}{c|@{\hskip 2pt}}{\includegraphics[trim = 5 0 0 0, clip, width = 0.25\textwidth]{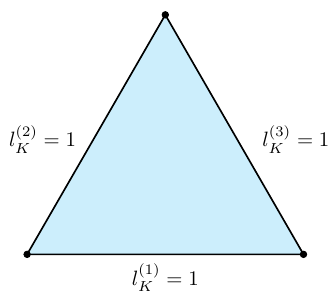}} &
		\multicolumn{2}{c@{\hskip 2pt}}{\includegraphics[trim = 4 0 0 0, clip, width = 0.26\textwidth]{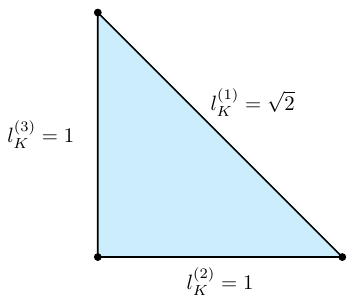}} \\
		\midrule[1pt]
		\multicolumn{2}{c|}{\multirow{1}{*}{quadrature}}
		& $\mathcal{C}_{\tt IDP}$ & \makecell{internal \\ nodes}  & $\mathcal{C}_{\tt IDP}$ & \makecell{internal \\ nodes} \\
		\midrule[.5pt]
		\multicolumn{1}{c|}{\multirow{4}{*}{$\mathbb{P}^1$}} & Optimal \cite{ding2025robust} & $\frac13 \approx 0.333$ & 0 & $\frac13 \approx 0.333$ & 1 \\
		& ZXS \cite{zhang2012maximum} & $\frac19 \approx 0.111$ & 0 & $\frac{1}{3(2+\sqrt{2})} \approx 0.0976$ & 0\\
		& ZXS \eqref{eq:1692zxs}  &  $\frac13 \approx 0.333$  &  0  &  $\frac1{3\sqrt{2}} \approx 0.236$ &  0 
		\\& Chen--Shu \cite{chen2017entropy} & $\frac13 \approx 0.333$ & 0 & $\frac1{3\sqrt{2}} \approx 0.236$ & 0 \\
		\midrule[.5pt]
		\multicolumn{1}{c|}{\multirow{4}{*}{$\mathbb{P}^2$}} & Optimal \cite{ding2025robust} & $\frac16 \approx 0.167$ & 1 & $\frac{2}{6+3\sqrt{2}+\sqrt{30-12\sqrt{2}}} \approx 0.144$ & 2 \\
		& ZXS \cite{zhang2012maximum} & $\frac{1}{27} \approx 0.037$ & 9 & $\frac{1}{9(2+\sqrt{2})} \approx 0.0325$ & 9\\
		& ZXS \eqref{eq:1692zxs} &  $\frac19 \approx 0.111$  &  9  &  $\frac1{9\sqrt{2}} \approx 0.0786$ &  9 \\
		& Chen--Shu \cite{chen2017entropy} & $\frac{3}{20}=0.15$ & 1 & $\frac{3}{20\sqrt{2}}\approx0.106$ & 1 \\
		\bottomrule[1.5pt]
	\end{tabular}
\end{table}

\subsection{Numerical examples of high order DG schemes for gas dynamics equations}

We list a few benchmark tests in  gas dynamics for verifying robustness of high order accurate schemes solving 
low density or low pressure problems,
all of which are challenging tests for high order accurate DG schemes. Below are numerical results of high order DG schemes with the third order SSP Runge-Kutta method with only the simple  limiter \eqref{eq:ZSlimiter_system} for enforcing positivity of density and pressure.

\begin{example} [Sedov blast wave]
 The blast wave generates low density and pressure. \Cref{fig:sedov} shows an IDP $\mathbb Q^6$ DG method on a rectangular mesh for compressible Navier--Stokes equations. 
{\color{black}The parameters are chosen so that, at the final time, the shock front is a circle of radius~1.} See \cite{zhang2010positivity,zhang2017positivity} for the problem setup, {\color{black}and the exact solution in Sedov’s monograph \cite{sedov1993-similarity}}.
\end{example}

\begin{example}[High speed astrophysical jets] The extremely high speed renders small internal energy in the computation, which is a tough test even for many second order schemes, e.g., even a second order MUSCL scheme may blow up if positivity is not preserved. \Cref{fig:astro-Mach2000-Euler}   shows an IDP $\mathbb Q^4$ DG method on a rectangular mesh for compressible Euler equations for a Mach 2000 jet with background pressure $0.4127$, see \cite{zhang2010positivity} for the problem set up.  \Cref{fig:astro-Mach2000}  shows  an IDP $\mathbb Q^6$ DG method on a rectangular mesh for compressible Navier--Stokes equations for a Mach 2000 jet with background pressure $10^{-6}$, see \cite{tong2023class, liu2024optimization} for the initial conditions.
\end{example}

\begin{example}[Mach 10 shock passing a sharp corner] In this test, a Mach 10 shock is first reflected, generating Kelvin-Helmholtz instability, exactly the same as those in the classical {\it double Mach reflection} test.  
    Then the shock is diffracted at a sharp corner, which induces low density and low pressure, causing numerical instabilities in high order DG schemes. This test involves strong shocks, low density/pressure, as well as fine structures such as roll-ups from Kelvin-Helmholtz instability,  which are often used as an indicator whether excessive artificial viscosity is added in numerical schemes for stabilization. \Cref{fig:astro-Mach2000}  shows results of IDP  high order DG methods for solving compressible Navier--Stokes equations.   \Cref{fig:astro-Mach2000} (a) and (b) show results of $\mathbb P^7$ DG on unstructured triangular meshes for a $60$ degree corner, see \cite{zhang2017positivity} for the problem set up.   \Cref{fig:astro-Mach2000} (c) and (d) show results of $\mathbb Q^6$ DG on rectangular meshes for a $90$ degree corner, see \cite{fan2022positivity,liu2024optimization} for the problem setup. 
    Limiters for enforcing non-oscillations can be added to reduce oscillations. 
\end{example}

\begin{figure}[htbp]
	\centering
	\begin{subfigure}{0.24\textwidth}
		\includegraphics[width=\textwidth]{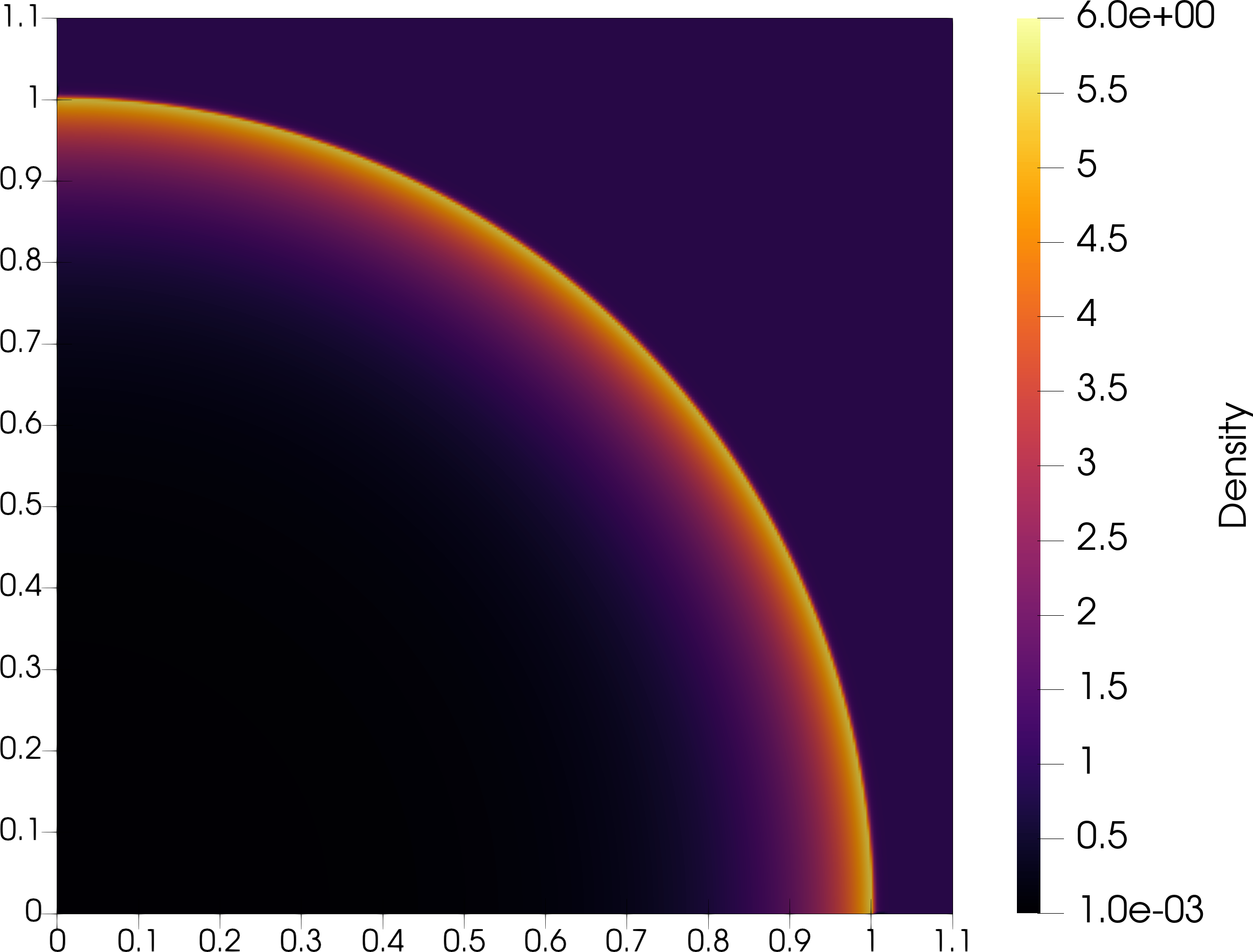}
		\caption{Density.}
	\end{subfigure}
	\hfill
	 	\begin{subfigure}{0.24\textwidth}
		\includegraphics[width=\textwidth]{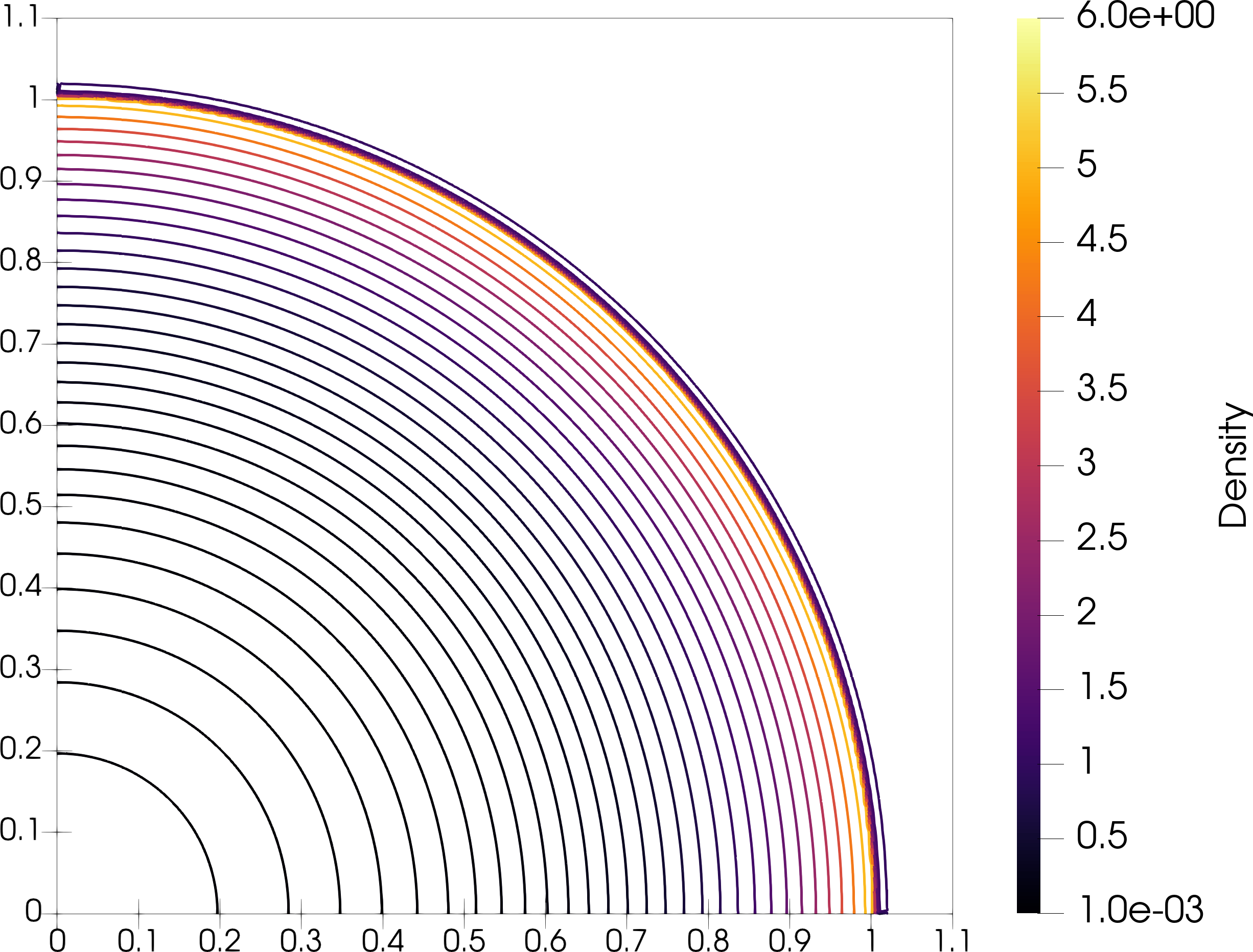}
		\caption{Density.}
	\end{subfigure}
    \begin{subfigure}{0.24\textwidth}
		\includegraphics[width=\textwidth]{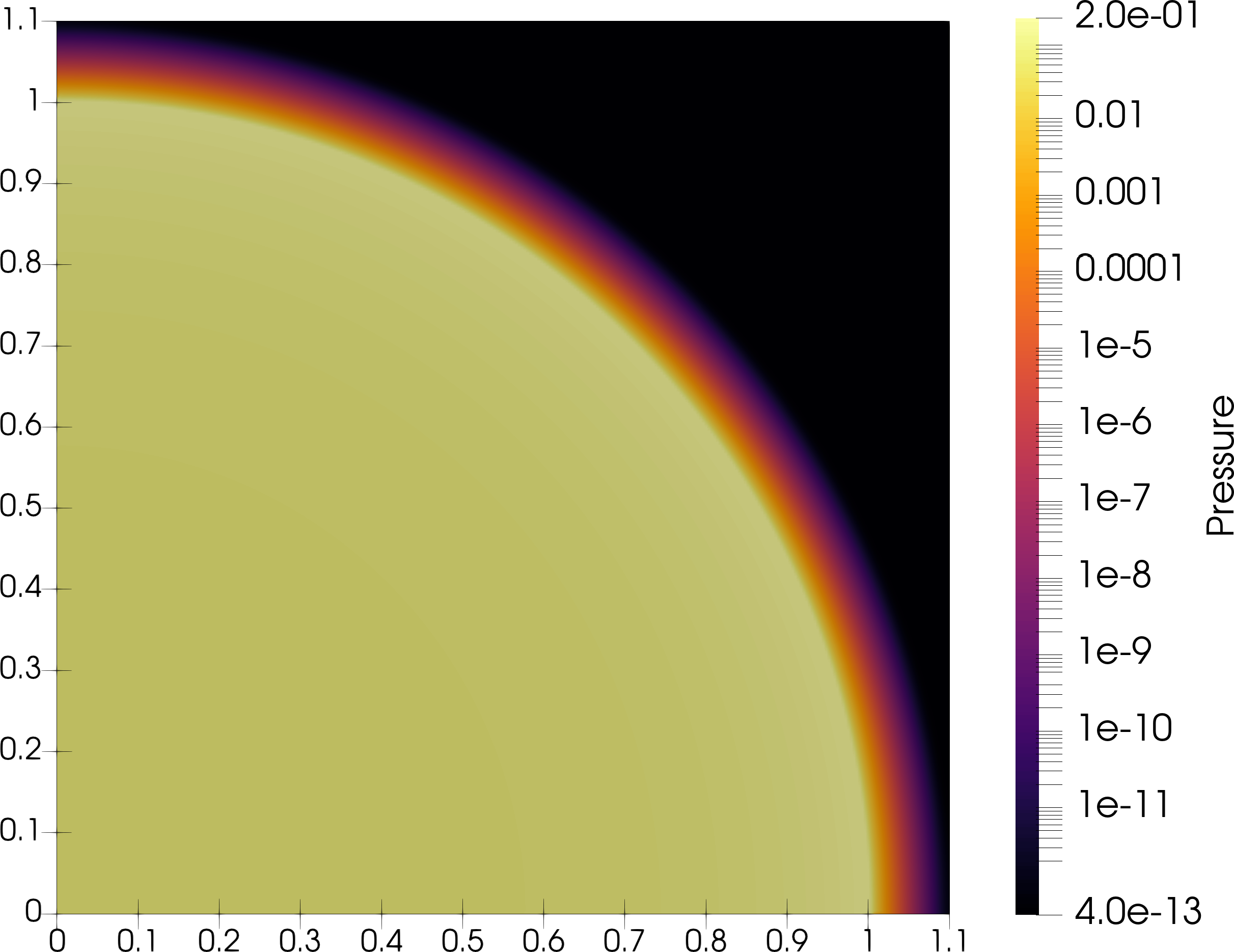}
		\caption{Pressure.}
	\end{subfigure}
	\hfill
	 	\begin{subfigure}{0.24\textwidth}
		\includegraphics[width=\textwidth]{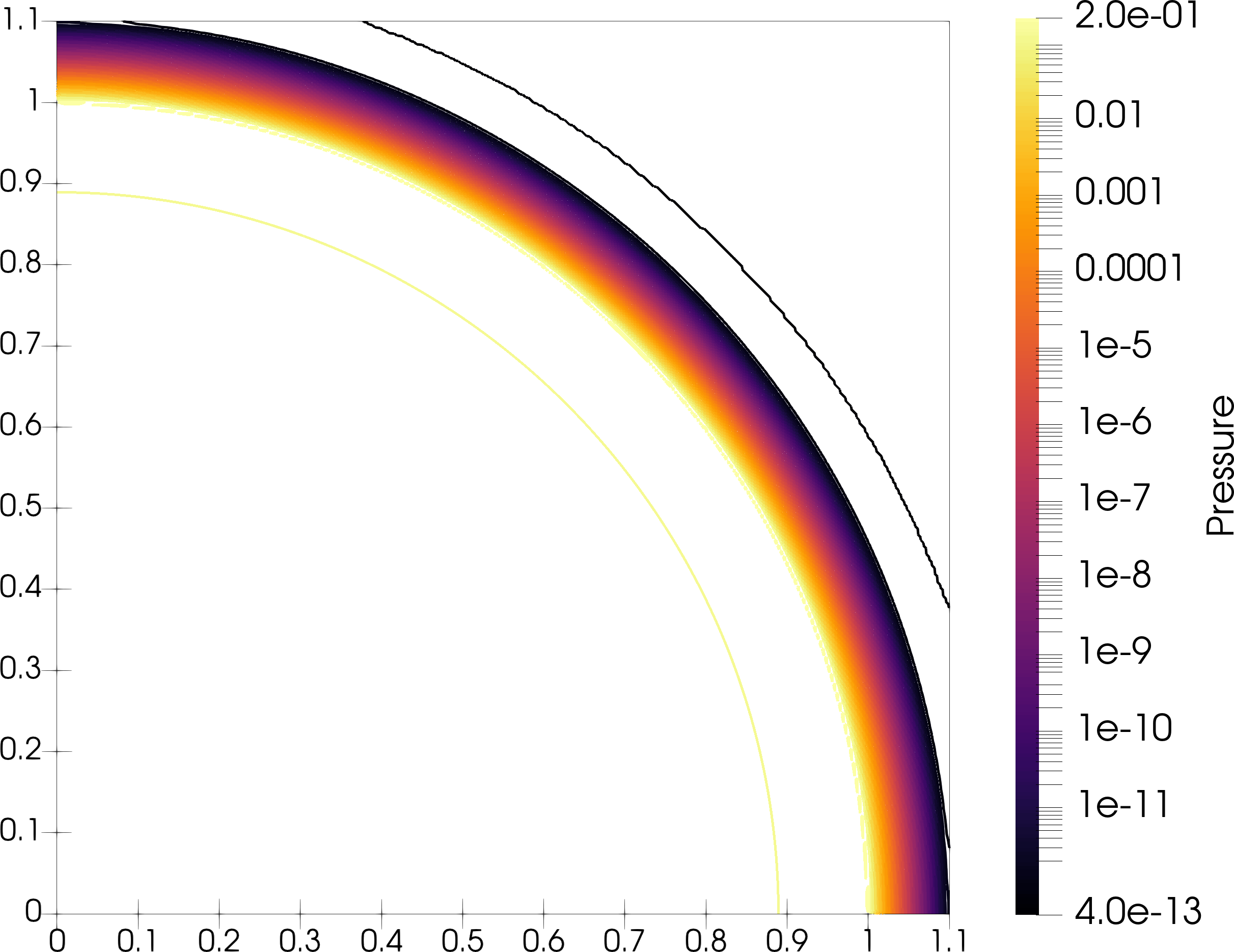}
		\caption{Pressure.}
	\end{subfigure} 
   \caption{2D Sedov blast wave. Numerical results of a positivity-preserving high order DG method with $\mathbb Q^6$ basis on a rectangular mesh of size   $\frac{1.1}{320}$  for  compressible Navier--Stokes with Reynolds number $1000$.  
   Only positivity-preserving limiter is used and no other limiters are used.}
	\label{fig:sedov}  
\end{figure}

\begin{figure}[htbp]
	\centering
	\begin{subfigure}{0.48\textwidth}
		\includegraphics[width=\textwidth]{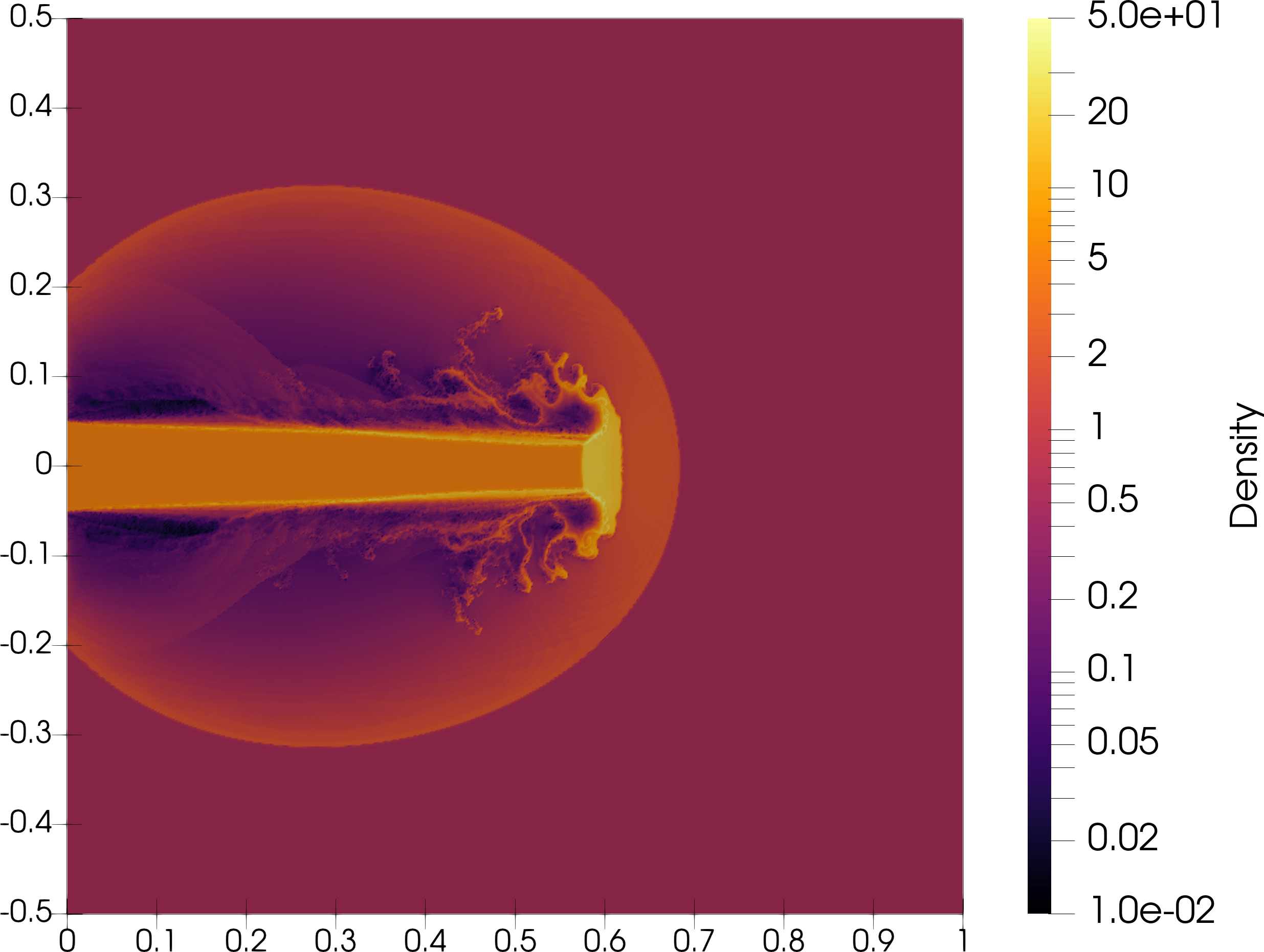}
		\caption{Density.}
	\end{subfigure}
	\hfill
	 	\begin{subfigure}{0.48\textwidth}
		\includegraphics[width=\textwidth]{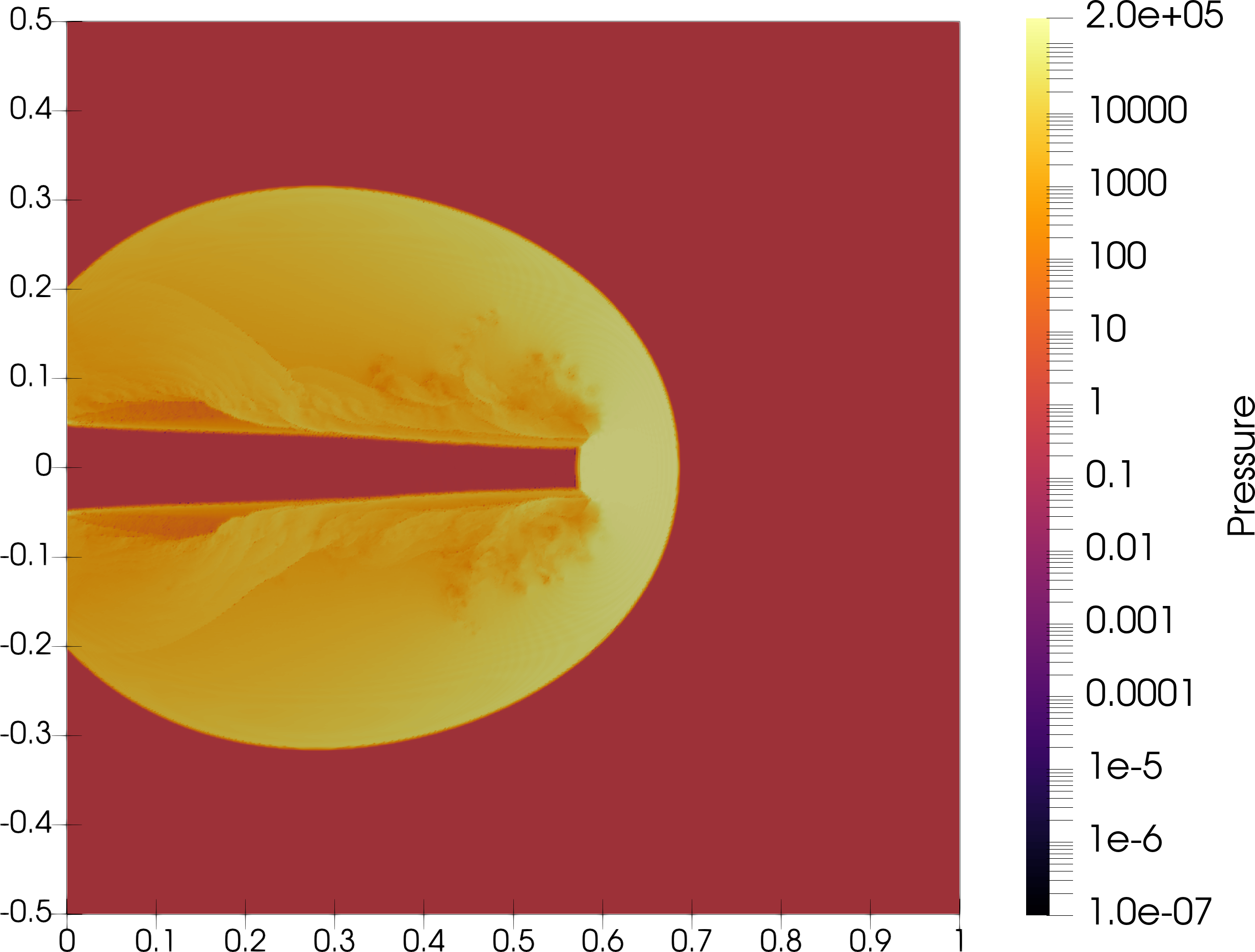}
		\caption{Pressure.}
	\end{subfigure} 
   \caption{{\it Mach 2000 jet with background pressure $0.4127$}. Numerical results of a positivity-preserving high order DG method  with $\mathbb Q^4$ basis on a rectangular mesh of size  $\frac{1}{640}$ for compressible Euler equations. Only positivity-preserving limiter is used and no other limiters are used. } 
	\label{fig:astro-Mach2000-Euler}  
\end{figure}

\begin{figure}[htbp]
	\centering
	\begin{subfigure}{0.48\textwidth}
		\includegraphics[width=\textwidth]{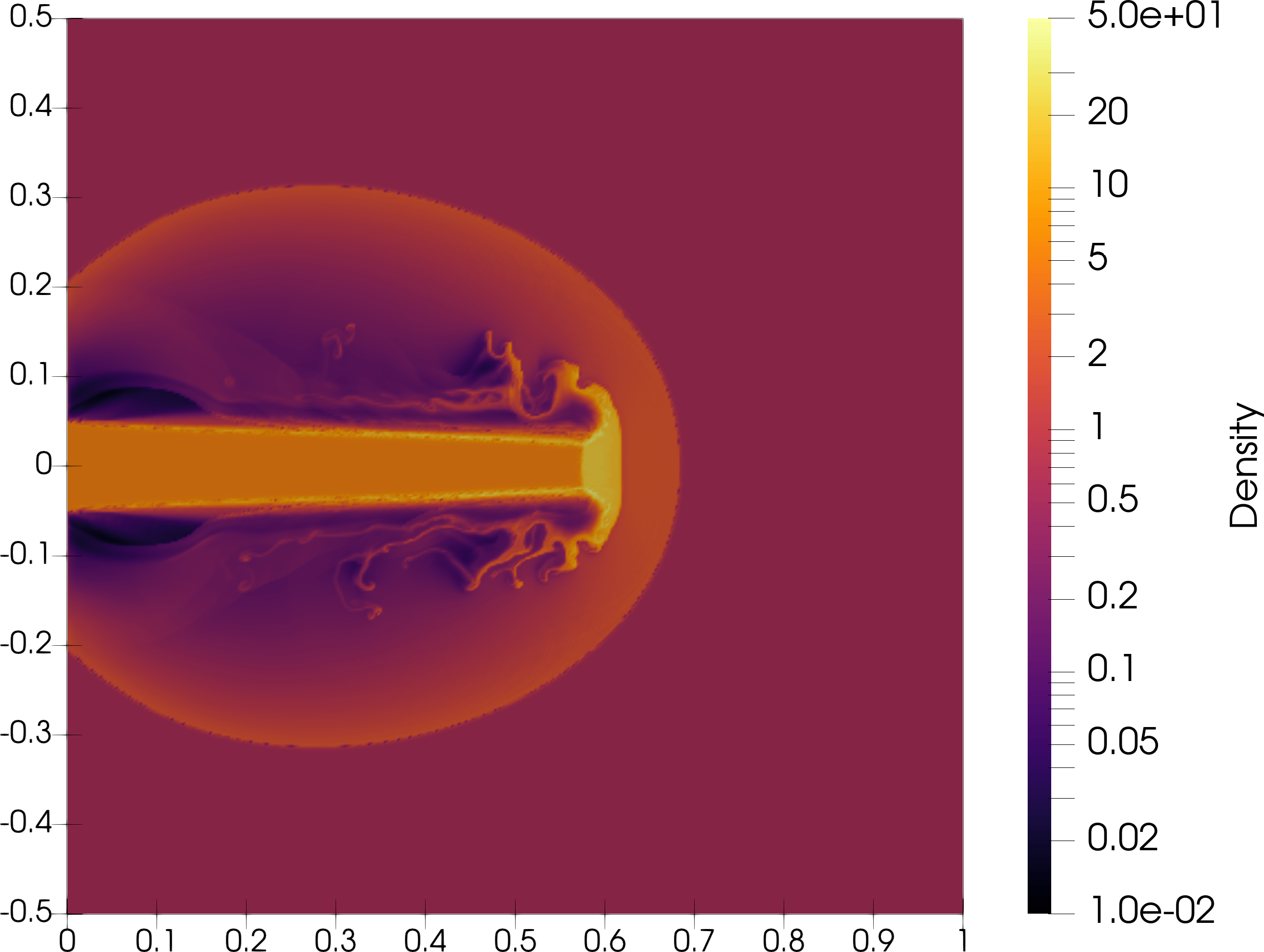}
		\caption{Density.}
	\end{subfigure}
	\hfill
	 	\begin{subfigure}{0.48\textwidth}
		\includegraphics[width=\textwidth]{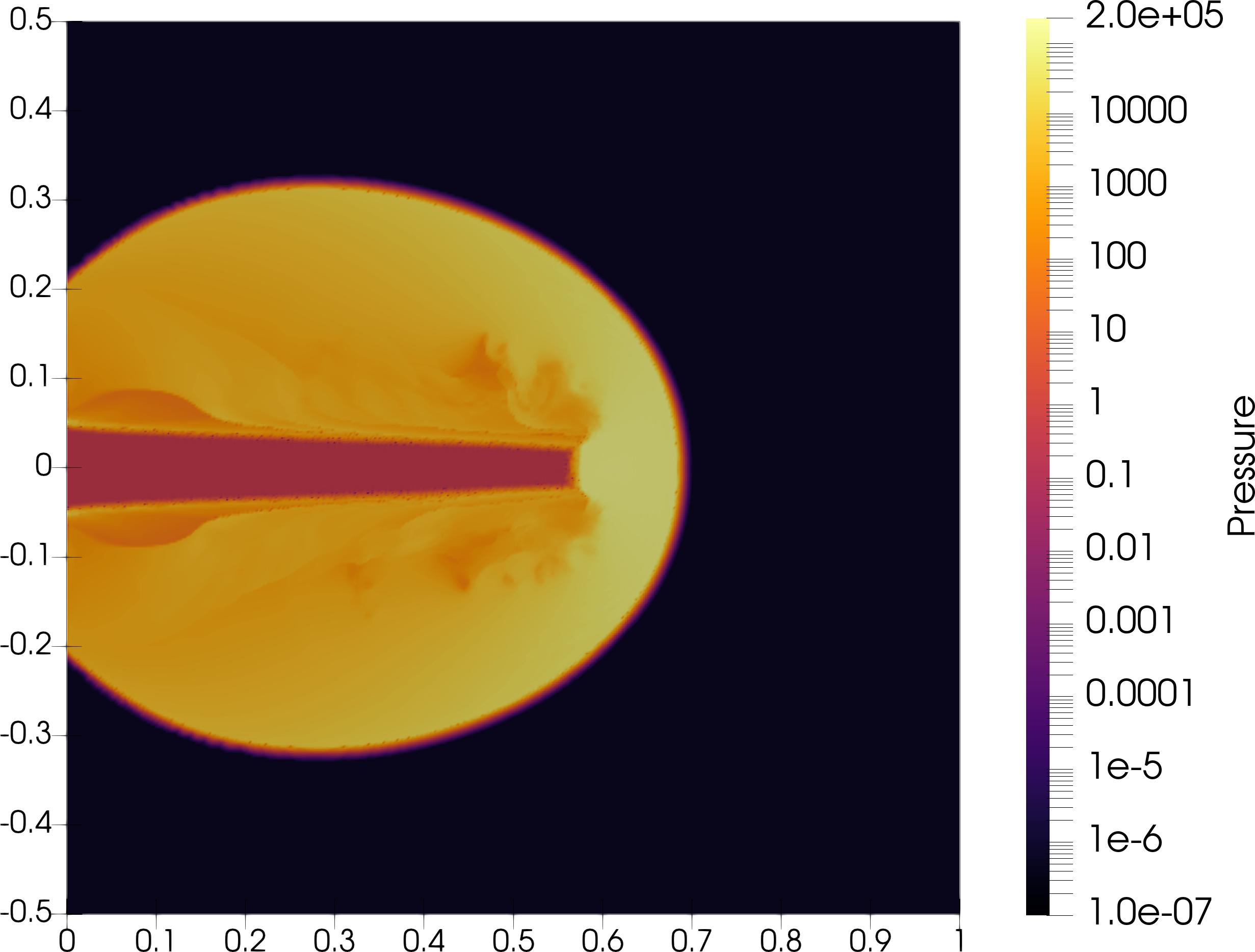}
		\caption{Pressure.}
	\end{subfigure} 
   \caption{{\it Mach 2000 jet with background pressure $10^{-6}$}. Numerical results of a positivity-preserving high order DG method with  $\mathbb Q^6$ basis on a rectangular mesh of size  $\frac{1}{400}$  for compressible Navier--Stokes with Reynolds number $1000$. Only positivity-preserving limiter is used and no other limiters are used. }
	\label{fig:astro-Mach2000}    
\end{figure}

\begin{figure}[htbp]
\centering
	\begin{subfigure}{0.48\textwidth}
		\includegraphics[width=\textwidth]{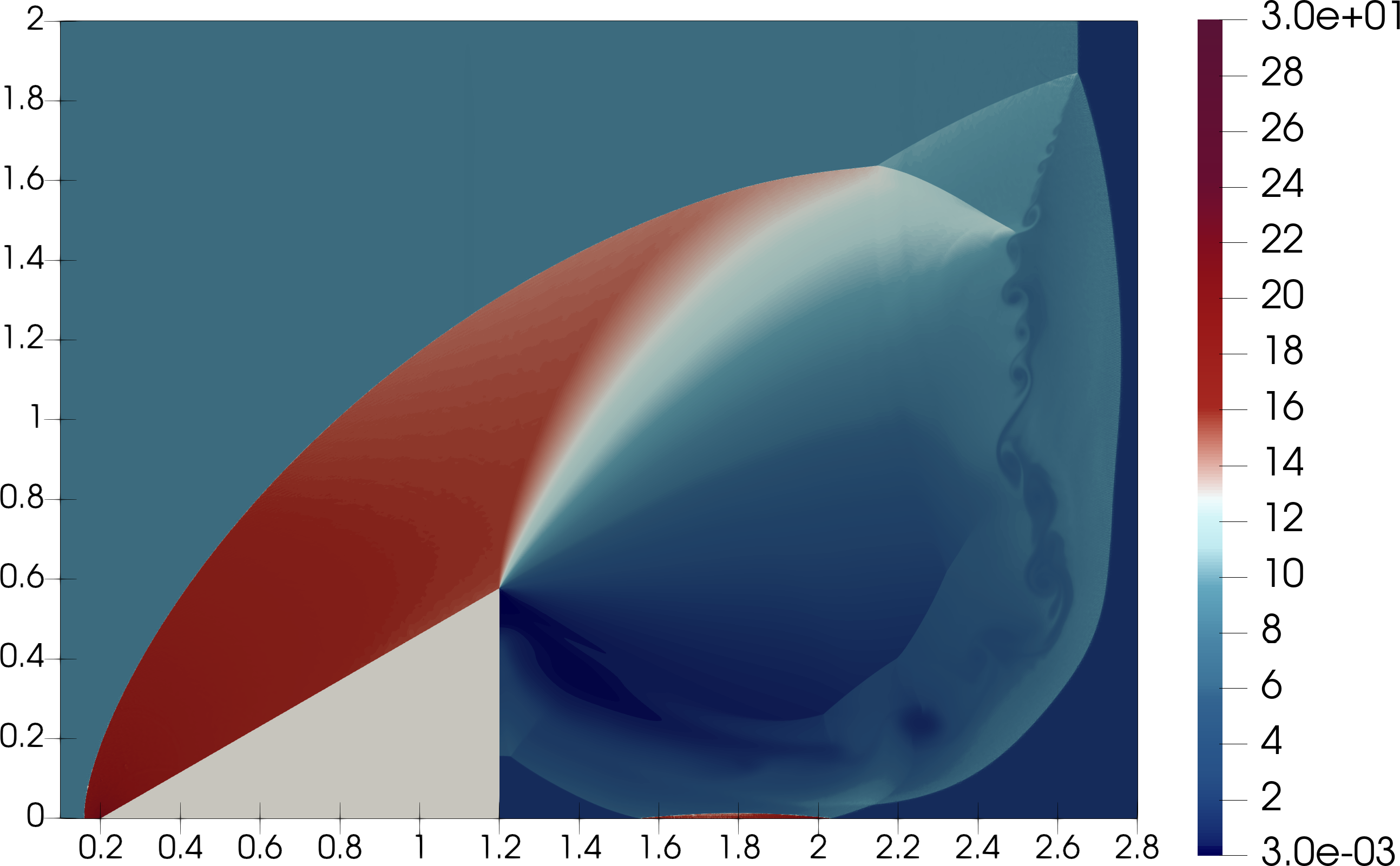}
		\caption{  $\mathbb P^7$ DG method on unstructured triangular mesh with a mesh size around $\frac{1}{160}$.}
	\end{subfigure}
	\hfill
	 	\begin{subfigure}{0.48\textwidth}
		\includegraphics[width=\textwidth]{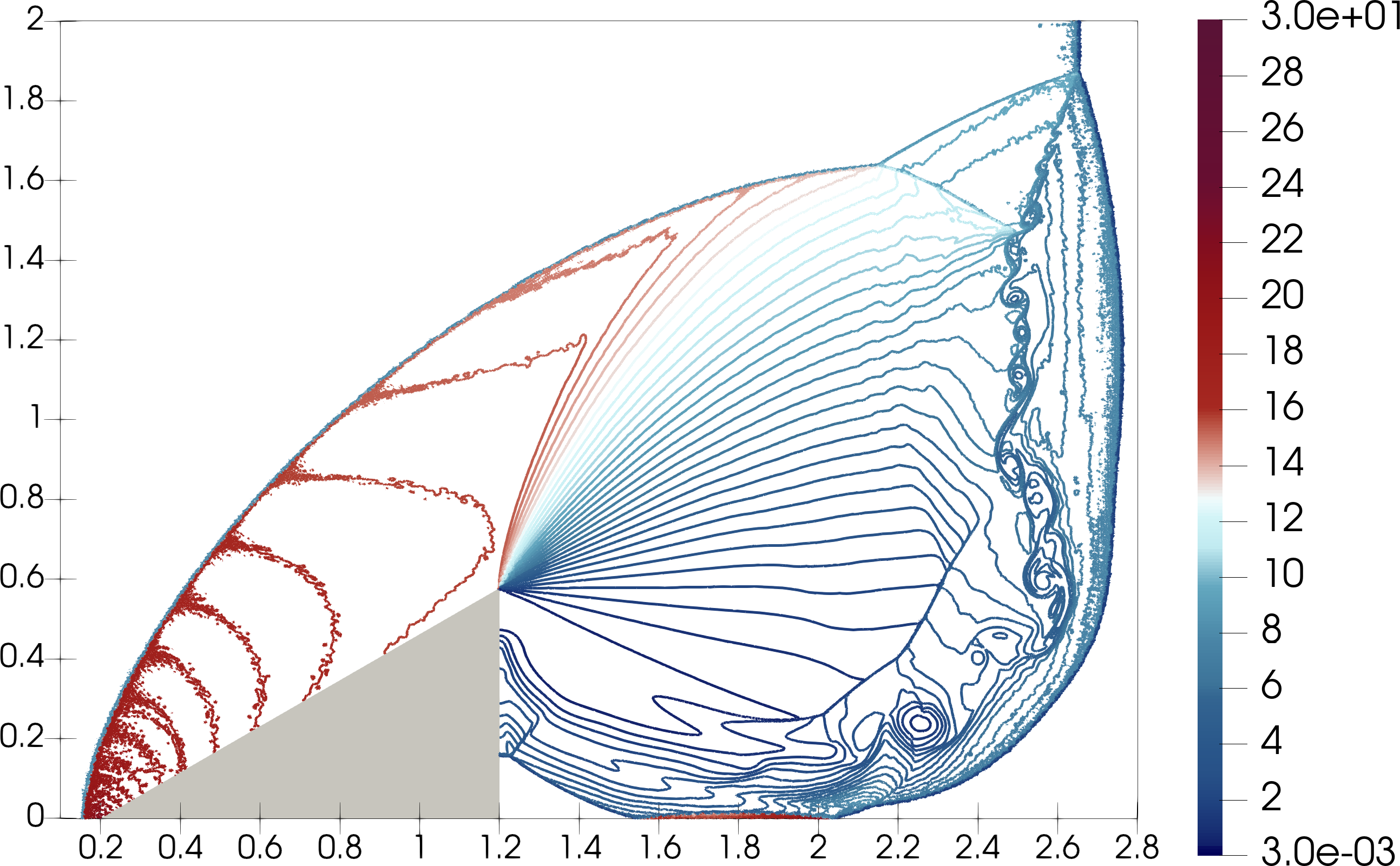}
		\caption{  $\mathbb P^7$ DG method on unstructured triangular mesh with a mesh size around $\frac{1}{160}$.}
	\end{subfigure}\\ 
\begin{subfigure}{0.48\textwidth}
		\includegraphics[width=\textwidth]{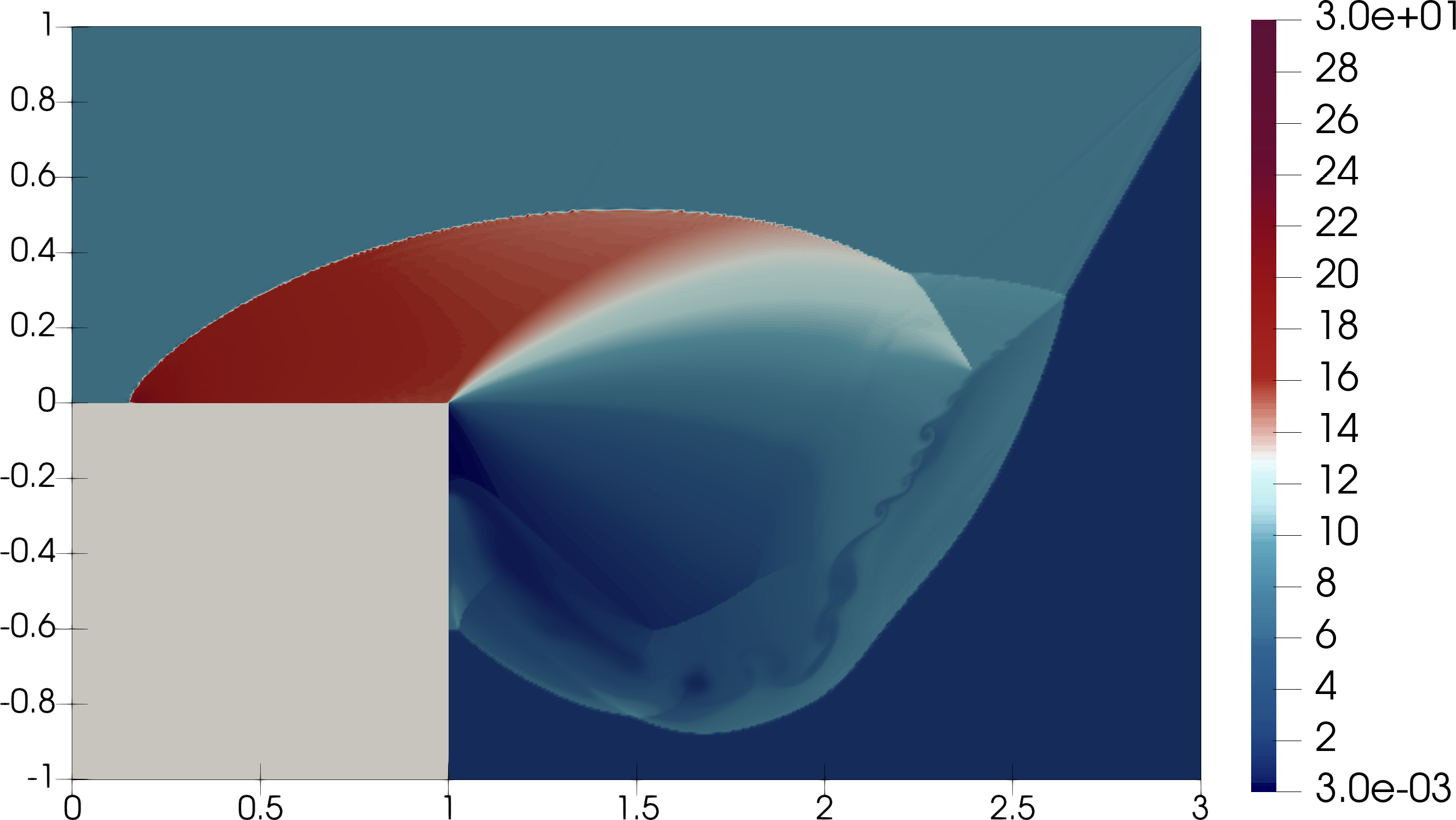}
		\caption{  $\mathbb Q^6$ DG method on a rectangular mesh with a mesh size $\frac{1}{180}$.}
        \end{subfigure}
	\hfill
    	\begin{subfigure}{0.48\textwidth}
		\includegraphics[width=\textwidth]{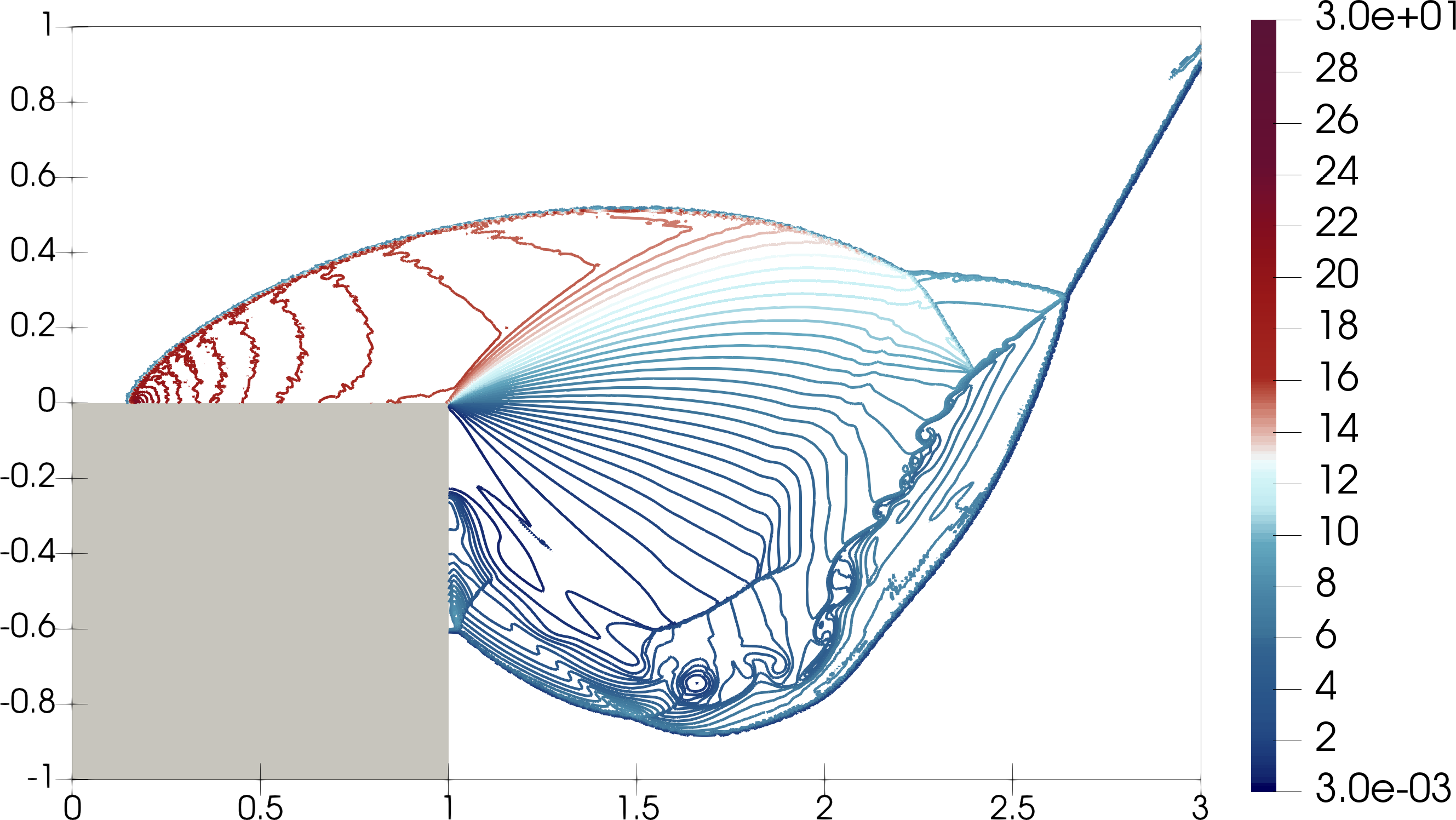}
		\caption{  $\mathbb Q^6$ DG method on a rectangular mesh with a mesh size $\frac{1}{180}$.}
	\end{subfigure}
	\label{fig:mach10-wedge}  
   \caption{ {\it Mach 10 shock reflection and diffraction.} Numerical results of positivity-preserving high order DG methods for solving Compressible Navier--Stokes with Reynolds number $1000$. Plot of Density. 
   Only positivity-preserving limiter is used and no other limiters are used.}
\end{figure}

\section{Flux correction limiters and convex limiting for high order schemes}
\label{sec:FCT}
Invoke flux correction limiters is also a very popular approach to enforce 
 convex invariant domains in high order schemes of many spatial discretizations, including continuous finite element methods. 
The methodology of flux correction limiters for IDP is build upon the ideas of FCT by Boris and Book \cite{boris1973flux,book1975flux,boris1976flux} and  Zalesak \cite{zalesak1979fully}. 
We refer to \cite{oran2001numerical,kuzmin-turek-2002flux,zalesak2012design,kuzmin2012flux,kuzmin2024property} and references therein for histories and developments as well as comprehensive reviews of FCT methods.
FCT type flux limiters can be considered for higher order PDEs and used to enforce more properties than a convex invariant domain. We mainly focus on the flux correction based methods for enforcing a convex invariant domain for systems of conservation laws, most of which emerged in the past 15 years. 

Flux correction limiters can be designed for many different time discretizations. For the sake of simplicity, we focus on a
high order accurate strong stability preserving (SSP) Runge-Kutta (RK) method \eqref{ssp-RK-3rd}, which is convex combination of forward Euler steps. Thus we only need to consider how to achieve IDP for the forward Euler step since a convex combination  preserves a convex invariant domain.

\subsection{The main idea of flux corrections}   

 Consider a high order   finite difference (or finite volume) spatial discretization with forward Euler time discretization as an example,
\begin{align}\label{eq:forward_Euler}
    {\bf u}_j^{n+1,H} = {\bf u}_j^n - \lambda \Big(\hat {\bf f}_{j+\frac12}^H - \hat {\bf f}_{j-\frac12}^H\Big),
\end{align}
which is in general not IDP. 
Assume that there is a first order IDP scheme, 
\begin{align}\label{eq:forward_EulerL}
    {\bf u}_j^{n+1,L} = {\bf u}_j^n - \lambda \Big(\hat {\bf f}_{j+\frac12}^L - \hat {\bf f}_{j-\frac12}^L\Big),
\end{align}
which is provably IDP under a CFL condition $a_{\max} \dt/\dx \le c_0$ as in Section \ref{sec:firstorderIDP}. 
The idea of flux correction for constructing IDP high order schemes is to seek suitable parameters $\theta_{j+\frac12} \in [0,1]$
such that 
the scheme using a modified numerical flux is IDP,
\begin{subequations}
    \label{eq:forward_EulerIDP}
\begin{align}
    {\bf u}_j^{n+1} = {\bf u}_j^n - \lambda \Big(\hat {\bf f}_{j+\frac12} - \hat {\bf f}_{j-\frac12}\Big),
\end{align} 
\begin{align}\label{flux_BP}
    \hat {\bf f}_{j+\frac12}
    = \theta_{j+\frac12} \Big(\hat {\bf f}^H_{j+\frac12} - \hat {\bf f}^L_{j+\frac12}\Big)
    + \hat {\bf f}^L_{j+\frac12}. 
\end{align}
\end{subequations}
Since $\hat \bff_{j+\frac12}$ is a convex combination of $\hat \bff^H_{j+\frac12}$ and $\hat \bff^L_{j+\frac12}$, the corrected numerical flux $\hat \bff_{j+\frac12}$ is a consistent and locally conservative flux, thus the scheme
\eqref{eq:forward_EulerIDP} is still a consistent and locally conservative scheme. 
In order to maintain the high order accuracy, $\theta_{j+\frac12} \in [0,1]$ should be as large as possible under the IDP constraint.

It is nontrivial to determine such parameters $\{ \theta_{j+\frac12} \}$, as they are coupled.   
For example, the IDP goal for preserving $g({\bf u})>0$ requires $\{ \theta_{j+\frac12} \}$ to satisfy the globally coupled inequalities for all $j$: 
\begin{align}\label{eq:MMP_NF_BP-2}
g \left( \bu_j^n-\lambda \Big[  (\theta_{j+\frac12} \Big(\hat {\bf f}^H_{j+\frac12} - \hat {\bf f}^L_{j+\frac12}\Big)
    + \hat {\bf f}^L_{j+\frac12})    -  (\theta_{j-\frac12} \Big(\hat {\bf f}^H_{j-\frac12} - \hat {\bf f}^L_{j-\frac12}\Big)
    + \hat {\bf f}^L_{j-\frac12})    \Big] \right) > 0. 
\end{align}

Defining ${\bm \delta}_{j+\frac12} := \hat {\bf f}^H_{j+\frac12} - \hat {\bf f}^L_{j+\frac12}$ and using the notation in \eqref{eq:forward_EulerL}, 
the IDP inequalities \eqref{eq:MMP_NF_BP-2} can be equivalently expressed as 
\begin{align}\label{eq:MMP_NF_BP}
g \left( \bu_j^{n+1,L}-\lambda \big(  \theta_{j+\frac12} {\bm \delta}_{j+\frac12}   -  \theta_{j-\frac12} {\bm \delta}_{j-\frac12}  \big)   \right) >  0, \quad \forall j. 
\end{align}

We list a simple fact implied by the Jensen's inequality:
\begin{lemma}
\label{lem:smallertheta}
   If $g(\bu)$ is a concave  or a quasi-concave function of $\bu\in \mathbb R^d$,
 and
   $\{\theta_{j+\frac12}, 
   j=1,2,\cdots, n\}$ satisfies \eqref{eq:MMP_NF_BP}, then    $g \left( \bu_j^{n+1,L}     \right) \geq  0$ implies that $\{a \theta_{j+\frac12},j=1,2,\cdots, n\}$ satisfies \eqref{eq:MMP_NF_BP} for any $a\in [0,1]$.
\end{lemma}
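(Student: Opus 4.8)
The plan is to exploit the affine dependence of the flux-corrected update on the limiting parameters and reduce the claim to a one-line convexity argument. Write the corrected update as a function of the parameter set,
\[
\bu_j^{n+1}(\{\theta\}) := \bu_j^{n+1,L} - \lambda\big(\theta_{j+\frac12}{\bm \delta}_{j+\frac12} - \theta_{j-\frac12}{\bm \delta}_{j-\frac12}\big),
\]
so that \eqref{eq:MMP_NF_BP} reads $g(\bu_j^{n+1}(\{\theta\})) > 0$ for all $j$, and the first-order update \eqref{eq:forward_EulerL} is recovered as $\bu_j^{n+1}(\{0\}) = \bu_j^{n+1,L}$. First I would record the elementary algebraic identity that, for any $a \in [0,1]$,
\[
\bu_j^{n+1}(\{a\theta\}) = (1-a)\,\bu_j^{n+1,L} + a\,\bu_j^{n+1}(\{\theta\}),
\]
which follows immediately because the correction term enters linearly and homogeneously. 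This exhibits $\bu_j^{n+1}(\{a\theta\})$ as a convex combination of the low-order IDP state $\bu_j^{n+1,L}$ and the given high-order-corrected state $\bu_j^{n+1}(\{\theta\})$.

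The key step is then to apply the generalized Jensen inequality to this convex combination, treating the concave and quasi-concave cases in parallel. If $g$ is concave, then $g(\bu_j^{n+1}(\{a\theta\})) \geq (1-a)\,g(\bu_j^{n+1,L}) + a\,g(\bu_j^{n+1}(\{\theta\}))$; if $g$ is quasi-concave, then by \eqref{Jensen-quasiconcave} one has $g(\bu_j^{n+1}(\{a\theta\})) \geq \min\{g(\bu_j^{n+1,L}),\, g(\bu_j^{n+1}(\{\theta\}))\}$. In either case the right-hand side is controlled by the two hypotheses $g(\bu_j^{n+1,L}) \geq 0$ and $g(\bu_j^{n+1}(\{\theta\})) > 0$, so that $\{a\theta_{j+\frac12}\}$ again yields an admissible update for every $j$.

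The main subtlety, and the only point requiring genuine care, is the strictness of the inequality. For $a \in (0,1]$ and concave $g$, the estimate above gives the strictly positive value $(1-a)\cdot 0 + a\,g(\bu_j^{n+1}(\{\theta\})) > 0$, so no issue arises. For quasi-concave $g$, or for $a=0$, the convexity estimate alone delivers only $\geq 0$ when $g(\bu_j^{n+1,L})$ is permitted to vanish; I would resolve this by noting that the case $a=0$ returns exactly the low-order state $\bu_j^{n+1,L}$, which is IDP by construction under the stated CFL condition, and that for a strict constraint $i \in \mathbb I$ the first-order scheme in fact gives $g(\bu_j^{n+1,L}) > 0$, so the minimum is strictly positive. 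I would therefore phrase the conclusion as membership in the invariant-domain slice defined by $g$ (open or closed according as $i\in\mathbb I$ or $i\in\hat{\mathbb I}$), which is precisely what \eqref{eq:MMP_NF_BP} encodes, rather than belaboring the degenerate endpoint $a=0$.
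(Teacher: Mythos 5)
Your proof is correct and follows essentially the same route as the paper's: the affine dependence of the update on the limiting parameters yields the convex-combination identity $\bu_j^{n+1}(\{a\theta\}) = (1-a)\,\bu_j^{n+1,L} + a\,\bu_j^{n+1}(\{\theta\})$, followed by Jensen's inequality in the concave case and \eqref{Jensen-quasiconcave} in the quasi-concave case. Your additional care about strict versus non-strict inequality is in fact more precise than the paper's own proof, which, consistent with the hypothesis $g\big(\bu_j^{n+1,L}\big) \geq 0$, concludes only the non-strict bound $\geq 0$ even though \eqref{eq:MMP_NF_BP} is stated with a strict inequality.
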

\begin{proof}
Since $\bu_j^{n+1}=\bu_j^{n+1,L}-\lambda \big(  \theta_{j+\frac12} {\bm \delta}_{j+\frac12}   -  \theta_{j-\frac12} {\bm \delta}_{j-\frac12}  \big),$
 we have a convex combination, 
\[\bu_j^{n+1,L}-\lambda \big( a \theta_{j+\frac12} {\bm \delta}_{j+\frac12}   - a \theta_{j-\frac12} {\bm \delta}_{j-\frac12}  \big)=(1-a)\bu_j^{n+1,L}+ a \bu_j^{n+1},\]
 If $g(\bu)$ is concave,  the Jensen's inequality gives
 \[g\left(\bu_j^{n+1,L}-\lambda \big( a \theta_{j+\frac12} {\bm \delta}_{j+\frac12}   - a \theta_{j-\frac12} {\bm \delta}_{j-\frac12}  \big)\right)=(1-a)g\left(\bu_j^{n+1,L}\right)+ a g\left(\bu_j^{n+1}\right)\geq 0.\]
 If $g(\bu)$ is quasi-concave, the same argument applies using the inequality \eqref{Jensen-quasiconcave}.
\end{proof}

For the method \eqref{eq:forward_EulerIDP}, the largest $\theta_{j+\frac12}\in [0,1]$ satisfying \eqref{eq:MMP_NF_BP} would give the most accurate scheme, and they can be found by maximizing  $\theta_{j+\frac12}\in [0,1]$ under the constraints \eqref{eq:MMP_NF_BP}, which is  
 a globally coupled constrained optimization thus
expensive to solve. As pointed out in \cite{liska2010optimization}, the flux correction approach can be regarded as seeking easier alternatives for parameters $\theta_{j+\frac12}$, to avoid solving the constrained global optimization problem.
There are several different ways to efficiently compute limiting parameters $ \theta_{j+\frac12}$ to satisfy  \eqref{eq:MMP_NF_BP}, resulting in different flux-correction limiters and IDP schemes. In the following subsections, we will review several popular methods.

\begin{remark}
\label{rmk:iFCT}
    To improve accuracy in the flux corrected scheme \eqref{eq:forward_EulerIDP}, one  method is to upgrade the low order IDP flux $\bff^L$ in \eqref{eq:forward_EulerIDP} by any higher order accurate IDP flux. 
The iterative FCT method \cite{schar1996synchronous} is such a simple  approach, by replacing the low order IDP flux $\bff^L$ in \eqref{eq:forward_EulerIDP} by the IDP flux  $\hat \bff$ in \eqref{flux_BP} recursively as follows. 
With a given first order IDP flux $\bff^L$, define $\bff^{L,0}=\bff^L$, and find $\theta^m$ such that
\[  \hat {\bf f}^{L,m+1}_{j+\frac12}
    = \theta^m_{j+\frac12} \Big(\hat {\bf f}^H_{j+\frac12} - \hat {\bf f}^{L,m}_{j+\frac12}\Big)
    + \hat {\bf f}^{L,m}_{j+\frac12},\]
is an IDP flux, for $m=0,1,2,\cdots, M$, then  ${\bf u}_j^{n+1} = {\bf u}_j^n - \lambda \Big(\hat {\bf f}_{j+\frac12} - \hat {\bf f}_{j-\frac12}\Big)$ {  with $\hat {\bf f}_{j+\frac12}:=\hat {\bf f}^{L,M+1}_{j+\frac12}$} gives a more accurate 
IDP scheme than \eqref{eq:forward_EulerIDP}, but with a higher computational cost.
\end{remark}

\subsection{Zalesak's FCT limiter for scalar conservation laws}
This method was originally proposed in \cite{zalesak1979fully} for preserving local maximum principle of scalar conservation laws. A detailed description of design principles behind FCT algorithms for structured grids can be found in Zalesak's book chapter \cite{zalesak2012design}. See \cite{kuzmin2012flux, zalesak1979fully,Kuzmin2022BoundPreserving} for the general version for multidimensional problems and unstructured meshes .  
As mentioned in Section \ref{sec:barrier}, if enforcing a maximum principle
like $\min_j u^n_j \leq u^{n+1}_j\leq \max_j u^n_j$, then it would be at most second order accurate for smooth solutions.
Thus for higher order accuracy, we consider enforcing the bound-preserving property, i.e., the invariant domain is a simple interval
 $G=[m,M]$ with  $m$ and $M$ being the lower and upper bounds of the initial condition. 
 
\subsubsection{The original version of Zalesak's  FCT limiter}

 Zalesak's \cite{zalesak1979fully} classical FCT algorithm determines the parameter 
 $\theta_{j+\frac12}$   as follows:
 \begin{itemize}
    \item Define 
    \[
    P^{+}_{j} = \max\left\{0, -\delta_{j+1/2}\right\} + \max\left\{0, \delta_{j-1/2}\right\},
    \]
    \[
    P^{-}_{j} = \min\left\{0, -\delta_{j+1/2}\right\} + \min\left\{0, \delta_{j-1/2}\right\}.
    \]

    \item Compute 
    \begin{equation}\label{eq:ZalesakQ}
    Q^{+}_{j} = \frac{1}{\lambda}\left(M- u^{n+1,L}_{j}\right), \quad 
    Q^{-}_{j} = \frac{1}{\lambda}\left(m - u^{n+1,L}_{j}\right).
    \end{equation}

    \item Calculate  
    \[
    R^{+}_{j} = \min\left\{1, \frac{Q^{+}_{j}}{P^{+}_{j}}\right\}, \quad 
    R^{-}_{j} = \min\left\{1, \frac{Q^{-}_{j}}{P^{-}_{j}}\right\}.
    \]

    \item Obtain the IDP limiting parameter
    \begin{equation}
       \theta_{j+1/2} = 
    \begin{cases}
        \min\left\{R^{+}_{j+1}, R^{-}_{j}\right\}& \text{if } \delta_{j+1/2} \geq 0, \\
        \min\left\{R^{-}_{j+1}, R^{+}_{j}\right\}& \text{if } \delta_{j+1/2} < 0.
    \end{cases}  \label{zelask-1d-scalar}
    \end{equation}
\end{itemize}

The formula \eqref{zelask-1d-scalar} can be derived from enforcing constraints, e.g., see \cite[Section 3]{liska2010optimization}. Thus \eqref{eq:forward_EulerIDP} with \eqref{zelask-1d-scalar} gives the bound-preserving property $u^{n+1}_j\in [m, M]$.

\subsubsection{Parametrized flux limiters}\label{sec:Xuscalar} 
In order to have a better understanding of the Zalesak's algorithm for computing $\theta_{j+\frac12}$, we first consider an alternative way to find $\theta_{j+\frac12}$ in \eqref{eq:MMP_NF_BP}.
It is possible to decouple the constraints of $\{ \theta_{j+\frac12} \}$ in \eqref{eq:MMP_NF_BP} by a parametrized method,  proposed in 
 \cite{jiang2013parametrized,xu2014parametrized}.  
Specifically, the parametrized method seeks a group of locally defined parameters $\Lambda_{j+\frac12}$ such that the IDP property is maintained for any
 $\theta_{j+\frac12} \in \Big[0, \Lambda_{j+\frac12}\Big]$. Then for the sake of minimal correction to maintain high order accuracy, one should simply take $\theta_{j+\frac12}=\Lambda_{j+\frac12}$.

For a finite difference scheme \eqref{eq:forward_Euler} at a grid point $x_j$, define an interval $I_j=[x_{j-\frac12}, x_{j+\frac12}]$. Parameters $\Lambda_{-\frac12,I_j}$ and $\Lambda_{+\frac12,I_j}$ will be constructed such that
\begin{align}\label{bound-theta}
    \theta_{j+\frac{1}{2}} \in \left[0, \Lambda_{+\frac12,I_j}\right] \cap
    \left[0, \Lambda_{-\frac12,I_{j+1}}\right],  
\end{align}
is sufficient for the scheme \eqref{eq:forward_EulerIDP} to preserve the desired bounds.  In \eqref{bound-theta}, 
 $\theta_{j+\frac{1}{2}}$ at $x_{j+\frac12}$ satisfies some constraints in $I_j=[x_{j-\frac12}, x_{j+\frac12}]$ and also $I_{j+1}=[x_{j+\frac12}, x_{j+\frac32}]$. The subscripts $+\frac12$ and $-\frac12$ in $\Lambda_{\pm\frac12,I_{j}}$ can be understood as a shift from the cell center of $I_j$, e.g., both  $\Lambda_{+\frac12,I_j}$ and  $\Lambda_{-\frac12,I_{j+1}}$ correspond to the grid point $x_{j+\frac12}$.

Define 
\begin{align}\label{eq:XuGamma}
    \Gamma_j^M= M - u^n_j + \lambda\left( \hat f^L_{j+\frac{1}{2}} - \hat f^L_{j-\frac{1}{2}} \right), \quad
    \Gamma_j^m= m - u^n_j + \lambda\left( \hat f^L_{j+\frac{1}{2}} - \hat f^L_{j-\frac{1}{2}} \right).
\end{align}
The IDP property of a first order monotone scheme yields
\begin{align}
   \Gamma_j^M \geq 0,  \quad \Gamma_j^m \leq 0.
\end{align}
To maintain $u_j^{n+1} \in [m, M]$,  $\theta_{j+\frac12}$ must satisfy \eqref{eq:MMP_NF_BP} 
with $g(u)=M-u$ and $g(u)=u-m$, respectively, i.e., 
\begin{align}
    L_M(\theta_{j-\frac12},\theta_{j+\frac12}):=& \lambda \theta_{j-\frac{1}{2}} \delta_{j-\frac{1}{2}}
    - \lambda \theta_{j+\frac{1}{2}} \delta_{j+\frac{1}{2}}
    - \Gamma_j^M \leq 0, \label{eq:MMP_NF_BP-1}\\
   L_m(\theta_{j-\frac12},\theta_{j+\frac12}):= & \lambda \theta_{j-\frac{1}{2}} \delta_{j-\frac{1}{2}}
    - \lambda \theta_{j+\frac{1}{2}} \delta_{j+\frac{1}{2}}
    - \Gamma_j^m \geq 0. \label{eq:MMP_NF_BP-2-Lm}
\end{align}
The parameters $\Lambda_{\pm\frac12,I_j}^M$ and $\Lambda_{\pm\frac12,I_j}^m$ are constructed as follows:

\paragraph{Upper Bound Preservation}  
   The parameters $\Lambda_{\pm\frac12,I_j}^M$ are defined to satisfy \eqref{eq:MMP_NF_BP-1}.  
    \begin{enumerate}[label=(\alph*)]
        \item If $\delta_{j-\frac{1}{2}} \leq 0$ and $\delta_{j+\frac{1}{2}} \geq 0$, define $ \left(\Lambda_{-\frac12,I_j}^M, \Lambda_{+\frac12,I_j}^M\right) = (1,1).$
        \item If $\delta_{j-\frac{1}{2}} \leq 0$ and $\delta_{j+\frac{1}{2}} < 0$, define  
        \begin{align}
            \left(\Lambda_{-\frac12,I_j}^M, \Lambda_{+\frac12,I_j}^M\right) = \left(1,\min\left\{1, \frac{\Gamma_j^M}{-\lambda \delta_{j+\frac{1}{2}}+\epsilon}\right\}\right).
        \end{align}
        \item If $\delta_{j-\frac{1}{2}} > 0$ and $\delta_{j+\frac{1}{2}} \geq 0$, define
        \begin{align}
            \left(\Lambda_{-\frac12,I_j}^M, \Lambda_{+\frac12,I_j}^M\right) = \left(\min\left\{1, \frac{\Gamma_j^M}{\lambda \delta_{j-\frac{1}{2}}+\epsilon}\right\},1\right).
        \end{align}
        \item If $\delta_{j-\frac{1}{2}} > 0$ and $\delta_{j+\frac{1}{2}} < 0$:  
                    \begin{align}
                \left(\Lambda_{-\frac12,I_j}^M, \Lambda_{+\frac12,I_j}^M\right) = \left(  \Lambda_0, \Lambda_0 \right),\quad \Lambda_0 = \min \left \{  1, \frac{\Gamma_j^M}{\lambda \delta_{j-\frac{1}{2}} - \lambda \delta_{j+\frac{1}{2}}+\epsilon} \right \}.
            \end{align}
    \end{enumerate}
        Here, $\epsilon$ is a small positive parameter, which is slightly above machine zero, to prevent division by zero. 
For convenience, they can also be equivalently  written as
\begin{subequations}
    \label{xu-limiter-equivalent-form}
    \begin{equation}\label{eq:OB1}
     \Lambda_{+\frac12,I_j}^M =\begin{cases}
         1, \qquad &\mbox{if }\quad  \delta_{j+\frac12} \ge 0\\
         \min\left\{ 1,  \frac{\Gamma_j^M}{\lambda \max \{0, \delta_{j-\frac{1}{2}} \} - \lambda \delta_{j+\frac{1}{2}}+\epsilon}   \right \}, \qquad &\mbox{if }\quad  \delta_{j+\frac12} < 0
     \end{cases},
    \end{equation}    
      \begin{equation}\label{eq:OB3}
     \Lambda_{-\frac12,I_j}^M =\begin{cases}
         1, \qquad &\mbox{if }\quad  \delta_{j-\frac12} \le 0\\
         \min\left\{ 1,  \frac{\Gamma_j^M}{\lambda  \delta_{j-\frac{1}{2}} - \lambda \min\{ 0, \delta_{j+\frac{1}{2}} \} +\epsilon}   \right \}, \qquad &\mbox{if }\quad  \delta_{j-\frac12} > 0
     \end{cases}.
    \end{equation}  
\end{subequations}

The first fact is that any smaller $\theta_{j\pm\frac12}$ than $\Lambda_{\pm\frac12,I_j}^M$ also satisfy \eqref{eq:MMP_NF_BP-1}. Notice that \Cref{lem:smallertheta} does not imply this result.
\begin{lemma}
\label{lem:xulimiter_upperbound}
The upper bound \eqref{eq:MMP_NF_BP-1} is satisfied for   
   $\theta_{j-\frac{1}{2}} \in \left[0,\Lambda_{-\frac12,I_j}^M\right]$ and   $\theta_{j+\frac{1}{2}} \in \left[0,\Lambda_{+\frac12,I_j}^M\right]$.
\end{lemma}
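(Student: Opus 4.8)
The plan is to exploit the fact that $L_M$, defined in \eqref{eq:MMP_NF_BP-1}, is an \emph{affine} function of the pair $(\theta_{j-\frac12},\theta_{j+\frac12})$ containing no product term: its coefficients are $\lambda\delta_{j-\frac{1}{2}}$ for $\theta_{j-\frac12}$ and $-\lambda\delta_{j+\frac{1}{2}}$ for $\theta_{j+\frac12}$. Consequently $L_M$ is separately monotone in each variable, and its maximum over the rectangle $[0,\Lambda_{-\frac12,I_j}^M]\times[0,\Lambda_{+\frac12,I_j}^M]$ is attained at one of the corners, chosen componentwise according to the signs of $\delta_{j-\frac{1}{2}}$ and $\delta_{j+\frac{1}{2}}$. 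It therefore suffices to show $L_M\le 0$ at that maximizing corner; I would organize the argument by exactly the same four sign combinations used to define $\Lambda_{\pm\frac12,I_j}^M$.

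First I would record the only structural input from the first-order scheme, namely the sign property $\Gamma_j^M\ge 0$ guaranteed by the monotone update. Next, for each case I would locate the maximizer: when a coefficient is non-positive the function decreases in that variable, so the maximizer sets it to $0$; when the coefficient is positive the maximizer sets the variable to its upper bound $\Lambda^M$. This matches the definitions directly---e.g.\ in case~(a) both coefficients are non-positive, forcing the corner $(0,0)$ where $L_M=-\Gamma_j^M\le 0$; in case~(d) both coefficients are positive, forcing the corner $(\Lambda_0,\Lambda_0)$.

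The core estimate I would then carry out at each non-trivial corner is uniform: substituting the relevant $\Lambda^M$ and using that it is bounded by $\Gamma_j^M$ divided by the (positive) sum of the active coefficients plus $\epsilon$, the linear part of $L_M$ is at most $\Gamma_j^M\cdot\frac{c}{c+\epsilon}$, where $c>0$ denotes that coefficient sum. Since $\Gamma_j^M\ge 0$ and $\epsilon>0$, this is at most $\Gamma_j^M$, whence $L_M\le\Gamma_j^M-\Gamma_j^M=0$. Cases~(b) and~(c), where only one $\Lambda^M$ is constrained, reduce to this estimate after first discarding the unconstrained variable, whose non-positive coefficient pushes its maximizer to $0$.

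The main obstacle, such as it is, is not the algebra but keeping the $\epsilon$-regularization from spoiling the bound: because $\epsilon>0$ only \emph{shrinks} each $\Lambda^M$ relative to the exact threshold, every equality in the naive $\epsilon=0$ computation becomes the correct inequality $L_M\le 0$, provided one invokes $\Gamma_j^M\ge 0$. I would also remark that the compact reformulation \eqref{xu-limiter-equivalent-form} can be used to merge the four cases into two, but this is a convenience rather than a necessity for the proof.
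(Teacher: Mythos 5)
Your proposal is correct and takes essentially the same approach as the paper: the paper's proof also exploits the affinity of $L_M$, splits into the identical four sign cases (handling case (a) by monotonicity from $(0,0)$), and for cases (b)--(d) verifies, via the pictures of the zero line $L_M=0$, precisely the worst-corner condition that you check algebraically. Your explicit corner-maximization and the observation that the $\epsilon$-regularization only shrinks each $\Lambda^M$ (so that, with $\Gamma_j^M \ge 0$, the naive equalities become the inequality $L_M \le 0$) simply render the paper's geometric argument fully rigorous.
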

\begin{proof}
Regard $\theta_{j-\frac12}$ and $\theta_{j+\frac12}$ as unknowns and consider a line equation 
    \begin{align}
    \label{eqn:blueline}
        L_M(\theta_{j-\frac12},\theta_{j+\frac12}):=\lambda \theta_{j-\frac{1}{2}}  \delta_{j-\frac{1}{2}}  
        - \lambda \theta_{j+\frac{1}{2}}\delta_{j+\frac{1}{2}}  - \Gamma_j^M = 0.
    \end{align} 
Then we discuss it case by case. For case (a), $L_M$ is a decreasing function, thus  $L_M(\theta_{j-\frac12},\theta_{j+\frac12})\leq L_M(0,0)=-\Gamma^M_j\leq 0$.

\begin{center}
        \begin{tikzpicture}[scale=0.55, xscale=1.13,yscale=0.96]
            \draw [very thick,-latex] (-2,0) -- (3,0);
            \draw [very thick,-latex] (0,-2) -- (0,3);

            \draw [ultra thick, cyan] (-1.8,-1) -- (1,2.5);
            \filldraw [green,opacity=0.4] (0,0) rectangle (1,1.22);
            \node [black, scale=0.98] at (2.5,1.6) {$(\Lambda_{-\frac12,I_j}^M, \Lambda_{+\frac12,I_j}^M)$};
            \node [black, scale=0.98] at (1,-0.6) {$(1,0)$};
            \node [black, scale=0.98] at (3,-0.9) {$\theta_{j-\frac12}$};
            \node [black, scale=0.98] at (-0.4,3.5) {$\theta_{j+\frac12}$};
            
            \filldraw [black] (1,0) circle (2.pt)
                              (1,1.22) circle (2.pt);
        \end{tikzpicture}
        \begin{tikzpicture}[scale=0.55, xscale=1.13,yscale=0.96]
            \draw [very thick,-latex] (-2,0) -- (3,0);
            \draw [very thick,-latex] (0,-2) -- (0,3);

            \draw [ultra thick, cyan] (-1,-1.8) -- (2.5,1.3);
            \filldraw [green,opacity=0.4] (0,0) rectangle (1,1.22);
            \node [black, scale=0.98] at (2.2,1.8) {$(\Lambda_{-\frac12,I_j}^M, \Lambda_{+\frac12,I_j}^M)$};
            \node [black, scale=0.98] at (-0.8,1.2) {$(0,1)$};
            \node [black, scale=0.98] at (3,-0.9) {$\theta_{j-\frac12}$};
            \node [black, scale=0.98] at (-0.4,3.5) {$\theta_{j+\frac12}$};
            
            \filldraw [black] (0,1.22) circle (2.pt)
                              (1,1.22) circle (2.pt);
        \end{tikzpicture}
        \begin{tikzpicture}[scale=0.55, xscale=1.11,yscale=0.96]
            \draw [very thick,-latex] (-2,0) -- (3,0);
            \draw [very thick,-latex] (0,-2) -- (0,3);

            \draw [ultra thick, cyan] (-1,3) -- (2.65,-0.2);
            \filldraw [green,opacity=0.4] (0,0) rectangle (1,1.22);
            \filldraw [black] (1,1.22) circle (2.pt);
            \node [black, scale=0.98] at (3.2,1.2) {$(\Lambda_{-\frac12,I_j}^M, \Lambda_{+\frac12,I_j}^M)$};
            \node [black, scale=0.98] at (3,-0.9) {$\theta_{j-\frac12}$};
            \node [black, scale=0.98] at (-0.4,3.5) {$\theta_{j+\frac12}$};
            
        \end{tikzpicture}
    \end{center}
 The cases  (b), (c) and (d) are illustrated in the figures, in which
     the blue line \eqref{eqn:blueline} is the zero line, i.e., $L_M(\theta_{j-\frac12},\theta_{j+\frac12})=0$. Since $L_M(0,0)\leq 0$ and the green rectangle is on the same side as $(0,0)$ w.r.t. the zero line,
    any $(\theta_{j-\frac12}, \theta_{j+\frac12})$ in the green rectangle achieves $L_M(\theta_{j-\frac12},\theta_{j+\frac12})\leq 0$. The proof is concluded.
\end{proof}

\paragraph{Lower Bound Preservation}    Similarly,  
the parameters $\Lambda_{\pm\frac12,I_j}^m$ are defined to satisfy \eqref{eq:MMP_NF_BP-2-Lm}.   
    \begin{enumerate}[label=(\alph*)]
        \item If $\delta_{j-\frac{1}{2}} \geq 0$, $\delta_{j+\frac{1}{2}} \leq 0$, define $ \left(\Lambda_{-\frac12,I_j}^m, \Lambda_{+\frac12,I_j}^m\right) = (1,1).$
        \item If $\delta_{j-\frac{1}{2}} \geq 0$, $\delta_{j+\frac{1}{2}} > 0$, define  $\left(\Lambda_{-\frac12,I_j}^m, \Lambda_{+\frac12,I_j}^m\right) = \left(1,\min\left\{1, \frac{\Gamma_j^m}{-\lambda \delta_{j+\frac{1} {2}} - \epsilon}\right\}\right).$
   
        \item If $\delta_{j-\frac{1}{2}} < 0$,  $\delta_{j+\frac{1}{2}} \leq 0$,  define $\left(\Lambda_{-\frac12,I_j}^m, \Lambda_{+\frac12,I_j}^m\right) = \left(\min\left\{1, \frac{\Gamma_j^m}{\lambda \delta_{j-\frac{1}{2}} -\epsilon}\right\},1\right).$
        \item If $\delta_{j-\frac{1}{2}} < 0$, $\delta_{j+\frac{1}{2}} > 0$, define $\Big(\Lambda_{-\frac12,I_j}^m, \Lambda_{+\frac12,I_j}^m\Big) = \Bigg(  
                \Lambda_0, \Lambda_0 \Bigg)$ with           
              $ \Lambda_0 = \min \left \{ 1, \frac{\Gamma_j^m}{\lambda \delta_{j-\frac12} - \lambda \delta_{j+\frac12}-\epsilon} \right \}.$
    \end{enumerate}

Similar to \Cref{lem:xulimiter_upperbound},
any smaller $\theta$ would also enforce the lower bound, i.e.,
   $\theta_{j-\frac{1}{2}} \in \left[0,\Lambda_{-\frac12,I_j}^m\right]$ and $\theta_{j+\frac{1}{2}} \in \left[0,\Lambda_{+\frac12,I_j}^m\right]$ satisfy \eqref{eq:MMP_NF_BP-2-Lm}.

    The final limiting parameter combines upper and lower bounds  
    \begin{align}\label{eq:MPP_limiting-parameter}
         \Lambda_{+\frac12,I_j} = \min\Big\{\Lambda_{+\frac12,I_j}^M, \Lambda_{+\frac12,I_j}^m\Big\}, \quad
\Lambda_{-\frac12,I_{j+1}} = \min\Big\{\Lambda_{-\frac12,I_{j+1}}^M, \Lambda_{-\frac12,I_{j+1}}^m\Big\}.
    \end{align}   Implied by \Cref{lem:xulimiter_upperbound}, we have
\begin{thm}
\label{thm:xu-limiter-bp}
   Let $\Lambda_{j+\frac{1}{2}}: 
        = \min\{\Lambda_{+\frac12,I_j},\Lambda_{-\frac12,I_{j+1}}\}$, then  $\theta_{j+\frac12}\in [0,\Lambda_{j+\frac{1}{2}}]$  satisfy both \eqref{eq:MMP_NF_BP-1} and \eqref{eq:MMP_NF_BP-2-Lm}. In particular,
the modified flux \eqref{flux_BP} with  $\theta_{j+\frac12} =  \Lambda_{j+\frac{1}{2}}$ is IDP for $G=[m,M]$.
\end{thm}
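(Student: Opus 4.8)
The plan is to reduce the statement entirely to the two ``box'' lemmas already in hand: \Cref{lem:xulimiter_upperbound} for the upper bound, together with its lower-bound twin stated right after the construction of $\Lambda_{\pm\frac12,I_j}^m$. The structural fact that makes this work is that the pair of inequalities \eqref{eq:MMP_NF_BP-1} and \eqref{eq:MMP_NF_BP-2-Lm} attached to a fixed cell $I_j$ involve \emph{only} the two face parameters $\theta_{j-\frac12}$ and $\theta_{j+\frac12}$, and each box lemma certifies its inequality as soon as these two parameters lie below the cell-$I_j$ thresholds $\Lambda_{-\frac12,I_j}^{M}$ and $\Lambda_{+\frac12,I_j}^{M}$ (respectively $\Lambda_{\pm\frac12,I_j}^{m}$).

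First I would unwind the nested minima in the definitions. Combining $\Lambda_{j+\frac12}=\min\{\Lambda_{+\frac12,I_j},\Lambda_{-\frac12,I_{j+1}}\}$ with $\Lambda_{\pm\frac12,I_j}=\min\{\Lambda_{\pm\frac12,I_j}^{M},\Lambda_{\pm\frac12,I_j}^{m}\}$ gives, for every admissible family $\theta_{j+\frac12}\in[0,\Lambda_{j+\frac12}]$,
\[
\theta_{j+\frac12}\le\Lambda_{+\frac12,I_j}^{M},\qquad \theta_{j+\frac12}\le\Lambda_{+\frac12,I_j}^{m},
\]
and, shifting the index by one unit, $\theta_{j-\frac12}\le\Lambda_{j-\frac12}\le\Lambda_{-\frac12,I_j}\le\min\{\Lambda_{-\frac12,I_j}^{M},\Lambda_{-\frac12,I_j}^{m}\}$. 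Thus, for each fixed $j$, the point $(\theta_{j-\frac12},\theta_{j+\frac12})$ lies inside the rectangle $[0,\Lambda_{-\frac12,I_j}^{M}]\times[0,\Lambda_{+\frac12,I_j}^{M}]$, so \Cref{lem:xulimiter_upperbound} delivers \eqref{eq:MMP_NF_BP-1}; running the identical argument with superscript $m$ and the lower-bound lemma delivers \eqref{eq:MMP_NF_BP-2-Lm}. As $j$ is arbitrary, both hold at every cell.

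Finally I would record that specializing to $\theta_{j+\frac12}=\Lambda_{j+\frac12}$ and recalling $\Gamma_j^M=M-u_j^{n+1,L}$, $\Gamma_j^m=m-u_j^{n+1,L}$, the algebraic identities $M-u_j^{n+1}=-L_M(\theta_{j-\frac12},\theta_{j+\frac12})$ and $u_j^{n+1}-m=L_m(\theta_{j-\frac12},\theta_{j+\frac12})$ turn \eqref{eq:MMP_NF_BP-1} and \eqref{eq:MMP_NF_BP-2-Lm} into $m\le u_j^{n+1}\le M$, i.e.\ the flux-corrected scheme \eqref{eq:forward_EulerIDP} with \eqref{flux_BP} is IDP for $G=[m,M]$. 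I expect the only genuine subtlety to be the two-sided bookkeeping: a single face value $\theta_{j+\frac12}$ must simultaneously respect constraints generated by the two adjacent cells $I_j$ and $I_{j+1}$, and one has to check that the double minimum defining $\Lambda_{j+\frac12}$ places it inside the correct rectangle for \emph{each} neighbour rather than only one. Everything else is a direct invocation of the monotonicity-in-the-box lemmas, so no new estimates are required.
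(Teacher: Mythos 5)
Your proof is correct and follows essentially the same route the paper takes: the paper states the theorem as ``implied by'' \Cref{lem:xulimiter_upperbound} together with its lower-bound analogue, and your argument simply makes that implication explicit by unwinding the nested minima in \eqref{eq:MPP_limiting-parameter} so that each face value $(\theta_{j-\frac12},\theta_{j+\frac12})$ lands in the correct rectangle for both adjacent cells, then translating \eqref{eq:MMP_NF_BP-1} and \eqref{eq:MMP_NF_BP-2-Lm} back into $m\le u_j^{n+1}\le M$ via $\Gamma_j^M=M-u_j^{n+1,L}$ and $\Gamma_j^m=m-u_j^{n+1,L}$. The two-sided bookkeeping you flag as the only subtlety is indeed the whole content of the theorem beyond the box lemmas, and you handle it correctly.
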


\subsubsection{Equivalence of Zalesak's FCT limiter and the parametrized flux limiter} 

Although the  parametrized flux limiter has a different formula, 
its final result for $\theta_{j+\frac12}$ is actually equivalent to the Zalesak's FCT limiter for enforcing bounds.

\begin{theorem}
    The parameterized limiter   is equivalent to Zalesak's FCT limiter, i.e., \eqref{zelask-1d-scalar} is equal to $\Lambda_{j+\frac{1}{2}} 
        = \min\{\Lambda_{+\frac12,I_j},\Lambda_{-\frac12,I_{j+1}}\}$.
\end{theorem}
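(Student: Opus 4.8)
The plan is to reduce both limiters to a common set of per-interface ratios and then verify these ratios coincide through a short case analysis on the sign of $\delta_{j+\frac12}$. The bridge between the two notations is the identity
\[
\Gamma_j^M = M - u_j^{n+1,L} = \lambda Q_j^+, \qquad \Gamma_j^m = m - u_j^{n+1,L} = \lambda Q_j^m,
\]
which follows immediately from comparing \eqref{eq:XuGamma}, \eqref{eq:ZalesakQ}, and the definition of $u_j^{n+1,L}$ in \eqref{eq:forward_EulerL}. With this, every denominator appearing in Zalesak's ratios $R_j^\pm$ can be matched to a denominator in the parametrized formulas \eqref{eq:OB1}--\eqref{eq:OB3} and their lower-bound analogues, the only bookkeeping being the $\max\{0,\cdot\}$/$\min\{0,\cdot\}$ encoding in $P_j^\pm$ versus the explicit sign splits in the cases (a)--(d). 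First I would record that $\Lambda_{\pm\frac12,I_j}=\min\{\Lambda^M_{\pm\frac12,I_j},\Lambda^m_{\pm\frac12,I_j}\}$, and observe that the sign of $\delta_{j+\frac12}$ forces exactly one of the two factors to equal $1$, so each min collapses to a single ratio.

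For the case $\delta_{j+\frac12}>0$ I would argue as follows. At the cell $I_j$, \eqref{eq:OB1} gives $\Lambda^M_{+\frac12,I_j}=1$, while the lower-bound cases (b) and (d) give $\Lambda^m_{+\frac12,I_j}=\min\bigl\{1,\Gamma_j^m/(\lambda\min\{0,\delta_{j-\frac12}\}-\lambda\delta_{j+\frac12})\bigr\}$; using $\Gamma_j^m=\lambda Q_j^m$ and $P_j^m=-\delta_{j+\frac12}+\min\{0,\delta_{j-\frac12}\}$, this is exactly $R_j^-$, so $\Lambda_{+\frac12,I_j}=R_j^-$. At the cell $I_{j+1}$, the left interface is governed by $\delta_{(j+1)-\frac12}=\delta_{j+\frac12}>0$, so the lower-bound cases force $\Lambda^m_{-\frac12,I_{j+1}}=1$, whereas \eqref{eq:OB3} yields $\Lambda^M_{-\frac12,I_{j+1}}=\min\bigl\{1,\Gamma_{j+1}^M/(\lambda\delta_{j+\frac12}-\lambda\min\{0,\delta_{j+\frac32}\})\bigr\}=R_{j+1}^+$. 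Hence $\Lambda_{j+\frac12}=\min\{R_j^-,R_{j+1}^+\}$, which is precisely the first branch of \eqref{zelask-1d-scalar}. This also matches the FCT design intuition: with $\delta_{j+\frac12}>0$ the correction pushes $u_j$ toward $m$ and $u_{j+1}$ toward $M$, so the binding constraints are indeed the lower bound at $j$ and the upper bound at $j+1$.

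The case $\delta_{j+\frac12}<0$ is then obtained by the symmetry $M\leftrightarrow m$, $+\leftrightarrow-$, giving $\Lambda_{j+\frac12}=\min\{R_j^+,R_{j+1}^-\}$, the second branch of \eqref{zelask-1d-scalar}. Assembling the two cases with \eqref{eq:MPP_limiting-parameter} completes the identification. I expect the main obstacle to be not any single hard inequality but the careful matching of indices and the two degenerate situations. The first is the regularizer $\epsilon$, which appears in the parametrized denominators but not in Zalesak's formulas; I would state the equivalence with the understanding that $\epsilon$ is a practical device to avoid division by zero, so the formulas agree whenever the denominators are nonzero (equivalently in the limit $\epsilon\to0^+$). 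The second, more genuine subtlety is the interface $\delta_{j+\frac12}=0$: there the parametrized rule returns $\Lambda_{j+\frac12}=1$ while \eqref{zelask-1d-scalar} may return a smaller value, yet the corrected flux contribution $\theta_{j+\frac12}\delta_{j+\frac12}$ vanishes for \emph{any} $\theta_{j+\frac12}$, so the two limiters produce the identical flux \eqref{flux_BP} and hence the identical IDP scheme. I would therefore phrase the conclusion as equality of the limiting parameters at all interfaces with $\delta_{j+\frac12}\neq0$, and coincidence of the resulting schemes everywhere.
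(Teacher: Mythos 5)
Your proof is correct and follows essentially the same route as the paper's: the bridge identity $\Gamma_j^M = \lambda Q_j^+$ (with its lower-bound analogue $\Gamma_j^m = \lambda Q_j^-$), followed by a sign split on $\delta_{j+\frac12}$ under which each $\min\{\Lambda^M,\Lambda^m\}$ collapses to a single Zalesak ratio, so that $\Lambda_{j+\frac12}=\min\{R_j^-,R_{j+1}^+\}$ or $\min\{R_j^+,R_{j+1}^-\}$ according to the sign. The paper treats only the upper bound ``without loss of generality'' and silently drops the regularizer $\epsilon$ in mid-computation, whereas you track both bounds explicitly and correctly flag the two degenerate situations --- the role of $\epsilon$, and the interface $\delta_{j+\frac12}=0$ where the two parameters may differ while the corrected fluxes $\theta_{j+\frac12}\delta_{j+\frac12}$ (hence the schemes) coincide --- a slightly more careful rendering of the same argument.
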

\begin{proof}
Without loss of generality, we only consider the preservation of the upper bound $u \le M$. For considering only the upper bound, the limiting parameter $\theta_{j+\frac12}$ given by Zalesak's FCT limiter becomes 
\begin{equation}\label{eq:Zalesak}
    \theta_{j+1/2} = 
    \begin{cases}
        R^{+}_{j+1},& \text{if } \delta_{j+1/2} \geq 0, \\
        R^{+}_{j},& \text{if } \delta_{j+1/2} < 0,
    \end{cases}
\end{equation}
where 
$
R^{+}_{j} = \min\left\{1, \frac{Q^{+}_{j}}{P^{+}_{j}}\right\},   
Q^{+}_{j} = \frac{\Delta x}{\Delta t}\left(M- u^{n+1,L}_{j}\right),
$
and 
    \[
    P^{+}_{j} = \max\left\{0, -\delta_{j+1/2}\right\} + \max\left\{0, \delta_{j-1/2}\right\}. 
    \]

    First, $\Gamma_j^M$ defined in \eqref{eq:XuGamma} simply
    $
\Gamma_j^M = \lambda Q^{+}_{j}
$, with which \eqref{xu-limiter-equivalent-form} gives 
$$
\begin{aligned}
\Lambda_{+\frac12,I_j}^M & =1, \quad \mbox{if }\quad  \delta_{j+\frac12} \ge 0\\ 
\Lambda_{-\frac12,I_{j+1}}^M & =\min\left\{ 1,  \frac{\Gamma_{j+1}^M}{\lambda  \delta_{j+\frac{1}{2}} - \lambda \min\{ 0, \delta_{j+\frac{3}{2}} \} +\epsilon}   \right \}
\\
&= \min\left\{ 1,  \frac{Q_{j+1}^+}{ \delta_{j+\frac{1}{2}} +  \max\{ 0, -\delta_{j+\frac{3}{2}} \}  }   \right \} = R_{j+1}^+,\qquad \mbox{if }\quad  \delta_{j+\frac12} \ge 0
\end{aligned}
$$
thus
$$
\min\Big\{\Lambda_{+\frac12,I_j}^M, \Lambda_{-\frac12,I_{j+1}}^M\Big\} = R_{j+1}^+,
$$
which implies the equivalence between the parameterized limiter and Zalesak's FCT limiter for the case $\delta_{j+\frac12}\geq 0$. 
Similarly,  \eqref{xu-limiter-equivalent-form} gives
$$
\begin{aligned}
\Lambda_{+\frac12,I_j}^M &=\min\left\{ 1,  \frac{\Gamma_j^M}{\lambda \max \{0, \delta_{j-\frac{1}{2}} \} - \lambda \delta_{j+\frac{1}{2}}+\epsilon}   \right \} \\
& = 
\min\left\{ 1,  \frac{Q_j^+}{ \max \{0, \delta_{j-\frac{1}{2}} \} - \delta_{j+\frac{1}{2}}}   \right \} = 
R_j^+,\quad \mbox{if }\quad  \delta_{j+\frac12} < 0\\
\Lambda_{-\frac12,I_{j+1}}^M &= 1,\quad \mbox{if }\quad  \delta_{j+\frac12} < 0,
\end{aligned}
$$
thus
$
\min\Big\{\Lambda_{+\frac12,I_j}^M, \Lambda_{-\frac12,I_{j+1}}^M\Big\} = R_{j}^+,
$
which implies the equivalence between the parameterized limiter and Zalesak's FCT limiter when $\delta_{j+\frac12} < 0$. 
The proof is completed. 
\end{proof}

\subsubsection{High order accuracy}
 By \Cref{thm:xu-limiter-bp}, the scheme \eqref{eq:forward_EulerIDP} with the Zalesak's FCT flux correction for scalar equations preserves $G=[m, M]$.
 In practice, such a flux correction can be applied to many high order schemes such as finite difference, finite volume and discontinuous Galerkin (DG) schemes, and high order accuracy can be observed numerically for sufficiently small time steps. For finite volume schemes solving 1D linear equation, the truncation error of the flux correction in this subsection can be shown high order accurate for smooth solutions under a reasonable time step constraint \cite{xu2014parametrized}.   

 For high order finite difference schemes with SSP Runge-Kutta methods, if applying the flux correction to each time stage in Runge-Kutta methods, then high order accuracy can be maintained only under extremely small time steps such as $\Delta t=\mathcal O(\Delta x^{1.5})$, see \cite{xu2014parametrized}. To recover high order accuracy under a reasonable time step, one simple remedy   is to apply the flux correction only at the final time stage of a Runge-Kutta method, with which  third order accuracy of a finite difference scheme can be proven for solving 1D linear equation with a practical time step \cite{xiong2013parametrized}. Such a flux correction    can be applied to any high order time discretization such as Lax-Wendroff time stepping, and can also be extended to explicit time stepping for a convection dominated diffusion equation \cite{xiong2015high-DG-confussion}.

\subsection{Parametrized flux limiters for hyperbolic systems}
\label{sec:parameterizedsystem}

The parametrized flux limiting method can be extended to several hyperbolic systems. 
As an example, we review how it can be applied to a finite difference scheme  for the compressible Euler equations \cite{xiong2016parametrized}, which is one kind of extension of Zalesak's FCT limiter to systems. See also 
\cite{lohmann2016synchronized} and references therein for another extension of Zalesak's FCT limiter to systems.
For simplicity, we only consider the one dimensional case since the multiple dimensional scheme can be  defined similarly in a dimension by dimension fashion for a finite difference scheme. 

For the positivity of density $\rho$ and pressure $p$ in the Euler equations, introduce two threshold parameters:  
$\epsilon_{\rho} = \min_j(\rho_j^{n+1,L},10^{-13})$ and $\epsilon_p = \min_j(p_j^{n+1,L},10^{-13})$, where $\rho_j^{n+1,L}$ and $p_j^{n+1,L}$ denote density and pressure computed by a first order IDP scheme. 
Let $(\hat f^{\rho,L}, \hat f^{m,L}, \hat f^{E,L})^\top$ represent the components of the first order IDP flux $\hat {\bf f}^L$. Similarly,  $\hat {\bf f}^H = (\hat f^{\rho,H}, \hat f^{m,H}, \hat f^{E,H})^\top$ denotes a high order numerical flux in a finite difference scheme, e.g., finite difference WENO scheme. Let $\hat {\bold f} = (\hat f^{\rho}, \hat f^{m}, \hat f^{E})^\top$ be the corrected flux.
The method proceeds in two steps:

  First,  follow the scalar case to determine 
    the limiting parameters $\theta_{j\pm\frac12}$ to enforce the positivity of density, i.e. to maintain 
    \begin{align*}
        \rho_j^{n+1} = \rho_j^n - \lambda\left( \hat f^{\rho}_{j+\frac12} - \hat f^{\rho}_{j-\frac12} \right) \geq \epsilon_\rho.
    \end{align*}  
     Obtain $(\Lambda_{-\frac12,I_j}^\rho, \Lambda_{+\frac12,I_j}^\rho)$ and define a rectangular box region:
    \begin{align}
        S_{\rho} = \left\{ (\theta_{j-\frac12}, \theta_{j+\frac12}): 0 \leq \theta_{j-\frac12} \leq \Lambda_{-\frac12,I_j}^{\rho},\ 0 \leq \theta_{j+\frac12} \leq \Lambda_{+\frac12,I_j}^{\rho} \right\}.
    \end{align}  
 Let $A^1 = (0, \Lambda_{+\frac12,I_j}^{\rho})$, $A^2 = (\Lambda_{-\frac12,I_j}^{\rho}, 0)$, and $A^3 = (\Lambda_{-\frac12,I_j}^{\rho}, \Lambda_{+\frac12,I_j}^{\rho})$ denote vertices of $S_{\rho}$, as shown in  Figure~\ref{Fig:decoupling_rectangle}.

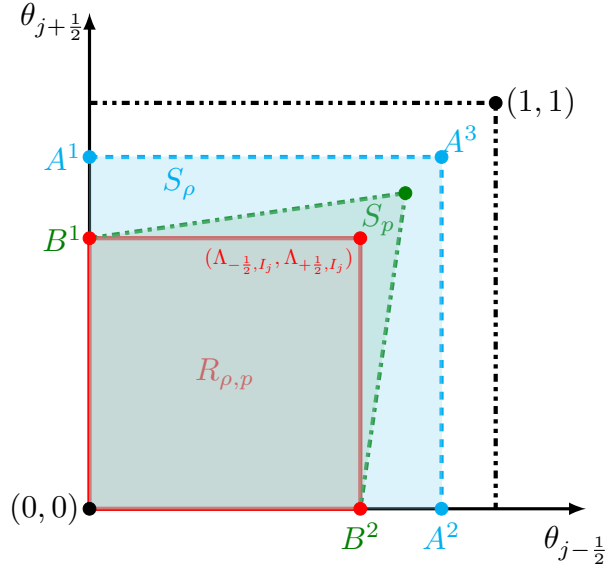
\begin{figure}[!htb]
    \begin{center}
        \begin{tikzpicture}[xscale=1.2,yscale=1.2]
            \draw [very thick,-latex] (0,0) -- (5.5,0);
            \draw [very thick,-latex] (0,0) -- (0,5.5);

            \draw [ultra thick, red] (0,0) -- (3,0) -- (3,3) -- (0,3) -- cycle;
            \filldraw [red!40,opacity=0.3] (0,0) rectangle (3,3);
            \node[red, scale=1.2] at (1.5,1.5) {$R_{\rho,p}$};

            \draw [ultra thick, dashdotted, darkgreen] (0,3) -- (3.5,3.5) -- (3,0);
            \filldraw [darkgreen!40,opacity=0.3] (0,3) -- (3.5,3.5) -- (3,0) -- (0,0) -- (0,3);
            \node[darkgreen, scale=1.2] at (3.2,3.2) {$S_p$};

            \draw [ultra thick, dashed, cyan] (0,3.9) -- (3.9,3.9) -- (3.9,0);
            \filldraw [cyan!40,opacity=0.3] (0,3.9) -- (3.9,3.9) -- (3.9,0) -- (0,0) -- (0,3.9);
            \node[cyan, scale=1.2] at (1.0,3.6) {$S_{\rho}$};

            \draw [ultra thick, dashdotdotted, black] (0,4.5) -- (4.5,4.5) -- (4.5,0);

            \filldraw [black] (0,0) circle (2pt);
            \filldraw [black] (4.5,4.5) circle (2pt);
            \filldraw [red] (3,0) circle (2pt);
            \filldraw [red] (3,3) circle (2pt);
            \filldraw [red] (0,3) circle (2pt);
            \filldraw [darkgreen] (3.5,3.5) circle (2pt);
            \filldraw [cyan] (3.9,0) circle (2pt);
            \filldraw [cyan] (3.9,3.9) circle (2pt); 
            \filldraw [cyan] (0,3.9) circle (2pt);

            \node[darkgreen, scale=1.2] at (-0.3,3) {$B^1$};
            \node[darkgreen, scale=1.2] at (3,-0.3) {$B^2$};
            
            \node[cyan, scale=1.2] at (3.9,-0.3) {$A^2$};
            \node[cyan, scale=1.2] at (4.1,4.1) {$A^3$};
            \node[cyan, scale=1.2] at (-0.3,3.9) {$A^1$};

            \node [red, scale=0.8] at (2.1,2.75) {$(\Lambda_{-\frac12,I_j}, \Lambda_{+\frac12,I_j})$};
            \node [black, scale=1.2] at (-0.5,0) {$(0,0)$};
            \node [black, scale=1.2] at (5,4.5) {$(1,1)$};
            \node [black, scale=1.2] at (5.4,-0.4) {$\theta_{j-\frac12}$};
            \node [black, scale=1.2] at (-0.4,5.4) {$\theta_{j+\frac12}$};
        \end{tikzpicture}
    \end{center}
    \caption{An illustration of 
the parameters for enforcing positivity of both density and pressure:
the first step is to find a box region $S_\rho$ (cyan color rectangle bounded by dashed lines) with four vertices $(0,0), A^1, A^2, A^3$, the second step is to find $B^i=rA^i$ with $r\in[0,1]$ such that $p(B^i)\geq\epsilon_p$ for each $i$. The convex hull of vertices 
$(0,0), B^1, B^2, B^3$ is the green polygon  $S_p\subset S_{\rho, p}$, and 
 the largest rectangle inside $S_p$ with two sides along axes is the red rectangle $R_{\rho,p}\subset S_{\rho, p}$.}
    \label{Fig:decoupling_rectangle}
\end{figure}

The second step is to enforce pressure positivity without losing positivity of density.
For the scheme with corrected flux \eqref{eq:forward_EulerIDP}, its numerical solution $\bu^{n+1}_j$ can be
regarded as a function of $\theta_{j\pm\frac12}$. For the ideal gas EOS \eqref{EOS}, the pressure function is concave w.r.t. $\bu=\begin{pmatrix}\rho & m & E\end{pmatrix}^T$, thus the proof of \Cref{lem:smallertheta}  implies the following set is a convex set, 
    \begin{align}
        S_{\rho,p} = \left\{ (\theta_{j-\frac12}, \theta_{j+\frac12}) \in S_{\rho}:  p_j^{n+1}(\theta_{j-\frac12}, \theta_{j+\frac12}) = (\gamma-1) \left( E_j^{n+1} - \frac{(m_j^{n+1})^2}{2\rho_j^{n+1}} \right) \geq \epsilon_p \right\}.
    \end{align}  

     Next, a box region $R_{\rho, p} $ in the plane of two variables $(\theta_{j-\frac12}, \theta_{j+\frac12}) $ for enforcing positivity of both density and pressure
   is constructed as follows and also illustrated in Figure~\ref{Fig:decoupling_rectangle}:  
    \begin{enumerate}
        \item For each vertex $A^\ell$, find $B^\ell = rA^\ell$ with $r \in [0,1]$ such that $p(B^\ell) \geq \epsilon_p$. The convex hull of $(0,0)$ and $B^\ell$ forms $S_p\subset S_{\rho, p}$.
        \item Define the rectangle $ R_{\rho, p} = \left[0,\Lambda_{-\frac12,I_j}\right] \times \left[0,\Lambda_{+\frac12,I_j}\right],$  
        with limiting parameters:  
        \begin{align}
            (\Lambda_{-\frac12,I_j}, \Lambda_{+\frac12,I_j}) = \left(\min(B_1^2, B_1^3), \min(B_2^1, B_2^3)\right).
        \end{align}  
    \end{enumerate}

The construction above and convexity of $ S_{\rho,p}$ ensure that  $R_{\rho, p}\subset S_{\rho,p}$.  
Finally, the limiting parameter is given by $ \theta_{j+\frac12} = \min\left(\Lambda_{-\frac12,I_{j+1}}, \Lambda_{+\frac12,I_j}\right)$, and such $\theta_{j+\frac 12}$ is inside all $R_{\rho, p}$ constructed thus IDP is achieved in \eqref{eq:forward_EulerIDP}.

Such a flux limiting method can be extended to other schemes such as  DG methods \cite{xiong2015high-DG-confussion}  and  finite volume schemes on unstructured meshes  \cite{christlieb2015high}.

\subsection{A simple flux limiting based on \Cref{prop:LFS}}

\label{sec:hu-shu}

In the literature, there are other simpler decoupling methods to find sufficient conditions for satisfying \eqref{eq:MMP_NF_BP-2}. We first review a simple flux limiting method
 introduced by Hu, Adams and Shu in \cite{hu-adam-2013positivity} for compressible Euler equations, which can be extended to
hyperbolic systems with $G$ satisfying \Cref{prop:LFS} (and  \Cref{prop:LFS-2D} in multiple dimensions).  It should be noted that scalar conservation laws with $G=[U_{\min}, U_{\max}]$ do not satisfy \Cref{prop:LFS} in general. 

For the compressible Euler equations, \Cref{prop:LFS} holds, implied by \Cref{lf-fact-NS}.
In the Hu--Adams--Shu method \cite{hu-adam-2013positivity}, the low order IDP flux is chosen as the first order Lax--Friedrichs flux, e.g.,
\begin{equation}
    \label{LF-flux-FLUXlimiter}
{\bf f}^{L}_{j+\frac12} = \frac{1}{2}[\bff(\bu^n_{j})+\bff(\bu^n_{j+1})-\alpha (\bu^n_{j+1}-\bu^n_{j})].
\end{equation}
The first order Lax--Friedrichs scheme can be written as 
\[ \bu^{n+1,L}_j=\frac12{\bf u}_j^{L,+}+\frac12{\bf u}_j^{L,-},\qquad {\bf u}_j^{L,\pm} := {\bf u}_j^n\mp2\lambda\hat{\bold f}_{j\pm\frac12}^{L}.\]
 Under the CFL condition $\alpha \lambda \le \frac12$ and the assumption $\bu^n_j, \bu^n_{j\pm 1}\in G$,  \Cref{prop:LFS} implies
 \begin{equation}
     \label{splitting_IDP_HuAdamShu}
     {\bf u}_j^{L,\pm} = (1-2\alpha \lambda)\bu^n_j + \alpha \lambda \left(  {\bf u}^n_j \mp \frac{ {\bf f} ({\bf u}^n_j) } {\alpha}    \right) + \alpha \lambda \left(  {\bf u}^n_{j\pm 1} \mp \frac{ {\bf f} ({\bf u}^n_{j \pm 1}) } {\alpha}    \right)\in G. 
 \end{equation}

In order to decouple the constraints of $\{ \theta_{j+\frac12} \}$ in \eqref{eq:MMP_NF_BP}, 
 the Hu--Adams--Shu method decomposes 
 the original high order scheme \eqref{eq:forward_Euler} into two parts: 
\begin{align}\label{eq:convex_combination-hu}
\bu_j^{n+1,H} = \frac12\Big(\bu_j^n + 2\lambda \hat {\bf f}_{j-\frac12}^H \Big)
                    + \frac12\Big(\bu_j^n - 2\lambda \hat {\bf f}_{j+\frac12}^H \Big)
                    := \frac12 \bu_j^{H,-} + \frac12 \bu_j^{H,+}.
\end{align} 

Then the scheme \eqref{eq:forward_EulerIDP} can be written as
\begin{equation} 
\label{hu-shu-decomp}
    \bu^{n+1}_j=\frac12\left[(1-\theta_{j-\frac12})\bu_j^{L,-}+ \theta_{j-\frac12}\bu_j^{H,-}\right]+\frac12\left[(1-\theta_{j+\frac12})\bu_j^{L,+}+ \theta_{j+\frac12}\bu_j^{H,+}\right],
\end{equation}
and the basic idea of the Hu--Adams--Shu method is to use ${\bf u}_j^{L,\pm} \in G$ to limit $\bu_j^{H,\pm}$.
 
For simplicity, assume $G$ in \eqref{eq:ASS-G} is defined by concave functions $g_i(\bu)>0, i=1,\cdots, N$, thus $g_i$ satisfies the Jensen inequality \eqref{eq:Jensen4g}, such as the positivity of density or pressure in the compressible Euler equations.  Define
\begin{subequations}
\label{alg:theta}
    \begin{equation}
       \theta_{j+\frac12}=\min_i \theta^i_{j+\frac12},\quad  \theta^i_{j+\frac12}=\min\left\{\theta_{j+\frac12}^{i,+}, \theta_{j+\frac12}^{i,-} \right\},
    \end{equation}
    \begin{equation}
        \theta_{j+\frac12}^{i,+}=\begin{cases}
    1, & \mbox{if } g_i(\bu_j^{H,+})\geq \epsilon\\
  \mbox{Solution to }  \big(1-\theta\big) g\big(\bu_j^{L,+}\big) + \theta g\big(\bu_j^{H,+}\big) = \epsilon, &\mbox{if }  g_i(\bu_j^{H,+})< \epsilon
\end{cases},
    \end{equation}
        \begin{equation}
        \theta_{j+\frac12}^{i,-}=\begin{cases}
    1, & \mbox{if } g_i(\bu_{j+1}^{H,-})\geq \epsilon\\
  \mbox{Solution to }  \big(1-\theta\big) g\big(\bu_{j+1}^{L,-}\big) + \theta g\big(\bu_{j+1}^{H,-}\big) = \epsilon, &\mbox{if }  g_i(\bu_{j+1}^{H,-})< \epsilon
\end{cases}.
    \end{equation}
\end{subequations}

Then \eqref{eq:forward_EulerIDP}  with $\theta_{j+\frac12}$ in  \eqref{alg:theta}  is IDP because of the following  facts.
 \begin{lemma}
\label{lem:fluxlimiter_smallertheta-simple}
Let $\bu^L,\bu^H$ be two given vectors satisfying $g(\bu^L)\geq \epsilon>0$
for 
  a concave function $g$. Let $\hat \theta\in[0,1]$ be a solution to
  $(1-\hat \theta) g(\bu^L)+\hat \theta g(\bu^H)=\epsilon$, then 
    \[ g\big[(1-\theta)\bu^L+ \theta\bu^H \big]\geq \epsilon,\quad \forall \theta\in [0,
    \hat\theta]. \]
\end{lemma}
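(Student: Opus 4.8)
The plan is to collapse the multivariate statement to a one-dimensional affine estimate supplied by Jensen's inequality \eqref{eq:Jensen4g}. First I would introduce the scalar auxiliary function
\[
\ell(\theta):=(1-\theta)\,g(\bu^L)+\theta\,g(\bu^H),
\]
which is affine in $\theta$ and, by the hypotheses, satisfies $\ell(0)=g(\bu^L)\ge\epsilon$ together with $\ell(\hat\theta)=\epsilon$. Since $g$ is concave, Jensen's inequality gives, for every $\theta\in[0,1]$,
\[
g\big[(1-\theta)\bu^L+\theta\bu^H\big]\ \ge\ (1-\theta)\,g(\bu^L)+\theta\,g(\bu^H)\ =\ \ell(\theta),
\]
so that it suffices to prove the purely scalar claim $\ell(\theta)\ge\epsilon$ for all $\theta\in[0,\hat\theta]$.

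The second step establishes this scalar bound. I would first dispose of the degenerate case $\hat\theta=0$, for which the admissible interval reduces to the single point $\theta=0$ and the desired inequality is exactly the assumption $g(\bu^L)\ge\epsilon$. For $\hat\theta\in(0,1]$, the slope of the affine function $\ell$ is
\[
g(\bu^H)-g(\bu^L)=\frac{\ell(\hat\theta)-\ell(0)}{\hat\theta}=\frac{\epsilon-g(\bu^L)}{\hat\theta}\le 0,
\]
because $g(\bu^L)\ge\epsilon$. Hence $\ell$ is non-increasing on $[0,\hat\theta]$, and therefore $\ell(\theta)\ge\ell(\hat\theta)=\epsilon$ for every $\theta\in[0,\hat\theta]$. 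Combining this with the Jensen bound above yields the conclusion. An equivalent phrasing, which avoids discussing the slope, writes any $\theta\in[0,\hat\theta]$ as the convex combination $\theta=(1-s)\cdot 0+s\,\hat\theta$ with $s=\theta/\hat\theta\in[0,1]$ and invokes linearity of $\ell$ to obtain $\ell(\theta)=(1-s)\ell(0)+s\,\ell(\hat\theta)\ge(1-s)\epsilon+s\,\epsilon=\epsilon$.

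I do not expect any genuine obstacle: the entire content is that precomposing the concave $g$ with the affine path $\theta\mapsto(1-\theta)\bu^L+\theta\bu^H$ produces a function of $\theta$ that dominates the affine interpolant of its endpoint values, which is $\ge\epsilon$ at both $\theta=0$ and $\theta=\hat\theta$. The only care needed is the degenerate endpoint $\hat\theta=0$ and the sign of the slope of $\ell$. I would, however, emphasize one structural point that distinguishes this lemma from the earlier \Cref{lem:smallertheta}: here $g(\bu^H)$ may well be smaller than $\epsilon$ (indeed that is precisely the situation in which limiting is triggered in \eqref{alg:theta}), so the full affine Jensen bound for concave $g$ is genuinely used, and the weaker quasi-concave estimate \eqref{Jensen-quasiconcave}, which only controls $g$ by $\min\{g(\bu^L),g(\bu^H)\}$, would not suffice in this case.
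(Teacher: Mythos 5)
Your proof is correct and is essentially the paper's argument: the paper also sets $a=\theta/\hat\theta$, writes $(1-\theta)\bu^L+\theta\bu^H$ as a convex combination of $\bu^L$ and $(1-\hat\theta)\bu^L+\hat\theta\bu^H$, and applies Jensen's inequality for the concave $g$, which is exactly your factorization through the affine interpolant $\ell$. Your explicit treatment of the degenerate case $\hat\theta=0$ (where the paper's division by $\hat\theta$ is undefined but the claim reduces to the hypothesis $g(\bu^L)\ge\epsilon$) and your remark that quasi-concavity would not suffice here are both accurate refinements, not deviations.
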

\begin{proof}
    Let $a=\theta/\hat{\theta}$, then $a\in [0,1]$, thus Jensen's inequality from concavity gives
    \begin{align*}
     & g\big[ \big(1-\theta\big)\bu^{L} + \theta  \bu^{H}\big] =g\big[ (1-a)\bu^{L}+ a  \big(1-\hat \theta\big)\bu^{L} + a \hat \theta \bu^{H}\big]\\
     \geq&  (1-a) g\big(\bu^{L}\big)+ a \big(1-\hat\theta\big)g\big(\bu^{L}\big) + a \hat\theta g\big( \bu^{H}\big)\geq \epsilon.
\end{align*}
\end{proof}

\begin{lemma}
\label{lem:fluxlimiter_smallertheta}
    If $\theta^i_{j+\frac12}$ is obtained from \Cref{alg:theta}, then \eqref{eq:forward_EulerIDP}  with any non-negative $\theta_{j+\frac12}\leq \theta^i_{j+\frac12}$ satisfies $g_i(\bu^{n+1})> 0$ for a concave function $g_i$.
\end{lemma}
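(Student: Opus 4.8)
The plan is to show that the flux-corrected scheme \eqref{eq:forward_EulerIDP} preserves positivity of each concave constraint $g_i$ by reducing the multi-point coupling to a pair of independent single-point (single-$\theta$) problems. First I would recall the decomposition \eqref{hu-shu-decomp}, which writes the updated state as a convex combination
\[
\bu_j^{n+1}=\tfrac12\Big[(1-\theta_{j-\frac12})\bu_j^{L,-}+\theta_{j-\frac12}\bu_j^{H,-}\Big]
+\tfrac12\Big[(1-\theta_{j+\frac12})\bu_j^{L,+}+\theta_{j+\frac12}\bu_j^{H,+}\Big].
\]
The key structural observation is that the two bracketed terms each depend on only a single limiting parameter, so once their positivity is controlled individually, the Jensen inequality for the concave $g_i$ (the inequality \eqref{eq:Jensen4g}) transfers positivity to the convex combination.

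The main step would be to analyze one bracket, say $(1-\theta_{j+\frac12})\bu_j^{L,+}+\theta_{j+\frac12}\bu_j^{H,+}$. By the splitting identity \eqref{splitting_IDP_HuAdamShu} we have $g_i(\bu_j^{L,+})\ge\epsilon$ under the Lax--Friedrichs CFL condition $\alpha\lambda\le\frac12$ and \Cref{prop:LFS}. Then I would invoke \Cref{lem:fluxlimiter_smallertheta-simple} with $\bu^L=\bu_j^{L,+}$, $\bu^H=\bu_j^{H,+}$, and $\hat\theta=\theta_{j+\frac12}^{i,+}$ defined in \Cref{alg:theta}: this lemma guarantees
\[
g_i\Big[(1-\theta)\bu_j^{L,+}+\theta\,\bu_j^{H,+}\Big]\ge\epsilon
\qquad\forall\,\theta\in[0,\theta_{j+\frac12}^{i,+}].
\]
The same argument applied to the minus bracket, using $\theta_{j+\frac12}^{i,-}$ and the state $\bu_{j+1}^{L,-},\bu_{j+1}^{H,-}$, yields positivity there. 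Since the hypothesis is $\theta_{j+\frac12}\le\theta_{j+\frac12}^i=\min\{\theta_{j+\frac12}^{i,+},\theta_{j+\frac12}^{i,-}\}$, both brackets satisfy the bound $\ge\epsilon$ simultaneously.

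Finally I would combine the two brackets. Because $g_i$ is concave and $\bu_j^{n+1}$ is the midpoint convex combination of the two brackets, Jensen's inequality \eqref{eq:Jensen4g} gives
\[
g_i(\bu_j^{n+1})\ge\tfrac12 g_i\big(\text{minus bracket}\big)+\tfrac12 g_i\big(\text{plus bracket}\big)\ge\epsilon>0,
\]
which establishes the claim. The one subtlety I expect to be the main obstacle is bookkeeping the \emph{indexing}: the parameter $\theta_{j+\frac12}^{i,-}$ controls positivity at node $j+1$ (through $\bu_{j+1}^{L,-}$ and $\bu_{j+1}^{H,-}$), not at node $j$, so one must be careful that the bound on $\theta_{j+\frac12}$ used when updating $\bu_j^{n+1}$ is the one arising from the plus bracket at $j$, namely $\theta_{j+\frac12}^{i,+}$, while the minus-bracket contribution to $\bu_j^{n+1}$ is actually governed by $\theta_{j-\frac12}^{i,-}$. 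Tracing this carefully (and noting that \Cref{alg:theta} defines $\theta_{j+\frac12}$ as the minimum over both the plus and minus conditions, so that every node's update inherits a valid bound from each adjacent interface) is the part that requires attention rather than deep ideas.
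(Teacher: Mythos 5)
Your proof is correct and follows essentially the same route as the paper's: apply \Cref{lem:fluxlimiter_smallertheta-simple} to each of the two single-parameter brackets in \eqref{hu-shu-decomp} (with $g_i\big(\bu_j^{L,\pm}\big)\ge\epsilon$ coming from \eqref{splitting_IDP_HuAdamShu} under the CFL condition), then conclude by Jensen's inequality for the concave $g_i$. Your closing remark on the indexing --- that the minus bracket in the update of $\bu_j^{n+1}$ is governed by $\theta_{j-\frac12}^{i,-}$ rather than $\theta_{j+\frac12}^{i,-}$, and that \Cref{alg:theta} accounts for this by taking the minimum over both interface conditions --- is precisely the bookkeeping the paper's terse proof leaves implicit, and you resolve it correctly.
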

\begin{proof}
   \Cref{lem:fluxlimiter_smallertheta-simple} implies
      \[  g_i\big[ \big(1-\theta_{j+\frac12}\big)\bu_j^{L,+} + \theta_{j+\frac12}  \bu_j^{H,+}\big]>0,\quad  g_i\big[ \big(1-\theta_{j-\frac12}\big)\bu_{j}^{L,-} + \theta_{j-\frac12}  \bu_{j}^{H,-}\big]>0.\]
With \eqref{hu-shu-decomp},
Jensen's inequality gives $g\big(\bu^{n+1}_j\big)>0.$
\end{proof}

\subsection{The convex limiting based on \Cref{prop:RP}}  
The convex limiting by Guermond, Popov and Tomas in \cite{guermond2019invariant} is another flux limiting approach  
adapted from the convex limiting technique for the continuous finite element method \cite{guermond2017invariant,guermond2018second-euler}, and applies to the DG, finite volume and finite difference schemes \cite{guermond2019invariant} solving 
 general hyperbolic systems satisfying
 \Cref{prop:RP} (and \Cref{prop:LF-2D} in multiple dimensions).  {See also \cite{cotter2016embedded,kuzmin2010failsafe} for using flux limiters for conservation laws.}
In \cite{kuzmin2020monolithic}, Kuzmin introduced   the monolithic convex limiting which can be applied to a semi-discrete scheme.
 As an example, we  briefly review how these convex limiting methods apply to finite difference and continuous finite element methods.

\subsubsection{Finite difference schemes with monolithic convex limiting} 
\label{sec:kuzmin}

We describe the monolithic convex limiting approach in  \cite{kuzmin2020monolithic} which can be used with any IDP flux \cite[Section 2.5.6.2]{kuzmin2024property}.
By adding and subtracting $\lambda {\bf f}(\bu_j^n)$,  the original high order scheme \eqref{eq:forward_Euler} can be rewritten as
\[ \bu_j^{n+1,H} = \frac12\Big(\bu_j^n + 2\lambda \big(\hat {\bf f}_{j-\frac12}^H
                      - {\bf f}(\bu_j^n)\big)\Big)
                    + \frac12\Big(\bu_j^n - 2\lambda \big(\hat {\bf f}_{j+\frac12}^H
                      - {\bf f}(\bu_j^n)\big)\Big)
                       =: \frac12 \bu_j^{H,-} + \frac12 \bu_j^{H,+}.\]
This can be regarded as a decomposition of the high order scheme \eqref{eq:forward_Euler} into {\it  residuals} as in the residual distribution schemes \cite{remi-10, remi-11}, which is however different from the decomposition in the Hu-Adam-Shu approach.

Now consider any first order IDP scheme \eqref{eq:forward_EulerL}
with $\hat{\bf f}_{j+\frac{1}{2}}^{L}=\hat{\bff}(\bu^n_{j},\bu^n_{j+1})$ and $\hat{\bff}$ is any consistent IDP flux including Lax--Friedrichs, Godunov, and HLLE fluxes. Then it can be also rewritten as
\[ \bu_j^{n+1,L} = \frac12\Big(\bu_j^n + 2\lambda \big(\hat {\bf f}_{j-\frac12}^L
                      - {\bf f}(\bu_j^n)\big)\Big)
                    + \frac12\Big(\bu_j^n - 2\lambda \big(\hat {\bf f}_{j+\frac12}^L
                      - {\bf f}(\bu_j^n)\big)\Big)
                       =: \frac12 \bu_j^{L,-} + \frac12 \bu_j^{L,+}.\]
If the first order scheme \eqref{eq:forward_EulerL} is IDP under CFL condition $\lambda \max_{\bu}|\bff'(\bu)|\leq a_0$, then   the following holds under halved CFL condition,
\begin{equation}
    \label{fake-1d-scheme}
    {\bf u}_j^{L,\pm} := 
\bu_j^n \mp 2\lambda \big(\hat{\bf f}_{j\pm\frac{1}{2}}^{L} - {\bf f}(\bu_j^n)\big) \in G,\qquad \lambda \max_{\bu}|\bff'(\bu)|\leq \frac12 a_0.
\end{equation}
 To see why \eqref{fake-1d-scheme} is true, we focus on ${\bf u}_j^{L,-}$. Notice that it can be written as another first order scheme    
 \begin{equation}
    \label{fake-1d-scheme-2}
     {\bf u}_j^{L,-} = \bu_j^n + 2\lambda \big(\hat {\bf f}_{j-\frac12}^L
                      - {\bf f}(\bu_j^n)\big)=\bu_j^n -2\lambda \big(\hat{\bff}(\bu_j^n,\bu_j^n)-\hat {\bf f}(\bu^n_{j-1},\bu^n_{j})\big),\end{equation}
  which is implied by $\hat {\bf f}_{j-\frac12}^L=\hat {\bf f}(\bu^n_{j-1},\bu^n_{j})$ and the consistency of the flux function $\hat {\bf f}(\bu^n_{j},\bu^n_{j})=\bff(\bu^n_{j}).$     Since \eqref{fake-1d-scheme-2} is in the same form as \eqref{scheme:1Dsystem} but with doubled time step,
  it is also IDP under halved CFL.
  
Then the scheme \eqref{eq:forward_EulerIDP} can be written as
\begin{equation}
    \label{1D-FD-monolithic-convex-limiting}
    \bu^{n+1}_j=\frac12\left[(1-\theta_{j-\frac12})\bu_j^{L,-}+ \theta_{j-\frac12}\bu_j^{H,-}\right]+\frac12\left[(1-\theta_{j+\frac12})\bu_j^{L,+}+ \theta_{j+\frac12}\bu_j^{H,+}\right],
\end{equation}
and we can use ${\bf u}_j^{L,\pm} \in G$ to correct  ${\bf u}_j^{H,\pm}$  in the same way as in the Hu-Adam-Shu approach.
  For simplicity, assume $G$ in \eqref{eq:ASS-G} is defined by concave functions $g_i(\bu)>0, i=1,\cdots, N$. Then \eqref{eq:forward_EulerIDP}  with $\theta_{j+\frac12}$   
 in  \eqref{alg:theta}  is IDP due to  \Cref{lem:fluxlimiter_smallertheta}.

 \begin{remark}
    The limiting approaches in \Cref{sec:hu-shu} and \Cref{sec:kuzmin} are defined via quantities of $\bu^n_j$ without involving $\bu^{n+1,L}_j$, and such limiting methods are called monolithic  \cite{hajduk2020matrix,kuzmin2020monolithic}. 
See \cite{hajduk2021monolithic,rueda2024monolithic,moujaes2025monolithic} for more monolithic convex limiting techniques. 
{ Similar flux limiters were proposed in \cite{Gu2021A,Fu2025Bound} for the five-equation model.}
\end{remark}

\subsubsection{Finite difference schemes with convex limiting { of FCT type}} 

For a high order finite difference scheme \eqref{eq:forward_EulerIDP}, 
 the convex limiting method in \cite{guermond2018second-euler,guermond2017invariant,guermond2019invariant} provides a simpler sufficient solution for enforcing \eqref{eq:MMP_NF_BP-2}. 
 The main idea is to rewrite the scheme \eqref{eq:forward_EulerIDP} in the following form 
\begin{equation*}
\bu^{n+1}_j=    \frac12 \left[\bu_j^{n+1,L}-2\lambda \theta_{j+\frac12} \Big(\hat {\bf f}^H_{j+\frac12} - \hat {\bf f}^L_{j+\frac12}\Big)\right]
   +\frac12 \left[\bu_j^{n+1,L}+2\lambda \theta_{j-\frac12} \Big(\hat {\bf f}^H_{j-\frac12} - \hat {\bf f}^L_{j-\frac12}  \Big)\right].
\end{equation*}
For a given concave function $g(\bu)>0$, the idea is to find $\theta_{j+\frac12}$ such that
\begin{equation}
  \label{FD:convexlimiting-guermond}
  g \left[\bu_j^{n+1,L}-2\lambda \theta_{j+\frac12} \Big(\hat {\bf f}^H_{j+\frac12} - \hat {\bf f}^L_{j+\frac12}\Big)\right]\geq \epsilon,
   \quad g \left[\bu_j^{n+1,L}+2\lambda \theta_{j-\frac12} \Big(\hat {\bf f}^H_{j-\frac12} - \hat {\bf f}^L_{j-\frac12}  \Big)\right]\geq \epsilon.
\end{equation}
 
Such $\theta_{j+\frac12}$ can be found in a way similar to \eqref{alg:theta}, see  \cite{guermond2017invariant,guermond2019invariant}.
In such a convex limiting approach, one seeks to use $\bu_j^{n+1,L}\in G$ to correct $\pm 2\lambda \Big(\hat {\bf f}^H_{j\mp\frac12} - \hat {\bf f}^L_{j\mp\frac12}\Big)$, which is different from the previous approaches.
 As a comparison, the approaches in \Cref{sec:hu-shu} and \Cref{sec:kuzmin} use ${\bf u}_j^{L,\pm} \in G$ to correct  ${\bf u}_j^{H,\pm}$,  and they satisfy $${\bf u}_j^{H,+}-{\bf u}_j^{L,+}=-\lambda(\hat \bff^H_{j+\frac12}-2\hat \bff^L_{j+\frac12}),\quad {\bf u}_j^{H,-}-{\bf u}_j^{L,-}=2\lambda(\hat \bff^H_{j-\frac12}-\hat \bff^L_{j-\frac12}).$$

\subsubsection{Convex limiting for continuous finite element methods}
Since $\bu^{n+1,L}_j$ is needed in \eqref{FD:convexlimiting-guermond}, the convex limiting method \eqref{FD:convexlimiting-guermond} is different from a monolithic approach. On the other hand, such a method offers easiness for schemes on unstructured meshes in multiple dimensions. 
 We briefly review the main idea in \cite{guermond2018second-euler} for applying convex limiting to continuous finite element methods. 
 
 With the same notation as in \Cref{sec:FEM}, we consider the group finite element method with $\mathbb P^1$ basis defined on unstructured triangular meshes.
Recall 
  the first order  continuous finite element method with mass lumping \eqref{FEM-firstorder} can be written 
in the flux form \eqref{FEM-firstorder-LF3}. For convenience, we denote  \eqref{FEM-firstorder-LF3} as
\[m_{i}\frac{\bu^{n+1, L}_i-\bu^{n}_i}{\Delta t}+\sum_{j\in \mathcal N_i} [(\bff(\bu^n_j)+\bff(\bu^n_i))\cdot \bc_{ij}-d_{ij}^L(\bu^n_j-\bu^n_i)]=0,\]
where $d_{ij}^L$ is the artificial viscosity coefficient in the first order IDP scheme. 

A second order in space finite element scheme without mass lumping with forward Euler time discretization can be written as 
\begin{equation}
   \label{secondorderFEM}
   \sum_{j\in \mathcal N_i}M_{ij}\frac{\bu^{n+1,H}_j-\bu^{n}_j}{\Delta t}+\sum_{j\in \mathcal N_i} [(\bff(\bu^n_j)+\bff(\bu^n_i))\cdot \bc_{ij}-d_{ij}^H(\bu^n_j-\bu^n_i)]=0,
\end{equation}
with  smaller  artificial viscosity coefficients $d_{ij}^H$ satisfying the same symmetry and zero sum constraints \eqref{FEM-diffusion-condition}. We refer to 
\cite{guermond2014second,guermond2017invariant,kuzmin2020monolithic} and references therein for how to compute $d_{ij}^H$.  
Let $\delta_{ij}$ be the Kronecker delta, then $\sum_{j\in \mathcal N_i}(M_{ij}-\delta_{ij} m_i)=0$, thus
\begin{align*}
& \sum_{j\in \mathcal N_i}M_{ij}\frac{\bu^{n+1,H}_j-\bu^{n}_j}{\Delta t}=\frac{m_i}{\Delta t}(\bu^{n+1,H}_i-\bu^{n}_i)+\sum_{j\in \mathcal N_i}\frac{M_{ij}-\delta_{ij}m_i}{\Delta t}(\bu^{n+1,H}_j-\bu^{n}_j)\\
=&\frac{m_i}{\Delta t}(\bu^{n+1,H}_i-\bu^{n}_i)+\sum_{j\in \mathcal N_i}\frac{M_{ij}-\delta_{ij}m_i}{\Delta t}(\bu^{n+1,H}_j-\bu^{n}_j-\bu^{n+1,H}_i+\bu^{n}_i).
\end{align*}
Together with properties of $\bc_{ij}$, 
 \eqref{secondorderFEM} can be rewritten as
 \[\frac{m_i}{\Delta t}(\bu^{n+1,H}_i-\bu^{n}_i)+\sum_{j\in \mathcal N_i^* }\hat \bff^H_{ij}=0, \]
where $j\in \mathcal N^*_i$ denotes $j\in \mathcal N_i, j\neq i$, and the  numerical flux is
\[\hat \bff^H_{ij}=\frac{M_{ij}}{\Delta t}(\bu^{n+1,H}_j-\bu^{n}_j-\bu^{n+1,H}_i+\bu^{n}_i)+(\bff(\bu^n_j)+\bff(\bu^n_i))\cdot \bc_{ij}-d_{ij}^H(\bu^n_j-\bu^n_i).  \]

The first order IDP scheme can be written in a similar form:
 \[\frac{m_i}{\Delta t}(\bu^{n+1,L}_i-\bu^{n}_i)+\sum_{j\in \mathcal N_i^* }\hat \bff^L_{ij}=0, \] 
\[\hat \bff^L_{ij}=(\bff(\bu^n_j)+\bff(\bu^n_i))\cdot \bc_{ij}-d_{ij}^L(\bu^n_j-\bu^n_i).  \]
 
The flux corrected scheme can be written as
 \[ \bu^{n+1}_i=\bu^{n+1,L}_i-\frac{\Delta t}{m_i}\sum_{j\in \mathcal N_i^* }\theta_{ij}\left(\hat \bff^H_{ij}-\hat \bff^L_{ij}\right), \quad \theta_{ij}\in [0,1].\] 
As an easy approach to find $\theta_{ij}$ to enforce convex invariant domains,
the convex limiting method in
\cite{guermond2017invariant,guermond2018second-euler,guermond2019invariant} proposes
to consider rewrite it as
 \[ \bu^{n+1}_i=\sum_{j\in \mathcal N_i^* }a_j \left[\bu^{n+1,L}_i-\theta_{ij}\frac{\Delta t}{m_ia_j}\left(\hat \bff^H_{ij}-\hat \bff^L_{ij}\right)\right], \quad \theta_{ij}\in [0,1],\] 
 where $a_j>0$ are any convex combination coefficients such that $\sum_{j\in \mathcal N_i^* }a_j=1$.
We refer to \cite{guermond2019invariant} for how to find $\theta_{ij}$ ensuring $$g\left[\bu^{n+1,L}_i-\theta_{ij}\frac{\Delta t}{m_ia_j}\left(\hat \bff^H_{ij}-\hat \bff^L_{ij}\right)\right]>0,$$ which is  sufficient for ensuring $g( \bu^{n+1}_i)>0$ for a concave constraint function $g$.
\begin{remark}
   The convex limiting  becomes more diffusive with  higher order polynomial basis, see \cite[Figure 4]{guermond2024finite}. Instead, a better way is to mix a high order finite element method with an invariant-domain preserving low order method on the
closest neighbor stencil. Such continuous finite element methods with $\mathbb P^2$ and $\mathbb P^3$ bases on simplicial meshes were constructed in \cite{guermond2024finite}.  {See also \cite{lohmann2017flux,kuzmin2020subcell,hajduk2020matrix} for  limiters on higher order finite element method.}
\end{remark}

{\color{black} 
\subsubsection{Monolithic limiter via GQL representation}
A monolithic limiting method via GQL representation was presented in \cite{Abgrall2026PAMPA1D} for the
Point-Average-Moment-PolynomiAl-interpreted (PAMPA) scheme
\cite{Abgrall2025IDP-PAMPA-1D}. For simplicity, we review the main idea of this method on a finite difference scheme, for which we write an alternative wave-speed-based convex decomposition of \eqref{eq:forward_EulerIDP}:
\begin{align*}
	\mathbf{u}_j^{n+1}
	&= \mathbf{u}_j^n
	- \lambda\!\left(\hat{\mathbf{f}}_{j+\frac12}-\hat{\mathbf{f}}_{j-\frac12}\right)
	+ \lambda\!\left(\alpha_{j-\frac12}+\alpha_{j+\frac12}\right)\mathbf{u}_j^n
	- \lambda\!\left(\alpha_{j-\frac12}+\alpha_{j+\frac12}\right)\mathbf{u}_j^n \\
	&=
	\bigl(1-\lambda\alpha_{j-\frac12}-\lambda\alpha_{j+\frac12}\bigr)\mathbf{u}_j^n
	+ \lambda\,\alpha_{j-\frac12}\,\mathbf{u}_j^-
	+ \lambda\,\alpha_{j+\frac12}\,\mathbf{u}_j^+ ,
\end{align*}
where  $\hat{\bff}$ is any consistent IDP flux and 
\[
\mathbf{u}_j^-=
\mathbf{u}_j^n+\frac{\hat{\mathbf{f}}_{j-\frac12}^L-\mathbf{f}(\mathbf{u}_j)
	+\theta_{j-\frac12}\,\boldsymbol{\delta}_{j-\frac12}}{\alpha_{j-\frac12}}
=: \widehat{\mathbf{u}}_j^{L,-}
+ \theta_{j-\frac12}\,\frac{\boldsymbol{\delta}_{j-\frac12}}{\alpha_{j-\frac12}},
\]
\[
\mathbf{u}_j^+=
\mathbf{u}_j^n-\frac{\hat{\mathbf{f}}_{j+\frac12}^L-\mathbf{f}(\mathbf{u}_j)
	+\theta_{j+\frac12}\,\boldsymbol{\delta}_{j+\frac12}}{\alpha_{j+\frac12}}
=: \widehat{\mathbf{u}}_j^{L,+}
- \theta_{j+\frac12}\,\frac{\boldsymbol{\delta}_{j+\frac12}}{\alpha_{j+\frac12}}
\]
with ${\bm \delta}_{j+\frac12} := \hat {\bf f}^H_{j+\frac12} - \hat {\bf f}^L_{j+\frac12}$. 
Similar to the previous discussion for \eqref{fake-1d-scheme}, with sufficiently large $\alpha_{j\pm\frac12}$, we have $\widehat{\mathbf{u}}_j^{L,\pm}\in G$.
Then there exist blending parameters
$\theta_{j\pm\frac12}\in[0,1]$ such that $\mathbf{u}_j^\pm\in G$.
Consequently, $\mathbf{u}_j^{n+1}\in G$ under the CFL condition
$\lambda\bigl(\alpha_{j-\frac12}+\alpha_{j+\frac12}\bigr)\le 1$.

The key feature of the method in \cite{Abgrall2026PAMPA1D} and its extension to polygonal meshes in \cite{Abgrall2025PAMPA-Polygon-arXiv} is  a strategy to design effective blending
parameters $\theta_{j\pm\frac12}$ for systems. We briefly review the idea for the 1D Euler equations.
Based on the GQL framework \cite{wu2023geometric}, the invariant domain \eqref{eq:G-Euler}
for the 1D Euler system can be written equivalently as
\[
G^\star
= \bigl\{ \mathbf{u}\in\mathbb{R}^3 : \mathbf{u}\cdot\mathbf{n}^\star>0
\ \ \forall\,\mathbf{n}^\star\in\mathcal{N} \bigr\}, \qquad
\mathcal{N}
= \left\{ \begin{pmatrix} 1 \\ 0 \\ 0 \end{pmatrix} \right\}
\cup
\left\{ \begin{pmatrix} v_\star^{2}/2 \\ -\,v_\star \\ 1 \end{pmatrix} : v_\star\in\mathbb{R} \right\}.
\]
To enforce $\mathbf{u}_j^\pm\in G$ (equivalently $G^\star$), it suffices to require
$\mathbf{u}_j^\pm\cdot\mathbf{n}^\star>0$ for all $\mathbf{n}^\star\in\mathcal{N}$,
for which a convenient choice of blending parameter is
\[
\theta_{j + \frac12} = \min \{ \theta_{j + \frac12}^-, \theta_{j + \frac12}^+ \} ~~
\mbox{with} ~~
\theta_{j\pm \frac12}^\mp
= \min\!\left\{
1,\;
\alpha_{j\pm \frac12}\,
\min_{\mathbf{n}^\star\in\mathcal{N}}
\frac{\widehat{\mathbf{u}}_j^{L,\pm}\cdot\mathbf{n}^\star}{
	\left|\boldsymbol{\delta}_{j\pm\frac12}\cdot\mathbf{n}^\star\right|}
\right\}.
\]
An explicit closed-form expression for the minimizer is given in \cite{Abgrall2026PAMPA1D}.

}

\subsection{Numerical results}

We implement and test
 the fifth order finite difference WENO scheme solving compressible Euler equations with ideal gas EOS \eqref{eq:Euler} with three flux correction methods for enforcing invariant domains:
\begin{enumerate}
    \item The parametrized limiter in \Cref{sec:parameterizedsystem} 
    \item The  Hu--Adams--Shu simple flux limiting in \Cref{sec:hu-shu}
    \item The monolithic convex limiting in \Cref{sec:kuzmin}.
\end{enumerate}

\begin{example}[Leblanc shock tube]\label{Ex:Leblanc}
	This test is a 1D shock tube with $\gamma=1.4$, the initial condition
	\begin{equation*}
		(\rho,v_1,p) = 
		\begin{cases}
			(2,0,10^{9}), & \text{ if } x<0, \\
			(0.001,0,10^{-12}), & \text{ otherwise}, 
		\end{cases}
	\end{equation*}
    and outflow boundary conditions on the domain $[-10,10]$. 
	See \Cref{fig:Euler-Leblanc} for the plots of density and pressure.
	
	\begin{figure}[!thb]
		\centering
		\begin{subfigure}{0.48\textwidth}
			\includegraphics[width=\textwidth]{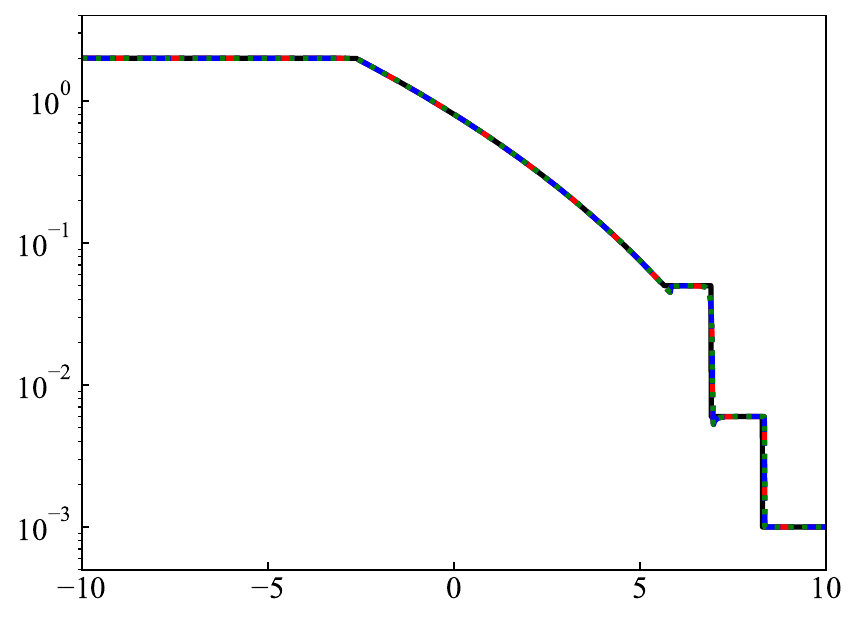}
		\end{subfigure}
		\hfill
		\begin{subfigure}{0.48\textwidth}
			\includegraphics[width=\textwidth]{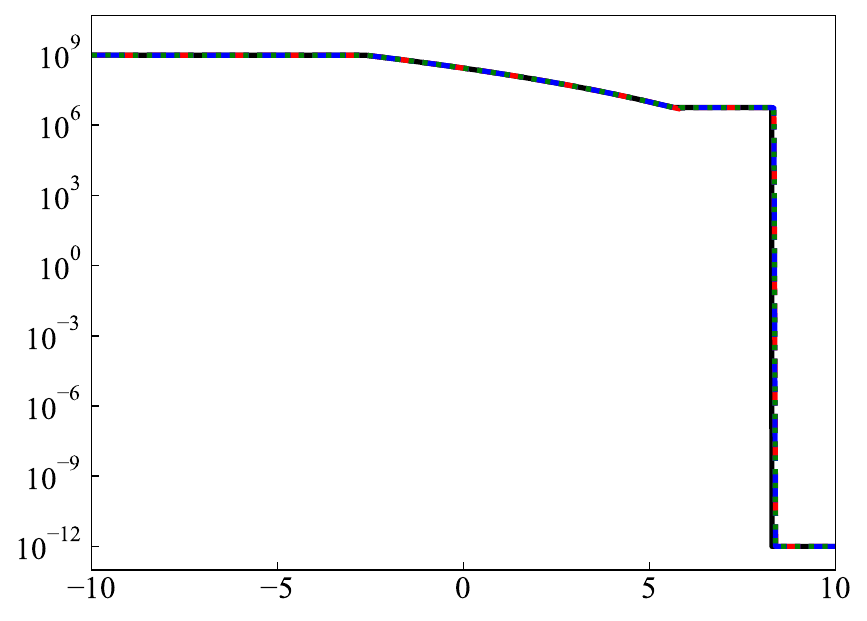}
		\end{subfigure}
		
		\caption{ \Cref{Ex:Leblanc}. Leblanc shock tube: density (left) and pressure 
        (right)
        at $t=0.001$ of the fifth order FD WENO scheme  with IDP flux limiters on $4000$ grid points.  Red dashed line: the parametrized limiter; Blue dash-dot line: the Hu--Adams--Shu limiter; Green dotted line: the monolithic convex limiting. 
		}
		\label{fig:Euler-Leblanc}
	\end{figure}
	
\end{example}

\begin{example}[Double rarefaction with low density and pressure]\label{Ex:DoubleRarefaction}
	This test is a 1D double rarefaction problem with $\gamma=1.4$ and the initial condition 
	\begin{equation*}
		(\rho,v_1,p) = 
		\begin{cases}
			(7,-100,0.01), & \text{ if } x<0.5, \\
			(7,100,0.01), & \text{ otherwise.}
		\end{cases}.
	\end{equation*}
    The exact solution contains perfect vacuum for which high order schemes can easily produce negative density and pressure. 
	The computational domain is $[0,1]$ divided into a quite coarse  mesh of only $100$ uniform cells. The outflow condition is applied on left and right boundaries. 
	\Cref{fig:Euler-DoubleRarefaction} displays the density and velocity at $t=0.003$ obtained by the fifth order FD WENO scheme with IDP flux limiters on $100$ grid points.   
	
	\begin{figure}[!thb]
		\centering
		\begin{subfigure}{0.48\textwidth}
			\includegraphics[width=\textwidth]{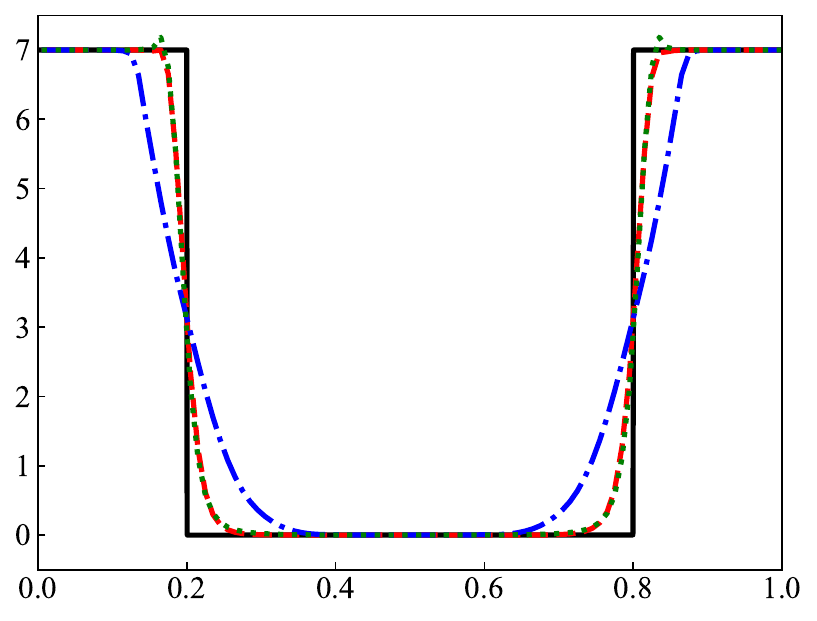}
		\end{subfigure}
		\hfill 
        		\begin{subfigure}{0.48\textwidth}
			\includegraphics[width=\textwidth]{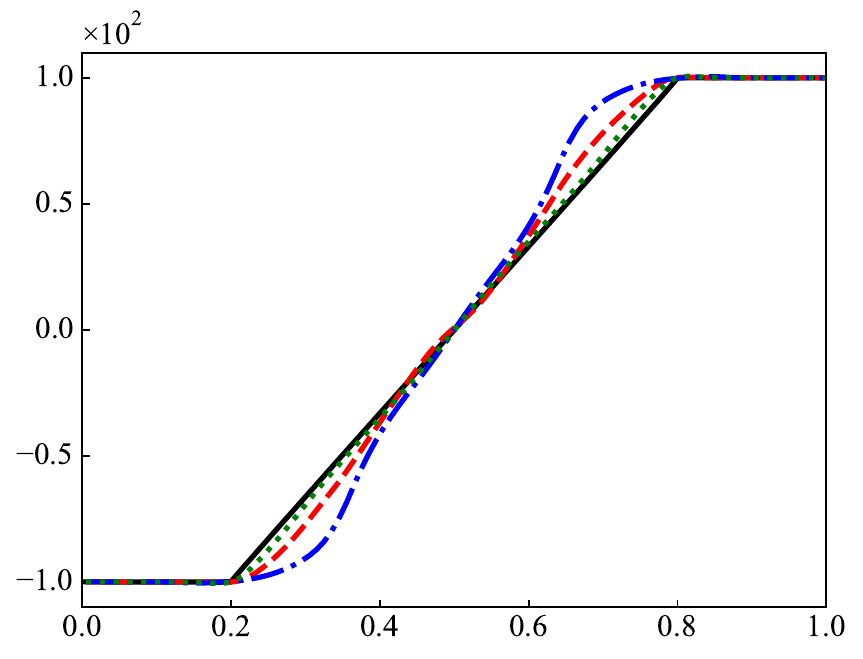}
		\end{subfigure}
		\caption{ \Cref{Ex:DoubleRarefaction}. Double rarefaction: density (left) and velocity (right) at $t=0.003$ of the fifth order FD WENO scheme with IDP flux limiters on $100$ grid points.  Red dashed line: the parametrized limiter; Blue dash-dot line: the Hu--Adams--Shu limiter; Green dotted line: the monolithic convex limiting.
		}
		\label{fig:Euler-DoubleRarefaction}
	\end{figure}

\end{example}

\begin{example}[Shock vortex interaction]\label{Ex:ShockVortex}
This example simulates the interaction of shock and vortex, which involves very low density and low pressure, which was proposed in \cite{cui2024optimal}. 
The computational domain is taken as $[0,2]\times[0,1]$, which is divided into $450 \times 225$ uniform cells. 
\Cref{fig:Euler-ShockVortex} shows the contours of the density and pressure obtained by the fifth order IDP finite difference WENO scheme using Hu--Adams--Shu limiter, convex limiting, and the parametrized limiter, respectively.

\begin{figure}[!thb]
	\centering
	\begin{subfigure}{0.32\textwidth}
		\includegraphics[width=\textwidth]{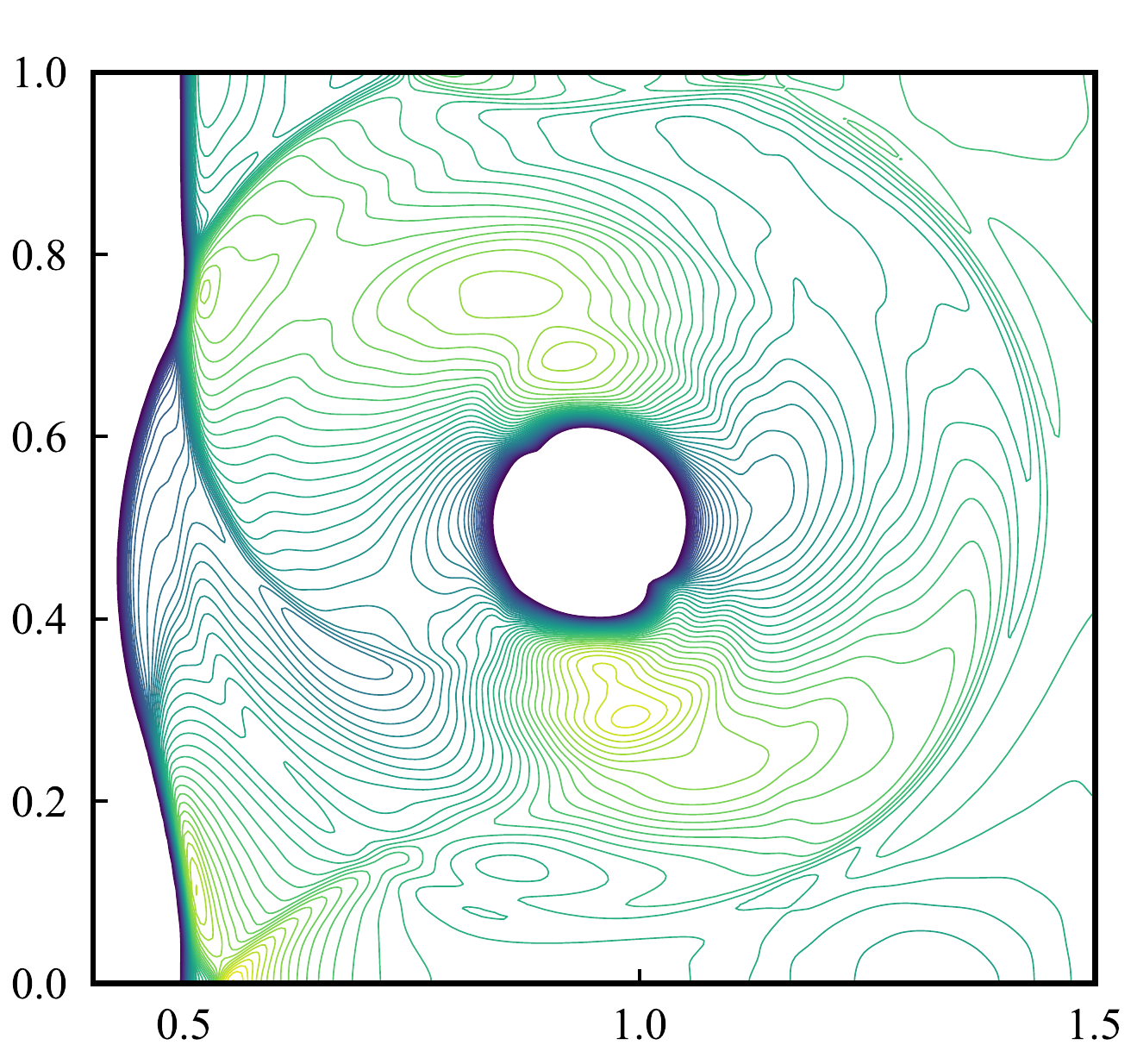}
	\end{subfigure}
	\hfill
	\begin{subfigure}{0.32\textwidth}
		\includegraphics[width=\textwidth]{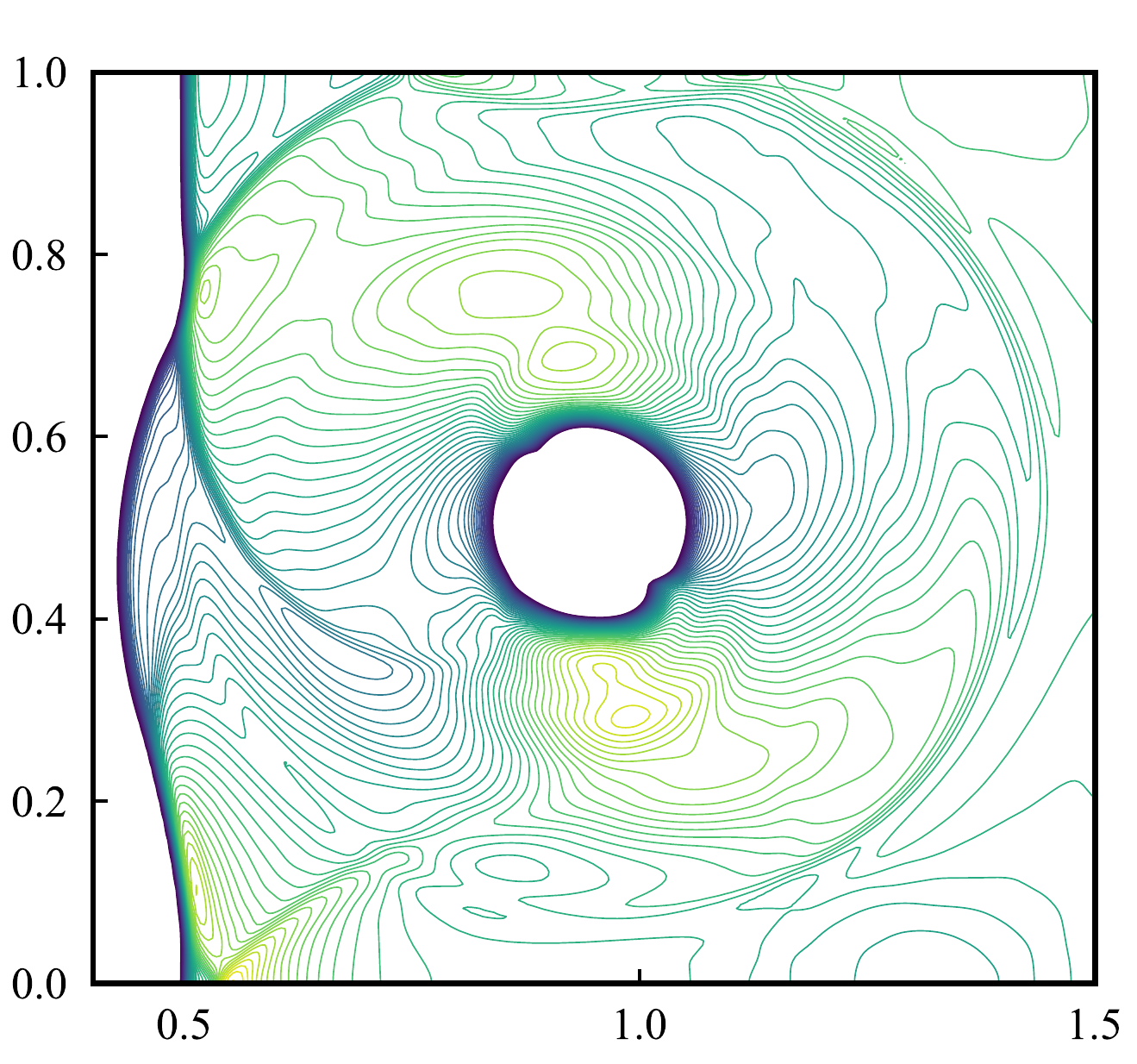}
	\end{subfigure}
	\hfill
	\begin{subfigure}{0.32\textwidth}
		\includegraphics[width=\textwidth]{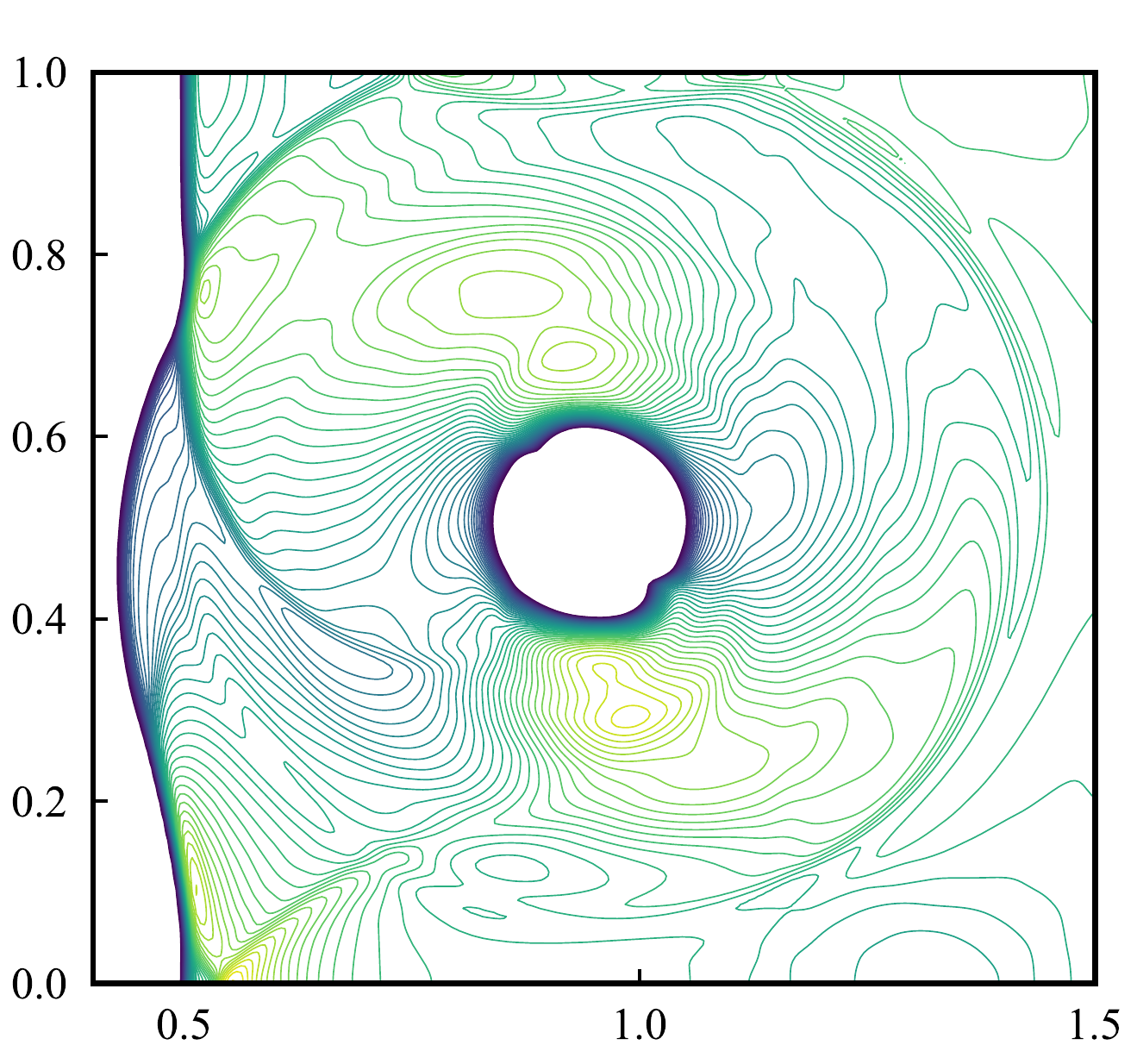}
	\end{subfigure}

	\begin{subfigure}{0.32\textwidth}
		\includegraphics[width=\textwidth]{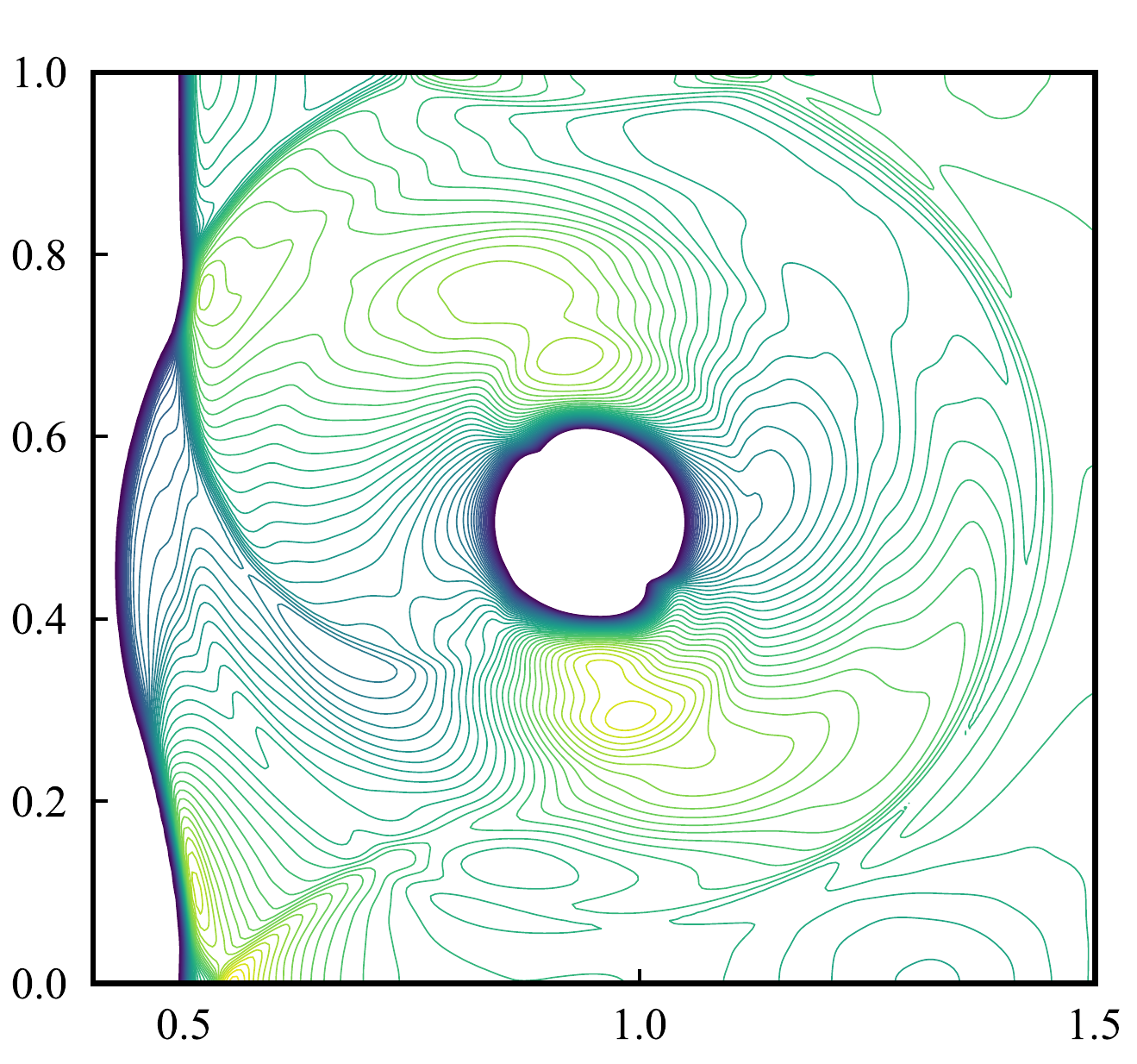}
	\end{subfigure}
	\hfill
	\begin{subfigure}{0.32\textwidth}
		\includegraphics[width=\textwidth]{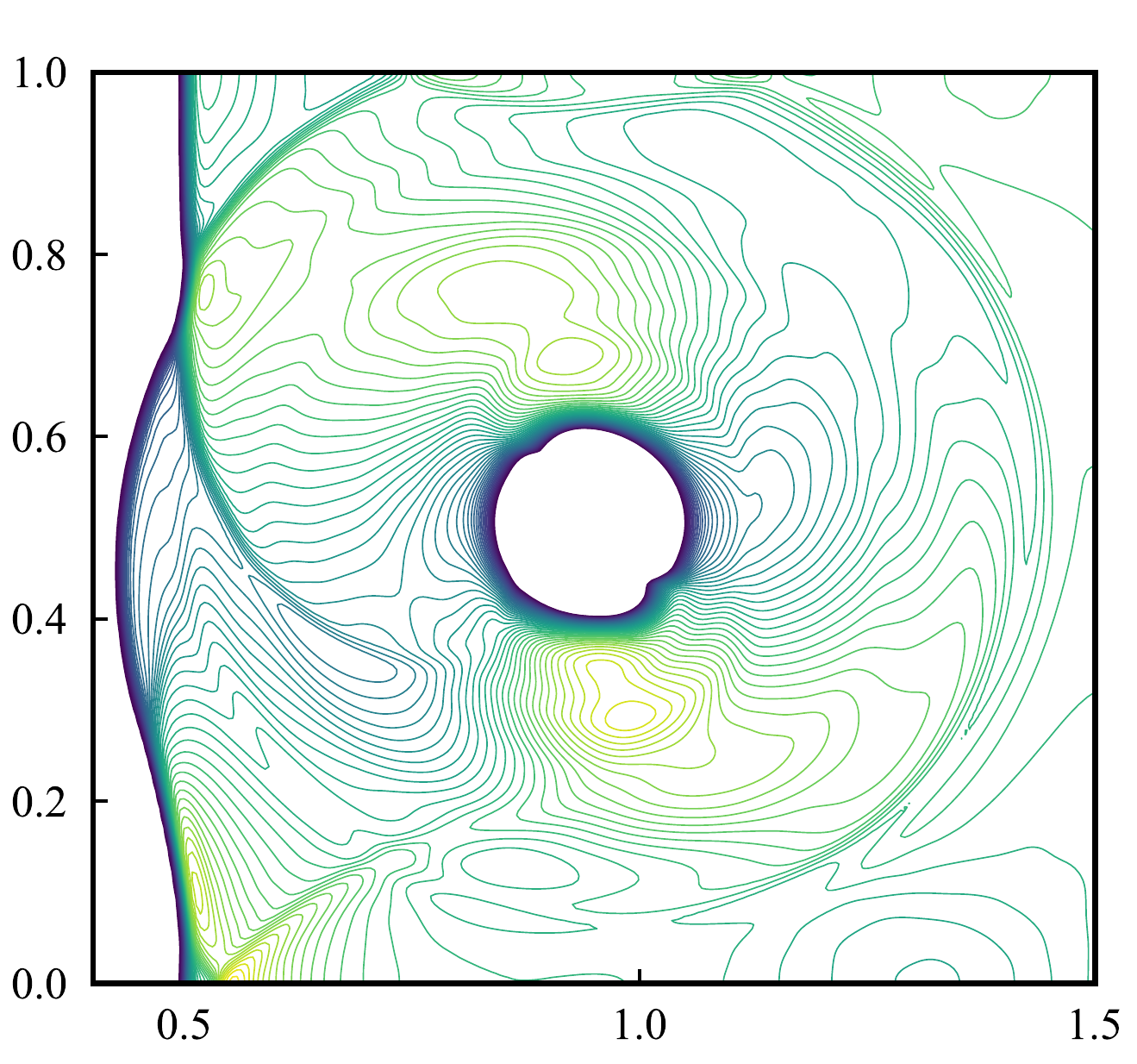}
	\end{subfigure}
	\hfill
	\begin{subfigure}{0.32\textwidth}
		\includegraphics[width=\textwidth]{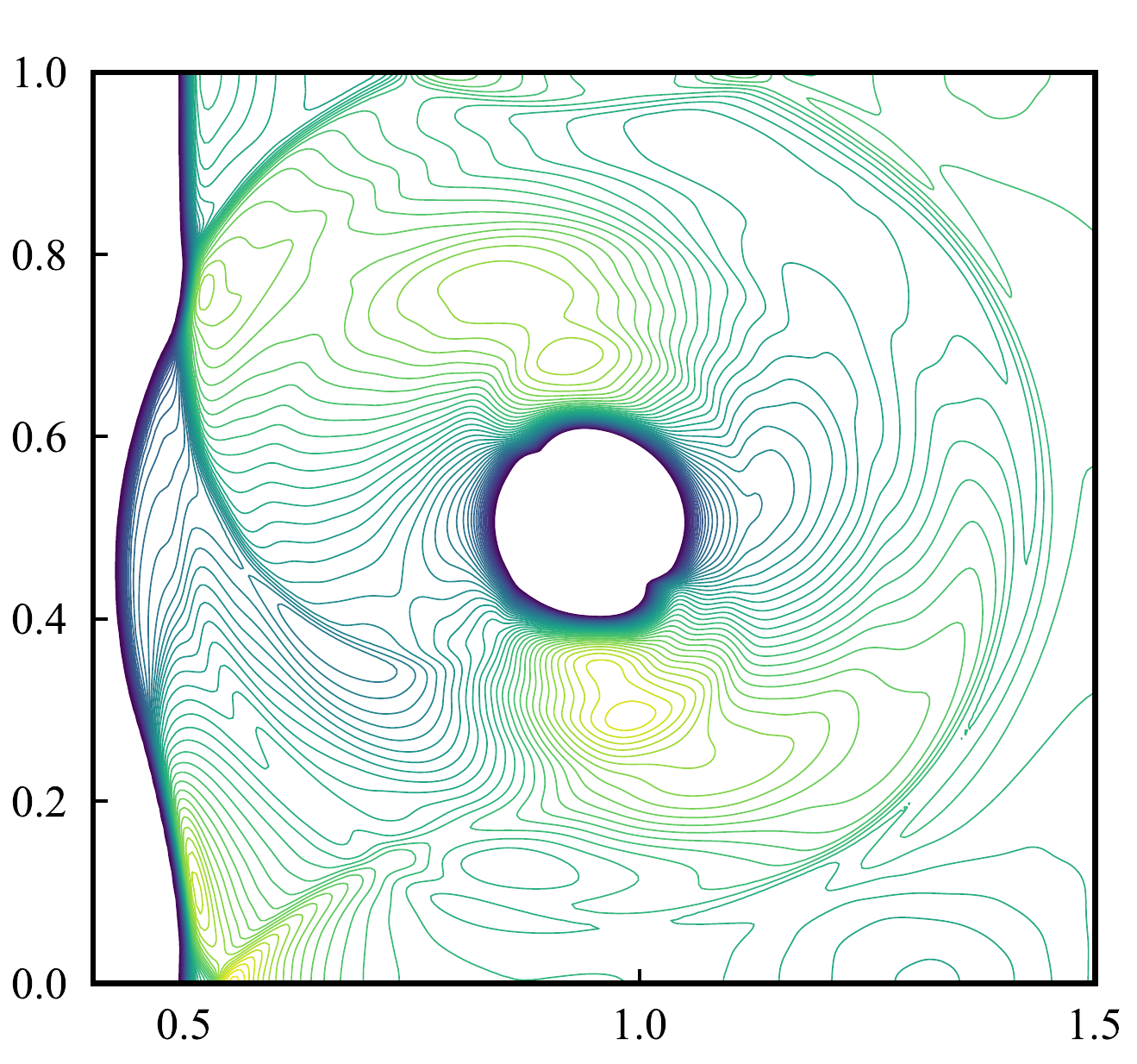}
	\end{subfigure}
	
	\caption{  \Cref{Ex:ShockVortex} The contour plots of density (top) and pressure (bottom) at $t=0.6$
    of the fifth order FD WENO scheme with IDP flux limiters. 
		50 contour lines: from 1.03 to 1.39. 
		Left: the parametrized limiter, middle: the Hu--Adams--Shu limiter, right: the monolithic convex limiting.
	}
	\label{fig:Euler-ShockVortex}
\end{figure}

\end{example}

\begin{example}[Relativistic axisymmetric jet]\label{Ex:RHDJet}
In this example, we simulate a very challenging astrophysical jet problem by solving the axisymmetric version of the relativistic 
hydrodynamic equations \eqref{eq:relativisticEuler}. The adiabatic index is taken as $\gamma=5/3$. The computational domain is set as $[0, 15] \times [0, 75]$, which is divided into $300 \times 1500$ uniform cells. Initially, the domain is full of the static uniform medium with
$$(\rho, u, v, p) = (1.0, 0.0, 0.0, 5.988006089640541\times 10^{-11}).$$
A high-speed relativistic jet with state
$$(\rho_b,{v}_{1,b},{v}_{2,b},p_b) = (0.01, 0.0, 0.999, 5.988006089640541\times 10^{-11})$$
is injected in $z$-direction through nozzle ($r \leq 1$) of the bottom boundary ($z = 0$). In other words,
the fixed inflow condition $(\rho_b, v_{1,b}, v_{2,b}, p_b)$ is applied on $\{r \leq 1, z = 0\}$ of the bottom boundary.
The symmetrical condition is specified on the left boundary $r = 0$, outflow conditions are applied
on other boundaries. For this jet, the classical Mach number is $10,000$, and the relativistic Mach number is about $223,662.719$, which is extremely high. 
\Cref{fig:RHD-Jet} displays the contours of rest-mass density logarithm at $t=100$ obtained by the fifth order IDP finite difference WENO scheme using Hu--Adams--Shu limiter, convex limiting, and the parametrized limiter, respectively.

\begin{figure}[!thb]
	\centering
	\begin{subfigure}{0.32\textwidth}
		\includegraphics[width=\textwidth, height=8cm]{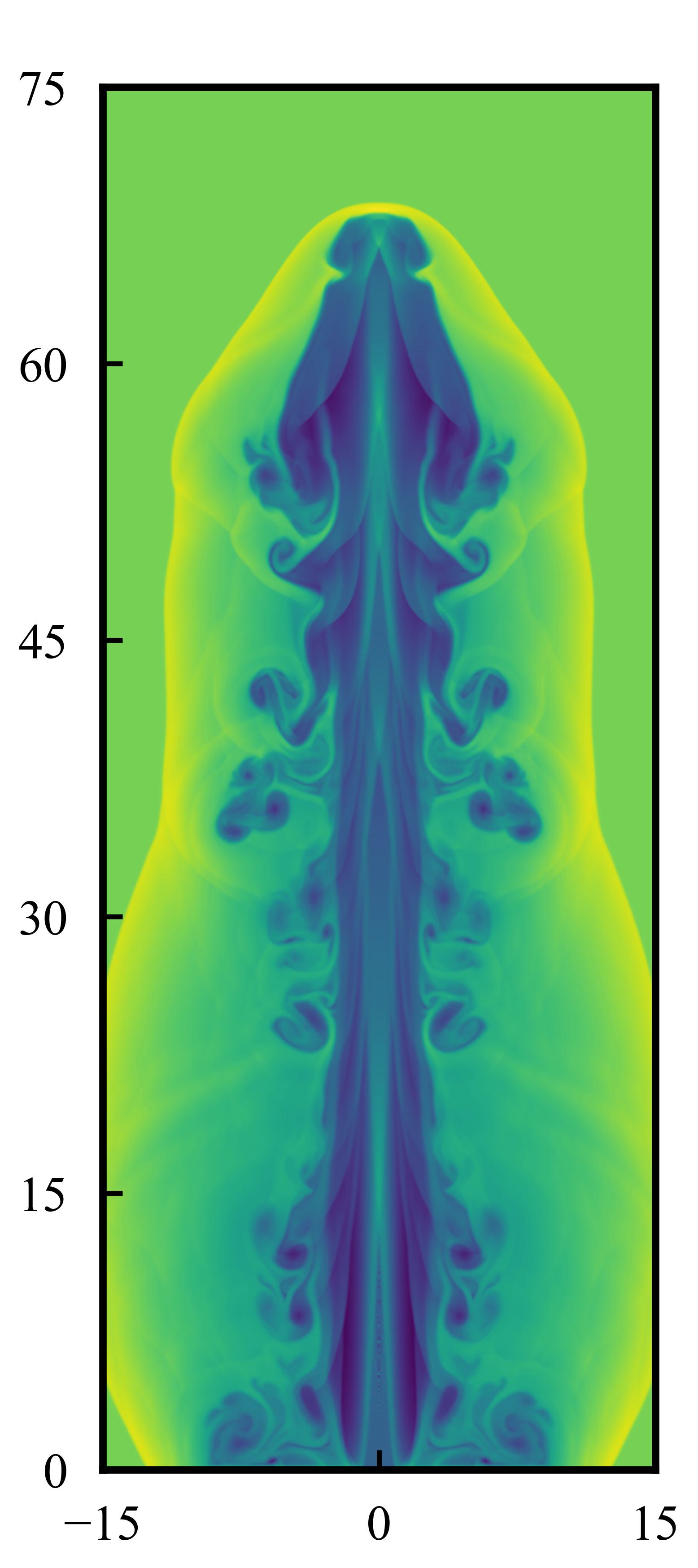}
	\end{subfigure}
	\hfill
	\begin{subfigure}{0.32\textwidth}
		\includegraphics[width=\textwidth, height=8cm]{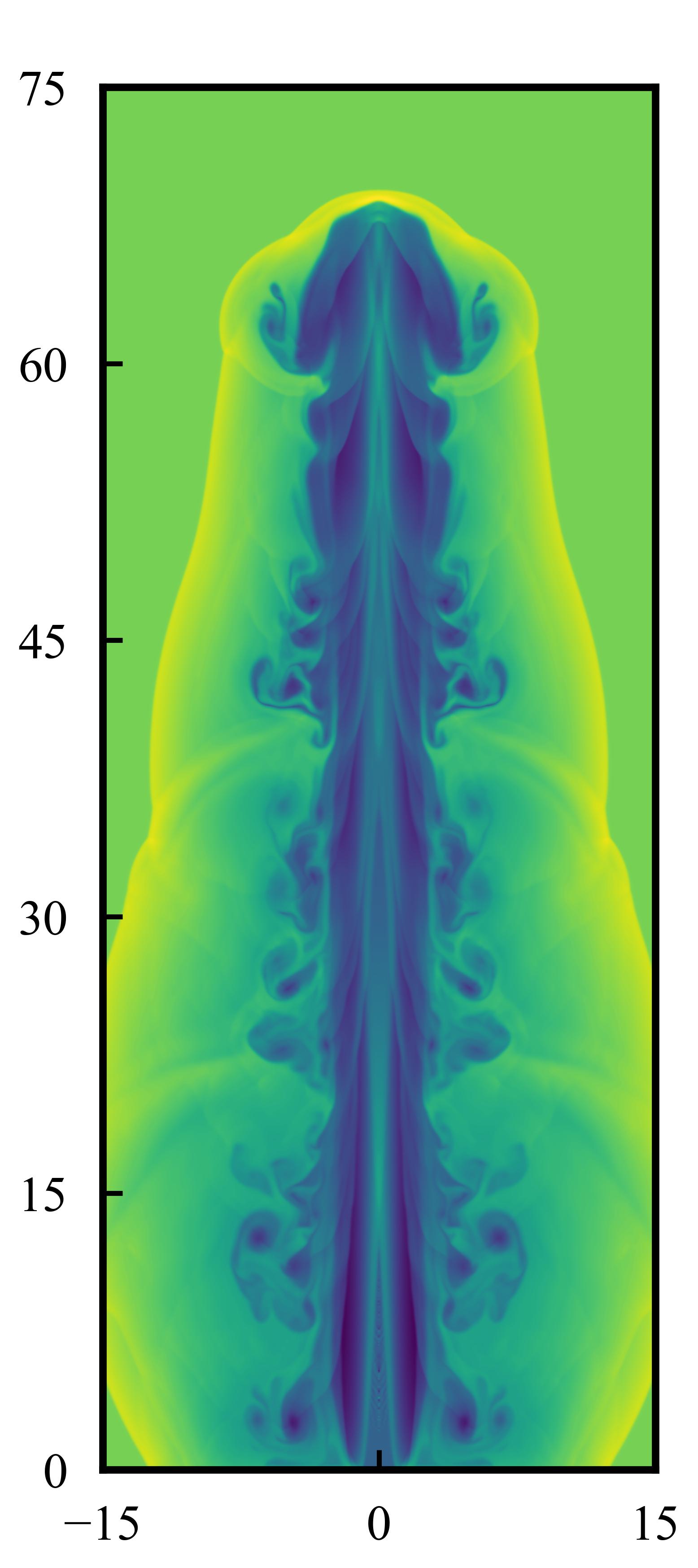}
	\end{subfigure}
	\hfill
	\begin{subfigure}{0.32\textwidth}
		\includegraphics[width=\textwidth, height=8cm]{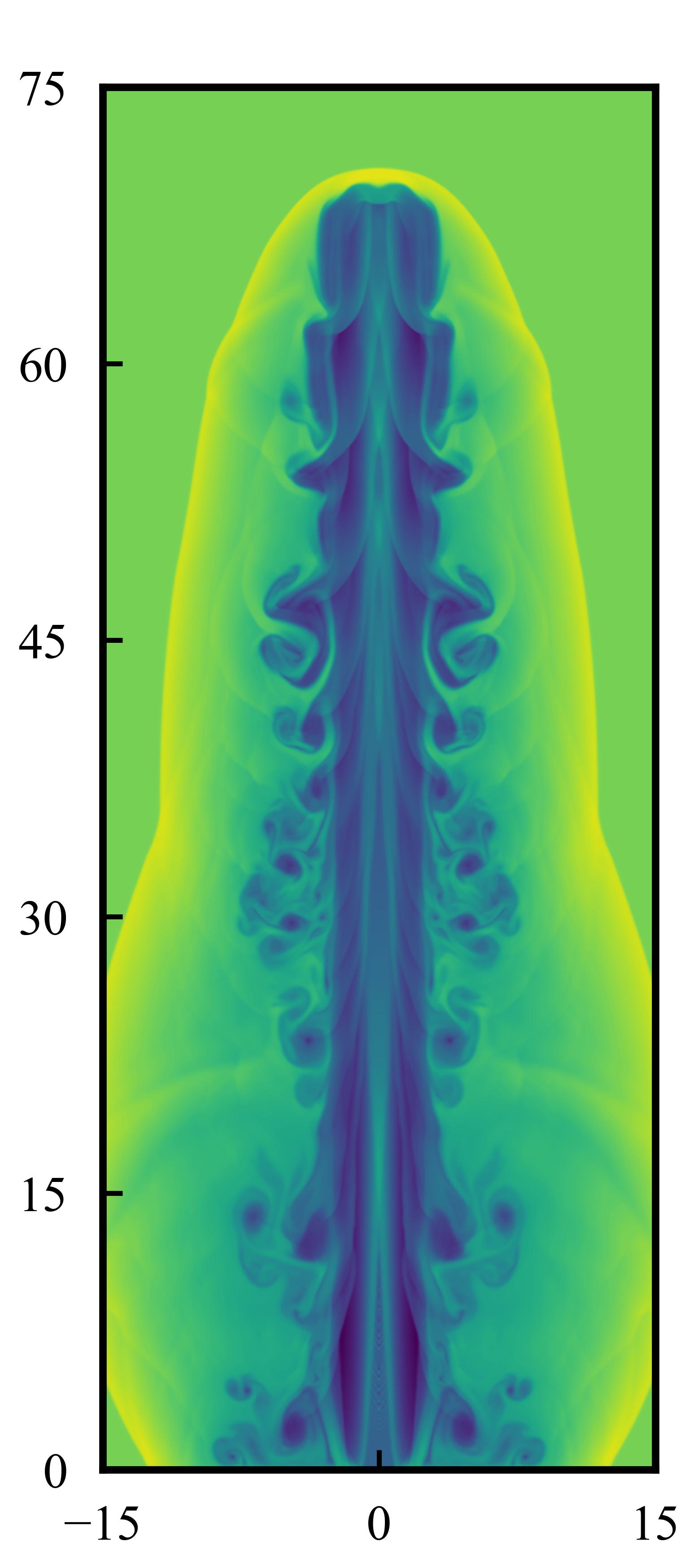}
	\end{subfigure}
	
	\caption{ (Example \ref{Ex:RHDJet}) The contour plots of rest mass density logarithm at $t=100$ of the fifth order FD WENO scheme with three IDP flux limiters. 
		Left: the parametrized limiter, middle: the Hu--Adams--Shu limiter, right: the monolithic convex limiting.
	}
	\label{fig:RHD-Jet}
\end{figure}

\end{example}

\section{Extensions and other approaches}
\label{sec:other}

In this section, we review some extensions and generalizations of the approaches mentioned above, as well as other approaches to enforce invariant domains.
In particular, for compressible MHD equations, the IDP method is more complicated due to the effect of the extra divergence free constraint of the magnetic field on the IDP property, which will be reviewed in \Cref{sec:MHD} with numerical examples shown in \Cref{sec:MHD-numerics}.

\subsection{Other time discretizations}

The Zhang-Shu approach in \Cref{sec:zhang-shu} can be used for any explicit SSP time discretizations. 
In order to use it in other time discretizations, one needs the weak monotonicity, which can be difficult to establish.
The weak monotonicity of backward Euler time stepping for DG methods solving a linear advection was proven in \cite{qin2018implicit}. See \cite{yu2025high} for  scalar convection-diffusion equations.  For DG methods, Lax-Wendroff time stepping was also considered in \cite{moe2017positivity-LW}.
For the flux limiters in \Cref{sec:FCT}, it can be applied to the last time stage of any explicit Runge-Kutta method.   

 {On the other hand, flux limiters are more flexible to use and in general they can be applied to any explicit Runge-Kutta method, e.g., \cite{xiong2013parametrized,kuzmin2022bound}.}
In \cite{ern2022invariant}, every explicit Runge-Kutta method can be made IDP by 
limiting each stage by a forward Euler time step with IDP low order spatial discretization, e.g., the first order schemes in \Cref{sec:firstorderIDP}.
Such a method can also be extended to IMEX (implicit-explicit) schemes \cite{ern2023invariant}, which can be used for convection diffusion equations if the first IMEX is IDP.  {See also \cite{quezada2022maximum} for the Diagonally Implicit Runge--Kutta method.}
 In general, it can be quite difficult to establish an IDP result in an implicit scheme.  
For compressible Navier--Stokes equations, a few implicit and semi-implicit IDP schemes have been constructed in the literature, see  e.g., the second order schemes \cite{grapsas2016unconditionally,guermond2021second-NS}, and  \cite{liu2023positivity-NS} with fourth order spatial accuracy.

\subsection{Lagrangian schemes}
All the schemes mentioned above are for Eulerian schemes defined on a given fixed mesh. 
The techniques and methods in  \Cref{sec:zhang-shu}  and \Cref{sec:FCT} can be extended to Lagrangian type schemes including the semi-Lagrangian and arbitrary Lagrangian–Eulerian  schemes, e.g.,
\cite{qiu2011positivity,rossmanith2011positivity,xiong2014high-Lagrangian,cheng2014positivity,vilar2016positivity,boscheri2017arbitrary,guermond2017-ALE,boscheri2018second,guermond2019arbitrary,guermond2020second,kenamond2021positivity,guermond2023second-ALE}.

\subsection{Subcell limiting methods}

For a finite volume scheme and DG scheme, a subcell limiting is to correct a bad cell solution violating given criteria such as invariant domain, by a first order IDP scheme on subcells of the bad cell. With enough number of subcells in one bad cell, such an IDP method gives a high order accurate correction. 
The subcell limiter in general can be used to preserve more properties such as entropy consideration.
{ See
\cite{dumbser2014posteriori, dumbser2016simple,meister2016positivity, vilar2019posteriori,pazner2021sparse,hajduk2021monolithic,haidar2022posteriori,vilar2024posteriori,vilar2025local} for} subcell limiters for DG and FV methods. 
 
{\color{black}
\subsection{IDP reformation for point values in PAMPA scheme}
The PAMPA method \cite{abgrall2023combination} combines conservative and non-conservative formulations of hyperbolic conservation laws to evolve both cell averages and point values. 
By taking advantage of this flexibility, 
in \cite{Abgrall2025IDP-PAMPA-1D} an automatic IDP reformulation of the non-conservative part was proposed, inspired by Softplus and clipped ReLU mappings from machine learning. 
This  
  yields an unconditionally limiter-free IDP scheme for the point values. In this approach, the Zhang--Shu IDP limiter is still required but solely for the cell midpoint value so that the resulting PAMPA scheme is provably IDP for the updated cell averages.
}

\subsection{Weak monotonicity for convection-diffusion equations}

For preserving bounds or positivity in a scalar equation, the flux limiters in \Cref{sec:FCT} can be easily extended from convection to convection-diffusion equations. 
For the Zhang-Shu method \Cref{sec:zhang-shu}, extensions to convection-diffusion equations would require a 
weak monotonicity result for diffusion equations, which is in general not true for arbitrarily high order DG and FV schemes. 
For special high order schemes, weak monotonicity can still be proven for  diffusion equations, including 
third order direct DG methods \cite{chen2016third, liu2014maximum}, some high order compact finite difference schemes \cite{li2018high}, and a nonstandard finite volume scheme using double cell averages \cite{zhang2012maximum-LiuYY},
all of which are linear schemes, i.e., the scheme is linear when the equation is linear. 
Arbitrarily high order nonlinear DG schemes \cite{sun2018discontinuous,srinivasan2018positivity} can be constructed to be weakly monotone so that the bound-preserving method in \Cref{sec:zhang-shu} still applies.
All these schemes are explicit in time and can be applied to preserve bounds in nonlinear parabolic equations.  

Such weak monotonicity for nonlinear parabolic equations is quite different from the discrete maximum principle finite element methods for linear elliptic and parabolic equations. 
For solving a Poisson equation $-\Delta u=f$, let $-\Delta_h$ denote the discrete Laplacian. If $(-\Delta_h)^{-1}$ is a matrix with positive entries, then we say a scheme is {\it monotone} which implies discrete maximum principle. 
Although high order finite element method is known to violate maximum principle on unstructured meshes,  continuous finite element method with quadratic and cubic polynomial bases on uniform meshes can still be monotone for Laplacian, see \cite{lorenz1977inversmonotonie, li2020monotonicity, cross2023monotonicity} and references therein.  We refer to \cite{barrenechea2024finite,barrenechea2025monotone} for a recent comprehensive review for finite element methods satisfying discrete maximum principle for linear convection diffusion equations.

\subsection{Optimization based approaches for enforcing bounds}

In the literature, bound-preserving limiters and methods can be implemented or formulated as a constrained minimization problem, e.g.,  optimization based limiters for each cell \cite{rider1997constrained,berger2005analysis}. See also \cite{Bochev2012} and references therein. 
There is a natural connection between FCT methods and optimization based method for enforcing bounds and constraints
\cite{liska2010optimization}. In \cite[Section 4.3]{Bochev2012}, it was proven that  Zalesak's original formula \eqref{zelask-1d-scalar} is the minimizer to a global optimization with a modified cost function under box constraints.

There are  advantages of optimization based approaches such as easy treatment for implicit time stepping \cite{van2019positivity-opt-implicit}, flexibility for spatial discretizations \cite{Akil2021}, and easy generalizations to higher order PDEs \cite{liu2024simple-opt-limiter}.   {For preserving bounds of a scalar variable, this has been well studied, e.g.,}
 the remap problem in  arbitrary Lagrangian–Eulerian schemes \cite{bochev2011formulation,bochev2014optimization}.  {In particular, an efficient algorithm was used in \cite{bochev2013fast} to find  the minimizer in $\ell^2$-norm and a direct and cheap construction of one particular minimizer to the $\ell^1$-norm was given in \cite{bradley2019communication}.}
Optimization based postprocessing was also considered by quadratic programming  \cite{guba2014optimization,bochev2020optimization, yee2020quadratic, peterson2024optimization}
as well as first order splitting methods 
\cite{liu2024simple-opt-limiter}.  Gradient descent was used in \cite{zala2023convex}.
In \cite{ruppenthal2023optimal}, Newton's method was used to solve a global optimization problem to find the optimal flux correction in the FCT method for enforcing bounds.
A bound-preserving limiter for total energy was used to enforce positivity of pressure in \cite{liu2024optimization}. 
See also \cite{kirby2024high,keith2024proximal} for enforcing bounds in finite element methods via optimization or variational inequalities for scalar 
convection diffusion problems.
 { On the other hand, for a general system, it is usually difficult to have an efficient optimization based approach with all desired properties enforced. See \cite{liu2025opt} for an optimization based limiter for enforcing the invariant domain set in gas dynamics and global conservation.}

\subsection{Extensions to relativistic hydrodynamics}\label{sec:RHD}

Due to the strong nonlinearity and the effects of curved spacetime in general relativity, the design of IDP schemes encounters several unique challenges. The governing equations of special relativistic hydrodynamics, also known as the relativistic Euler equations, can be written in the form of \eqref{eq:hPDE}:
\begin{equation} \label{eq:relativisticEuler}
		\partial_t	
		\begin{pmatrix}
			D  
			\\
			{\bm m}
			\\
			E	
		\end{pmatrix}
		+ 	 \nabla \cdot
		\begin{pmatrix}
			D {\bm v}
			\\
			{\bm m} \otimes {\bm v} 
			+ p {\bf I}  
			\\
			 {\bm m}
		\end{pmatrix} = {\bf 0},\quad {\bm x}\in \mathbb R^d, 
	\end{equation}	
where $d = 1, 2, 3$ denotes the spatial dimension. The relativistic mass density is $D = \rho W$, with $\rho$ being the rest-mass density and $W = (1 - | {\bm v} |^2)^{-1/2}$ the Lorentz factor. The momentum vector is ${\bm m} = \rho h W^2 {\bm v}$, where $h$ is the specific enthalpy, and the velocity is normalized such that the speed of light is unity. The total energy is given by $E = \rho h W^2 - p$.

An invariant domain of \eqref{eq:relativisticEuler} is
\begin{equation}\label{eq:RHDset}
	G = \left\{ {\bf u} = (D, {\bm m}, E)^\top : D > 0,\; p({\bf u}) > 0,\; | {\bm v}({\bf u}) | < 1 \right\}.
\end{equation}
Here, both ${\bm v}({\bf u})$ and $p({\bf u})$ are nonlinear functions of the conserved variables and lack closed-form expressions, making the design and analysis of IDP schemes nontrivial. To address this difficulty, an equivalent characterization of the set \eqref{eq:RHDset} was proposed in \cite{WU2015539}:
\begin{equation}\label{eq:RHD}
	G = \left\{ {\bf u} = (D, {\bm m}, E)^\top: D > 0,\; q({\bf u}) > 0 \right\},
\end{equation}
where $q({\bf u}) := E - \sqrt{D^2 + |{\bm m}|^2}$ is concave in ${\bf u}$, implying the convexity of $G$.
 Based on this finding,  \Cref{prop:LFS} was rigorously proven in \cite{WU2015539} for the relativistic Euler equations \eqref{eq:relativisticEuler}, and high-order finite difference IDP schemes were developed. Other extensions include high-order IDP (central) DG and finite volume schemes \cite{qin2016bound,wu2016physical,chen2022physical}. See \Cref{Ex:RHDJet} for IDP simulations of a challenging relativistic jet.

Similar to the non-relativistic case discussed in \Cref{ex:entropy}, it was shown in \cite{wu2021minimum,Cui2025LocalMEP-RelEuler} that the relativistic system \eqref{eq:relativisticEuler} also satisfies the minimum entropy principle. Incorporating this leads to another invariant domain { (see \cite{Cui2025LocalMEP-RelEuler} for the general EOS case)}:
\begin{equation}
	G_S = \left\{ 
	{\bf u} = (D, {\bm m}, E)^\top:~ D > 0,\; q({\bf u}) > 0,\; S = \ln \left( \frac{p}{\rho^\gamma} \right) \geq S_{\min}
	\right\},
	\label{G-RHDS}
\end{equation}
where $S({\bf u})$ is not concave, but it was shown in \cite{wu2021minimum} that $D(S - S_{\min})$ is concave in ${\bf u}$, hence $G_S$ remains convex. 
However, \Cref{prop:LFS} (or \Cref{prop:LFS-2D} in 2D) does not hold for $G_S$, and verifying \Cref{prop:RP} (or \Cref{prop:LF-2D}) is highly nontrivial due to the implicit form of $S({\bf u})$.
To overcome this difficulty, the GQL approach can be used to derive an equivalent linear representation of $G_S$:
\begin{equation}
	G_S^\star = \left\{ 
	{\bf u} = (D, {\bm m}, E)^\top:~ D > 0,\; \mathbf{u} \cdot \mathbf{n}^\star + S_{\min} \rho_\star^\gamma > 0~~ \forall \rho_\star > 0, \forall {\bm v}^\star \in \mathbb{B}_1({\bf 0})
	\right\},
	\label{G-RHDS2}
\end{equation}
where $\mathbf{n}^\star := \left( -\sqrt{1 - |{\bm v}^\star|^2} \left( 1 + \frac{S_{\min} \gamma \rho_\star^{\gamma - 1}}{\gamma - 1} \right), -{\bm v}^\star, 1 \right)^\top$ is the inward normal vector to $\partial G_S$, and $\mathbb{B}_1({\bf 0})$ denotes the unit ball in $\mathbb{R}^d$. 
Thanks to the GQL technique, \Cref{prop:wLFS} (\Cref{prop:wLFS-2D} in 2D) was rigorously proven in \cite[Theorem 3.9]{wu2021minimum}, implying \eqref{assumption2} and its multidimensional counterparts without using any assumptions on the exact Riemann solutions. Consequently, high-order numerical schemes preserving the invariant domain $G_S$ in \eqref{G-RHDS} were successfully developed in \cite{wu2021minimum}.

 In general relativity, new challenges arise because the invariant domain depends on the spacetime metric and therefore varies across space, to which  standard convex combination techniques do not directly apply. To address this issue, a new formulation (W-form) of the general relativistic hydrodynamic equations was introduced in \cite{wu2017design}, enabling the generalization of high-order IDP schemes to curved spacetimes \cite{wu2017design,cao2025robust}.

\subsection{Extensions to compressible MHD}\label{sec:MHD}


The MHD equations presents a nonlinear coupling of fluid dynamics with Maxwell's equations; see \eqref{eq:rephrased-idealMHD} for the classical MHD system and \eqref{eq:Rephrased-RMHD} for the relativistic version. These systems involve two key structures: a divergence-free constraint on the magnetic field, and IDP constraints on fluid variables (positivity of density/pressure, subluminal velocity). While divergence-free schemes are well-established, constructing provably IDP methods---especially in multiple dimensions---has been an open problem. 
In the non-relativistic setting, IDP schemes are also called positivity-preserving schemes. Although many techniques and limiters (e.g., \cite{cheng2013positivity-MHD,christlieb2015positivity,christlieb2016high}) were adapted to enforce positivity in ideal MHD, few of the resulting schemes were rigorously and completely proven to be IDP in theory, even for the first-order schemes, especially in multiple dimensions  \cite{christlieb2015positivity}.

The development of IDP schemes for MHD systems encounters unique challenges that do not typically arise in other hyperbolic systems. A fundamental difficulty lies in the lack of understanding of the intrinsic connection---if any---between the IDP property and the divergence-free constraint on the magnetic field. Do such connections exist? If so, how are they expressed mathematically, and 
 how can they be established? These central questions remained open, until the work in \cite{wu2017admissible,wu2018positivity}. 
 Identifying this connection is crucial, as it may provide the foundation for designing provably IDP methods for MHD. 
 Notably, the IDP constraints are pointwise {\em algebraic} structure, whereas the divergence-free constraint is a {\em differential} structure in nature, making it inherently difficult to bridge the two.
 
 This connection was first theoretically revealed in \cite{wu2017admissible,wu2018positivity} for cartesian meshes, in \cite{wu2019provably} for general unstructured meshes, and in \cite{wu2023provably} for central DG schemes on overlapping meshes. It was shown that a discrete divergence-free condition is essential for ensuring the IDP property. Even minor violations of this condition may lead to the loss of the IDP property. Moreover, because of the discrete divergence-free constraint, states at different quadrature points become strongly coupled, rendering classical convex decomposition techniques (decomposing multidimensional schemes into a convex combination of formally first-order schemes), such as \eqref{eq:edefd}, ineffective. 
 Additionally, none of Assumptions 1--6 hold for multidimensional MHD when there is a discontinuity in the normal component of the magnetic field.

This difficulty was ultimately overcome using the GQL approach, yielding the following representation of $G$:
\begin{equation}\label{eq:GQL-MHD}
G^\star = \left\{  {\bf u} \in \mathbb R^{2d+2}: \rho>0,~( {\bf u} - {\bf u}^\star ) \cdot {\bf n}^\star > 0~~\forall {\bf u}^\star  \right\},
\end{equation}
where ${\bf u}^\star \in \partial G$ corresponds to an arbitrary state with thermal pressure $p^\star = 0$, and ${\bf n}^\star$ is an inward normal vector to $\partial G$ at ${\bf u}^\star$; see \cite{wu2018positivity} for ideal MHD and \cite{wu2017admissible} for relativistic MHD. GQL enables precise mathematical formulation of the relationship between the IDP property and the divergence-free condition \cite{wu2017admissible,wu2018positivity,wu2019provably,wu2023provably}. In particular, it was shown that a modified version of \Cref{prop:wLFS} holds:

\begin{theorem}[Validity of a modified \Cref{prop:wLFS} in relativistic MHD]\label{thm:MHD1}
If setting ${\zeta}({\bf u^\star}) = - v_\ell^\star p_m^\star$, where $v_\ell^\star$ and $p_m^\star$ are respectively the $\ell$th component of velocity vector ${\bm v}^\star$ and the magnetic pressure at the state ${\bf u}^\star$, then we have  
	\begin{equation}\label{eq:wLFS-MHD}
		{\bf u}\in G \quad \Longrightarrow \quad  
		\alpha({\bf u}-{\bf u}^\star)\cdot \mathbf{n}^\star 
		\pm  
		{\bf f}_\ell({\bf u})\cdot \mathbf{n}^\star
		> \pm \zeta({\bf u^\star}) \pm B_\ell( {\bm v}^\star \cdot {\bm B}^\star )~~~ \forall {\bf u}^\star, \forall \alpha \ge 1,
	\end{equation}
 where ${\bf f}_\ell$ is the $\ell$th component of the flux ${\bf f}=({\bf f}_1,\dots,{\bf f}_d)$, $\ell=1,\dots,d$. 
\end{theorem}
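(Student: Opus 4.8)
The plan is to build everything on the GQL equivalence $G = G^\star$ from \eqref{eq:GQL-MHD}, which replaces the nonlinear, implicitly defined constraints $p({\bf u})>0$ and $|{\bm v}({\bf u})|<1$ by the family of linear inequalities $({\bf u}-{\bf u}^\star)\cdot{\bf n}^\star>0$ ranging over all boundary auxiliary states ${\bf u}^\star$ with vanishing thermal pressure $p^\star=0$. Since ${\bf u}\in G = G^\star$ already delivers $({\bf u}-{\bf u}^\star)\cdot{\bf n}^\star>0$, and the left-hand side of \eqref{eq:wLFS-MHD} is increasing in $\alpha$ through precisely this strictly positive quantity (the flux term $\pm{\bf f}_\ell({\bf u})\cdot{\bf n}^\star$ is independent of $\alpha$), the first reduction I would make is to the single threshold case $\alpha=1$: once
\[
({\bf u}-{\bf u}^\star)\cdot{\bf n}^\star \pm \big[ {\bf f}_\ell({\bf u})\cdot{\bf n}^\star - \zeta({\bf u}^\star) - B_\ell({\bm v}^\star\cdot{\bm B}^\star)\big] > 0
\]
is established, adding the nonnegative increment $(\alpha-1)({\bf u}-{\bf u}^\star)\cdot{\bf n}^\star$ yields the claim for every $\alpha\ge1$. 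This is consistent with the speed of light being normalized to unity, so that $\alpha=1$ already dominates all admissible wave speeds.

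Next I would parametrize the auxiliary state ${\bf u}^\star$ by its primitive variables $(\rho^\star,{\bm v}^\star,{\bm B}^\star)$ on the surface $p^\star=0$ and compute the inward normal ${\bf n}^\star$ in closed form there; this evaluation is feasible even though ${\bm v}(\cdot)$ and $p(\cdot)$ are implicit through the auxiliary root $\hat\phi$ of \eqref{eq:Rephrased-getprimformU}, because differentiating the defining relations and specializing to $p^\star=0$ collapses most of the awkward implicit dependence. With ${\bf n}^\star$ explicit, the key step is to substitute the RMHD flux ${\bf f}_\ell$ from \eqref{eq:Rephrased-RMHD} and rewrite both $({\bf u}-{\bf u}^\star)\cdot{\bf n}^\star$ and ${\bf f}_\ell({\bf u})\cdot{\bf n}^\star$ in primitive variables. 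The decisive algebraic observation I expect here is that the magnetic coupling in the momentum and induction fluxes produces exactly the two scalar contributions $v_\ell^\star p_m^\star$ and $B_\ell({\bm v}^\star\cdot{\bm B}^\star)$; setting $\zeta({\bf u}^\star) = -v_\ell^\star p_m^\star$ and moving $B_\ell({\bm v}^\star\cdot{\bm B}^\star)$ to the right-hand side cancels precisely the terms proportional to the normal magnetic field — the same terms that vanish upon summation over a cell only under a discrete divergence-free condition via \eqref{discrete-div}.

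After this cancellation, the residual inequality should reduce to a purely physical scalar bound that is bilinear in the two states, closable by Cauchy--Schwarz together with completion of squares, using $\rho>0$, $p>0$, $|{\bm v}|<1$, $|{\bm v}^\star|<1$, and the strict positivity of $({\bf u}-{\bf u}^\star)\cdot{\bf n}^\star$; the relativistic enthalpy $h$ and the Lorentz factors $W = (1-|{\bm v}|^2)^{-1/2}$ ought to enter only through manifestly positive combinations.

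The hard part will be the relativistic kinematics itself: because neither ${\bm v}({\bf u})$ nor $p({\bf u})$ has a closed form, all manipulations must be carried out in primitive variables and then translated back, while tracking the tight nonlinear coupling through $W$, $h$, $p_{\rm tot}=p+p_m$, and the momentum relation ${\bm m}=(\rho h W^2+|{\bf B}|^2){\bm v}-({\bm v}\cdot{\bf B}){\bf B}$. The subtle point is to prove the bound \emph{strictly} and \emph{uniformly} over the entire one-parameter family of boundary states $(\rho^\star,{\bm v}^\star,{\bm B}^\star)$, rather than for a single worst case; this is exactly where the GQL linearization earns its keep, turning a nonconvex implicit feasibility problem into a collection of explicit inequalities that are linear in ${\bf u}$ and tractable by elementary quadratic estimates.
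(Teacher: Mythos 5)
Your skeleton does follow the strategy behind this result as established in the cited works \cite{wu2017admissible,wu2021provably} (the survey itself states \Cref{thm:MHD1} by citation rather than proving it): use the GQL equivalence \eqref{eq:GQL-MHD}, reduce to $\alpha=1$ via monotonicity in $\alpha$ (valid, since ${\bf u}\in G=G^\star$ gives the strict positivity of $({\bf u}-{\bf u}^\star)\cdot{\bf n}^\star$, so the case $\alpha\ge 1$ follows from $\alpha=1$), parametrize the zero-pressure boundary states $(\rho^\star,{\bm v}^\star,{\bf B}^\star)$ explicitly, rewrite everything in primitive variables, and isolate the terms $v_\ell^\star p_m^\star$ and $B_\ell({\bm v}^\star\cdot{\bm B}^\star)$ that cannot be absorbed pointwise and only vanish after summation under a discrete divergence-free constraint. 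Up to that point your plan is sound and matches the known route.

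The genuine gap is in your closing step. After subtracting $\zeta({\bf u}^\star)$ and $B_\ell({\bm v}^\star\cdot{\bm B}^\star)$, the residual is \emph{not} bilinear in the two states and is not closable by Cauchy--Schwarz plus completion of squares alone: it contains terms quadratic in $|{\bf B}|$ and $|{\bf B}^\star|$, quartic velocity--field couplings through $W$, $W^\star$, and $\rho h W^2$, and the implicitly defined $p({\bf u})$ and $h({\bf u})$. In the actual proofs this step occupies the bulk of the argument and requires several specially constructed auxiliary inequalities --- e.g., verifying positive semidefiniteness of certain quadratic forms in $({\bm v}^\star,{\bf B}^\star)$ \emph{uniformly} over the non-compact family $|{\bm v}^\star|<1$, $\rho^\star\ge 0$, ${\bf B}^\star\in\mathbb{R}^3$, with the strictness at the degenerate limits ($|{\bm v}^\star|\to 1$, vanishing $\rho^\star$) checked separately; generic quadratic estimates lose exactly the strictness and uniformity the theorem asserts. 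Two smaller misstatements compound this: the inequality \eqref{eq:wLFS-MHD} is linear in ${\bf u}$ only through the GQL term, while ${\bf f}_\ell({\bf u})\cdot{\bf n}^\star$ remains fully nonlinear via the implicit primitive variables, so GQL does not ``turn the feasibility problem into inequalities linear in ${\bf u}$''; and obtaining ${\bf n}^\star$ by implicit differentiation of the root $\hat\phi$ is delicate precisely at $p^\star=0$, where that parametrization degenerates --- in the literature ${\bf n}^\star$ is instead read off from an explicit parametrization of $\partial G$. As written, your proposal would stall at the residual estimate, which is the theorem's real content.
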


The additional term $\pm B_\ell({\bm v}^\star \cdot {\bm B}^\star)$ in \eqref{eq:wLFS-MHD} is essential; without it, the inequality reduces to the original \Cref{prop:wLFS}, which does not hold in general in the MHD case. This term captures the fundamental influence of the divergence-free condition on the IDP property.

\begin{example}
	 To illustrate the basic idea, consider the 1D case, where the divergence free condition simplifies to $B_1\equiv const$. 
	 Assume ${\bf u}_L, {\bf u}_R \in G$, then it follows from \eqref{eq:wLFS-MHD} that 
	 \begin{align*}
	 	\alpha({\bf u}_L-{\bf u}^\star)\cdot \mathbf{n}^\star 
	 	+  
	 	{\bf f}_1({\bf u}_L)\cdot \mathbf{n}^\star
	 	&>  \zeta({\bf u^\star}) + B_{1,L} ( {\bm v}^\star \cdot {\bm B}^\star ),
	 	\\
	 	\alpha({\bf u}_R-{\bf u}^\star)\cdot \mathbf{n}^\star 
	 	- 
	 	{\bf f}_1({\bf u}_L)\cdot \mathbf{n}^\star
	 	&> - \zeta({\bf u^\star}) - B_{1,R}( {\bm v}^\star \cdot {\bm B}^\star )
	 \end{align*}
	 If we further assume $B_{1,L} = B_{1,R}$, averaging the two inequalities yields
	 $$
	 \alpha \left( \overline {\bf u} -{\bf u}^\star \right)\cdot \mathbf{n}^\star > \frac12 ( B_{1,L} -B_{1,R})  ( {\bm v}^\star \cdot {\bm B}^\star ) = 0,
	 $$
	 where 
	 $
	 \overline {\bf u} = \frac{ {\bf u}_L + {\bf u}_R  }2 + \frac{ {\bf f}_1 ( {\bf u}_L ) - {\bf f}_1 ( {\bf u}_R )  }{2 \alpha }. 
	 $
	 By the GQL representation \eqref{eq:GQL-MHD}, we conclude
	 \begin{equation}\label{eq:MHDss}
	 {\bf u}_L, {\bf u}_R \in G,~ B_{1,L} = B_{1,R}   \quad  \Longrightarrow \quad   \frac{ {\bf u}_L + {\bf u}_R  }2 + \frac{ {\bf f}_1 ( {\bf u}_L ) - {\bf f}_1 ( {\bf u}_R )  }{2 \alpha }  \in G~~~ \forall \alpha \ge 1.
	 \end{equation}
\end{example}

	 This shows that \eqref{assumption2} holds for relativistic MHD, but only under the additional discrete divergence-free constraint $B_{1,L} = B_{1,R}$. See \cite{wu2018positivity} for the non-relativistic counterpart. 
While this discrete divergence-free condition is trivial in 1D, it becomes significantly more complex in multiple dimensions \cite{wu2019provably,wu2021provably}. 
For instance, a 2D version of \eqref{eq:MHDss} for first-order IDP schemes is given as follows:
\begin{theorem}\label{thm:MHD2}
If ${\bf u}_L, {\bf u}_R, {\bf u}_D, {\bf u}_U \in G$ and the 2D discrete divergence-free constraint $\frac{B_{1,R}-B_{1,L}}{\Delta x} + \frac{B_{2,U}-B_{2,D}}{\Delta y} = 0$ holds, 
then 
\begin{align*} 
	\frac{1}{ \frac{\alpha_x}{\Delta x} + \frac{\alpha_y}{\Delta y} } \Bigg[ & \frac{\alpha_x}{\Delta x}\left( \frac{ {\bf u}_L + {\bf u}_R  }{2} + \frac{ {\bf f}_1 ( {\bf u}_L ) - {\bf f}_1 ( {\bf u}_R )  }{2 \alpha_x } \right) 
	\\
	+ & \frac{\alpha_y}{\Delta y}\left( \frac{ {\bf u}_D + {\bf u}_U  }{2}  + \frac{ {\bf f}_2 ( {\bf u}_D ) - {\bf f}_2 ( {\bf u}_U )  }{2 \alpha_y } \right) \Bigg] \in G~~~ \forall \alpha_x, \alpha_y \ge 1.
\end{align*}
\end{theorem}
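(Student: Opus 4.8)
The plan is to verify the inclusion through the GQL representation \eqref{eq:GQL-MHD}, which turns the vector statement $\bar{\bf u}\in G$ into the scalar conditions $\rho(\bar{\bf u})>0$ and $(\bar{\bf u}-\mathbf{u}^\star)\cdot\mathbf{n}^\star>0$ for every admissible free auxiliary state $\mathbf{u}^\star$ with $p^\star=0$. Writing $\phi_x:=\alpha_x/\Delta x$, $\phi_y:=\alpha_y/\Delta y$ and denoting the target convex combination by $\bar{\bf u}$, density positivity is immediate: the density entry of $\bar{\bf u}$ is a genuine convex combination of the positive densities of ${\bf u}_L,{\bf u}_R,{\bf u}_D,{\bf u}_U$. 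It remains to control the linear form, and by linearity of the inner product it splits as $(\phi_x+\phi_y)(\bar{\bf u}-\mathbf{u}^\star)\cdot\mathbf{n}^\star=\phi_x(\bar{\bf u}^x-\mathbf{u}^\star)\cdot\mathbf{n}^\star+\phi_y(\bar{\bf u}^y-\mathbf{u}^\star)\cdot\mathbf{n}^\star$, where $\bar{\bf u}^x$ and $\bar{\bf u}^y$ are the two directional Lax--Friedrichs-type averages in the statement.

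First I would bound the $x$-contribution by applying \Cref{thm:MHD1} with $\ell=1$ twice at the fixed $\mathbf{u}^\star$ and $\alpha=\alpha_x\ge1$: once to ${\bf u}_L$ with the plus sign and once to ${\bf u}_R$ with the minus sign. Adding the two resulting inequalities, the $\zeta$-terms $\mp v_1^\star p_m^\star$ cancel because they enter with opposite signs, leaving
\begin{equation*}
\alpha_x\big[({\bf u}_L+{\bf u}_R)-2\mathbf{u}^\star\big]\cdot\mathbf{n}^\star+\big[{\bf f}_1({\bf u}_L)-{\bf f}_1({\bf u}_R)\big]\cdot\mathbf{n}^\star>(B_{1,L}-B_{1,R})(\mathbf{v}^\star\cdot\mathbf{B}^\star).
\end{equation*}
Dividing by $2\alpha_x$, recognizing the left side as $(\bar{\bf u}^x-\mathbf{u}^\star)\cdot\mathbf{n}^\star$, and multiplying by $\phi_x$ gives $\phi_x(\bar{\bf u}^x-\mathbf{u}^\star)\cdot\mathbf{n}^\star>\tfrac{B_{1,L}-B_{1,R}}{2\Delta x}(\mathbf{v}^\star\cdot\mathbf{B}^\star)$. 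An identical argument with $\ell=2$, applied to ${\bf u}_D$ (plus) and ${\bf u}_U$ (minus) at $\alpha=\alpha_y$, yields $\phi_y(\bar{\bf u}^y-\mathbf{u}^\star)\cdot\mathbf{n}^\star>\tfrac{B_{2,D}-B_{2,U}}{2\Delta y}(\mathbf{v}^\star\cdot\mathbf{B}^\star)$.

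Finally I would add the two directional bounds. The right-hand side collapses to $\tfrac12\big(\tfrac{B_{1,L}-B_{1,R}}{\Delta x}+\tfrac{B_{2,D}-B_{2,U}}{\Delta y}\big)(\mathbf{v}^\star\cdot\mathbf{B}^\star)$, which is exactly $-\tfrac12$ times the assumed discrete divergence $\tfrac{B_{1,R}-B_{1,L}}{\Delta x}+\tfrac{B_{2,U}-B_{2,D}}{\Delta y}=0$, hence vanishes. Dividing by $\phi_x+\phi_y>0$ gives $(\bar{\bf u}-\mathbf{u}^\star)\cdot\mathbf{n}^\star>0$ for all $\mathbf{u}^\star$; combined with $\rho(\bar{\bf u})>0$ and the equivalence $G=G^\star$ of \eqref{eq:GQL-MHD}, this proves $\bar{\bf u}\in G$.

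The main obstacle is the careful bookkeeping of the anomalous term $\pm B_\ell(\mathbf{v}^\star\cdot\mathbf{B}^\star)$ in \eqref{eq:wLFS-MHD}. Unlike the $\zeta$-terms, which cancel within each direction under the $\pm$ averaging, these magnetic terms survive each directional average and must be tracked; the genuinely nontrivial point, and the precise mechanism by which the divergence-free structure enters, is that the surviving $x$- and $y$-residuals assemble \emph{exactly} into the discrete divergence operator and therefore cancel only when the two directions are combined. This is in sharp contrast to the one-dimensional case \eqref{eq:MHDss}, where the single residual disappears trivially because $B_{1,L}=B_{1,R}$; in two dimensions no individual term vanishes, and only their divergence-structured sum does. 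One must also keep $\mathbf{v}^\star$, $\mathbf{B}^\star$, and $p_m^\star$ fixed across all four applications of \Cref{thm:MHD1}—legitimate since they depend only on the common auxiliary state $\mathbf{u}^\star$—which is what licenses the cancellation.
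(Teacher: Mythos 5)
Your proof is correct and takes essentially the same route as the paper: the paper's own Example carries out exactly this GQL-plus-\Cref{thm:MHD1} averaging in the 1D case, and the 2D statement (whose proof the survey defers to the cited references) follows precisely by your directional splitting, with the $\zeta$-terms cancelling within each direction and the surviving $B_\ell(\mathbf{v}^\star\cdot\mathbf{B}^\star)$ residuals assembling into the discrete divergence across the two directions. One small imprecision: the density entry of $\bar{\bf u}$ is \emph{not} a convex combination of the four densities, since the fluxes contribute $D v_\ell$ terms to that component; it is instead a positive-weighted combination, e.g. $\tfrac{D_L(1+v_{1,L}/\alpha_x)}{2}+\tfrac{D_R(1-v_{1,R}/\alpha_x)}{2}$ in the $x$-direction, whose weights are positive because $|v_\ell|<1\le\alpha_x$, so the positivity conclusion still holds with this one-line fix.
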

It is worth noting that the GQL auxiliary variables in the additional term $\pm B_\ell({\bm v}^\star \cdot {\bm B}^\star)$ play a critical role in linking the IDP and discrete divergence-free properties. 

We refer interested readers to \cite{wu2019provably,wu2021provably} for the multidimensional and higher-order versions of \Cref{thm:MHD1} and \Cref{thm:MHD2} and the corresponding discrete divergence-free constraints on general meshes. Notably, in multiple dimensions, these discrete divergence-free conditions are strongly coupled, involving information from neighboring cells. While global divergence-free discretizations can ensure these constraints, the use of local scaling IDP limiters, such as the Zhang–Shu limiter, typically destroys the global divergence-free property of the magnetic field. Consequently, it is nontrivial to construct high-order {\em strictly conservative} schemes that are both IDP and globally divergence-free.

	 Interestingly, at the continuous level, Wu and Shu discovered that the IDP property of the exact solution is also tightly linked to the divergence-free condition. In \cite{wu2018provably,wu2021provably}, it was shown that if the magnetic field is not divergence-free, even the exact solution of the MHD system may violate pressure positivity. This implies that the set \eqref{eq:rephrased-G-iMHD}, and its relativistic counterpart \eqref{eq:Rephrased-RMHD-G}, is no longer an invariant domain of the PDE. Therefore, when the numerical magnetic field fails to satisfy the divergence-free condition, even an exact PDE solver (assuming it were available) cannot guarantee IDP, highlighting the inherent complexity of constructing genuinely IDP schemes for multidimensional MHD.

 Moreover, in \cite{wu2019provably}, Wu and Shu observed that the symmetrizable modified formulation of the ideal MHD system---originally introduced by Godunov \cite{Godunov1972}---always admits the set \eqref{eq:rephrased-G-iMHD} as an invariant domain, regardless of whether the magnetic field is divergence-free. For the relativistic system, the symmetrizable modified formulation enjoyed the same feature was recently shown in \cite{wu2020entropy}.   
Motivated by this observation, Wu and Shu \cite{wu2018provably,wu2019provably,wu2021provably} proved that the IDP property of numerical schemes based on these symmetrizable modified formulations only depends on a (discrete) locally divergence-free constraint. Crucially, this local constraint is compatible with local scaling IDP limiters, such as the Zhang--Shu limiter. Based on these findings, a series of structure-preserving frameworks, provably IDP and locally divergence-free, have been systematically developed in \cite{wu2018provably,wu2019provably,wu2021provably,wu2023provably,ding2024new,ding2024gql,liu2025structure,ding2025divergence,CaiQiuWu2025NRMHD} for compressible MHD systems using the symmetrizable modified formulation.

 \begin{remark} 
 	{  A localized element-based positivity-preserving FCT approach with divergence cleaning was proposed for continuous finite element discretization of the MHD system  in 
 		\cite{KUZMIN2020109230}.} 
 	A second-order structure-preserving finite element method for ideal MHD was recently proposed in \cite{dao2024structure}. 
 	 This method combines convex limiting with a novel operator splitting technique. {  A constrained transport method provably preserving positivity and divergence-free constraint was further introduced for MHD in \cite{PANG2025114312}.}
\end{remark}

\subsection{Numerical results for ideal MHD and relativistic MHD equations}
\label{sec:MHD-numerics}

\begin{example}[Shock cloud interaction]\label{Ex:ShockCloud}
	In this example,  
	we solve the ideal MHD equations to simulate the interaction of a strong shock wave and a high density cloud with the adiabatic index $\gamma = 5/3$. The computational domain is chosen to be $[0,1]^2$, as in \cite{wu2018provably,wu2019provably,wu2023provably}. 
	The problem is simulated up to $t=0.06$ with $400\times 400$ uniform cells.
	\Cref{fig:MHD-ShockCloud} presents the contours of the density, the thermal pressure, and the magnitude of magnetic field obtained by the third order IDP DG method and fifth order IDP finite volume method. 
    It is worth noting that the schemes would produce nonphysical solutions without using the IDP limiter.
	
	\begin{figure}[!htbp]
		\centering
		\begin{subfigure}{0.48\textwidth}
			\includegraphics[width=\textwidth]{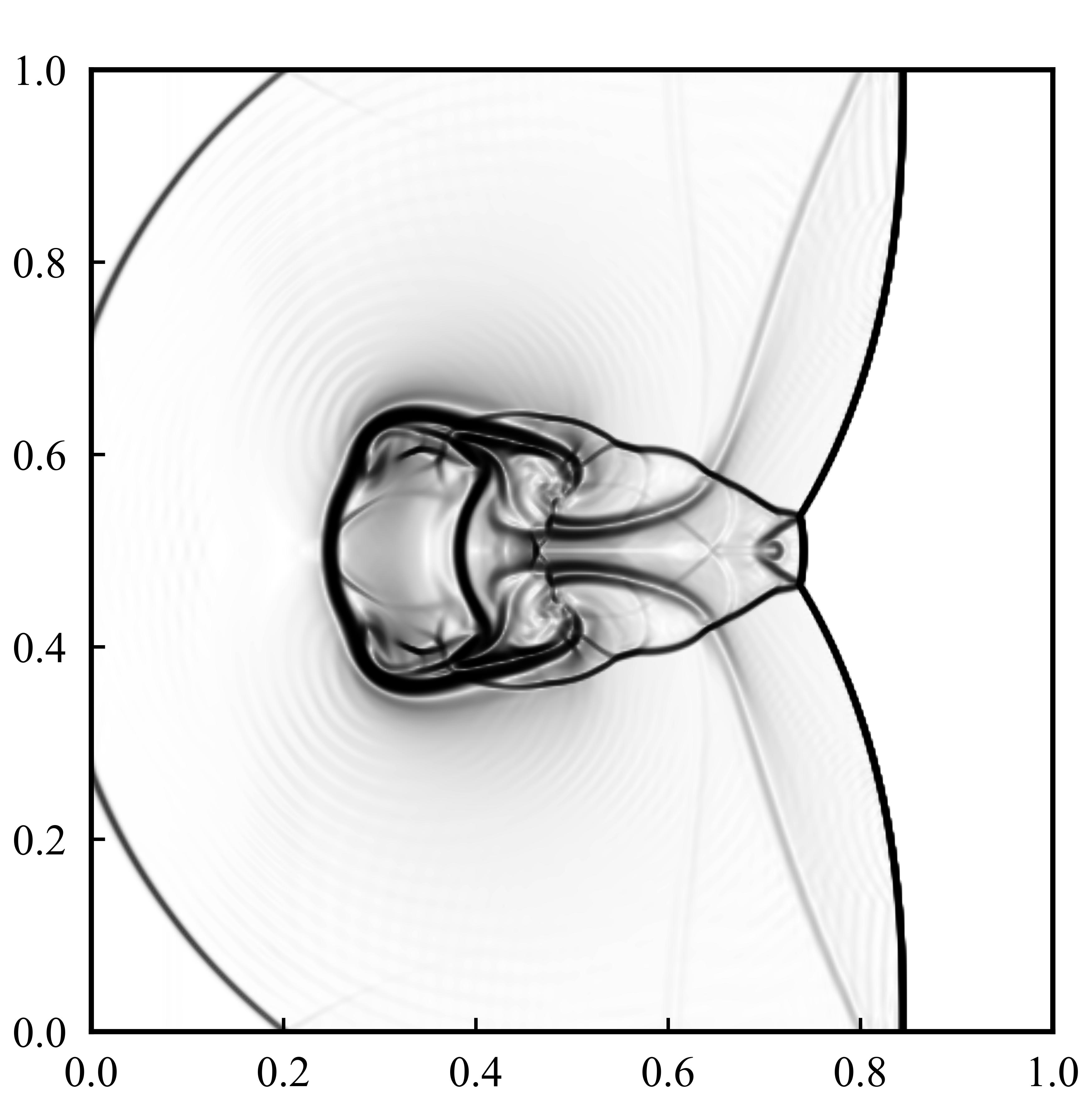}
		\end{subfigure}
           \hfill 
           \begin{subfigure}{0.48\textwidth}
			\includegraphics[width=\textwidth]{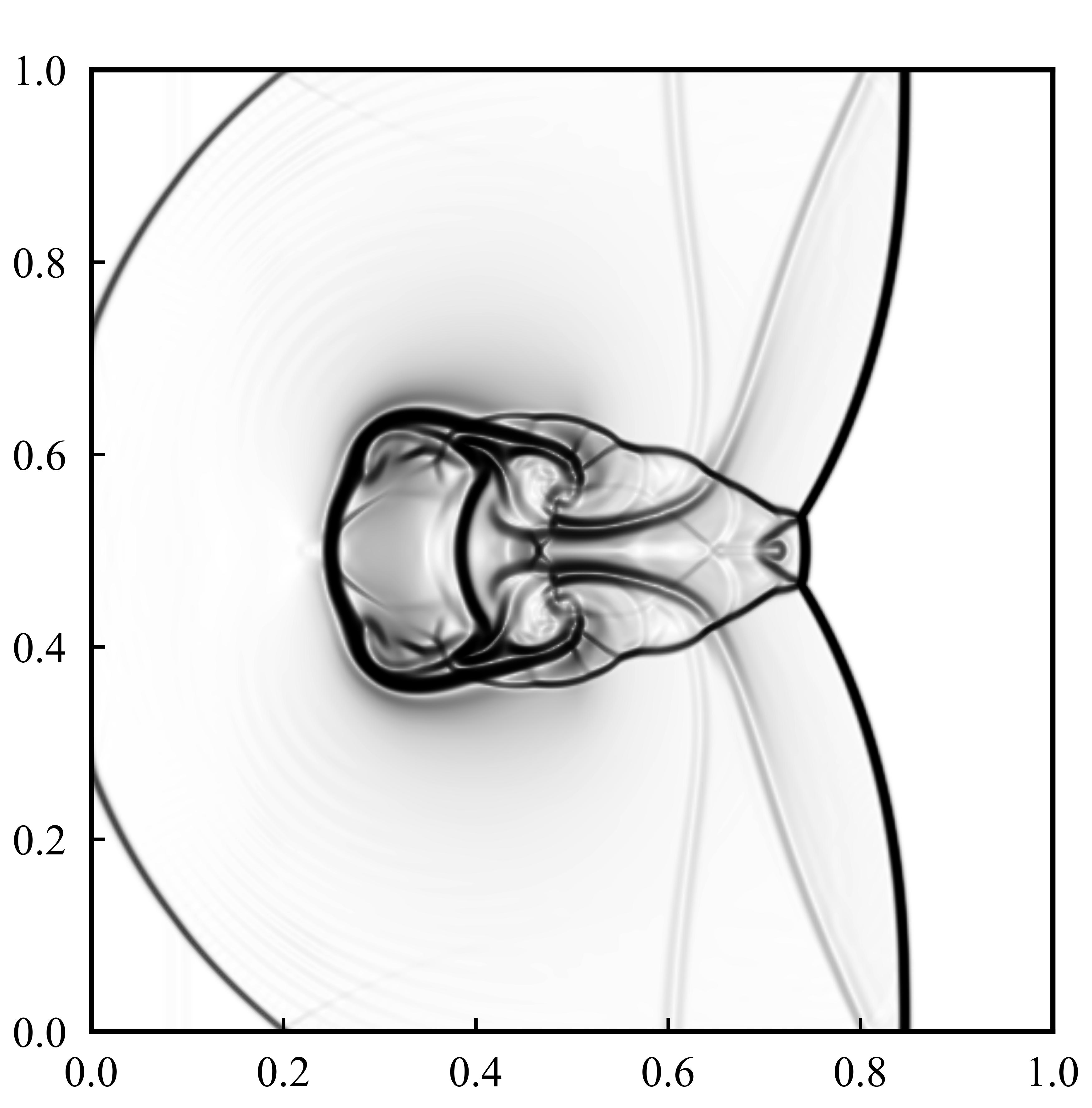}
		\end{subfigure}
            
		\hfill
		\begin{subfigure}{0.48\textwidth}
			\includegraphics[width=\textwidth]{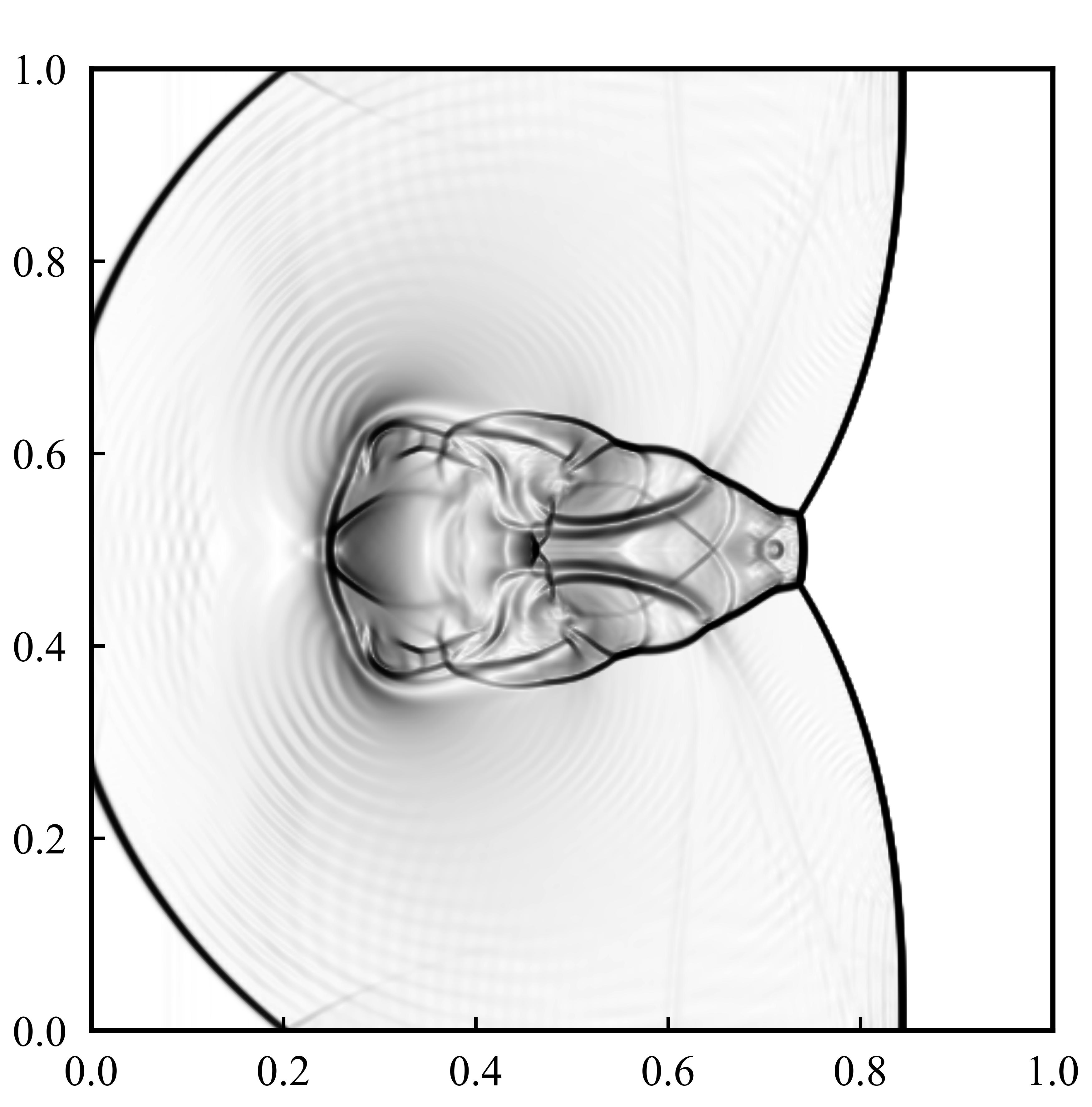}
		\end{subfigure}
        \hfill
		\begin{subfigure}{0.48\textwidth}
			\includegraphics[width=\textwidth]{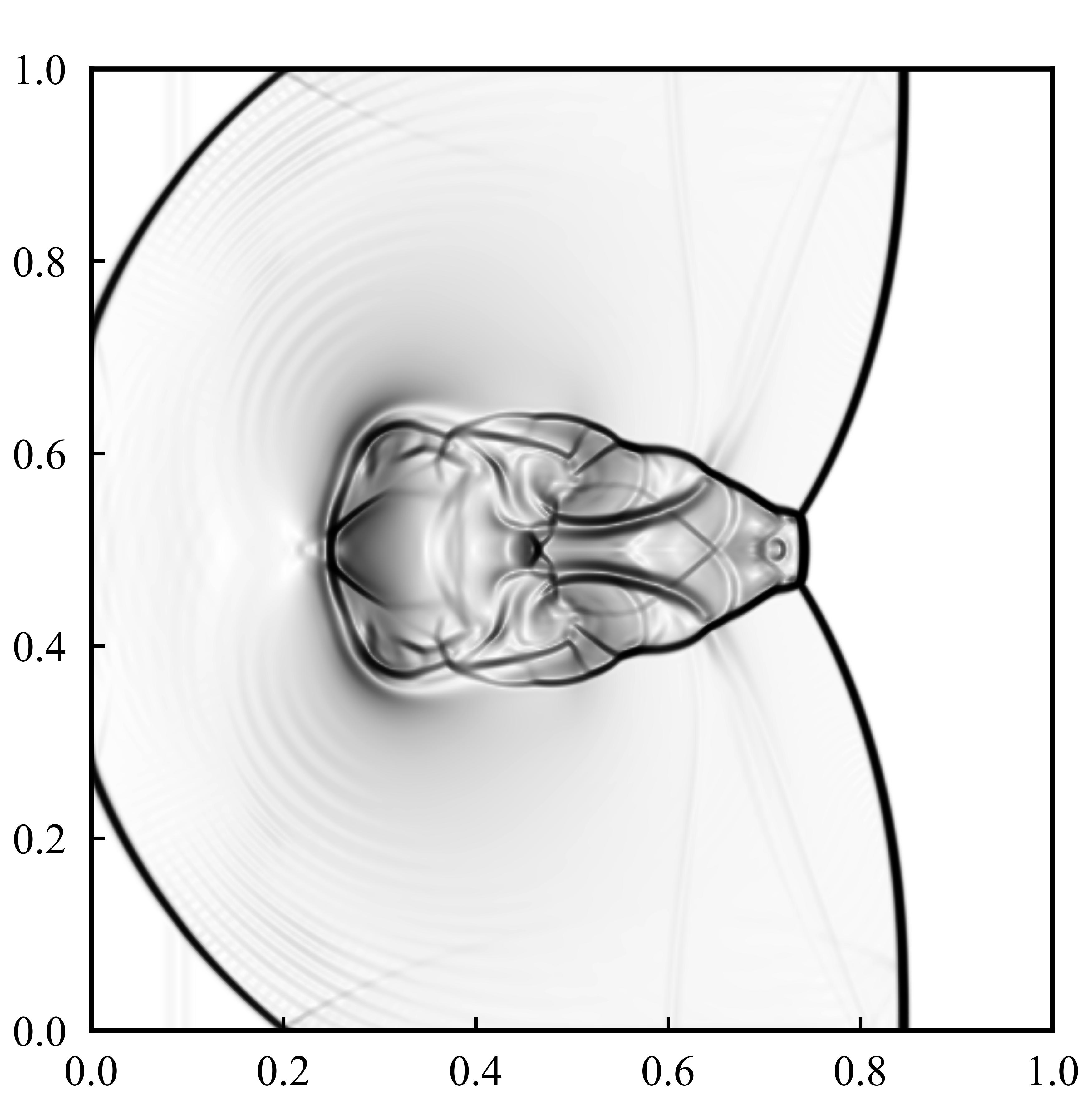}
		\end{subfigure}

		\begin{subfigure}{0.48\textwidth}
			\includegraphics[width=\textwidth]{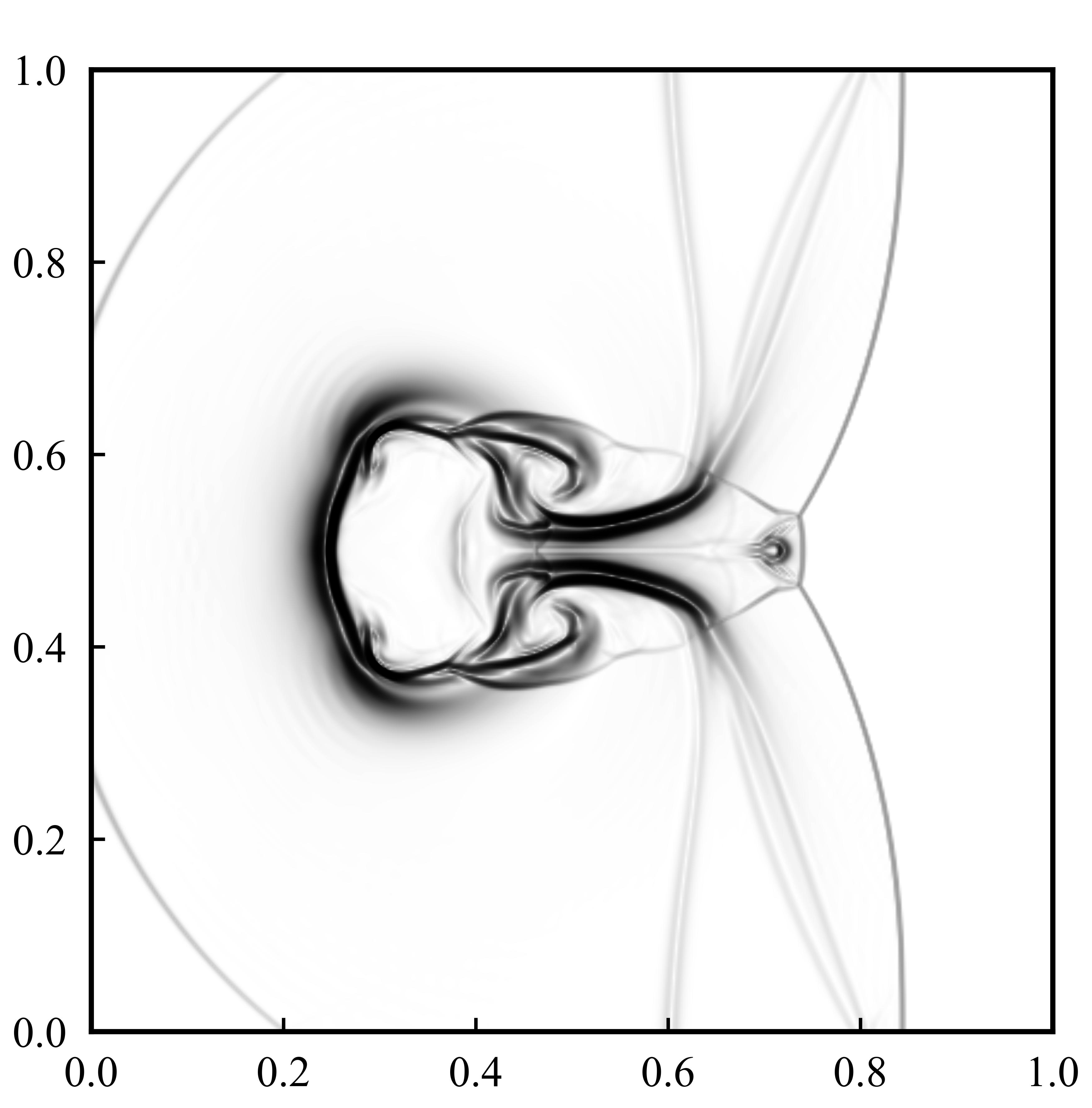}
		\end{subfigure}
		\hfill
		\begin{subfigure}{0.48\textwidth}
			\includegraphics[width=\textwidth]{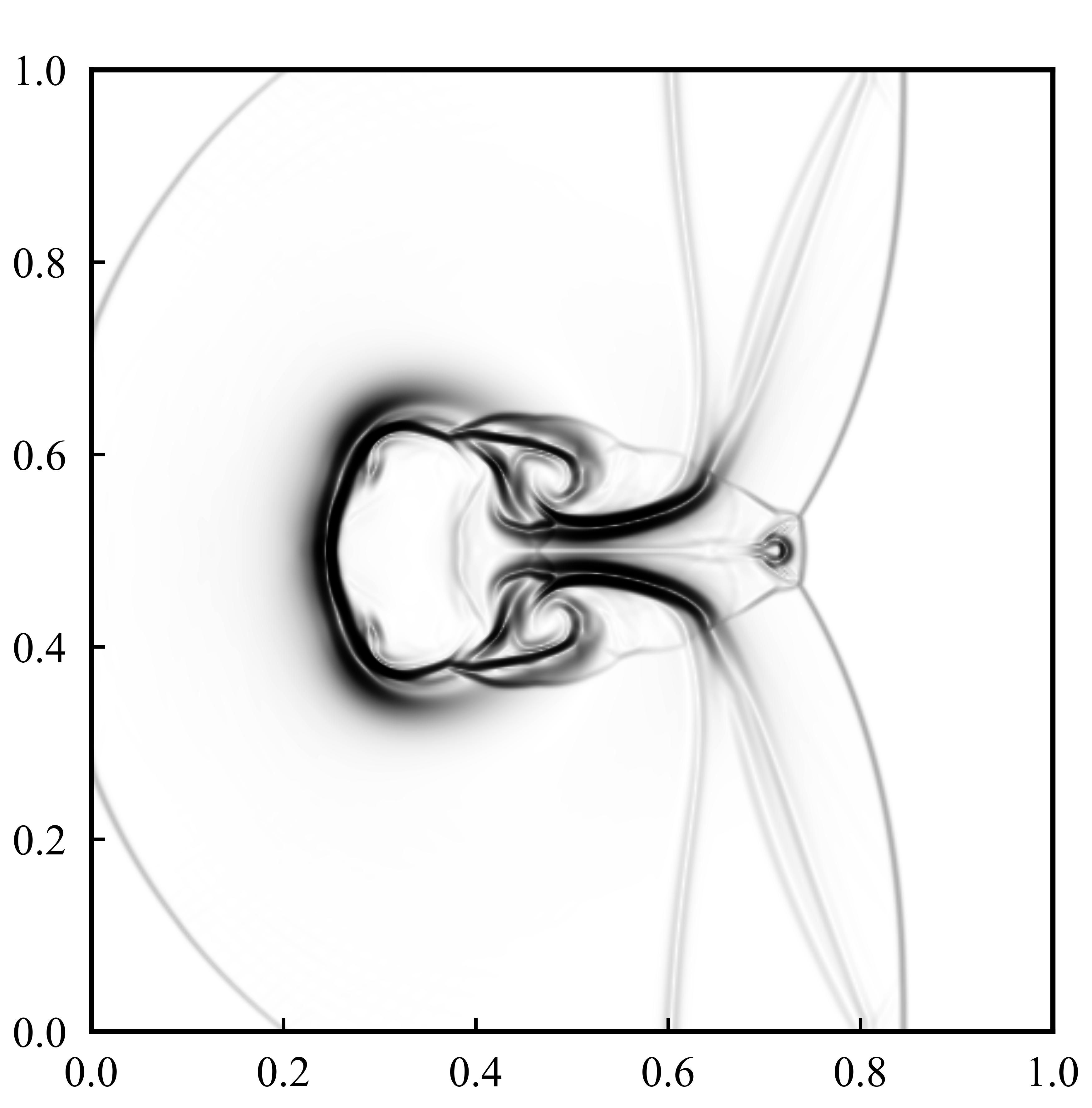}
		\end{subfigure}
		
		\caption{(Example \ref{Ex:ShockCloud}) The density logarithm (top), thermal pressure (middle), and magnitude of magnetic field (bottom).
		Left: third order IDP DG scheme. Right: fifth order IDP finite volume scheme.
		}
		\label{fig:MHD-ShockCloud}
	\end{figure} 
	
\end{example}

\begin{example}[Astrophysical MHD jet with huge Mach number]\label{Ex:Jet}
The high-speed MHD jets are  proposed and first simulated in \cite{wu2018provably,wu2019provably,wu2021provably}. 
	This test simulates an extremely high Mach number jet problem in a strong magnetic field. 
    The adiabatic index is set to be $\gamma=1.4$. 
    Initially, the domain $[-0.5,0.5]\times[0,1.5]$ is full of the ambient plasma with 
	$(\rho, {\bm v}, \mathbf{B}, p) = (0.1\gamma, 0, 0, 0, 0, 2\times 10^5, 0, 1).$ 
	A high-speed jet initially locates at $x\in [-0.05,0.05]$ and $y=0$, it is injected in $y$-direction of the bottom boundary with the inflow jet condition
	$(\rho, {\bm v}, \mathbf{B}, p) = (\gamma, 0, 10^{6}, 0, 0, 2\times 10^5, 0, 1).$ 
	In our test, the computational domain is taken as $[0,0.5]\times [0,1.5]$ and divided into $200\times 600$ cells. For the left boundary $x=0$, the reflecting boundary condition is imposed. The outflow conditions are applied on other boundaries. The final time is $t=1.8\times 10^{-6}$. 
	\Cref{fig:Ex-MHDJet} shows the schlieren images of density logarithm $\log{\rho}$ obtained by the third order IDP DG method and fifth order IDP finite volume method.
	
\begin{figure}[!htbp]
	\centering
	\begin{subfigure}{0.48\textwidth}
		\includegraphics[width=\textwidth]{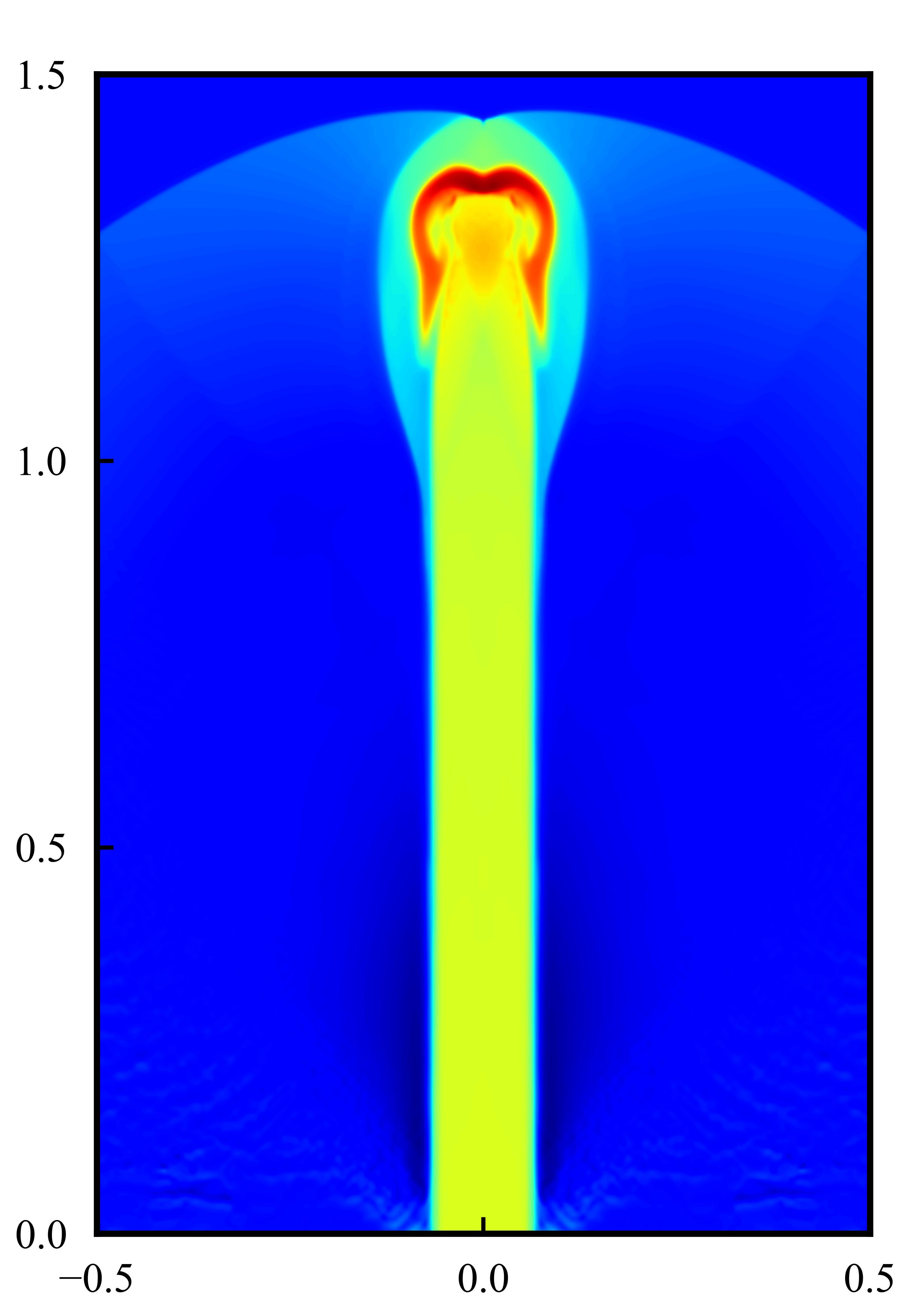}
	\end{subfigure}
	\hfill 
	\begin{subfigure}{0.48\textwidth}
		\includegraphics[width=\textwidth]{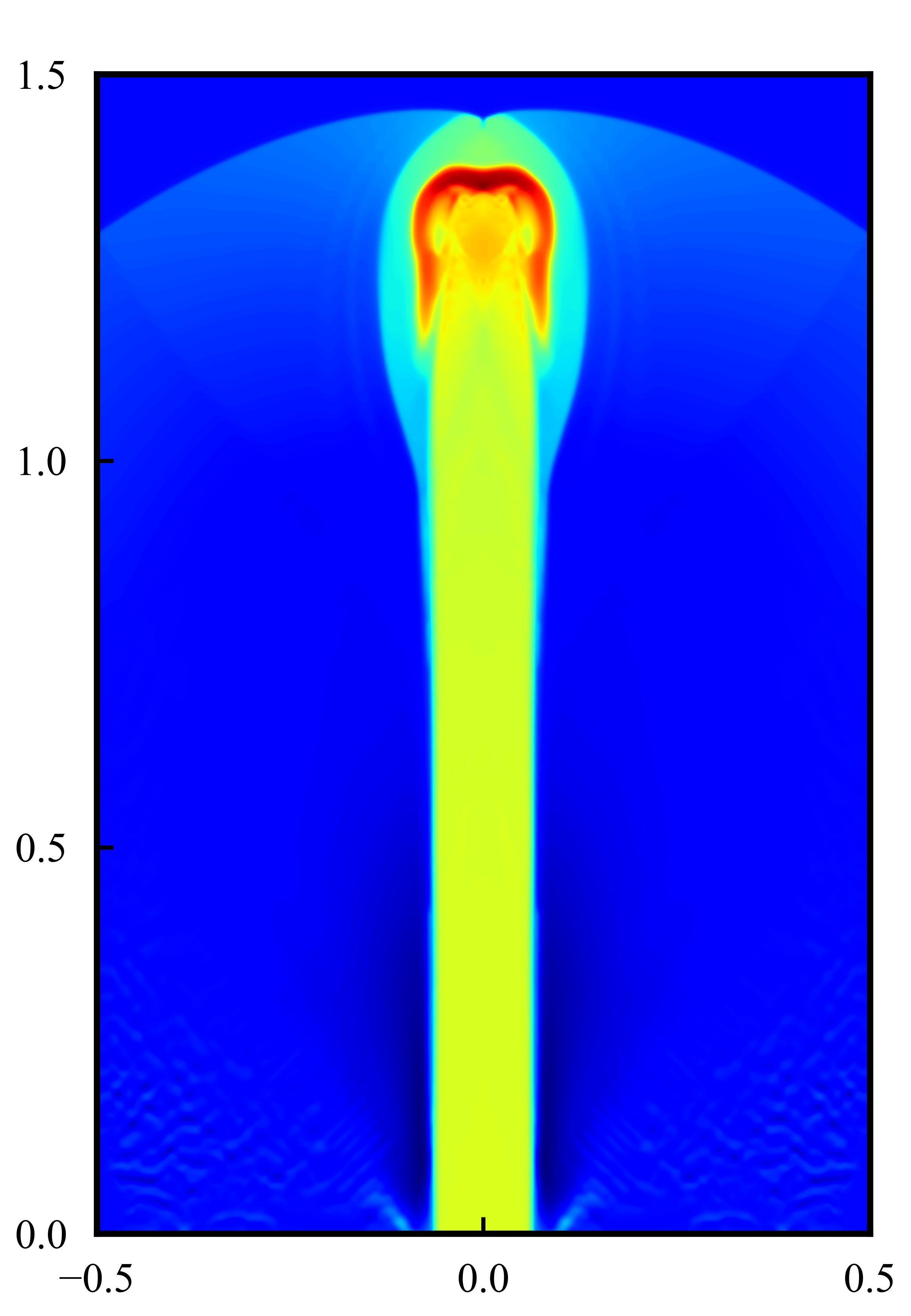}
	\end{subfigure}
	
	\begin{subfigure}{0.48\textwidth}
		\includegraphics[width=\textwidth]{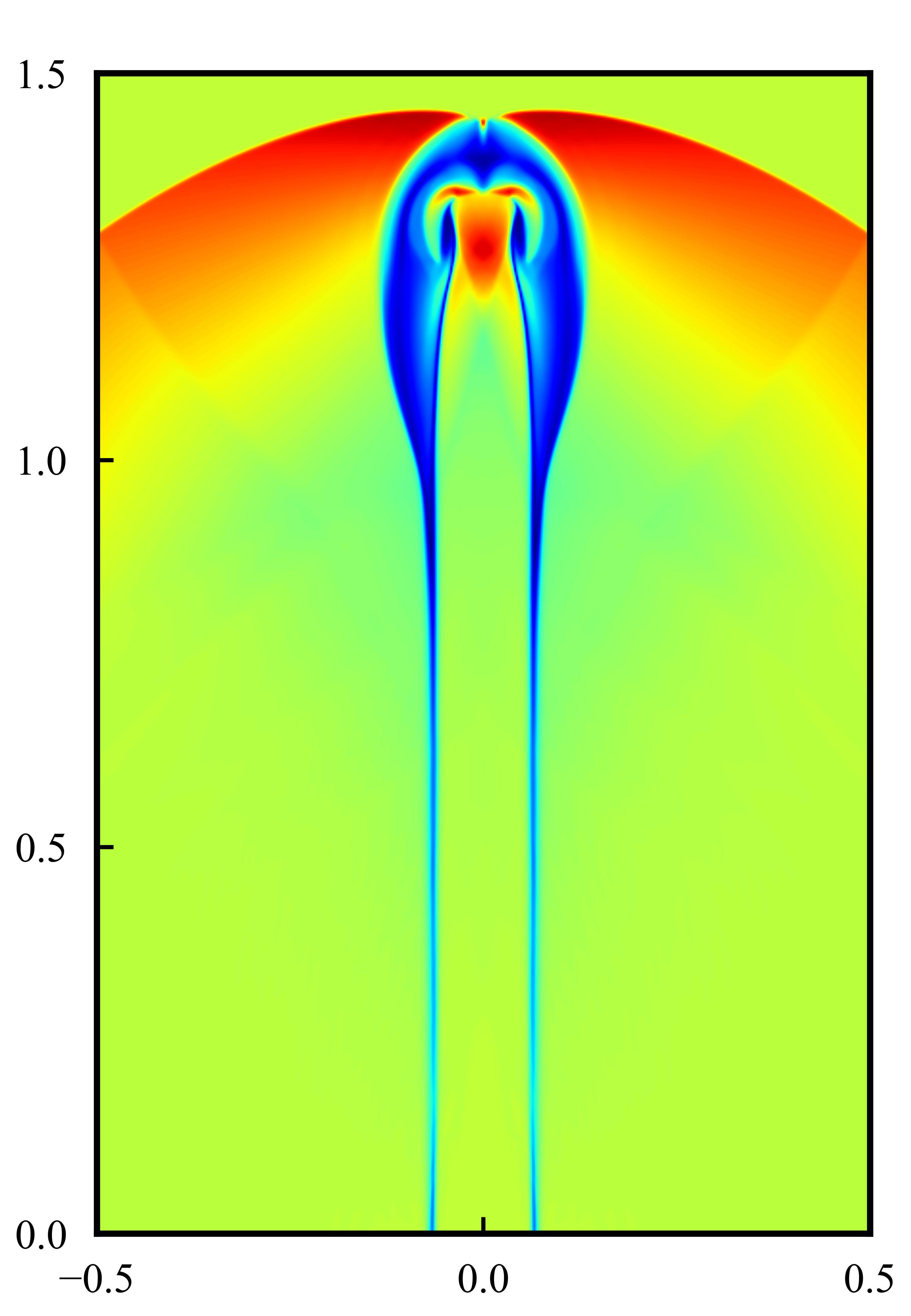}
	\end{subfigure}
	\hfill 
	\begin{subfigure}{0.48\textwidth}
		\includegraphics[width=\textwidth]{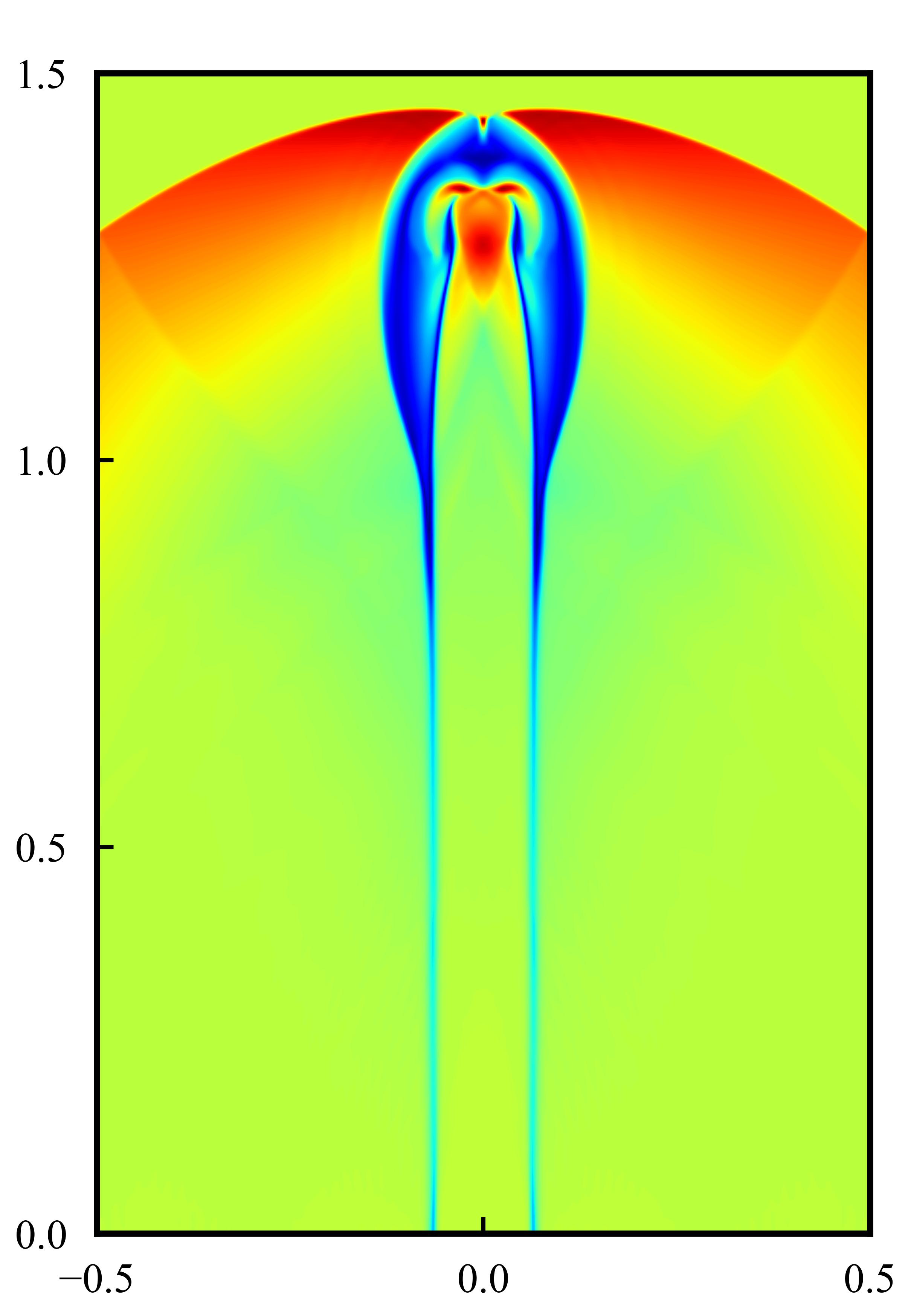}
	\end{subfigure}

	\caption{ (Example \ref{Ex:Jet}) Density logarithm (top) and the magnitude of magnetic
		field (bottom) at $1.8\times 10^{-6}$. Left: third order IDP DG method, right: fifth order IDP finite volume method.
	}
	\label{fig:Ex-MHDJet}
\end{figure} 
	
\end{example}

\begin{example}[Orszag--Tang problem]\label{Ex:OT} 
	We simulates the 2D Orszag--Tang problem of the relativistic MHD equations, 
	 following the setup in \cite{VANDERHOLST2008617,wu2021provably}. 
    In this problem, the initial maximum velocity reaches $0.99$, close to the speed of light. 
 \Cref{fig:RMHD-OT} presents the numerical results, obtained by the third order IDP DG method and fifth order IDP finite volume method 
    with $600\times 600$ uniform cells in the domain $[0,2\pi]^2$, 
    for the logarithm of the rest-mass density $\log{\rho}$ at $t=2.818127$ and $6.8558$. 
	As time progresses, complex wave structures are generated and correctly captured by our method. 
	The results agree with those reported in \cite{wu2021provably,VANDERHOLST2008617}. 
	
	\begin{figure}[!htbp]
		\centering
		\begin{subfigure}{0.48\textwidth}
			\includegraphics[width=\textwidth]{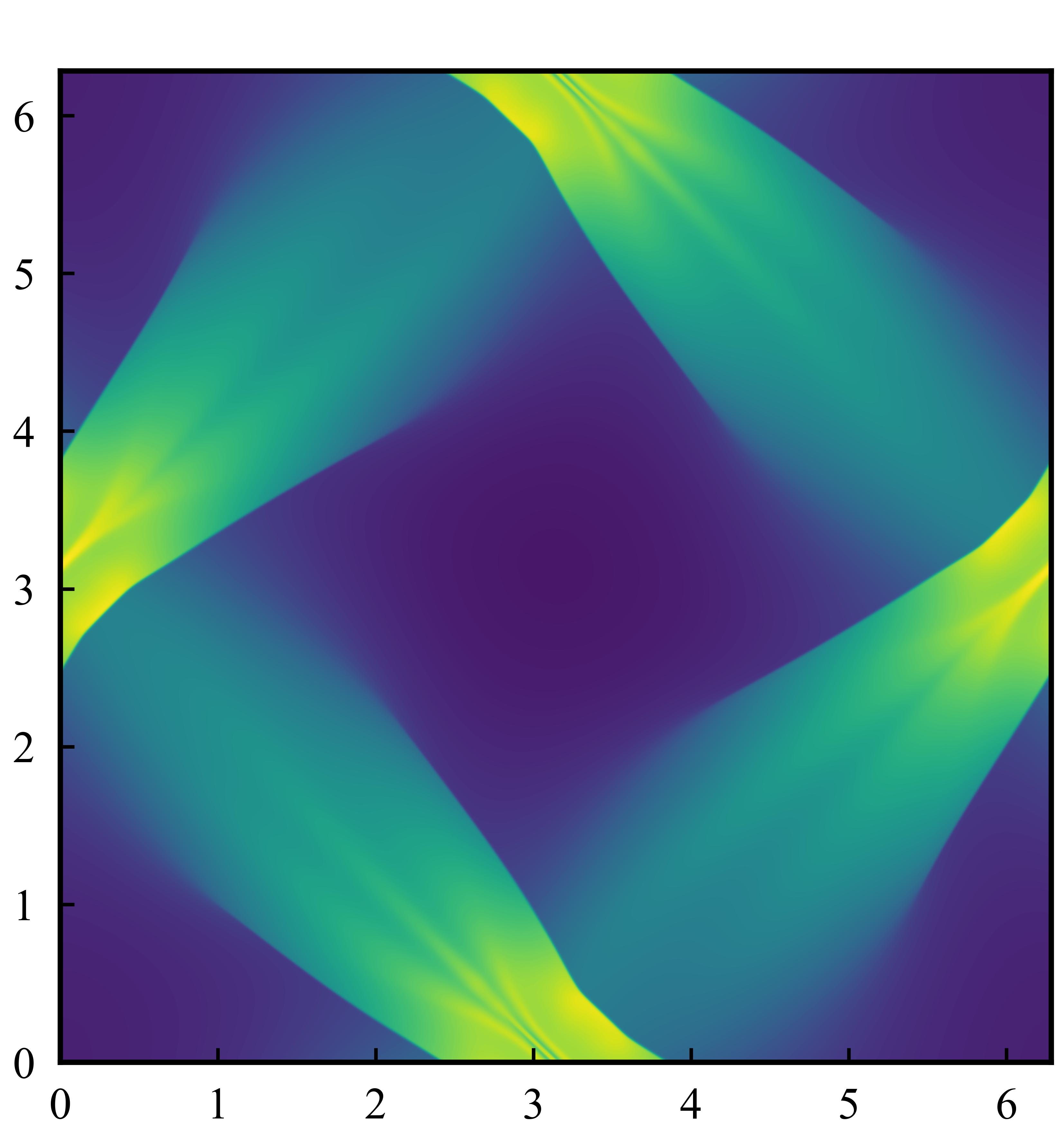}
		\end{subfigure}
		\hfill 
        \begin{subfigure}{0.48\textwidth}
			\includegraphics[width=\textwidth]{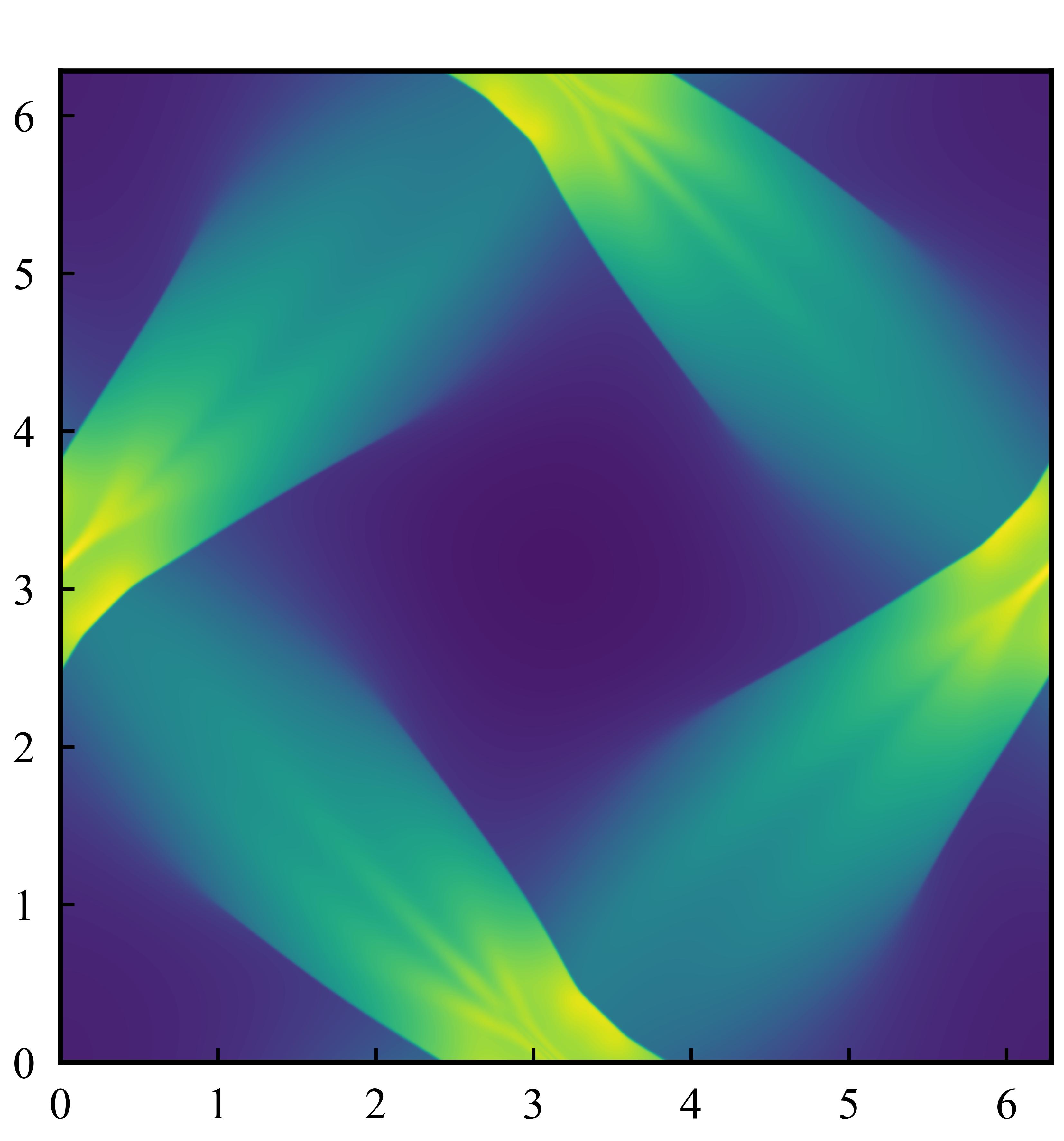}
		\end{subfigure}

		\begin{subfigure}{0.48\textwidth}
			\centering
			\includegraphics[width=\textwidth]{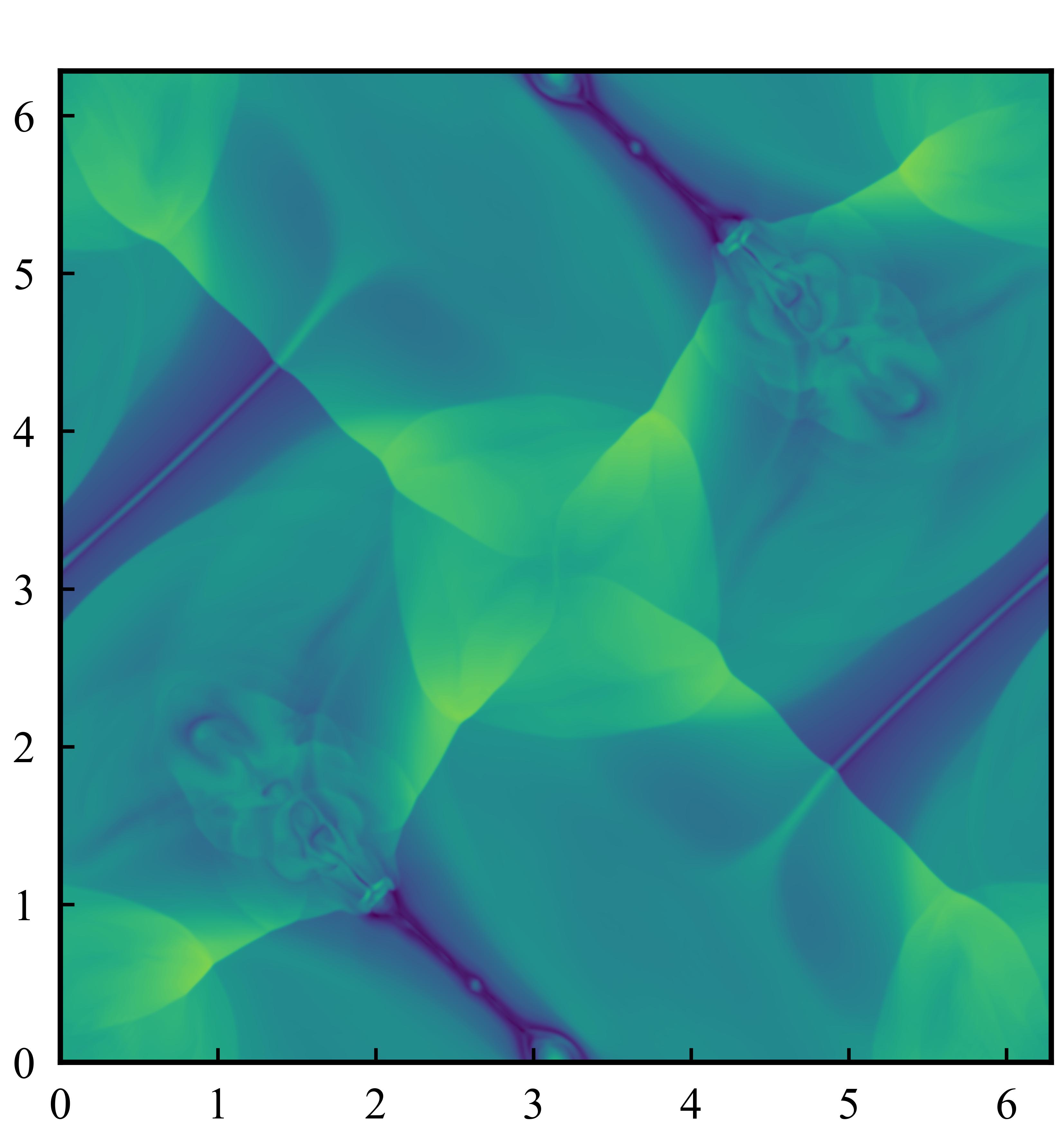}
		\end{subfigure}
		\hfill 	
		\begin{subfigure}{0.48\textwidth}
			\centering
			\includegraphics[width=\textwidth]{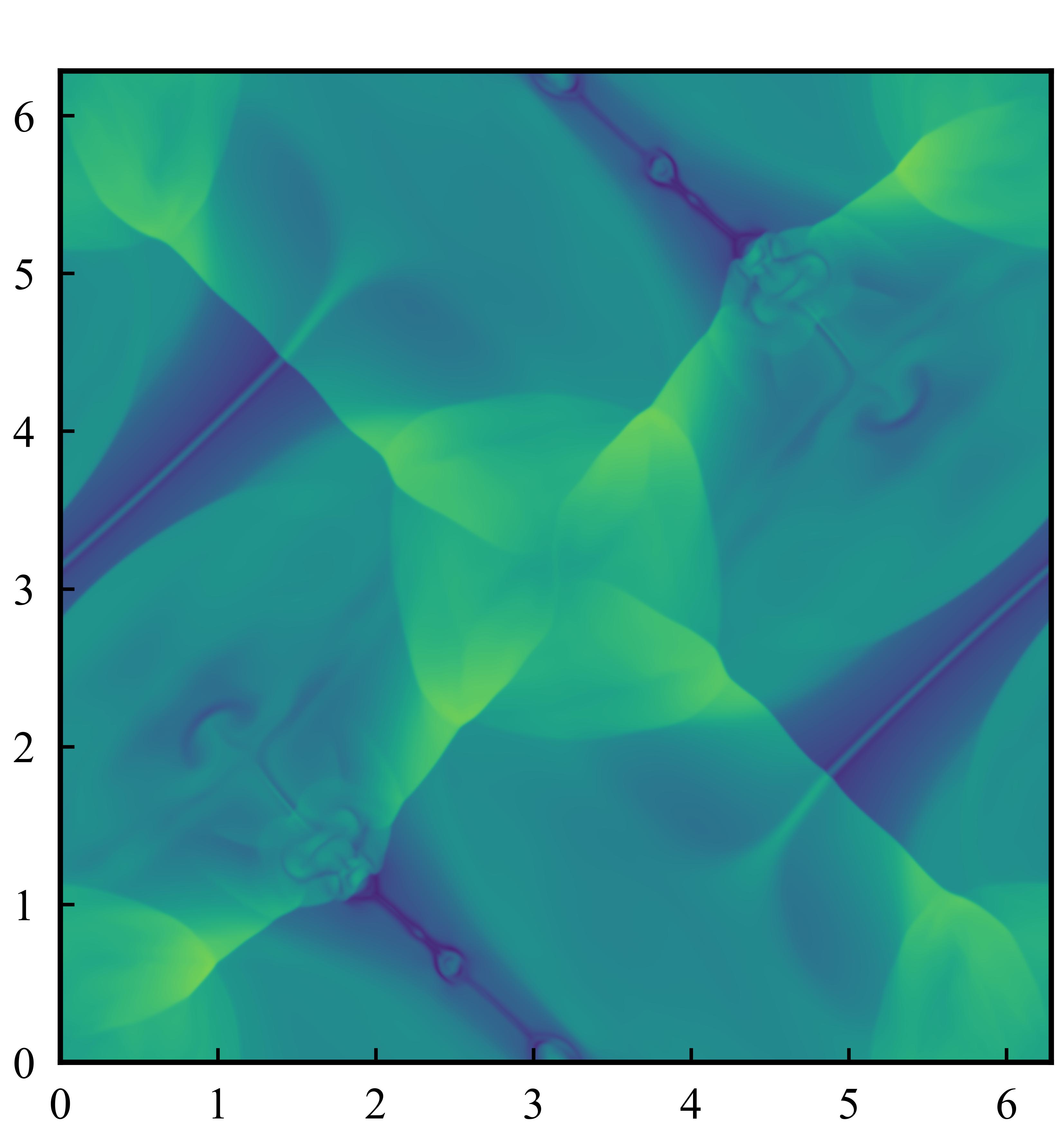}
		\end{subfigure}
		
		\caption{ (Example \ref{Ex:OT})
			Density logarithm $\log{\rho}$ at $t=2.818127$ (top) and $6.8558$ (bottom).
			Left: third order IDP DG method. Right: fifth order IDP finite volume method.
		}
		\label{fig:RMHD-OT}
	\end{figure} 
\end{example}

\begin{example}[Relativistic MHD blast problem]\label{Ex:Blast}
	Relativistic MHD blast wave problems \cite{wu2017admissible,wu2021provably} are widely used to test the robustness of numerical schemes, as nonphysical solutions can easily be produced in numerical simulations. 
    In this paper, we follow the setup in \cite{wu2021provably} and consider the  
    blast problem with a huge strength of magnetic field 2000 in the $x$-direction. 
    \Cref{fig:RMHD-Blast} displays the rest-mass density logarithm, 
    and magnitude of the magnetic field at $t=4$ obtained by the third order IDP DG method and fifth order IDP finite volume method with $400 \times 400$ uniform cells in the domain $\Omega=[-6,6]^2$.

	\begin{figure}[!htbp]
		\centering
		\begin{subfigure}{0.4\textwidth}
			\includegraphics[width=\textwidth]{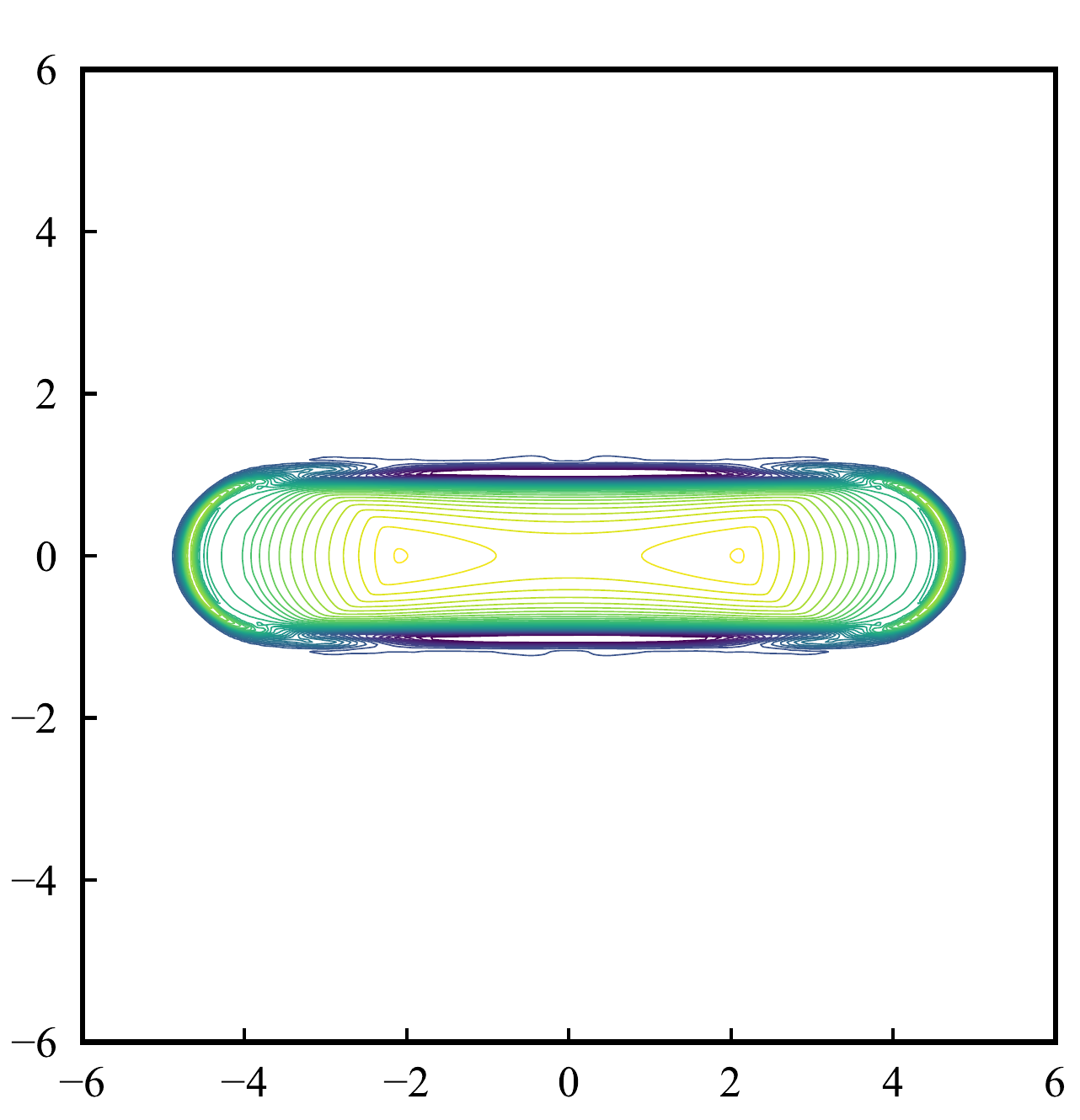}
		\end{subfigure}
		\qquad 
        \begin{subfigure}{0.4\textwidth}
			\includegraphics[width=\textwidth]{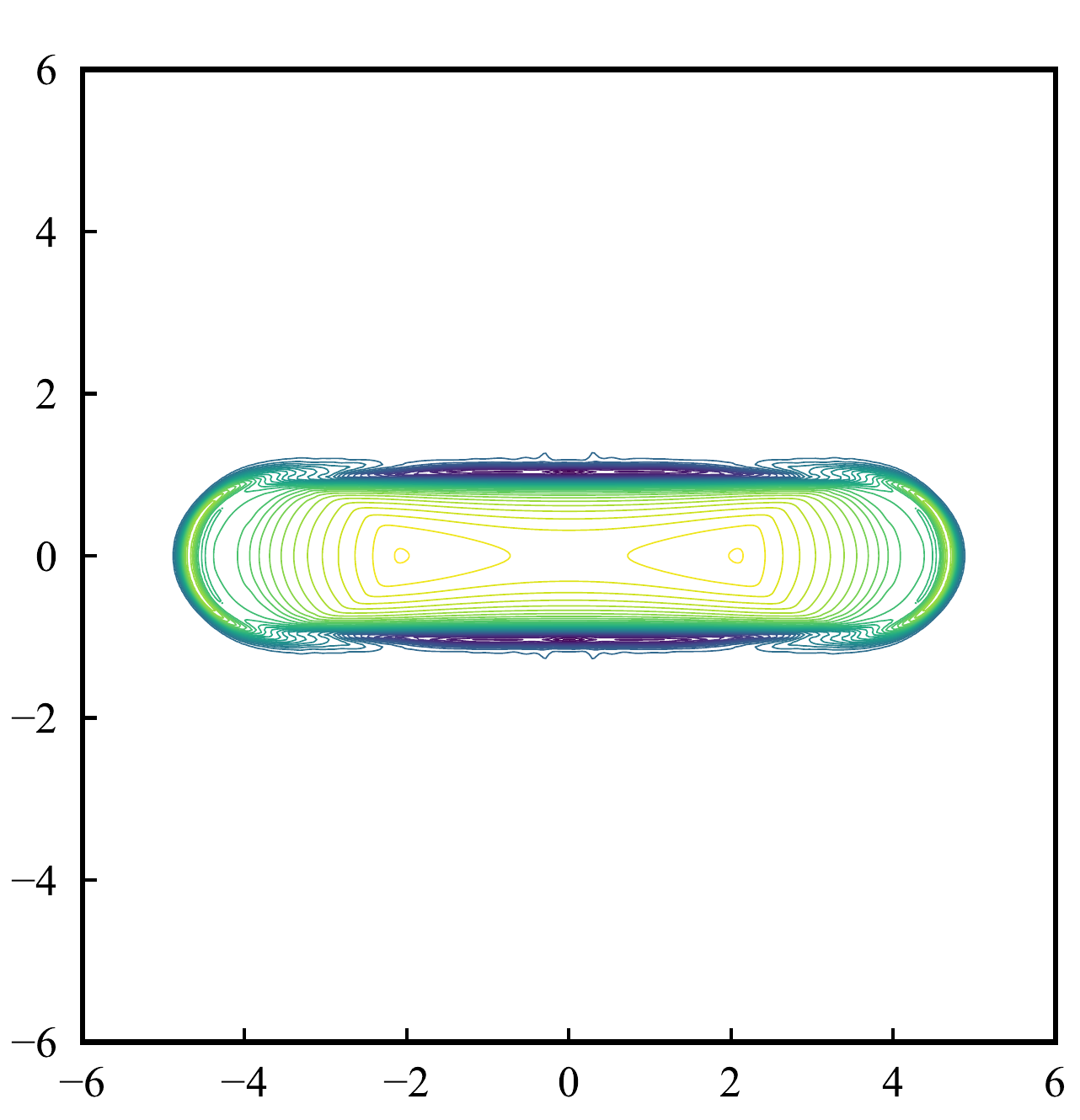}
		\end{subfigure}

		\begin{subfigure}{0.4\textwidth}
			\includegraphics[width=\textwidth]{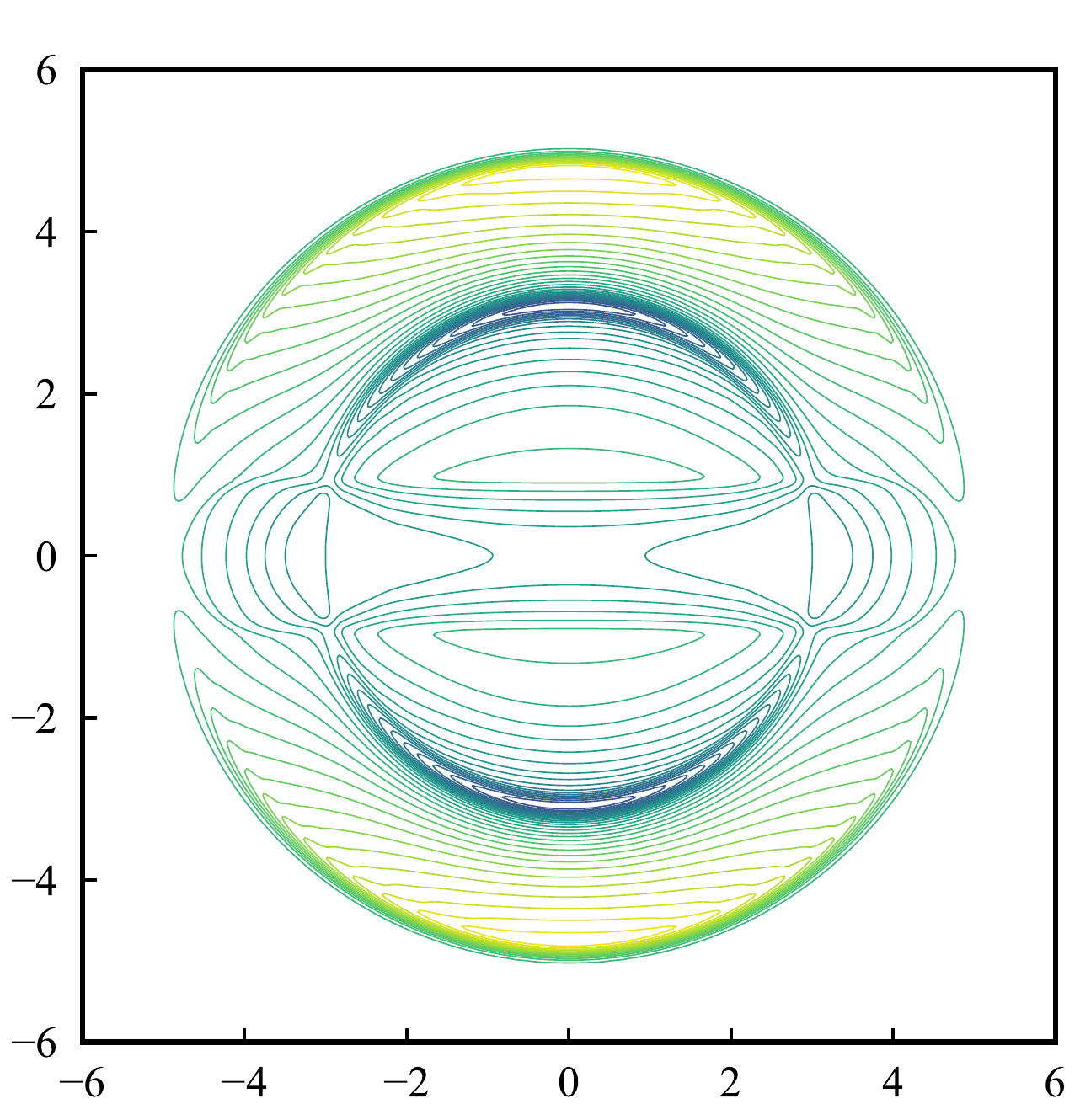}
		\end{subfigure}
		\qquad 
		\begin{subfigure}{0.4\textwidth}
			\includegraphics[width=\textwidth]{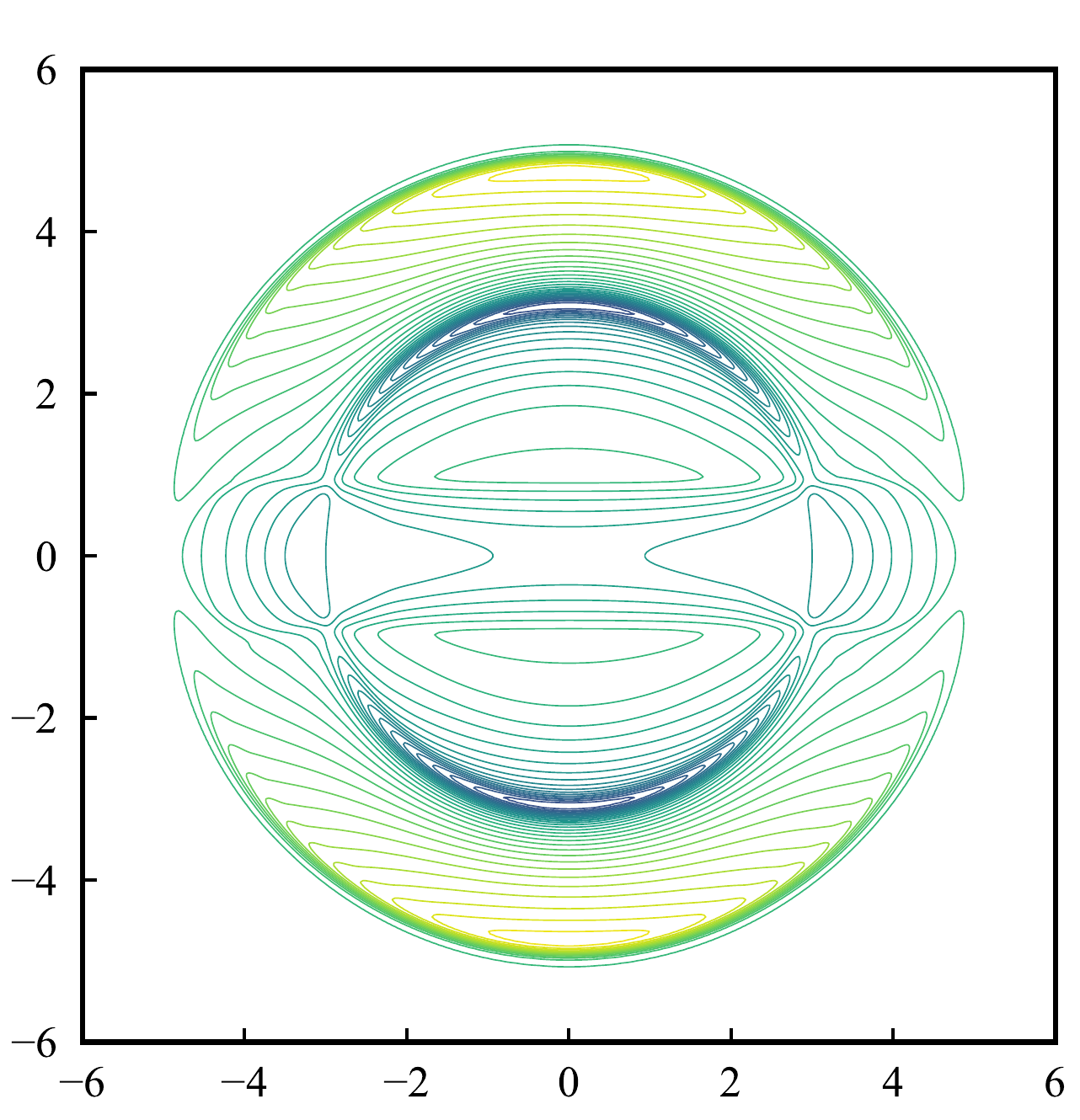}
		\end{subfigure}
		
		\caption{(Example \ref{Ex:Blast}) Contour plots of the rest-mass density logarithm (top), thermal pressure (middle), and magnitude of magnetic field (bottom).  Left: third order IDP DG method. Right: fifth order IDP finite volume method.
		}
		\label{fig:RMHD-Blast}
	\end{figure} 
\end{example}

\section{Concluding remarks}
\label{sec:remarks}
We have presented a comprehensive survey of numerical schemes which are invariant-domain-preserving (IDP)   for hyperbolic systems and related equations. 
We have unified existing techniques and theories for establishing IDP properties in first-order accurate schemes, and given
a systematical review of two popular approaches for constructing high-order IDP schemes, along with recent developments in the field. 
The Zhang--Shu approach leverages the intrinsic weak IDP property of high-order schemes, enabling the design of polynomial limiters that enforce a strong IDP property at point values for high-order finite volume and discontinuous Galerkin methods. 
The flux limiting approaches apply to a broader range of spatial discretizations, including finite difference and continuous finite element methods. 
In addition, we also discussed recent breakthroughs in constructing IDP schemes for more challenging systems, such as the magnetohydrodynamics equations, where maintaining the invariant domain is more delicate due to the complication from  discrete divergence-free constraints. 
Throughout the paper, we have provided new perspectives and insights about existing IDP approaches. 
Extensive examples, including positivity-preserving schemes for gas dynamics and numerical experiments from gas dynamics and magnetohydrodynamics, were presented to illustrate the practical performance and importance of IDP schemes.
In general, preserving the invariant domain remains a cornerstone for developing reliable and physically meaningful numerical methods for hyperbolic systems and related equations. The approaches surveyed in this paper are useful for future research, particularly in the design of high-order, robust, and efficient solvers for more complex applications.

\section*{Acknowledgement}
Xiangxiong Zhang is grateful to Professor Remi Abgrall for discussions on residual distribution schemes. 
The authors would like to thank Dr.~Shengrong Ding and Dr.~Chen Liu for the help on the visualization of some numerical results.

\renewcommand\baselinestretch{0.93}
\bibliographystyle{siamplain}
\bibliography{references}
\end{document}